\newcommand{\@bbify}[1]{
  \ifcsname b#1\endcsname
  \message{WARNING: Overwriting b#1 with blackboard letter!}
  \fi
  \expandafter\edef\csname b#1\endcsname
  {\noexpand\ensuremath{\noexpand\mathbb #1}\noexpand\xspace}}
\newcommand{\@calify}[1]{
  \ifcsname c#1\endcsname
  \message{WARNING: Overwriting c#1 with calligraphic letter!}
  \fi 
  \expandafter\edef\csname c#1\endcsname
  {\noexpand\ensuremath{\noexpand\mathcal #1}\noexpand\xspace}}
\newcommand{\@bfify}[1]{
  \ifcsname bf#1\endcsname
  \message{WARNING: Overwriting bf#1 with bold letter!}
  \fi
  \expandafter\edef\csname bf#1\endcsname
  {\noexpand\ensuremath{\noexpand\mathbf #1}\noexpand\xspace}}
\newcommand{\@sfify}[1]{
  \ifcsname s#1\endcsname
  \message{WARNING: Overwriting s#1 with sf letter!}
  \fi
  \expandafter\edef\csname s#1\endcsname
  {\noexpand\ensuremath{\noexpand\mathsf #1}\noexpand\xspace}}
\newcounter{@letter}\stepcounter{@letter}
\loop\@bbify{\Alph{@letter}}\@calify{\Alph{@letter}}\@bfify{\Alph{@letter}}\@sfify{\Alph{@letter}}
\newenvironment{tz}{\begin{center}\begin{tikzpicture}}{\end{tikzpicture}\end{center}}
\tikzstyle{d}=[double distance=.3ex]
\NewDocumentCommand{\punctuation}{ m m O{5pt} }{\node at ($(#1.east)-(0,#3)$) {#2};}
\tikzset{%
node distance=1.5cm, la/.style={scale=0.8}, rr/.style={xshift=1.5cm},
space/.style={xshift=.5cm}, over/.style={auto=false,fill=white,inner sep=1.5pt, minimum size=0, outer sep=0},
    symbol/.style={%
        draw=none,
        every to/.append style={%
            edge node={node [sloped, allow upside down, auto=false]{$#1$}}},
            
    }, 
    pro/.style={postaction={decorate,decoration={markings,
      mark=at position .5 with {
        \draw[-]
        (0,{-1.25ex/2}) -- (0,{1.25ex/2});
      }},
      inner sep=.9ex,
      }},
  n/.style={double equal sign distance, -implies}, t/.style={double distance=2.5pt, -implies, postaction={draw,-}},
}
\newcommand{\varrow}{\mathrel{\begin{tikzpicture}
    [baseline=(current bounding box.south)]
    \draw[pro, ->] (0,0) -- (.4,0);
  \end{tikzpicture}}}
\def\cellslide{0.5}
\def\celllength{.2cm}
\NewDocumentCommand{\cell}{ O{} O{n} O{\cellslide} O{\celllength} m m m }{
  \coordinate (mid) at ($({#5})!{#3}!({#6})$);
  \coordinate (start) at ($(mid)!{#4}!({#5})$);
  \coordinate (end) at ($(mid)!{#4}!({#6})$);
  \draw[#2] (start) to node
  [inner sep=6pt,outer sep=0,minimum size=0,#1]{{#7}} (end);
}
\newcommand{\Nh}{N^h}
\newcommand{\ch}{c^h}
\newcommand{\Sd}{\mathrm{Sd}}
\newcommand{\Ex}{\mathrm{Ex}}
\newcommand{\op}{{\mathrm{op}}}
\newcommand{\colim}{\mathrm{colim}}
\newcommand{\hocolim}{\mathrm{hocolim}}
\newcommand{\id}{{\mathrm{id}}}
\newcommand{\Hom}{{\mathrm{Hom}}}
\newcommand{\Cat}{\mathsf{Cat}}
\newcommand{\DblCat}{\mathsf{DblCat}}
\newcommand{\Set}{\mathsf{Set}}
\newcommand{\sSet}{\mathsf{sSet}}
\newcommand{\Simp}{\mathsf{\Delta}}
\newcommand{\Simpop}{\Simp^{\op}}
\newcommand{\Sp}{\mathsf{Spaces}} 
\newcommand{\proj}{\mathrm{proj}}
\newcommand{\Seg}{\mathrm{Seg}}
\newcommand{\CSS}{\mathrm{CSS}}
\newcommand{\Gpd}{\mathrm{Gpd}}
\newcommand{\Grp}{\mathrm{Grp}}
\newcommand{\Kan}{\mathrm{Kan}}
\newcommand{\Thom}{\mathrm{Thom}}
\newcommand{\projThom}{\Cat^{\Simpop}_\proj}
\newcommand{\projKan}{\sSet^{\Simpop}_\proj}
\newcommand{\MSdiag}{\sSet^{\Simpop}_{\mathrm{diag}}}
\newcommand{\Ob}{\mathrm{Ob}}
\newcommand{\Mor}{\mathrm{Mor}}
\newcommand{\cst}{\mathrm{cst}}
\newcommand{\diag}{\mathrm{diag}}
\newcommand{\pushout}[1]{\node at ($({#1})-(10pt,-10pt)$) {$\ulcorner$};}
\newlist{rome}{enumerate}{7}
\setlist[rome]{label=(\roman*),leftmargin=0.85cm}
\newtheorem{theorem}{Theorem}[section]
\newtheorem{cor}[theorem]{Corollary}
\newtheorem{prop}[theorem]{Proposition}
\newtheorem{lemma}[theorem]{Lemma}
\declaretheorem[name=Theorem,numbered=yes]{theoremA}
\theoremstyle{definition}
\newtheorem{defn}[theorem]{Definition}
\newtheorem{ex}[theorem]{Example}
\newtheorem{notation}[theorem]{Notation}
\theoremstyle{remark}
\newtheorem{rem}[theorem]{Remark}
\declaretheoremstyle[
spaceabove=\topsep, spacebelow=\topsep,
headfont=\normalfont\bfseries,
notefont=\bfseries, notebraces={}{},
bodyfont=\normalfont,
postheadspace=0.5em,
name={\ignorespaces},
numbered=no,
headpunct=.]
{mystyle}
\crefname{theorem}{Theorem}{Theorems}
\crefname{cor}{Corollary}{Corollaries}
\crefname{prop}{Proposition}{Propositions}
\crefname{lemme}{Lemma}{Lemmas}
\crefname{defn}{Definition}{Definitions}
\crefname{ex}{Example}{Examples}
\crefname{notation}{Notation}{Notations}
\crefname{descr}{Description}{Descriptions}
\crefname{constr}{Construction}{Constructions}
\crefname{rem}{Remark}{Remarks}
\title{Double categorical model of $(\infty,1)$-categories}
\author{Léonard Guetta}
\address{Utrecht Geometry Center, Universiteit Utrecht, Utrecht, The Netherlands}
\email{l.s.guetta@uu.nl}
\author{Lyne Moser}
\address{Fakultät für Mathematik, Universität Regensburg, Regensburg, Germany}
\email{lyne.moser@ur.de}
\begin{document}

\begin{abstract}
    Building on work by Fiore--Pronk--Paoli, we construct four model structures on the category of double categories, each modeling one of the following: simplicial spaces, Segal spaces, $(\infty,1)$-categories, and $\infty$-groupoids. Additionally, we provide an explicit formula for computing homotopy colimits in these models using the Grothendieck construction. We expect the model of double categories for $(\infty,1)$-categories to play a similar role than that of the model of categories for spaces or $\infty$-groupoids in Grothendieck's study of the homotopy theory of spaces.
\end{abstract}

\maketitle

\setcounter{tocdepth}{1}
\tableofcontents

\section{Introduction}

Over the last two decades, the theory of higher categories, particularly $(\infty,1)$-categories, has become increasingly prominent in modern mathematics. While ordinary categories have sets of objects and of morphisms, which compose in an associative and unital way, $(\infty,1)$-categories provide homotopical versions of such structures. Sets of morphisms are replaced by \emph{spaces} (e.g.~topological spaces), which encode not only morphisms between objects but also higher morphisms: $2$-morphisms (homotopies), $3$-morphisms (homotopies between homotopies), and so on; in particular, all $k$-morphisms are invertible for $k>1$. Composition remains associative and unital, but only up to higher invertible morphisms. This relaxed structure accommodates a broader range of examples, positioning $(\infty,1)$-categories as a powerful language to formulate and demonstrate results in various fields, such as derived algebraic geometry, $K$-theory, and topological quantum field theory. 

Transitioning from intuition to a rigorous definition of $(\infty,1)$-categories is challenging due to the vast amount of data required to encode all higher coherences. A classical approach is to present their homotopy theory via a Quillen model structure on a category. In a model category, the key players are the \emph{weak equivalences}, which identify when two objects are ``essentially the same'', and the \emph{fibrant} objects, which exhibit the desired homotopical properties. For instance, one can study the homotopy theory of topological spaces up to weak homotopy equivalences, which capture the homotopy theory of \emph{spaces}. While all models are theoretically equivalent, some are better suited to specific goals, making it valuable to explore new models with different features.

One widely used model for $(\infty,1)$-categories is given by Rezk's complete Segal spaces \cite{Rezk}. To explain their definition, recall an alternative perspective on ordinary categories: the category of categories is equivalent to a full subcategory of the category $\Set^{\Simpop}$ of functors $\Simpop\to \Set$, with 
$\Simp$ the category of finite ordered sets $[m]\coloneqq \{0<1<\ldots<m\}$ and order-preserving maps. Categories correspond to those functors 
$\sC\colon \Simpop\to \Set$ satisfying the Segal conditions: for every $m\geq 2$, the Segal map 
\[
\sC_m \to \sC_1\times_{\sC_0}\cdots\times_{\sC_0}\sC_1
\] 
is a bijection. Here, we can think of the sets $\sC_0$, $\sC_1$, and $\sC_m$ as the sets of objects, morphisms, and $m$ composable morphisms, and composition, associativity, and unitality are encoded by the structure of $\Simp$ and the Segal conditions.

The homotopical generalization is then obtained by replacing sets, pullbacks, and bijections with the appropriate space-theoretic analogues. A \emph{Segal space} is then a functor $X \colon \Simpop \to \Sp$, where $\Sp$ denotes a model of the homotopy theory of spaces (e.g.~topological spaces), such that, for every $m \geq 2$, the Segal map
\[
X_m \to X_1\times^h_{X_0}\cdots\times^h_{X_0}X_1,
\]
is a weak homotopy equivalence of spaces. Here the pullbacks are replaced by \emph{homotopy pullbacks}. Such a structure has spaces $X_0$ of objects and $X_1$ of morphisms, and composition and higher coherences are encoded by the structure of $\Simp$ and the Segal conditions. However, Segal spaces do not fully model $(\infty,1)$-categories, as they a priori have two different underlying $\infty$-groupoids: the space 
$X_0$ of objects and the subspace of $X_1$ spanned by the invertible morphisms. A \emph{complete} Segal space is defined as a Segal space $X$ such that these two underlying $\infty$-groupoids are weakly homotopy equivalent.

In order to obtain a model of $(\infty,1)$-categories using complete Segal spaces, we consider the Kan--Quillen model structure on the category $\sSet\coloneqq \Set^{\Simpop}$ of simplicial sets, a model for the homotopy theory of spaces which has favorable combinatorial properties. Then Rezk \cite{Rezk} shows that the complete Segal spaces are the fibrant objects of a model structure on the category $\sSet^{\Simpop}$ of functors $\Simpop\to \sSet$, and this presents the homotopy theory of $(\infty,1)$-categories by a result of Joyal--Tierney \cite{JoyalTierney}. 

In \cite{Horel}, Horel proves a \emph{rigidification} result, using the category $\Cat(\sSet)$ of internal categories to $\sSet$. This category is equivalent to the full subcategory of $\sSet^{\Simpop}$ given by the functors $X \colon \Simpop \to \sSet$ such that, for every $m \geq 2$, the Segal map
\[
X_m \to X_1\times_{X_0}\cdots\times_{X_0}X_1
\]
is an isomorphism of simplicial sets. Although each value $X_m$ is a simplicial set, we consider here \emph{strict} pullbacks and isomorphisms, rather than their homotopical versions. While this might seem a priori insufficient to model $(\infty,1)$-categories, as this makes composition of $1$-morphisms \emph{strictly} associative and unital, Horel proves that there is a model structure on $\Cat(\sSet)$ such that the inclusion $\Cat(\sSet) \to \sSet^{\Simpop}$ induces an equivalence of homotopy theories with Rezk's model structure on $\sSet^{\Simpop}$ for complete Segal spaces. This provides another presentation of the homotopy theory of $(\infty,1)$-categories.

The main goal of this paper is to replace the model of spaces, given by $\sSet$ in Horel's result, with another one. Namely, we use Thomason's model structure on the category $\Cat$ of categories \cite{Thomason}, which is equivalent to the Kan--Quillen model structure on $\sSet$ and thus presents the homotopy theory of spaces. Replacing $\sSet$ with $\Cat$, we work with the category $\DblCat\coloneqq \Cat(\Cat)$ of internal categories to categories, usually referred to as \emph{double categories}. The main result of this paper, proven in \cref{sec:CSS}, can be stated as follows:

\begin{theoremA}\label{thmintro}
    There exists a model structure on the category $\DblCat$ of double categories such that the inclusion functor
\[
\bN\colon \DblCat\to \sSet^{\Simpop}
\]
induces an equivalence of homotopy theories with Rezk's model structure on $\sSet^{\Simpop}$ for complete Segal spaces. In particular, this model structure on $\DblCat$ presents the homotopy theory of $(\infty,1)$-categories.
\end{theoremA}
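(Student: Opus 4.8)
The plan is to run Horel's rigidification strategy \cite{Horel} over Thomason's model structure \cite{Thomason} in place of the Kan--Quillen one: I would produce the model structure on $\DblCat=\Cat(\Cat)$ by right-inducing a suitably localized projective model structure on $\Cat^{\Simpop}$, and then string together a chain of Quillen equivalences and equivalences of homotopy theories reaching Rezk's model structure \cite{Rezk}.

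\emph{Auxiliary model structures and adjunctions.} First I would record that $\bN$ factors as $\DblCat \xrightarrow{\iota} \Cat^{\Simpop} \xrightarrow{N_*} \sSet^{\Simpop}$, where $\iota$ is the inclusion of internal categories (equivalently, strict Segal objects) and $N_*$ is the levelwise nerve. Both functors preserve limits and are accessible; as $\DblCat$ and $\Cat^{\Simpop}$ are locally presentable, each is therefore a right adjoint --- the left adjoint of $N_*$ being the levelwise fundamental-category functor, and that of $\iota$ a strict internal fundamental-category (Segalification) functor $L$. Since Thomason's model structure on $\Cat$ is combinatorial, left proper, and has its weak equivalences created by the nerve $N$, the projective model structure $\projThom$ on $\Cat^{\Simpop}$ exists and admits a left Bousfield localization $\Cat^{\Simpop}_{\CSS}$ at the (images of the) Segal maps and the completeness map. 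On the codomain side, $N_*$ is levelwise $N$, so it preserves and reflects projective --- hence also localized --- weak equivalences and, although it need not be a Quillen functor, induces an equivalence of homotopy theories $\Cat^{\Simpop}_{\CSS}\simeq\sSet^{\Simpop}_{\CSS}$ (the levelwise, then localized, version of Thomason's comparison), where $\sSet^{\Simpop}_{\CSS}$ is the analogous localization of $\projKan$. Finally, as the identity is a Quillen equivalence $\projKan\to\sSet^{\Simpop}_\Reedy$, localizing both sides at the same maps yields a Quillen equivalence $\sSet^{\Simpop}_{\CSS}\simeq(\sSet^{\Simpop},\text{Rezk})$.

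\emph{Right-inducing to $\DblCat$.} Next I would put the model structure on $\DblCat$ by right-induction along $\iota$: a functor $F$ of double categories is a weak equivalence, resp.\ a fibration, precisely when $\iota F$ is one in $\Cat^{\Simpop}_{\CSS}$. Local presentability and combinatoriality supply the smallness hypotheses of the transfer theorem, so existence reduces to the acyclicity condition: every transfinite composite of pushouts (formed in $\DblCat$) of maps $Lj$, with $j$ a generating trivial cofibration of $\Cat^{\Simpop}_{\CSS}$, must be carried by $\iota$ to a weak equivalence. \textbf{This is the step I expect to be the main obstacle.} I would attack it with Quillen's path-object criterion, reducing it to the construction of functorial path objects in $\DblCat$ whose underlying simplicial objects in $\Cat$ compute homotopy path objects in $\Cat^{\Simpop}_{\CSS}$. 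This is exactly where Thomason-specific input must enter --- the compatibility of $\Sd$ and $\Ex$ with the internal pullbacks defining $\Cat(\Cat)$, together with the homotopical robustness of strict internal Segal objects --- and it is the analogue here of the key technical lemma of \cite{Horel}.

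\emph{Identifying the homotopy theory.} Granting the above, $\iota\colon\DblCat\to\Cat^{\Simpop}_{\CSS}$ is a right Quillen functor that creates weak equivalences, hence a Quillen equivalence as soon as its derived unit is a weak equivalence on cofibrant objects; equivalently, the strict Segalification of a cofibrant homotopy-internal Segal object of $\Cat^{\Simpop}$ must agree with it after the Segal localization. I would deduce this rigidification statement from its simplicial counterpart in \cite{Horel} by applying $N$ levelwise and invoking Thomason's equivalence once more. Composing the Quillen equivalence $\iota$ with the homotopy-theory equivalence $N_*$ and with the Quillen equivalence $\sSet^{\Simpop}_{\CSS}\simeq(\sSet^{\Simpop},\text{Rezk})$ then produces the asserted equivalence of homotopy theories induced by $\bN=N_*\circ\iota$, and the final sentence of the theorem follows since Rezk's model structure presents $(\infty,1)$-categories by Joyal--Tierney \cite{JoyalTierney}. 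Right-inducing instead from $\projThom$, from its localization at the Segal maps only, or from its localization at the diagonal maps produces the other three model structures on $\DblCat$ announced in the abstract.
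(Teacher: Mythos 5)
Your overall architecture (projective structure on $\Cat^{\Simpop}$ built from $\Cat_\Thom$, localization at Segal and completeness maps, transfer along the horizontal nerve, comparison with Rezk via Horel and Thomason) matches the paper's in outline, and the model structure you aim at is the same one. But the two steps you leave open are exactly where the paper's actual content lies, and the routes you sketch for them would not go through as stated. First, the existence of the transferred structure: you propose to right-induce directly from the localized $\Cat^{\Simpop}_\CSS$ and verify acyclicity by building functorial path objects computing homotopy path objects for complete Segal objects in $\Cat_\Thom$. This is far harder than the projective transfer (the generating trivial cofibrations of a left Bousfield localization are inexplicit, and the fibrant objects are complete Segal objects, for which no such path-object construction in $\DblCat$ is in sight). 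The paper does not do this: it quotes Fiore--Pronk--Paoli for the existence of the projective transfer $\DblCat_\proj$ along the $\Ex^2$-modified nerve $(\Ex^2 N)_*\Nh$ (not the canonical $\bN$), proves a genuinely new result --- left properness of $\DblCat_\proj$, via the explicit description of pushouts along $[n]\boxtimes(c\Sd^2\partial\Simp[k]\to c\Sd^2\Simp[k])$ and the ``weakly solid sieve'' lemma showing $\Nh$ preserves these pushouts --- and then localizes on the $\DblCat$ side. The fact that the localized structure is right-induced is deduced \emph{afterwards}, and its proof (\cref{criterionfitori,prop:Upreswe,thm:RIMS}) uses the projective-level Quillen equivalence as an input; it is not something you can establish independently and then use to bootstrap the comparison.

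Second, the rigidification step: your plan to ``deduce the derived-unit statement from Horel by applying $N$ levelwise and invoking Thomason's equivalence'' glosses over precisely the obstruction the paper highlights in its introduction, namely that internal categories are not a homotopical notion and the relevant left adjoints do not commute with the levelwise nerve. In particular $c$ and $c\Sd^2$ do not preserve pullbacks, so neither Horel's Segalification nor his derived-unit analysis transports along $N_*$ to the double-categorical side; the square of left adjoints that does commute in the paper involves $\Cat(c\Sd^2)$, which is not a levelwise functor at all. The paper's substitute is the test-category argument: replace $\Ex^2 N$ by $i_\Simp^*$, whose left adjoint $i_\Simp$ \emph{does} preserve pullbacks, internalize the levelwise unit/counit weak equivalences (using that $\Simp$ is a test category) to conclude that $\Cat(i_\Simp^*)$ induces an equivalence of underlying $\infty$-categories, and compare $\Cat(\Ex^2 N)$, $\Cat(N)$, $\Cat(i_\Simp^*)$ by a zigzag of levelwise weak equivalences. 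Without an argument of this kind your Quillen-equivalence step has no proof, so the proposal as written has genuine gaps at both critical points. (Minor further slips: $\bN$ itself is not the right Quillen functor, only weak-equivalence-detecting via the zigzag $\beta^2N$; and the fourth, groupoidal, model is obtained by localizing at $([n]\to[0])\boxtimes[0]$, not at ``diagonal maps''.)
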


Although it might seem reasonable to expect such a result, it is not a trivial consequence of Horel's result. Indeed, the notion of internal category is \emph{not} homotopical, as it is defined using \emph{strict} pullbacks and isomorphisms. Hence, there is no guarantee that this notion behaves well when replacing one model of spaces with another. To illustrate this point, consider the following example. The category $\Grp(\sSet)$ of internal groups to simplicial sets is known to model pointed connected homotopy types \cite[Chapter V]{goerssjardine}, whereas the category $\Grp(\Cat)$ of internal groups to categories only models pointed connected homotopy $2$-types \cite{loday}.

We expect the model $\DblCat$ to play a role in the homotopy theory of $(\infty,1)$-categories analogous to the role of $\Cat$ in the homotopy theory of spaces, as conceptualized by Gro\-then\-dieck \cite{Grothendieck}. For instance, Gro\-then\-dieck's choice of $\Cat$ as the fundamental model for spaces enables the axiomatization of \emph{test categories}, a notion further studied by Maltsiniotis \cite{Maltsiniotis}. Test categories are categories whose associated presheaf category models the homotopy theory of spaces in a canonical way; this was studied through the lens of model categories by Cisinski \cite{Cisinski}. Additionally, Grothendieck \cite{Grothendieck} introduced the concept of \emph{basic localizers} in $\Cat$, which Cisinski \cite{Cisinski} further developed, showing that they provide a convenient framework to describe all (Bousfield) localizations of the homotopy theory of spaces. A crucial ingredient in both the theories of test categories and basic localizers is the explicit computation of homotopy colimits in $\Cat$ using the Grothendieck construction.

As a first compelling point supporting this expectation, we show that the model $\DblCat$ of the homotopy theory of $(\infty,1)$-categories admits a particularly convenient and explicit description of homotopy colimits.  This is achieved through the following construction. For a small category $\sJ$, the Grothendieck construction functor 
\[
\textstyle \int_\sJ \colon \Cat^\sJ \to \Cat,
\]
induces by postcomposition a functor $(\int_\sJ)_* \colon (\Cat^{\Simpop})^\sJ\cong (\Cat^\sJ)^{\Simpop} \to \Cat^{\Simpop}$, which in turn restricts to a functor $(\int_\sJ)_* \colon \DblCat^\sJ \to \DblCat$. We prove in \cref{sec:hocolimdblcat} the following result.

\begin{theoremA}
A model for the homotopy colimit functor is given by
\[ \textstyle (\int_\sJ)_* \colon \DblCat^\sJ \to \DblCat, \] 
where $\DblCat$ is endowed with the model structure from  \cref{thmintro} presenting the homotopy theory of $(\infty,1)$-categories.
\end{theoremA}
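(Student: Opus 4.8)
The plan is to reduce the statement to Thomason's theorem that the Grothendieck construction $\int_\sJ\colon\Cat^\sJ\to\Cat$ models the homotopy colimit for Thomason's model structure on $\Cat$, and then transport this along the double nerve $\bN$ and the equivalence of homotopy theories of \cref{thmintro}. Throughout, $\DblCat^\sJ$ carries the projective model structure.

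First I would recall Thomason's homotopy colimit theorem in the form: for every $G\colon\sJ\to\Cat$ there is a natural weak equivalence $N(\int_\sJ G)\simeq\hocolim_\sJ(NG)$ in $\sSet$. Since $\hocolim_\sJ$ of simplicial sets preserves objectwise weak equivalences, this shows in particular that $\int_\sJ$ sends objectwise Thomason equivalences to Thomason equivalences. Now take $F\colon\sJ\to\DblCat$ and, for each $[m]\in\Simpop$, write $F_{[m]}\colon\sJ\to\Cat$ for the diagram in simplicial degree $m$. By the construction of $(\int_\sJ)_*$, the double nerve $\bN((\int_\sJ)_*F)$ is the simplicial space $[m]\mapsto N(\int_\sJ F_{[m]})$, so Thomason's theorem applied degreewise yields a natural levelwise weak equivalence $\bN((\int_\sJ)_*F)\simeq\bigl([m]\mapsto\hocolim_\sJ(\bN F)_{[m]}\bigr)$ in $\sSet^{\Simpop}$. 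The same equivalence shows that $(\int_\sJ)_*$ sends objectwise weak equivalences of $\sJ$-diagrams of double categories to weak equivalences, so it is homotopical.

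Next I would identify $[m]\mapsto\hocolim_\sJ(\bN F)_{[m]}$ with an honest homotopy colimit. In the projective (equivalently Reedy) model structure on $\sSet^{\Simpop}$ over the Kan--Quillen model structure, homotopy colimits of $\sJ$-diagrams are computed levelwise — evaluation at $[m]$ is left Quillen — so this diagram models $\hocolim_\sJ(\bN F)$ there. Passing to Rezk's model structure for complete Segal spaces does not affect homotopy colimits: it is a left Bousfield localization, so it has the same cofibrations, the constant-diagram functor $\cst$ remains right Quillen, hence $\colim_\sJ$ remains left Quillen with unchanged total left derived functor. Therefore $\bN((\int_\sJ)_*F)\simeq\hocolim_\sJ^{\CSS}(\bN F)$, naturally in $F$.

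Finally, I would invoke \cref{thmintro}, which provides an equivalence of homotopy theories between $\DblCat$ and $\sSet^{\Simpop}$ with Rezk's model structure, and such an equivalence preserves homotopy colimits (pass to the underlying $(\infty,1)$-categories, or derivators, where equivalences preserve colimits); hence $\hocolim_\sJ^{\CSS}(\bN F)\simeq\bN(\hocolim_\sJ^{\DblCat}F)$. Combining with the previous step, $\bN((\int_\sJ)_*F)$ and $\bN(\hocolim_\sJ^{\DblCat}F)$ are naturally weakly equivalent, and since $\bN$ reflects weak equivalences and induces an equivalence of homotopy categories this transports to a natural weak equivalence $(\int_\sJ)_*F\simeq\hocolim_\sJ^{\DblCat}F$; together with homotopicality, this exhibits $(\int_\sJ)_*$ as a model for the homotopy colimit functor. \textbf{The main obstacle} is the bookkeeping across the various model structures rather than any single computation — Thomason's theorem and the homotopy theory of complete Segal spaces enter as black boxes: one must check that homotopy colimits in $\sSet^{\Simpop}$ are levelwise and localization-invariant, and, above all, that the equivalence of \cref{thmintro} genuinely transports homotopy colimits, which one either phrases $(\infty,1)$-categorically or verifies by tracking the chain of Quillen equivalences underlying it.
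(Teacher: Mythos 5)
Your overall strategy is parallel to the paper's: Thomason's theorem applied degreewise, the observation that homotopy colimits in the projective model structure on simplicial objects are computed levelwise, invariance under left Bousfield localization, and a final transfer along the comparison with complete Segal spaces. The paper runs the same three preliminary steps, but entirely inside $\Cat^{\Simpop}$ rather than $\sSet^{\Simpop}$: \cref{GChocolim} is proved by exhibiting the right adjoint $\cH_\sJ$ of $\int_\sJ$ and a point-set natural transformation $\cst\Rightarrow\cH_\sJ$ whose mate, after precomposition with a cofibrant replacement, yields an explicit zig-zag $\int_\sJ\Leftarrow\int_\sJ\circ Q\Rightarrow\hocolim_\sJ$; \cref{hocoliminproj,hocoliminloc} likewise produce strict zig-zags; and the final transfer in \cref{thm:homotopycolim} goes along the square $\Nh\circ(\int_\sJ)_*=(\int_\sJ)_*\circ(\Nh)_*$, which commutes on the nose (this is what \cref{lemma:intpullback} buys, and it is also what silently underlies your identification of $\bN((\int_\sJ)_*F)$ with $[m]\mapsto N(\int_\sJ F_{[m]})$), with $\Nh$ a right Quillen equivalence along which all the model structures on $\DblCat$ are right-induced, hence creating weak equivalences.

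The genuine gap is in your last step. The paper's definition of ``model for the homotopy colimit functor'' requires a zig-zag of honest natural transformations of functors $\DblCat^\sJ\to\DblCat$ that are levelwise weak equivalences, and your argument never produces a comparison map in $\DblCat$ at all: every map you construct lives in $\sSet^{\Simpop}$ after applying $\bN$, and the middle identification $\hocolim^{\CSS}_\sJ\circ\bN_*\simeq\bN\circ\hocolim_\sJ$ obtained from ``equivalences of $\infty$-categories preserve colimits'' is only an equivalence of derived ($\infty$-categorical) functors, not a priori a strict zig-zag. The fact that $\bN$ reflects weak equivalences lets you test a natural transformation you already have; it does not let you lift an abstract equivalence of derived functors to the required point-set zig-zag upstairs. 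To close this you must either build the comparison in $\DblCat$ itself---for instance, note that $(\int_\sJ)_*\colon\DblCat^\sJ\to\DblCat$ has a right adjoint (levelwise $\cH_\sJ$ restricts to double categories since exponentiation preserves pullbacks) and rerun the mate argument of \cref{GChocolim} there, then use $\bN$ or $\Nh$ only to check that the resulting map is a levelwise weak equivalence---or transfer along the strictly commuting square with $\Nh$ as the paper does, or else supply a rectification argument for natural transformations between these weak-equivalence-preserving functors. The other ingredients you use are sound and correspond to \cref{GChocolim}, \cref{GCinproj,GCinloc} and \cref{hocoliminproj,hocoliminloc}; what is missing is precisely the point-set-level comparison that the paper engineers its lemmas to provide.
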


Similarly to the case of $\Cat$, the above functor preserves all weak equivalences, so it does not need to be derived. As a result, it offers a much more practical formula for homotopy colimits than the one provided by the abstract theory; see \cref{explicitGCdblcat,sec:application}.

Another potential application of the model structure on $\DblCat$ is that it provides a model of $(\infty,1)$-categories on a category whose objects are described with a finite amount of data and relations, while retaining favorable properties. To illustrate this, recall that a relative category consists of a category $\sC$ equipped with a wide subcategory $\sW \subseteq \sC$. Relative categories form models of $(\infty,1)$-categories, and many examples of $(\infty,1)$-categories in fact arise from relative categories. The model of double categories occupies an intermediate position between relative categories and complete Segal spaces
\[
\mathsf{CatRel} \hookrightarrow \DblCat \hookrightarrow \sSet^{\Simpop},
\]
while already enjoying desirable features such as an explicit description of homotopy colimits. This perspective may lead to intriguing new developments, which we aim to explore in future work.

\subsection*{Overview of the proof of \texorpdfstring{\cref{thmintro}}
{Theorem A}}

The proof of \cref{thmintro} goes in three steps. First, we consider the ``projective model structure'' $\DblCat_{\proj}$, whose existence is established by Fiore--Paoli--Pronk in \cite{FPP} and recalled in \cref{sec:MSdblcat}, and we prove that it is left proper in \cref{sec:leftproper}. This model structure is obtained as the right-induced model structure along a right adjoint
\[ (\Ex^2)_*\bN\colon \DblCat_{\proj}\to \sSet_{\proj}^{\Simpop}, \]
where $\sSet_{\proj}^{\Simpop}$ denotes the projective model structure on simplicial objects in the Kan--Quillen model structure on $\sSet$. Note that the functor is not the canonical inclusion of double categories into bisimplicial sets. Instead, it is modified by applying the extension functor~$\Ex$ on simplicial sets twice, akin to the approach of Thomason in constructing the model structure on $\Cat$. We then demonstrate that this functor is, in fact, a Quillen equivalence in \cref{sec:QE}.

Finally, once we know that we have a Quillen equivalence between $\DblCat_{\proj}$ and $\sSet^{\Simpop}_{\proj}$, \cref{thmintro} follows by taking the left Bousfield localizations of these model structures at the Segal and completeness maps. We summarize the different model structures on $\DblCat$ appearing in this paper in the following commutative diagram of model categories.
\begin{tz}
    \node[](1) {$\DblCat_\proj$};
    \node[right of=1,xshift=2cm](2) {$\DblCat_\Seg$}; 
    \node[right of=2,xshift=2cm](3) {$\DblCat_\CSS$}; 
    \node[right of=3,xshift=2cm](4) {$\DblCat_\Gpd$}; 

    \node[below of=1](1') {$\sSet^{\Simpop}_\proj$};
    \node[below of=2](2') {$\sSet^{\Simpop}_\Seg$};
    \node[below of=3](3') {$\sSet^{\Simpop}_\CSS$};
    \node[below of=4](4') {$\sSet^{\Simpop}_\Gpd$};

    \draw[->](1) to node[above,la]{$\id$} (2);
    \draw[->](2) to node[above,la]{$\id$} (3);
    \draw[->](3) to node[above,la]{$\id$} (4);
    \draw[->](1') to node[below,la]{$\id$} (2');
    \draw[->](2') to node[below,la]{$\id$} (3');
    \draw[->](3') to node[below,la]{$\id$} (4');
    \draw[->](1) to node[left,la]{$\simeq$} node[right,la]{$(\Ex^2)_*\bN$} (1');
    \draw[->](2) to node[left,la]{$\simeq$} node[right,la]{$(\Ex^2)_*\bN$} (2');
    \draw[->](3) to node[left,la]{$\simeq$} node[right,la]{$(\Ex^2)_*\bN$} (3');
    \draw[->](4) to node[left,la]{$\simeq$} node[right,la]{$(\Ex^2)_*\bN$} (4');
\end{tz}
The horizontal functors are localization functors, while the vertical ones are Quillen equivalences, and all the model structures on $\DblCat$ are in fact right-induced from the corresponding model structures on $\sSet^{\Simpop}$; see \cref{thm:RIMS}. From left to right, they provide models for the homotopy theory of simplicial spaces, Segal spaces (\cref{sec:Segal}), complete Segal spaces (\cref{sec:CSS}), and $\infty$-groupoids or spaces (\cref{sec:gpdloc}). We further compare in \cref{sec:GpdvsThom} our double categorical model of spaces with Fiore--Paoli's Thomason-like model structure on $\DblCat$ \cite{FP}, also modeling spaces. 

Furthermore, we show that, for each of the above localizations, the canonical inclusion $\bN\colon \DblCat\to \sSet^{\Simpop}$ also preserve and reflects weak equivalences, and induces an equivalence at the level of underlying $\infty$-categories. Hence we obtain a commutative diagram of $\infty$-categories
\begin{tz}
    \node[](1) {$(\DblCat_\proj)_\infty$};
    \node[right of=1,xshift=2cm](2) {$(\DblCat_\Seg)_\infty$}; 
    \node[right of=2,xshift=2cm](3) {$(\DblCat_\CSS)_\infty$}; 
    \node[right of=3,xshift=2cm](4) {$(\DblCat_\Gpd)_\infty$}; 

    \node[below of=1](1') {$(\sSet^{\Simpop}_\proj)_\infty$};
    \node[below of=2](2') {$(\sSet^{\Simpop}_\Seg)_\infty$};
    \node[below of=3](3') {$(\sSet^{\Simpop}_\CSS)_\infty$};
    \node[below of=4](4') {$(\sSet^{\Simpop}_\Gpd)_\infty$};

    \draw[->](1) to node[above,la]{$\mathrm{loc}$} (2);
    \draw[->](2) to node[above,la]{$\mathrm{loc}$} (3);
    \draw[->](3) to node[above,la]{$\mathrm{loc}$} (4);
    \draw[->](1') to node[below,la]{$\mathrm{loc}$} (2');
    \draw[->](2') to node[below,la]{$\mathrm{loc}$} (3');
    \draw[->](3') to node[below,la]{$\mathrm{loc}$} (4');
    \draw[->](1) to node[left,la]{$\simeq$} node[right,la]{$\bN_\infty$} (1');
    \draw[->](2) to node[left,la]{$\simeq$} node[right,la]{$\bN_\infty$} (2');
    \draw[->](3) to node[left,la]{$\simeq$} node[right,la]{$\bN_\infty$} (3');
    \draw[->](4) to node[left,la]{$\simeq$} node[right,la]{$\bN_\infty$} (4');
\end{tz}
where the horizontal functors are localizations functors and the vertical functors are equivalences of $\infty$-categories induced by the canonical inclusion $\bN$.

\subsection*{Acknowledgments}

We would like to thank Denis-Charles Cisinski, Joshua Lieber, and Nima Rasekh for helpful discussions related to the subject of this paper.

During the realization of this work, the second author was a member of the Collaborative Research Centre ``SFB 1085: Higher
Invariants'' funded by the Deutsche Forschungsgemeinschaft (DFG).

\section{Model categorical preliminaries}

In this section, we introduce the necessary model categorical background for the paper. In \cref{sec:indMS}, we recall two methods for inducing a model structure along an adjunction: the classical \emph{right-induction} and the \emph{fibrant-induction} introduced in \cite{GMSV}. Then, in \cref{sec:locMS}, we recall the notion of \emph{left Bousfield localization} of a model structure. In \cref{sec:indvslocMS}, we compare these two approaches---induction and localization---and prove in \cref{thm:RIMS} that a right-induced model structure along a Quillen equivalence remains right-induced after localizing on both sides. Finally, in \cref{sec:hocolimMS}, we recall some results on homotopy colimits in a model category.

\subsection{Induced model structures} \label{sec:indMS}

We recall methods for inducing model structures along adjunctions. For this, consider an adjunction
   \begin{tz}
\node[](A) {$\sM$};
\node[right of=A,xshift=.6cm](B) {$\sN$};
\punctuation{B}{,};
\draw[->] ($(A.east)+(0,5pt)$) to node[above,la]{$F$} ($(B.west)+(0,5pt)$);
\draw[->] ($(B.west)-(0,5pt)$) to node[below,la]{$U$} ($(A.east)-(0,5pt)$);
\node[la] at ($(A.east)!0.5!(B.west)$) {$\bot$};
\end{tz}
where $\sM$ is a model category. Classically, the model structure on $\sM$ can be transferred to the category $\sN$ via the right adjoint $U$, by pulling back the weak equivalences and fibrations. 

\begin{defn}
    The \textbf{right-induced model structure} on $\sN$, if it exists, is such that a map $f$ is a weak equivalence (resp.~fibration) in $\sN$ if and only if $Uf$ is a weak equivalence (resp.~fibration) in~$\sM$. 
\end{defn}

In \cite[\textsection 3]{GMSV}, the authors introduce a relaxed version of this transfer, where the trivial fibrations remain induced, but the fibrations and weak equivalences are now only transferred between fibrant objects.

\begin{defn}
The \textbf{fibrantly-induced model structure} on $\sN$, if it exists, is such that
    \begin{rome}
        \item a map $f$ is a trivial fibration in $\sN$ if and only if $Uf$ is a trivial fibration in $\sM$, 
        \item an object $Y$ is fibrant in $\sN$ if and only if $UY$ is fibrant in $\sM$, 
        \item a map $f$ between fibrant objects is a weak equivalence (resp.~fibration) in~$\sN$ if and only if $Uf$ is a weak equivalence (resp.~fibration) in $\sM$.
    \end{rome}
\end{defn}

\begin{rem}
    Since the fibrantly-induced and right-induced model structures have the same trivial fibrations and fibrant objects, if the right-induced model structure exists, then it coincides with the fibrantly-induced one. However, there are cases where the fibrantly-induced model structure exists, but the right-induced one does not; see \cite[\textsection 7]{GMSV}.
\end{rem}

\begin{rem}
    For either induced model structure, the adjunction $F\dashv U$ becomes a Quillen pair.
\end{rem}

We now give a useful criterion for recognizing when the fibrantly-induced model structure is in fact the right-induced one.

\begin{prop} \label{criterionfitori}
    Suppose that $\sN$ is locally presentable and $\sM$ is combinatorial. If the fibrantly-induced model structure on $\sN$ exists and the right adjoint $U$ preserves all weak equivalences, then the right-induced model structure on $\sN$ exists. 
\end{prop}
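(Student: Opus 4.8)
The plan is to show that, under the stated hypotheses, the fibrantly-induced model structure on $\sN$ already satisfies the defining property of the right-induced one — namely that \emph{every} map $f$ with $Uf$ a weak equivalence (resp.\ fibration) is a weak equivalence (resp.\ fibration) in $\sN$, not merely those between fibrant objects. Since the two structures share the same trivial fibrations by definition, and the fibrantly-induced one is already known to exist, the only thing to check is this extension of the classes of weak equivalences and fibrations, together with the fact that the resulting classes still assemble into a model structure.

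First I would handle the weak equivalences. Let $f\colon X\to Y$ in $\sN$ with $Uf$ a weak equivalence in $\sM$. Take fibrant replacements $X\to X'$ and $Y\to Y'$ in the fibrantly-induced model structure and a lift filling the square, giving $f'\colon X'\to Y'$ between fibrant objects with $f'\circ(X\to X')\simeq(Y\to Y')\circ f$. Applying $U$: since $U$ preserves \emph{all} weak equivalences, $U(X\to X')$ and $U(Y\to Y')$ are weak equivalences in $\sM$, hence by two-out-of-three $Uf'$ is a weak equivalence in $\sM$; by condition (iii) of the fibrantly-induced structure, $f'$ is then a weak equivalence in $\sN$. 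Two-out-of-three in $\sN$ (applied to the commuting-up-to-homotopy square, which commutes on the nose after one more replacement, or directly via the homotopy category) then forces $f$ to be a weak equivalence in $\sN$. Conversely, the fibrantly-induced weak equivalences already map to weak equivalences in $\sM$ on fibrant objects, and using fibrant replacement plus the preservation hypothesis one checks the same holds for all of them, so the class of maps inverted in $\sN$ is exactly $U^{-1}(\text{weak equivalences})$.

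Next, the fibrations. Here I would invoke combinatoriality of $\sM$ and local presentability of $\sN$ to run Jeff Smith's recognition theorem, or more efficiently cite the form of the right-induction theorem (à la Crans, or the ``Cisinski–Olschok''/accessibility-based transfer): one needs the right adjoint $U$ to send relative $U$-cell complexes on the generating acyclic cofibrations into weak equivalences. But with the class of weak equivalences of $\sN$ now identified as $U^{-1}(\mathcal W_\sM)$ — an accessible, two-out-of-three class containing the isomorphisms, because $U$ is accessible — and with the generating (trivial) cofibrations taken to be those of the fibrantly-induced structure (whose existence we are given), the standard transfer criterion is met: pushouts and transfinite composites of the generating trivial cofibrations are sent by $U$, a left-adjoint-preserving-colimits... (no) — rather, one argues these are already trivial cofibrations in the fibrantly-induced structure, hence weak equivalences in $\sN$, hence in $U^{-1}(\mathcal W_\sM)$. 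The lifting axioms then pin down the fibrations as exactly the maps with the right lifting property against generating trivial cofibrations, and a retract/adjunction argument identifies these with $U^{-1}(\text{fibrations of }\sM)$.

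The main obstacle is the second half: verifying that the enlarged classes genuinely form a model structure, i.e.\ that the ``fibration = $U^{-1}(\text{fib})$'' class and ``weak equivalence = $U^{-1}(\mathcal W_\sM)$'' class are compatible with the cofibrations of the fibrantly-induced structure. Concretely, the delicate point is showing a map that is both a fibration (in the new sense) and a weak equivalence is a trivial fibration — equivalently, that it lifts against all cofibrations — since a priori we only know trivial fibrations are $U$-detected. This is where combinatoriality is essential: one shows the class $U^{-1}(\mathcal W_\sM)\cap U^{-1}(\text{fib}_\sM) = U^{-1}(\text{triv.fib}_\sM)$, which by the defining property (i) of the fibrantly-induced structure \emph{is} the class of trivial fibrations of $\sN$, so the identification is automatic. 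Assembling these identifications, the fibrantly-induced model structure with its cofibrations, the weak equivalences $U^{-1}(\mathcal W_\sM)$, and the fibrations $U^{-1}(\text{fib}_\sM)$ satisfies all the axioms, and is by construction the right-induced model structure. I expect the write-up to lean on \cref{criterionfitori}'s hypotheses exactly at the two points where preservation of all weak equivalences by $U$ is used: the two-out-of-three argument for weak equivalences, and the accessibility of the weak equivalence class needed to invoke the combinatorial transfer theorem.
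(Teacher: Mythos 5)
Your overall strategy is in fact the paper's: reduce the existence of the right-induced structure to an acyclicity-type transfer criterion (the paper uses \cite[Corollary 3.3.4]{HKRS}; you propose a Smith/Crans-style transfer, which is legitimate since $\sN$ is locally presentable and $\sM$ is combinatorial), and verify that criterion by showing the relevant maps are trivial cofibrations in the already-existing fibrantly-induced structure, hence weak equivalences there, hence sent by $U$ to weak equivalences of $\sM$. Your first step, identifying the weak equivalences of the fibrantly-induced structure with $U^{-1}(\mathcal{W}_{\sM})$ via fibrant replacement and two-out-of-three, is correct but superfluous once the transfer theorem is invoked; the paper does not need it.

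The genuine gap is in the key step. The transfer criterion must be checked on the maps relevant to the \emph{right-induced} structure: either on relative cell complexes built from the images under the left adjoint $F$ of a set of generating trivial cofibrations of $\sM$, or, in the form the paper uses, on all maps with the left lifting property against $U^{-1}(\mathrm{fib}_{\sM})$. Your proposal instead takes ``the generating (trivial) cofibrations \ldots to be those of the fibrantly-induced structure,'' which is circular: the small object argument on those only regenerates the fibrantly-induced structure and says nothing about $U^{-1}(\mathrm{fib}_{\sM})$. Your subsequent assertion that the relevant maps ``are already trivial cofibrations in the fibrantly-induced structure'' is exactly the point requiring proof, and no justification is given. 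In your cell-complex formulation it would follow from the (unstated) fact that $F\dashv U$ is a Quillen pair for the fibrantly-induced structure, so that the images of generating trivial cofibrations of $\sM$, and hence cell complexes on them, are trivial cofibrations in $\sN$; in the stronger lifting-property formulation the paper needs, the input is \cite[Lemma E.2.13]{JoyalVolumeII}, namely that a map with the left lifting property against all fibrations between fibrant objects is a trivial cofibration---this is the one nontrivial ingredient of the paper's proof and it is absent from yours. Finally, the point you single out as the ``main obstacle,'' that fibration intersected with weak equivalence gives trivial fibration, is automatic, since $U^{-1}(\mathcal{W}_{\sM})\cap U^{-1}(\mathrm{fib}_{\sM})=U^{-1}(\mathcal{W}_{\sM}\cap \mathrm{fib}_{\sM})$ and this is the class from condition (i); no combinatoriality is involved there. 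The genuinely delicate point is the acyclicity condition discussed above, which your write-up addresses only by the unsupported assertion just mentioned.
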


\begin{proof}
    By \cite[Corollary 3.3.4]{HKRS}, it suffices to show the acyclicity condition: for every map $f$ in $\sN$ that has the left lifting property against all maps $p$ in $\sN$ such that $Up$ is a fibration in $\sM$, then $Uf$ is a weak equivalence in $\sN$. Suppose that $f$ is a map in $\sN$ satisfying the left lifting property against all maps $p$ in $\sN$ such that $Up$ is a fibration in~$\sM$. Then it satisfies the left lifting property against all maps $p\colon X\to Y$ in $\sN$ such that $Up\colon UX\to UY$ is a fibration between fibrant objects in $\sM$. Hence, by \cite[Lemma E.2.13]{JoyalVolumeII}, the map $f$ is a trivial cofibration in $\sN$ and, in particular, it is a weak equivalence. As $U$ preserves weak equivalences by assumption, then $Uf$ is a weak equivalence in $\sM$.
\end{proof}

\subsection{Localized model structures} \label{sec:locMS}

We recall techniques for localizing a model structure at a set of morphisms. Consider a set $S$ of morphisms in a model category $\sM$. We start by recalling from \cite[Definition 2.1.2]{Hirschhorn} the definition of the \emph{left Bousfield localization} of the model structure~$\sM$ at the set~$S$.

\begin{notation}
    Given objects $X,Y\in \sM$, we denote by $\bR\Hom_\sM(X,Y)$ the derived hom of $\sM$ from $X$ to $Y$. Recall that, if $Y$ is fibrant in $\sM$, the derived hom can be modeled by the simplicial set $\Hom_\sM(X_\bullet,Y)$ obtained by taking a cosimplicial resolution $X_\bullet$ of $X$. See \cite[\textsection 17.1]{Hirschhorn} for more details.
\end{notation}

\begin{defn}
    We say that 
    \begin{itemize}[leftmargin=0.6cm]
        \item an object $X\in\sM$ is \textbf{$S$-local} if it is fibrant in $\sM$ and, for every map $s\colon A\to B$ in $S$, the induced map on derived homs 
        \[ s^*\colon \bR\Hom_\sM(B,X)\to \bR\Hom_\sM(A,X) \]
        is a weak homotopy equivalence,
        \item a map $f\colon A\to B$ in $\sM$ is an \textbf{$S$-local equivalence} if, for every $S$-local object $X$ in~$\sM$, the induced map on derived homs 
        \[ f^*\colon \bR\Hom_\sM(B,X)\to \bR\Hom_\sM(A,X) \]
        is a weak homotopy equivalence.
    \end{itemize}
\end{defn}

\begin{defn}
     The \textbf{$S$-localized model structure $\sM_S$} on $\sM$, if it exists, is such that 
    \begin{rome}
        \item a map $f$ is a trivial fibration in $\sM_S$ if and only if it is a trivial fibration in $\sM$, 
        \item a map $f$ is a weak equivalence in $\sM_S$ if and only if it is an $S$-local equivalence.
    \end{rome}
\end{defn}

We recall the following existence theorem of left Bousfield localizations in the case of left proper, combinatorial model structures, proven in \cite[Theorem 4.7]{Barwick}.

\begin{theorem} \label{thm:locexist}
    Let $\sM$ be a left proper, combinatorial model category and $S$ be a set of morphisms in $\sM$. Then the $S$-localized model structure $\sM_S$ exists. 
    
    Moreover, an object $X\in \sM$ is fibrant in $\sM_S$ if and only if it is $S$-local, and a map $f$ between $S$-local objects is a weak equivalence (resp.~fibration) in $\sM_S$ if and only if it is a weak equivalence (resp.~fibration) in $\sM$. 
\end{theorem}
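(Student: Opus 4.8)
The plan is to construct $\sM_S$ by invoking Jeff Smith's recognition theorem for combinatorial model structures: one takes the cofibrations of $\sM_S$ to be those of $\sM$ and its weak equivalences to be the $S$-local equivalences. Writing $W_S$ for the class of $S$-local equivalences and fixing a set $I$ of generating cofibrations of $\sM$, Smith's theorem yields a cofibrantly generated model structure with these data provided that: (i) $W_S$ satisfies two-out-of-three and is closed under retracts; (ii) $W_S$ is an accessible, accessibly embedded full subcategory of the arrow category $\sM^{[1]}$; (iii) every map with the right lifting property against $I$ lies in $W_S$; and (iv) the class of cofibrations belonging to $W_S$ is closed under pushout and transfinite composition. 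Once such a structure is produced, its trivial fibrations are exactly the $I$-injectives, i.e.\ the trivial fibrations of $\sM$, so both conditions in the statement hold by construction. Conditions (i) and (iii) are formal: two-out-of-three and retract-closure for $W_S$ follow immediately from the definition of $S$-local equivalence in terms of the maps $f^*\colon\bR\Hom_\sM(B,X)\to\bR\Hom_\sM(A,X)$ together with the corresponding properties for simplicial sets; and for (iii) a map with the right lifting property against $I$ is a trivial fibration of $\sM$, hence an $\sM$-weak equivalence, hence an $S$-local equivalence, since any $\sM$-weak equivalence induces weak homotopy equivalences on $\bR\Hom_\sM(-,X)$ for every fibrant $X$.

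The crux is condition (ii). The strategy is first to show that the full subcategory of $S$-local objects is accessible and accessibly embedded in $\sM$: because $\sM$ is combinatorial, hence locally presentable, the assignments $X\mapsto\bR\Hom_\sM(A,X)$ and $X\mapsto\bR\Hom_\sM(B,X)$ can be realized by accessible functors $\sM\to\sSet$ (using a functorial cosimplicial resolution, or a framing, of $A$ and of $B$), and an object is $S$-local precisely if it is $\sM$-fibrant and, for every $s\colon A\to B$ in $S$, the induced map of these simplicial sets is a weak homotopy equivalence; each of these is an accessibly embedded accessible condition, and accessibly embedded accessible subcategories are stable under set-indexed intersections. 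From this one deduces that $W_S$, which consists of those maps $f$ such that $\bR\Hom_\sM(-,X)$ carries $f$ to a weak homotopy equivalence for every $S$-local $X$, is accessible, using that a set of $S$-local objects suffices to detect $S$-local equivalences together with the closure of accessibly embedded accessible subcategories of $\sM^{[1]}$ under the relevant limits. I expect this accessibility argument, with its cardinality bookkeeping, to be the main obstacle; the remaining work is comparatively soft.

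Condition (iv) is where the left properness hypothesis is used. Since $\sM$ is left proper, every pushout square along a cofibration of $\sM$ is a homotopy pushout square; applying $\bR\Hom_\sM(-,X)$ for an $S$-local $X$ sends such a square to a homotopy pullback square of simplicial sets, and since the pullback of a weak homotopy equivalence in a homotopy pullback square is again a weak homotopy equivalence, the pushout of an $S$-local equivalence along a cofibration is an $S$-local equivalence. Closure under transfinite composition is obtained in the same spirit, by expressing $\bR\Hom_\sM(-,X)$ of a transfinite composite of cofibrations as the homotopy limit of the associated tower of derived homs.

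It remains to identify the fibrant objects of $\sM_S$ and to compare weak equivalences and fibrations between them with those of $\sM$. One checks, by the standard ``$S$-horns'' construction, that $\sM_S$ admits a set of generating trivial cofibrations consisting of generating trivial cofibrations of $\sM$ together with the pushout-product maps of $\partial\Delta^n\hookrightarrow\Delta^n$ (interpreted in $\sM$ via a simplicial framing) with a cofibration $\bar A\to\bar B$ between cofibrant objects modeling each $s\colon A\to B$ in $S$; having the right lifting property against this set translates adjointly into being $\sM$-fibrant and having $s^*\colon\bR\Hom_\sM(B,X)\to\bR\Hom_\sM(A,X)$ a trivial fibration of simplicial sets, equivalently a weak homotopy equivalence, i.e.\ into being $S$-local. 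Finally, for a map $f$ between $S$-local objects: if $f$ is an $S$-local equivalence, then applying $\pi_0$ to $f^*\colon\bR\Hom_\sM(-,X)$ for $X$ the source and target of $f$ gives, by a Whitehead-type argument in $\mathrm{Ho}(\sM)$, that $f$ becomes invertible there, hence is an $\sM$-weak equivalence; the converse is clear since $\sM$-weak equivalences are always $S$-local equivalences. And any $\sM_S$-fibration is an $\sM$-fibration, as $\sM_S$ has at least the trivial cofibrations of $\sM$; conversely, if $f\colon X\to Y$ is an $\sM$-fibration between $S$-local objects, factor it in $\sM_S$ as an $\sM_S$-trivial cofibration $j\colon X\to Z$ followed by an $\sM_S$-fibration $q\colon Z\to Y$; then $Z$ is $S$-local, so $j$ is an $S$-local equivalence between $S$-local objects, hence an $\sM$-trivial cofibration, so $f$ has the right lifting property against $j$ and is therefore a retract of $q$ over $Y$, whence an $\sM_S$-fibration.
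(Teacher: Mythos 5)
The paper does not actually prove this statement: it is recalled from \cite[Theorem 4.7]{Barwick}, with the ``moreover'' clause being standard localization theory as in \cite{Hirschhorn}. Your overall strategy --- Smith's recognition theorem applied to the cofibrations of $\sM$ and the class $W_S$ of $S$-local equivalences --- is exactly the route of the cited sources, and your treatment of conditions (i), (iii), (iv), of the identification of fibrant objects, and of the ``moreover'' statements is essentially the standard one.

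The genuine gap is condition (ii), which you yourself flag as the main obstacle. Your reduction rests on the claim that some \emph{set} of $S$-local objects detects $S$-local equivalences. This is not justified, and it does not follow from the accessibility of the subcategory of $S$-local objects in the way you indicate: the maps $f$ to be tested have arbitrarily large domain and codomain, so for a fixed $\lambda$ you cannot commute $\bR\Hom_\sM(B,-)$ past a $\lambda$-filtered colimit presentation of an arbitrary $S$-local $X$. Moreover, the general principle you invoke (preimages of accessibly embedded accessible subcategories under accessible functors, closed under set-indexed intersections) does apply to the class of $S$-local \emph{objects}, where $X$ appears covariantly, but not to $W_S$: for fixed $X$ the assignment $f\mapsto f^*$ is contravariant in the $\sM$-variable, so it sends filtered colimits to cofiltered limits, and weak homotopy equivalences of simplicial sets are not closed under cofiltered limits. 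The standard repair --- and the real content of Smith's/Barwick's proof --- is covariant: use the small object argument on the generating trivial cofibrations of $\sM$ together with your set of pushout-product ``$S$-horns'' to build an accessible endofunctor of $\sM$, prove that $f$ is an $S$-local equivalence if and only if this functor carries $f$ to a weak equivalence of $\sM$, and then invoke Smith's theorem that the weak equivalences of a combinatorial model category form an accessible, accessibly embedded subcategory of the arrow category. A smaller inaccuracy in the last step: the generating trivial cofibrations of $\sM$ together with the $S$-horns do \emph{not} in general form a set of generating trivial cofibrations of $\sM_S$; they only detect the $\sM_S$-fibrant objects. That is all you actually use them for, but then the implication that every $S$-local object is fibrant in $\sM_S$ requires the Whitehead-plus-retract argument rather than the lifting characterization alone.
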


\subsection{Induced vs localized model structures} \label{sec:indvslocMS}

We now study the interaction between induction and localization. To this end, consider the following setup. Let $\sM$ and $\sN$ be left proper, combinatorial model categories, and suppose that we have a Quillen pair
\begin{tz}
\node[](A) {$\sM$};
\node[right of=A,xshift=.6cm](B) {$\sN$};
\punctuation{B}{.};
\draw[->] ($(A.east)+(0,5pt)$) to node[above,la]{$F$} ($(B.west)+(0,5pt)$);
\draw[->] ($(B.west)-(0,5pt)$) to node[below,la]{$U$} ($(A.east)-(0,5pt)$);
\node[la] at ($(A.east)!0.5!(B.west)$) {$\bot$};
\end{tz}
Given a set $S$ of morphisms in $\sM$, we denote by $\bL F(S)$ the corresponding set of morphisms in $\sN$ induced by the derived functor of $F$. By \cref{thm:locexist}, we obtain localized model structures $\sM_S$ and $\sN_{\bL F(S)}$. The following result appears as \cite[Theorem 3.3.20]{Hirschhorn}.

\begin{prop} \label{prop:QPloc}
    The Quillen pair $F\dashv U$ between $\sM$ and $\sN$ induces a Quillen pair between their localizations
    \begin{tz}
\node[](A) {$\sM_{S}$};
\node[right of=A,xshift=1.1cm](B) {$\sN_{\bL F(S)}$};
\punctuation{B}{.};
\draw[->] ($(A.east)+(0,5pt)$) to node[above,la]{$F$} ($(B.west)+(0,5pt)$);
\draw[->] ($(B.west)-(0,5pt)$) to node[below,la]{$U$} ($(A.east)-(0,5pt)$);
\node[la] at ($(A.east)!0.5!(B.west)$) {$\bot$};
\end{tz}

    Moreover, if $F\dashv U$ is further a Quillen equivalence between $\sM$ and $\sN$, then the induced Quillen pair $F\dashv U$ between $\sM_S$ and $\sN_{\bL F(S)}$ is also a Quillen equivalence. 
\end{prop}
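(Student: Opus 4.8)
The plan is to deduce both statements from two standing facts about left Bousfield localizations that are already in place: localizing changes neither the cofibrations nor the trivial fibrations (hence not the cofibrant objects), and by \cref{thm:locexist} the fibrant objects of $\sM_S$ (resp.\ $\sN_{\bL F(S)}$) are exactly the $S$-local (resp.\ $\bL F(S)$-local) objects, while a weak equivalence in either localized structure is by definition a local equivalence. Throughout I abbreviate $T\coloneqq \bL F(S)$, keeping in mind that $T$ is the set of maps obtained by applying $F$ to cofibrant replacements of the maps in $S$.

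First I would show that $F\dashv U$ remains a Quillen pair between $\sM_S$ and $\sN_T$. Since cofibrations are unchanged, $F$ still preserves cofibrations, so it suffices to check that $F$ sends trivial cofibrations of $\sM_S$ to trivial cofibrations of $\sN_T$. The key intermediate claim is that \emph{$U$ sends $T$-local objects to $S$-local objects}: if $Y$ is $T$-local then $UY$ is fibrant in $\sM$ (as $U$ is right Quillen), and for every $s\colon A\to B$ in $S$ the derived adjunction $\bR\Hom_\sM(-,UY)\simeq \bR\Hom_\sN(\bL F(-),Y)$ identifies $s^*$ on derived homs into $UY$ with $(\bL F s)^*$ on derived homs into $Y$; the latter is a weak homotopy equivalence because $\bL F s\in T$ and $Y$ is $T$-local. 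Granting this, let $i\colon A\to B$ be a cofibration that is an $S$-local equivalence. Then $Fi$ is a cofibration, and for every $T$-local $Y$ the same derived adjunction identifies $(Fi)^*\colon \bR\Hom_\sN(FB,Y)\to\bR\Hom_\sN(FA,Y)$ with $i^*\colon \bR\Hom_\sM(B,UY)\to\bR\Hom_\sM(A,UY)$, which is a weak homotopy equivalence since $i$ is an $S$-local equivalence and $UY$ is $S$-local. Hence $Fi$ is a $T$-local equivalence, i.e.\ a trivial cofibration in $\sN_T$, and $F\dashv U$ is a Quillen pair between $\sM_S$ and $\sN_T$.

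For the second statement I would argue on homotopy categories. The homotopy category of a left Bousfield localization $\sM_S$ is the localization $\mathrm{Ho}(\sM)[S^{-1}]$ (equivalently the reflective subcategory of $S$-local objects), with the canonical functor $\mathrm{Ho}(\sM)\to\mathrm{Ho}(\sM_S)$ being the identity on objects; likewise for $\sN_T$. Moreover the total left derived functor of the localized pair restricts to $F$ on cofibrant objects, so it agrees with the functor induced on $\mathrm{Ho}(\sM)[S^{-1}]$ by the universal property of localization applied to $\bL F\colon\mathrm{Ho}(\sM)\to\mathrm{Ho}(\sN)$; similarly for $\bR U$. Now if $F\dashv U$ is a Quillen equivalence, then $\bL F$ and $\bR U$ are inverse equivalences of homotopy categories with $\bL F(S)=T$ by definition and $\bR U(T)=\bR U\bL F(S)\cong S$ via the unit isomorphism; a quasi-inverse pair of functors carrying $S$ and $T$ into one another up to isomorphism descends, by the universal property of localization, to a quasi-inverse pair between $\mathrm{Ho}(\sM)[S^{-1}]$ and $\mathrm{Ho}(\sN)[T^{-1}]$. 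Therefore the induced $\bL F\dashv\bR U$ between $\mathrm{Ho}(\sM_S)$ and $\mathrm{Ho}(\sN_T)$ is an adjoint equivalence, which is precisely the assertion that $F\dashv U$ is a Quillen equivalence between $\sM_S$ and $\sN_T$.

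I expect no genuine obstacle here, only bookkeeping that must be done carefully: one has to keep straight what "$\bL F(S)$" literally is (so that its members are honestly inverted in $\sN_T$ and represent $\bL F$ of $S$ in $\mathrm{Ho}(\sN)$), verify the derived-adjunction identification of derived homs used in the $S$-/$T$-local comparison, and confirm that the total derived functors of the \emph{localized} Quillen pair coincide with the functors induced on the localized homotopy categories, so that an equivalence obtained before localizing genuinely lifts. Each of these is routine given \cref{thm:locexist} and the standard description of the homotopy category of a Bousfield localization, but they are the points where precision is needed.
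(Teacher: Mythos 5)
The paper does not actually prove this proposition---it is quoted from \cite[Theorem 3.3.20]{Hirschhorn}---so your argument is necessarily an independent one, and its key observation, that $U$ carries $\bL F(S)$-local objects to $S$-local objects via the derived-hom adjunction, is correct (this is exactly the mechanism the paper uses right afterwards for the fibrant-induction statement). The genuine gap is in the step where you conclude that $F$ sends an \emph{arbitrary} trivial cofibration $i\colon A\to B$ of $\sM_S$ to an $\bL F(S)$-local equivalence. The identification you invoke, $\bR\Hom_\sN(FA,Y)\simeq\bR\Hom_\sM(A,UY)$, only holds when $A$ is cofibrant: computing the left-hand side via a cosimplicial resolution and applying the adjunction produces $\bR\Hom_\sN(\bL F(A),Y)$, and $FA$ need not be weakly equivalent to $\bL F(A)$ for non-cofibrant $A$ (in $\Cat_\Thom$, say, most objects are not cofibrant, and the proposition is stated for general left proper combinatorial $\sM$). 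So as written your argument only shows that $F$ sends trivial cofibrations \emph{between cofibrant objects} of $\sM_S$ to weak equivalences of $\sN_{\bL F(S)}$. That is in fact sufficient, but it needs an extra ingredient you do not state: either Dugger's lemma that a left adjoint preserving cofibrations and trivial cofibrations between cofibrant objects is left Quillen, or, dually, the criterion that a right adjoint preserving trivial fibrations and fibrations between fibrant objects is right Quillen (the form the paper itself exploits via \cite[Lemma E.2.13]{JoyalVolumeII} in the proof of \cref{criterionfitori}); with the dual criterion, your local-object claim together with the description of fibrations between local objects in \cref{thm:locexist} closes the first part cleanly.

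For the equivalence part, the strategy is workable but leans on the identification $\mathrm{Ho}(\sM_S)\simeq \mathrm{Ho}(\sM)[S^{-1}]$, the one-categorical localization at the set $S$ itself. The elementary description of $\mathrm{Ho}(\sM_S)$ is as the full reflective subcategory of $\mathrm{Ho}(\sM)$ on the $S$-local objects; that this agrees with the naive localization at $S$ is true but is itself a theorem (it amounts to the $S$-local equivalences being the saturation of $S$, usually proved through simplicial or $\infty$-categorical localization), so it must be cited rather than filed under bookkeeping. Granting it, your descent argument is fine, provided you add the closing remark that it suffices for the total \emph{left} derived functor of the localized pair to be an equivalence (an adjunction whose left adjoint is an equivalence is an adjoint equivalence), together with your correct observation that cofibrant replacements are unchanged by localization, so that the descended functor really is that derived functor; trying to identify the descended right adjoint with $\bR U$ of the localized pair directly would be murkier, since fibrant replacements do change.
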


If we now suppose that the model structure on $\sN$ is right-induced from that on $\sM$ along the adjunction $F\dashv U$, we observe that the localized model structure on $\sN$ remains fibrantly-induced from the localized model structure on $\sM$.  

\begin{prop}
If the model structure on $\sN$ is right-induced from that on $\sM$ along the adjunction $F\dashv U$, then the model structure $\sN_{\bL F(S)}$ is fibrantly-induced from $\sM_S$ along the adjunction $F\dashv U$. 
\end{prop}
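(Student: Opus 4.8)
The plan is to verify directly, from the definitions, that the localized model structure $\sN_{\bL F(S)}$ satisfies the three conditions characterizing the model structure fibrantly-induced from $\sM_S$ along $F\dashv U$. Both localizations exist by \cref{thm:locexist}, since $\sM$ and $\sN$ are left proper and combinatorial, so there is a concrete comparison to make. Throughout I would use the explicit descriptions from \cref{thm:locexist}: the fibrant objects of $\sM_S$ (resp.\ $\sN_{\bL F(S)}$) are precisely the $S$-local (resp.\ $\bL F(S)$-local) objects; the trivial fibrations are unchanged by localization; and a map between fibrant objects is a weak equivalence, resp.\ a fibration, in the localization if and only if it is so in the original model structure. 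I would also use freely that, since the model structure on $\sN$ is right-induced from $\sM$, the functor $U$ creates weak equivalences, fibrations, trivial fibrations, and fibrant objects (the last because $U$, being a right adjoint, preserves the terminal object).

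The condition on trivial fibrations is immediate: a map $f$ is a trivial fibration in $\sN_{\bL F(S)}$ iff it is one in $\sN$, iff $Uf$ is one in $\sM$ by right-induction, iff $Uf$ is one in $\sM_S$.

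The heart of the argument is the condition on fibrant objects: $Y$ is fibrant in $\sN_{\bL F(S)}$ if and only if $UY$ is fibrant in $\sM_S$, i.e.\ $Y$ is $\bL F(S)$-local iff $UY$ is $S$-local. The fibrancy requirements match up, since $Y$ is fibrant in $\sN$ iff $UY$ is fibrant in $\sM$; assuming this, it remains to compare the derived-hom conditions. For this I would invoke the compatibility of the derived adjunction $\bL F\dashv\bR U$ with derived mapping spaces: for every $A\in\sM$ there is a natural weak homotopy equivalence $\bR\Hom_\sN(\bL F(A),Y)\simeq\bR\Hom_\sM(A,\bR U Y)$, and $\bR U Y\simeq UY$ because $Y$ is fibrant (using again that $U$ preserves fibrant objects). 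Under this identification, for $s\colon A\to B$ in $S$ the map $(\bL F s)^*\colon\bR\Hom_\sN(\bL F(B),Y)\to\bR\Hom_\sN(\bL F(A),Y)$ corresponds to $s^*\colon\bR\Hom_\sM(B,UY)\to\bR\Hom_\sM(A,UY)$, so one is a weak homotopy equivalence iff the other is, which yields the equivalence of localities. I expect the only genuine subtlety to be book-keeping for the derived adjunction on mapping spaces---concretely, that applying $F$ to a cosimplicial resolution of a cofibrant replacement $QA$ of $A$ yields a cosimplicial resolution of $\bL F(A)$, so that $\Hom_\sN(F((QA)_\bullet),Y)=\Hom_\sM((QA)_\bullet,UY)$ computes both derived homs---but this is standard (cf.\ \cite[\textsection 16--17]{Hirschhorn}). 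One should also watch the order of implications, since the identification $\bR U Y\simeq UY$ presupposes $Y$ fibrant in $\sN$: in the direction starting from $UY$ fibrant in $\sM_S$ one first deduces $Y$ fibrant in $\sN$ by right-induction, and only then transfers the derived-hom condition.

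Finally, for the condition on maps between fibrant objects, let $f\colon X\to Y$ be a map between $\bL F(S)$-local objects. By the previous step $UX$ and $UY$ are $S$-local, hence fibrant in $\sM_S$. Then $f$ is a weak equivalence, resp.\ a fibration, in $\sN_{\bL F(S)}$ iff it is so in $\sN$ by \cref{thm:locexist}, iff $Uf$ is so in $\sM$ by right-induction, iff $Uf$ is so in $\sM_S$, again by \cref{thm:locexist} since $UX,UY$ are $S$-local. Having verified the three conditions, I would conclude that $\sN_{\bL F(S)}$ is the model structure fibrantly-induced from $\sM_S$ along $F\dashv U$.
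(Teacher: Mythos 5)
Your proof is correct and follows essentially the same route as the paper: reduce everything to \cref{thm:locexist} plus right-induction, and check that a fibrant $Y$ in $\sN$ is $\bL F(S)$-local if and only if $UY$ is $S$-local via the derived-hom equivalence $\bR\Hom_\sN(\bL F(A),Y)\simeq\bR\Hom_\sM(A,UY)$. The only (harmless) difference is in how you identify $\bR U Y$ with $UY$ — you use fibrancy of $Y$, while the paper uses that $U$ preserves all weak equivalences because the structure on $\sN$ is right-induced.
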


\begin{proof}
    Recall that the trivial fibrations and weak equivalences (resp.~fibrations) between fibrant objects in the localized model structures $\sM_S$ and $\sN_{\bL F(S)}$ are determined by those of $\sM$ and $\sN$. Since the model structure on $\sN$ is right-induced from that on $\sM$, it is therefore sufficient to show that a fibrant object $Y$ in $\sN$ is $\bL F(S)$-local if and only if $UY$ is $S$-local. But this follows directly from the weak homotopy equivalence between derived homs 
    \[ \bR\Hom_\sN(\bL F(A),Y)\simeq \bR\Hom_\sM(A,UY) \]
    induced by the Quillen pair $F\dashv U$, using that the derived functor $\bR U$ is $U$ itself as it preserves all weak equivalences.
\end{proof}

Finally, we show that, if the adjunction that we started with is further a Quillen equivalence, then the localized model structure on $\sN$ is in fact right-induced from that on~$\sM$.

\begin{prop} \label{prop:Upreswe}
    Suppose that the model structure on $\sN$ is right-induced from that on~$\sM$ along the adjunction $F\dashv U$, and $F\dashv U$ is a Quillen equivalence between $\sM$ and $\sN$. Then the induced right Quillen functor $U\colon \sN_{\bL F(S)}\to \sM_S$ preserves all weak equivalences. 
\end{prop}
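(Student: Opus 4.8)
The plan is to prove directly that $U$ carries every $\bL F(S)$-local equivalence of $\sN$ to an $S$-local equivalence of $\sM$, using the description of local equivalences via derived mapping spaces. Two inputs carry most of the weight. First, because the model structure on $\sN$ is right-induced, $U\colon\sN\to\sM$ preserves \emph{all} weak equivalences, so its total right derived functor is $U$ itself; and by \cref{prop:QPloc} the pair $F\dashv U$ remains a Quillen equivalence between $\sM$ and $\sN$, whence $\bL F\colon\mathrm{Ho}(\sM)\to\mathrm{Ho}(\sN)$ is an equivalence of categories with quasi-inverse induced by $U$, and for every cofibrant $A\in\sM$ the unit $\eta_A\colon A\to UFA$ is a weak equivalence (the derived unit is a weak equivalence, and $U$ sends the fibrant-replacement map $FA\to(FA)^{\mathrm{fib}}$ to a weak equivalence). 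Second, the proof of the preceding proposition shows that a fibrant object $Z$ of $\sN$ is $\bL F(S)$-local if and only if $UZ$ is $S$-local in $\sM$.

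The first step is to identify, up to weak equivalence, the $S$-local objects of $\sM$ with the objects $UZ$ for $Z$ an $\bL F(S)$-local object of $\sN$. One inclusion is immediate from the second input. For the other, given an $S$-local $W$, essential surjectivity of $\bR U=U$ provides a fibrant $Z\in\sN$ with $UZ\simeq W$ in $\mathrm{Ho}(\sM)$; since $UZ$ and $W$ are then connected by weak equivalences between fibrant objects, their derived mapping spaces out of any object coincide, so $UZ$ is again $S$-local and hence $Z$ is $\bL F(S)$-local.

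Next I reduce to a map in the image of $F$. Given an $\bL F(S)$-local equivalence $f\colon X\to Y$ in $\sN$, the equivalence $\bL F\colon\mathrm{Ho}(\sM)\to\mathrm{Ho}(\sN)$ lets me choose cofibrant objects $A,B$ of $\sM$ and a map $g\colon A\to B$ such that $Fg$ becomes isomorphic to $f$ in $\mathrm{Ho}(\sN)$; then $Fg$ is again an $\bL F(S)$-local equivalence. For any fibrant $Z\in\sN$ the adjunction provides a natural weak homotopy equivalence $\bR\Hom_\sN(FA,Z)\simeq\bR\Hom_\sM(A,UZ)$ (using that $A$, hence $FA$, is cofibrant, so that a cosimplicial resolution of $A$ maps to one of $FA$), so $(Fg)^*$ is a weak homotopy equivalence on $\bR\Hom_\sN(-,Z)$ if and only if $g^*$ is one on $\bR\Hom_\sM(-,UZ)$; letting $Z$ range over all $\bL F(S)$-local objects and using the first step, this says precisely that $g$ is an $S$-local equivalence in $\sM$. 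Finally, naturality of $\eta$ exhibits $g$ and $UFg$ as related by the weak equivalences $\eta_A,\eta_B$ on source and target, so $UFg$ is an $S$-local equivalence as well; and since $Uf$ and $UFg$ have the same image in $\mathrm{Ho}(\sM)$ up to isomorphism, $Uf$ is an $S$-local equivalence, as required.

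The conceptual content is slim, and I expect the obstacle to be bookkeeping. The points needing care are: making the first step precise (the ``up to weak equivalence'' clause and the correct appeal to essential surjectivity of $\bR U$); checking that the reduction $f\rightsquigarrow Fg$ genuinely preserves ``being a local equivalence'' on both sides and interacts correctly with $U$ at the level of homotopy categories; and verifying the naturality of the adjunction equivalence on derived mapping spaces, which is cleanest to carry out through cosimplicial resolutions (left Quillen functors preserve them) rather than by a purely formal manipulation.
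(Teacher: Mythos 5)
Your proposal is correct, but it takes a noticeably different route from the paper's proof, which is shorter and works directly with $Uf$. The paper uses that, since the structure on $\sN$ is right-induced, $\bR U=U$, and then invokes the homotopical full faithfulness of the right Quillen equivalence in the form $\bR\Hom_\sN(A,Y)\simeq\bR\Hom_\sM(UA,UY)$ for all $A$ and all $\bL F(S)$-local $Y$; this immediately turns the hypothesis on $f^*$ into the statement that $(Uf)^*$ is an equivalence on derived homs into every $UY$, and it then cites \cref{prop:QPloc} to see that every $S$-local object of $\sM$ is weakly equivalent to such a $UY$. You avoid the full-faithfulness statement altogether: you transport $f$ backwards along the equivalence $\bL F$ to an honest map $g$ between cofibrant objects of $\sM$, use only the standard derived-adjunction equivalence $\bR\Hom_\sN(FA,Z)\simeq\bR\Hom_\sM(A,UZ)$ together with your identification of the $S$-local objects with the $UZ$ for $Z$ an $\bL F(S)$-local object (which you get from essential surjectivity of $\bR U$ plus the fibrantly-induced characterization, rather than from \cref{prop:QPloc}), and then return to $Uf$ via the unit weak equivalences $\eta_A,\eta_B$, two-out-of-three for $S$-local equivalences, and invariance of the notion of local equivalence under isomorphism in the arrow category of $\mathrm{Ho}(\sM)$. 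What your route buys is that it rests only on the most standard pieces of the derived adjunction; what it costs is exactly the bookkeeping you flag (realizing a morphism of $\mathrm{Ho}(\sM)$ by a map of cofibrant objects, the arrow-category invariance of local equivalences, naturality of the cosimplicial-resolution comparison), all of which is routine and does not hide a gap. One small slip: the Quillen equivalence between $\sM$ and $\sN$ is a hypothesis of the proposition, not a consequence of \cref{prop:QPloc}; the latter is only needed for the statement about the localized pair.
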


\begin{proof}
Let $f\colon A\to B$ be a $\bL F(S)$-local equivalence in $\sN$. Then, by definition, the induced map
\[ f^*\colon \bR \Hom_\sN(B,Y)\to \bR\Hom_\sN(A,Y) \]
is a weak homotopy equivalence, for every $\bL F(S)$-local object $Y\in \sN$. Since $U\colon \sN\to \sM$ is a right Quillen equivalence such that the derived functor $\bR U$ is $U$ itself (as it preserves all weak equivalences), for all objects $A,Y\in \sN$, we have weak homotopy equivalences 
\[ \bR \Hom_\sN(A,Y)\simeq \bR\Hom_\sM(UA,UY). \]
Hence the induced map
\[ (Uf)^*\colon \bR \Hom_\sM(UB,UY)\to \bR\Hom_\sM(UA,UY) \]
is also a weak homotopy equivalence, for every $\bL F(S)$-local object $Y\in \sN$. Since the Quillen equivalence $F\dashv U$ between $\sM$ and $\sN$ induces a Quillen equivalence between the localizations $\sM_S$ and $\sN_{\bL F(S)}$ by \cref{prop:QPloc}, every $S$-local object $X\in\sM$ is weakly equivalent to an object $UY$ with $Y\in \sN$ a $\bL F(S)$-local object. Hence the induced map
\[ (Uf)^*\colon \bR \Hom_\sM(UB,X)\to \bR\Hom_\sM(UA,X) \]
is also a weak homotopy equivalence, for every $S$-local object $X\in \sM$. By definition, this means that $Uf$ is an $S$-local equivalence in $\sM$, as desired. 
\end{proof}

The following is now a direct consequence of \cref{criterionfitori,prop:Upreswe}.

\begin{prop} \label{thm:RIMS}
    Let $\sM$ and $\sN$ be left proper, combinatorial model categories and suppose that the model structure on $\sN$ is right-induced from that on $\sM$ along an adjunction
\begin{tz}
\node[](A) {$\sM$};
\node[right of=A,xshift=.6cm](B) {$\sN$};
\punctuation{B}{.};
\draw[->] ($(A.east)+(0,5pt)$) to node[above,la]{$F$} ($(B.west)+(0,5pt)$);
\draw[->] ($(B.west)-(0,5pt)$) to node[below,la]{$U$} ($(A.east)-(0,5pt)$);
\node[la] at ($(A.east)!0.5!(B.west)$) {$\bot$};
\end{tz}
Given a set $S$ of morphisms in $\sM$, we denote by $\bL F(S)$ the corresponding set of morphisms in $\sN$ induced by the derived functor of $F$. If the adjunction $F\dashv U$ is a Quillen equivalence between $\sM$ and $\sN$, then the localized model structure $\sN_{\bL F(S)}$ is right-induced from $\sM_S$ along the adjunction $F\dashv U$. 
\end{prop}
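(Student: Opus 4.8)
The statement is obtained by feeding the two propositions just established into \cref{criterionfitori}, so the plan is short.

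First I would isolate the two ingredients. By the proposition immediately preceding \cref{prop:Upreswe}, the hypothesis that the model structure on $\sN$ is right-induced from that on $\sM$ along $F\dashv U$ implies that the localized model structure $\sN_{\bL F(S)}$ is \emph{fibrantly}-induced from $\sM_S$ along $F\dashv U$; in particular this fibrantly-induced model structure exists. By \cref{prop:Upreswe}, the further hypothesis that $F\dashv U$ is a Quillen equivalence guarantees that the right adjoint $U\colon \sN_{\bL F(S)}\to \sM_S$ preserves all weak equivalences.

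Next I would verify the standing hypotheses of \cref{criterionfitori} for the adjunction $F\dashv U$ regarded as going from $\sM_S$ to $\sN_{\bL F(S)}$. The underlying category of $\sN_{\bL F(S)}$ is that of $\sN$, which is locally presentable since $\sN$ is combinatorial; and $\sM_S$ is again combinatorial, either by the construction underlying \cref{thm:locexist} (see \cite{Barwick}) or by the standard fact that left Bousfield localizations of left proper combinatorial model categories are again left proper and combinatorial. With these in place, \cref{criterionfitori} applies to the adjunction $F\dashv U$ between $\sM_S$ and $\sN_{\bL F(S)}$, since the fibrantly-induced model structure exists and $U$ preserves all weak equivalences, and it yields that the right-induced model structure on $\sN_{\bL F(S)}$ along $F\dashv U$ exists.

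Finally, since a right-induced model structure, when it exists, coincides with the fibrantly-induced one — they share the same trivial fibrations and fibrant objects, as recorded after the two definitions in \cref{sec:indMS} — we conclude that $\sN_{\bL F(S)}$ is right-induced from $\sM_S$ along $F\dashv U$, as claimed. The only non-formal input is the acyclicity-type statement packaged into \cref{prop:Upreswe}, namely that $U$ remains weak-equivalence-preserving after passing to the localizations, which is exactly where the Quillen equivalence hypothesis is used; the remaining steps are bookkeeping with locally presentable and combinatorial categories, so I do not anticipate any further obstacle.
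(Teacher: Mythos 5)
Your proposal is correct and follows exactly the paper's intended argument: the paper's proof is precisely "apply \cref{criterionfitori} using the preceding proposition (fibrant-induction of $\sN_{\bL F(S)}$ from $\sM_S$) and \cref{prop:Upreswe} (preservation of weak equivalences by $U$ after localization)", which you have unpacked faithfully, including the verification that $\sN$ is locally presentable and $\sM_S$ is combinatorial and the final identification of the right-induced structure with the fibrantly-induced one.
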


\subsection{Homotopy colimits} \label{sec:hocolimMS}

Finally, we prove some results concerning homotopy colimit functors in a model category. To this end, we first recall the definition of \emph{projective model structures}. Let $\sM$ be a model category and $\sJ$ be a small category. We consider the category $\sM^\sJ$ of functors $\sJ\to \sM$ and natural transformations between them. 

\begin{defn}
    The \textbf{projective model structure} $\sM^\sJ_\proj$ on $\sM^\sJ$, if it exists, is such that a map $f$ is a weak equivalence (resp.~fibration) in $\sM^\sJ$ if and only if, for every object $j\in \sJ$, the induced map $f_j$ is a weak equivalence (resp.~fibration) in $\sM$. 
\end{defn}

The existence of the projective model structure in the case of a combinatorial model category is well-known, and can be found e.g.~in \cite[Theorem 11.6.1]{Hirschhorn}.

\begin{theorem} \label{thm:existproj}
    Let $\sM$ be a combinatorial model category and $\sJ$ be a small category. Then the projective model structure $\sM^\sJ_\proj$ exists.
\end{theorem}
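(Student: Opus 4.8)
The plan is to apply the standard recognition theorem for cofibrantly generated model structures (Kan's theorem, e.g.\ \cite[Theorem 11.3.1]{Hirschhorn}). Since $\sM$ is combinatorial, fix sets $I$ and $J$ of generating cofibrations and generating trivial cofibrations. For each object $j\in\sJ$, the evaluation functor $\mathrm{ev}_j\colon \sM^\sJ\to\sM$ admits a left adjoint $F_j$, with $(F_j A)_{j'}=\coprod_{\sJ(j,j')}A$. I would propose
\[ I_\sJ\coloneqq\{F_j i \mid j\in\sJ,\ i\in I\}, \qquad J_\sJ\coloneqq\{F_j f\mid j\in\sJ,\ f\in J\} \]
as the generating (trivial) cofibrations, with weak equivalences and fibrations defined objectwise as in the statement. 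Since $\sM$ is locally presentable, so is the functor category $\sM^\sJ$, hence the smallness and permitting conditions needed to run Kan's theorem hold automatically; the class of objectwise weak equivalences satisfies the two-out-of-three property and is closed under retracts because these hold in $\sM$ and are detected objectwise.

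The core of the argument is to identify the relevant lifting classes. By adjunction, a map $p$ in $\sM^\sJ$ has the right lifting property against $I_\sJ$ (respectively $J_\sJ$) if and only if $\mathrm{ev}_j p$ has it against $I$ (respectively $J$) in $\sM$ for every $j$, i.e.\ if and only if $p$ is an objectwise trivial fibration (respectively objectwise fibration). In particular the $I_\sJ$-injectives are exactly the objectwise fibrations that are objectwise weak equivalences, using the characterization of trivial fibrations in $\sM$, and these are precisely the intersection of the $J_\sJ$-injectives with the objectwise weak equivalences. It then remains to verify the acyclicity condition: every map in $J_\sJ\text{-cof}$ is an objectwise weak equivalence. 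Because $F_j$ is a left adjoint and colimits in $\sM^\sJ$ are computed objectwise, any relative $J_\sJ$-cell complex is, evaluated at each $j'\in\sJ$, a transfinite composition of pushouts of coproducts of maps in $J$, hence a trivial cofibration in $\sM$ and in particular an objectwise weak equivalence; closure of objectwise weak equivalences under retracts then handles all of $J_\sJ\text{-cof}$. Conversely, if $f$ lies in $I_\sJ\text{-cof}$ and is an objectwise weak equivalence, a factorization $f=p\circ i$ with $i\in J_\sJ\text{-cell}$ and $p$ an objectwise fibration forces $p$ to be an objectwise weak equivalence by two-out-of-three, hence an objectwise trivial fibration, so $f$ has the left lifting property against $p$ and the retract argument exhibits $f$ as a retract of $i$, placing it in $J_\sJ\text{-cof}$. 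Kan's theorem then produces the cofibrantly generated model structure $\sM^\sJ_\proj$ with the asserted weak equivalences and fibrations.

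I expect the only genuinely substantive step to be the acyclicity condition, which is the usual obstacle in any transfer- or recognition-type argument. Here, however, it is essentially formal: the sole nontrivial input is that trivial cofibrations in $\sM$ are stable under pushout, coproduct, and transfinite composition, which holds in any model category, while the facts that $F_j$ preserves colimits and that colimits in $\sM^\sJ$ are objectwise make the reduction to $\sM$ immediate. One could alternatively just invoke a packaged projective transfer theorem, but the hands-on verification above makes transparent why combinatoriality of $\sM$---hence local presentability of $\sM^\sJ$---is exactly the hypothesis being used.
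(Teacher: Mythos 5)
Your proof is correct: the paper does not prove this statement at all, but simply cites \cite[Theorem 11.6.1]{Hirschhorn}, and your argument (left adjoints $F_j$ to the evaluation functors, generating sets $F_j(I)$ and $F_j(J)$, the adjunction identification of the injective classes, acyclicity via objectwise cell complexes, and Kan's recognition theorem) is essentially the standard proof given in that reference, with local presentability of $\sM^\sJ$ discharging the smallness hypotheses. The only step left implicit is the inclusion $J_\sJ\subseteq I_\sJ\text{-cof}$, which follows immediately by the same adjunction argument since each $f\in J$ is in particular a cofibration of $\sM$.
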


In the remainder of the section, we assume that $\sM$ is a combinatorial model category, so that the model structure $\sM^\sJ_\proj$ exists by the previous result, and we turn to homotopy colimits. Recall that the constant diagram functor $\cst\colon \sM\to \sM^\sJ$, sending an object $X\in \sM$ to the constant diagram at $X$, admits as a left adjoint the colimit functor $\colim_\sJ\colon \sM^\sJ\to \sM$, and these form a Quillen pair as follows; see e.g.~\cite[Theorem 11.6.8]{Hirschhorn}.

\begin{prop} \label{thm:colimQP}
    The adjunction 
    \begin{tz}
\node[](A) {$\sM^\sJ_{\proj}$};
\node[right of=A,xshift=.9cm](B) {$\sM$};
\draw[->] ($(A.east)+(0,5pt)$) to node[above,la]{$\colim_\sJ$} ($(B.west)+(0,5pt)$);
\draw[->] ($(B.west)-(0,5pt)$) to node[below,la]{$\cst$} ($(A.east)-(0,5pt)$);
\node[la] at ($(A.east)!0.5!(B.west)$) {$\bot$};
\end{tz}
    is a Quillen pair. 
\end{prop}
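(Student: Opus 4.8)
The plan is to verify that the right adjoint $\cst\colon \sM\to \sM^\sJ_\proj$ is a right Quillen functor, i.e.\ that it preserves fibrations and trivial fibrations; this immediately yields that $\colim_\sJ\dashv\cst$ is a Quillen pair. Working with the right adjoint is the natural choice here, since fibrations and weak equivalences in $\sM^\sJ_\proj$ admit a simple levelwise description, whereas the cofibrations do not, so attempting to show directly that $\colim_\sJ$ preserves cofibrations would be more involved.

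The key step is the following observation. Let $g\colon X\to Y$ be a fibration (resp.\ a trivial fibration) in $\sM$. For every object $j\in\sJ$, the component $(\cst g)_j\colon (\cst X)_j\to (\cst Y)_j$ is, by definition of the constant diagram functor, just the map $g\colon X\to Y$ itself, hence a fibration (resp.\ a trivial fibration) in $\sM$. Recalling that a trivial fibration is a map which is simultaneously a fibration and a weak equivalence, and that both of these classes are detected levelwise in $\sM^\sJ_\proj$ by the defining property of the projective model structure (whose existence is guaranteed by \cref{thm:existproj} under the standing combinatoriality assumption), it follows that $\cst g$ is a fibration (resp.\ a trivial fibration) in $\sM^\sJ_\proj$. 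Thus $\cst$ is a right Quillen functor, and therefore the adjunction $\colim_\sJ\dashv\cst$ is a Quillen pair.

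There is essentially no obstacle: the whole content is the levelwise characterization of (trivial) fibrations in $\sM^\sJ_\proj$ together with the triviality that the constant diagram on a map has all of its components equal to that map. The only point requiring any care is to phrase the argument in terms of the right adjoint $\cst$ rather than the left adjoint $\colim_\sJ$, precisely so as to avoid needing an explicit description of the projective cofibrations.
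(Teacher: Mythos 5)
Your argument is correct: since fibrations and weak equivalences in $\sM^\sJ_\proj$ are defined levelwise and the components of $\cst g$ are all equal to $g$, the right adjoint $\cst$ preserves fibrations and trivial fibrations, so $\colim_\sJ\dashv\cst$ is a Quillen pair. The paper does not spell out a proof but simply cites the reference for this standard fact, and your reasoning is exactly the standard argument behind that citation, so there is nothing to add.
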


\begin{defn}
    Let $\cM$ be a model category. A \textbf{cofibrant replacement functor} is a functor $Q \colon \cM \to \cM$ equipped with a natural transformation $\alpha \colon Q \Rightarrow \id_{\cM}$ such that for every object $X$ of $\cM$, the map
    \[
    \alpha_X \colon Q(X) \to X
    \]
    is a weak equivalence, and $Q(X)$ is cofibrant.
\end{defn}

\begin{notation}
    It follows from \cref{thm:colimQP} that the functor $\colim_\sJ$ is left derivable. By choosing a cofibrant replacement functor $Q \colon \sM^\sJ\to \sM^\sJ$
    for the projective model structure, we can define ``the'' homotopy colimit functor as
\[
\hocolim_\sJ\coloneqq \colim_\sJ \circ Q \colon \sM^\sJ\to \sM.
\]
\end{notation}

\begin{defn}
    A functor $ \cL_\sJ \colon \sM^\sJ\to \sM$ is a \textbf{model for the homotopy colimit functor} if it is equivalent to $\hocolim_\sJ$ via a zig-zag of natural transformations which are levelwise weak equivalences in $\sM$, which we simply denote as
\[
\cL_\sJ \simeq \hocolim_\sJ.
\]
\end{defn}

\begin{rem}
    Since $\hocolim_\sJ\colon \sM^\sJ_\proj\to \sM$ preserves all weak equivalences, then so does any model for the homotopy colimit functor.
\end{rem}

\begin{notation}
    Suppose that we are given another small category $\sK$. Given a model of homotopy colimit functor $\cL_\sJ \colon \sM^\sJ\to \sM$, it induces by post-composition a functor 
    \[ (\cL_\sJ)_* \colon \sM^{\sK\times \sJ}\cong (\sM^\sJ)^\sK\to \sM^\sK. \]
\end{notation}

We show that this induced functor is a model of the homotopy colimit functor for the model category $\sM^\sK_{\proj}$. For this, recall that we have a canonical isomorphism of model categories
\[
(\sM^\sK_{\proj})^\sJ_{\proj} \cong \sM^{\sK\times \sJ}_{\proj}.
\]

\begin{prop} \label{hocoliminproj}
    Let $\sM$ be a combinatorial model category, and $\sJ$ and $\sK$ be small categories. Consider a model $\cL_\sJ \colon \sM^\sJ\to \sM$ for the homotopy colimit functor. Then the induced functor
    \[
    (\cL_\sJ)_* \colon (\sM^\sK_\proj)^\sJ\to \sM^\sK_\proj.
    \]
    is a model for the homotopy colimit functor.
\end{prop}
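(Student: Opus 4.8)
The plan is to reduce the statement to the definition of a model for the homotopy colimit functor by comparing the given model $\cL_\sJ$ on $\sM$ with a chosen ``reference'' homotopy colimit on $\sM^\sK_\proj$, using the canonical isomorphism $(\sM^\sK_\proj)^\sJ_\proj \cong \sM^{\sK\times\sJ}_\proj$ freely. The key observation is that $(\cL_\sJ)_* \colon \sM^{\sK\times\sJ} \to \sM^\sK$ is precisely the postcomposition functor $(\cL_\sJ)_*$ obtained from $\cL_\sJ \colon \sM^\sJ \to \sM$ by applying the $2$-functor $(-)^\sK$ (i.e. $\cL_\sJ$ acts ``levelwise in $\sK$''), and postcomposition is compatible with natural transformations that are levelwise weak equivalences.

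First I would spell out what needs to be checked: by definition, I must exhibit a zig-zag of natural transformations between $(\cL_\sJ)_*$ and $\hocolim_\sJ \colon (\sM^\sK_\proj)^\sJ \to \sM^\sK_\proj$ which are levelwise weak equivalences \emph{in $\sM^\sK_\proj$}, i.e. objectwise weak equivalences in $\sM$ over all of $\sK$. Since $\cL_\sJ \simeq \hocolim_\sJ$ by hypothesis, fix a zig-zag $\cL_\sJ \Leftarrow \cdots \Rightarrow \hocolim_\sJ$ of natural transformations on $\sM^\sJ$ that are levelwise (i.e. objectwise-in-$\sJ$) weak equivalences in $\sM$. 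Applying postcomposition $(-)_* $, which is functorial, yields a zig-zag between $(\cL_\sJ)_*$ and $(\hocolim_\sJ)_* \colon \sM^{\sK\times\sJ} \to \sM^\sK$; and because weak equivalences in $\sM^\sK_\proj$ are detected objectwise in $\sK$, a natural transformation of postcomposition functors that is objectwise-in-$\sJ$ a weak equivalence in $\sM$ becomes, after postcomposition, a natural transformation which is objectwise-in-$(\sK\times\sJ)$ a weak equivalence in $\sM$, hence a levelwise weak equivalence in $\sM^\sK_\proj$. This handles everything except identifying $(\hocolim_\sJ)_*$ itself as a valid homotopy colimit functor for $(\sM^\sK_\proj)^\sJ_\proj$.

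The remaining—and main—step is therefore to show $(\hocolim_\sJ)_* \simeq \hocolim_\sJ^{(\sM^\sK_\proj)}$, i.e. that postcomposing with the homotopy colimit of $\sM$ computes the homotopy colimit in $\sM^\sK_\proj$. I would argue this as follows. Recall $\hocolim_\sJ = \colim_\sJ \circ Q$ for a projective cofibrant replacement $Q$ on $\sM^\sJ$; the ordinary colimit functor $\colim_\sJ \colon \sM^\sJ \to \sM$ commutes with evaluation at objects of $\sK$ (colimits in functor categories are computed pointwise), so $(\colim_\sJ)_* \colon \sM^{\sK\times\sJ} \to \sM^\sK$ agrees with $\colim_\sJ$ computed in the functor category $(\sM^\sK)^\sJ$. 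Moreover $(\colim_\sJ)_* \dashv \cst$ is a Quillen pair from $(\sM^\sK_\proj)^\sJ_\proj \cong \sM^{\sK\times\sJ}_\proj$ to $\sM^\sK_\proj$ by \cref{thm:colimQP} applied to the combinatorial model category $\sM^\sK_\proj$. Hence the left derived functor of $(\colim_\sJ)_*$ is computed by applying $(\colim_\sJ)_*$ to any projective cofibrant replacement in $\sM^{\sK\times\sJ}_\proj$. Now I need to relate $Q_*$—postcomposition with the chosen $Q$ on $\sM^\sJ$, applied levelwise in $\sK$—to such a cofibrant replacement: a diagram in $(\sM^\sK_\proj)^\sJ$ that is objectwise-in-$\sK$ projectively cofibrant in $\sM^\sJ$ need not be projectively cofibrant in $(\sM^\sK_\proj)^\sJ$, so $Q_*$ is \emph{not} literally a projective cofibrant replacement there. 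However, $(\colim_\sJ)_*$ preserves all weak equivalences between objectwise-projectively-cofibrant diagrams (this is exactly the statement that $\colim_\sJ$ on $\sM^\sJ_\proj$ has the property that a levelwise weak equivalence between cofibrant objects goes to a weak equivalence, applied in each $\sK$-level), so applying $(\colim_\sJ)_*$ to a comparison map between $Q_*(X)$ and $Q'(X)$—where $Q'$ is an honest projective cofibrant replacement in $(\sM^\sK_\proj)^\sJ_\proj$ and the comparison exists because both map by weak equivalences to $X$ with $Q_*(X)$ objectwise cofibrant—produces a weak equivalence $(\hocolim_\sJ)_*(X) = (\colim_\sJ)_* Q_*(X) \xrightarrow{\sim} (\colim_\sJ)_* Q'(X) = \hocolim_\sJ^{(\sM^\sK_\proj)}(X)$, natural in $X$. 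I expect the delicate point to be organizing this comparison functorially (choosing the zig-zag $Q_* \Leftarrow Q'' \Rightarrow Q'$ naturally, e.g. via a functorial factorization in $(\sM^\sK_\proj)^\sJ_\proj$ applied to $Q_*(X) \to X$), and making sure the ``weak equivalences between cofibrant objects'' argument is invoked with the correct cofibrancy hypothesis—namely that $Q_*(X)$ and $Q'(X)$ are objectwise-in-$\sK$ projectively cofibrant in $\sM^\sJ$, which suffices for $(\colim_\sJ)_*$ to preserve the weak equivalence since that is checked one $\sK$-level at a time.

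Assembling: $(\cL_\sJ)_* \simeq (\hocolim_\sJ)_* \simeq \hocolim_\sJ \colon (\sM^\sK_\proj)^\sJ_\proj \to \sM^\sK_\proj$, each $\simeq$ being a zig-zag of levelwise weak equivalences, so $(\cL_\sJ)_*$ is a model for the homotopy colimit functor, as claimed. \qed
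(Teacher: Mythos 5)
Your overall route is the same as the paper's: reduce to comparing $(\hocolim_\sJ)_*$ with $\hocolim_\sJ$ on $(\sM^\sK_\proj)^\sJ_\proj\cong \sM^{\sK\times\sJ}_\proj$, use that $\colim_\sJ$ is computed $\sK$-pointwise so $(\colim_\sJ)_*\cong\colim_\sJ$, and use Ken Brown type preservation of weak equivalences between (objectwise) cofibrant diagrams. Your preliminary reductions and the observation that $Q_*$ is objectwise-in-$\sK$ cofibrant but not projectively cofibrant in the double structure are all correct and match the paper.

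The gap is exactly the step you flag as ``delicate'' and then do not resolve: producing a \emph{natural} zig-zag between $Q_*$ and the genuine projective cofibrant replacement $Q'$ of $(\sM^\sK_\proj)^\sJ_\proj$. The mechanism you suggest does not deliver it: a functorial factorization applied to $Q_*(X)\to X$ produces one natural object over $X$, not a comparison with $Q'(X)$; and the alternative of lifting $Q'(X)\to X$ through $Q_*(X)\to X$ fails both because $Q_*(X)\to X$ is only a weak equivalence (not a trivial fibration) and because chosen lifts are not natural in $X$. The paper's proof consists precisely of supplying the missing bridge: take $Q''\coloneqq Q'\circ Q_*$ (in the paper's notation, $Q\circ Q'_*$), i.e.\ apply the cofibrant replacement functor of the double projective structure to the objectwise one. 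Its two canonical structure maps give natural weak equivalences $Q_*(X)\xLeftarrow{\ \simeq\ } Q'(Q_*(X)) \xRightarrow{\ \simeq\ } Q'(X)$ (the left one is the augmentation of $Q'$, the right one is $Q'$ applied to $Q_*(X)\to X$, using that $Q'$ preserves weak equivalences by $2$-out-of-$3$). The right leg is a weak equivalence between projectively cofibrant objects of $(\sM^\sK_\proj)^\sJ_\proj$, so it is preserved by the left Quillen functor $\colim_\sJ$; the left leg is, at each $k\in\sK$, a weak equivalence between projectively cofibrant objects of $\sM^\sJ_\proj$ (projective cofibrancy in the double structure implies objectwise cofibrancy), so it is preserved by $(\colim_\sJ)_*$. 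This yields the natural zig-zag $\hocolim_\sJ\simeq(\hocolim_\sJ)_*$ that your argument needs; with that ingredient inserted, the rest of your proof goes through and coincides with the paper's.
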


\begin{proof}
Let $Q\colon \sM^{\sK\times \sJ}\to \sM^{\sK\times \sJ}$ and $Q'\colon \sM^\sJ\to \sM^\sJ$ be the chosen cofibrant replacement functors for the model structures $\sM^{\sK\times \sJ}_{\proj}\cong (\sM^\sK_{\proj})^\sJ_{\proj}$ and $\sM^\sJ_\proj$. Then $Q'$ induces by post-composition a functor $Q'_*\colon (\sM^\sK)^\sJ\cong (\sM^\sJ)^\sK\to (\sM^\sJ)^\sK\cong (\sM^\sK)^\sJ$. Then we have a zigzag of natural transformations
\[ Q\xRightarrow{\simeq} QQ'_*\xLeftarrow{\simeq} Q'_* \]
which are levelwise a weak equivalence in $\sM^{\sJ\times \sK}_\proj$. Then, for every functor $F\colon \sK\times \sJ\to \sM$, we have a zigzag of weak equivalences in $\sM^{\sK\times\sJ}_\proj$
\begin{align*} 
\hocolim_\sJ F &=\colim_\sJ Q(F) \xLeftarrow{\simeq} \colim_\sJ QQ'_*(F) \\ &\cong (\colim_\sJ)_* QQ'_*(F)\xRightarrow{\simeq} (\colim_\sJ)_*Q'_*(F)=(\hocolim_\sJ)_* F 
\end{align*}
using that $\colim_\sJ\colon (\sM^\sK)^\sJ\to \sM^\sK$ and $\colim_\sJ\colon \sM^\sJ\to \sM$ preserve weak equivalences between cofibrant objects, and that colimits in $\sM^\sK$ are computed levelwise in $\sM$. As the above zigzag is natural in $F$, we get a zigzag of natural levelwise weak equivalence 
\[ \hocolim_\sJ\simeq (\hocolim_\sJ)_*, \]
as desired. The general statement for an arbitrary model for homotopy colimits follows immediately.
\end{proof}

We conclude these reminders on homotopy colimits with the following result.

\begin{prop} \label{hocoliminloc}
Let $\sM$ be a left proper, combinatorial model structure, $\sJ$ be a small category, and $S$ be a set of morphisms in $\sM$. Consider a model $\cL_\sJ \colon \sM^\sJ\to \sM$ for the homotopy colimit functor. Then the functor 
\[ \cL_\sJ\colon (\sM_S)^\sJ\to \sM_S \]
is also a model for the homotopy colimit functor in the $S$-localized model category $\sM_S$. 
\end{prop}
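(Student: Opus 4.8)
The plan is to exploit the fact that left Bousfield localization changes neither the cofibrations nor the trivial fibrations, so that both the colimit functor and a chosen cofibrant replacement functor can be reused verbatim on the localized side, making the two homotopy colimit functors literally coincide.

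First I would record the basic setup: being a left Bousfield localization of a left proper combinatorial model category, $\sM_S$ exists by \cref{thm:locexist} and is again combinatorial, so $(\sM_S)^\sJ_\proj$ exists by \cref{thm:existproj} and the homotopy colimit functor $\hocolim_\sJ \colon (\sM_S)^\sJ \to \sM_S$ is defined. Next I would observe that $\sM$ and $\sM_S$ share the same cofibrations and the same trivial fibrations, so that $\sM^\sJ_\proj$ and $(\sM_S)^\sJ_\proj$ share the same cofibrations (these are detected by the left lifting property against levelwise trivial fibrations), hence the same cofibrant objects. Moreover every weak equivalence in $\sM^\sJ_\proj$ is levelwise a weak equivalence in $\sM$, hence levelwise an $S$-local equivalence, hence a weak equivalence in $(\sM_S)^\sJ_\proj$. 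It follows that a cofibrant replacement functor $Q\colon \sM^\sJ\to \sM^\sJ$ for $\sM^\sJ_\proj$ is simultaneously a cofibrant replacement functor for $(\sM_S)^\sJ_\proj$.

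I would then fix such a $Q$ for both model structures at once. By \cref{thm:colimQP}, applied to $\sM$ and to $\sM_S$, the functor $\colim_\sJ$ is left Quillen both as $\sM^\sJ_\proj\to \sM$ and as $(\sM_S)^\sJ_\proj\to \sM_S$, and in either case its underlying functor is the genuine colimit. Hence the two homotopy colimit functors, computed with this common $Q$, are literally the same functor $\colim_\sJ\circ Q\colon \sM^\sJ\to \sM$, now regarded as landing in $\sM_S$. (One also notes that the notion of being a model for the homotopy colimit functor is insensitive to the choice of cofibrant replacement functor, since two such choices give homotopy colimit functors linked by a zig-zag of levelwise weak equivalences; so there is no loss in making this common choice.)

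Finally, the hypothesis supplies a zig-zag of natural transformations $\cL_\sJ\simeq \hocolim_\sJ$ whose components are levelwise weak equivalences in $\sM$. Since every weak equivalence of $\sM$ is an $S$-local equivalence, i.e. a weak equivalence of $\sM_S$, the very same zig-zag consists of levelwise weak equivalences in $\sM_S$, exhibiting $\cL_\sJ\simeq \hocolim_\sJ$ as functors into $\sM_S$, which is the claim. The argument is essentially formal; the only point needing a little care is the verification that a single cofibrant replacement functor serves both model structures, which rests on the standard fact — immediate from the definition of $\sM_S$ — that localization leaves cofibrations and trivial fibrations untouched, so I do not expect a genuine obstacle here.
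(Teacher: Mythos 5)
Your proposal is correct and follows essentially the same route as the paper: since $\sM$ and $\sM_S$ have the same (trivial fibrations and hence) cofibrations, the projective structures $\sM^\sJ_\proj$ and $(\sM_S)^\sJ_\proj$ share cofibrations, so a cofibrant replacement functor for the former also serves the latter, and the zig-zag $\cL_\sJ\simeq\hocolim_\sJ$ of levelwise weak equivalences in $\sM$ remains one in $\sM_S$. Your write-up merely spells out details (trivial fibrations, independence of the choice of $Q$) that the paper leaves implicit.
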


\begin{proof} 
Since $\sM$ and $\sM_S$ have the same cofibrations, it is straightforward to see that the projective model structure $\sM^\sJ_\proj$ and $(\sM_S)^\sJ_\proj$ have the same cofibrations as well. In particular, if $Q\colon \sM^\sJ\to \sM^\sJ$ is a cofibrant replacement functor for $\sM^\sJ_\proj$, it is also a cofibrant replacement for $(\sM_S)^\sJ_\proj$. The result then follows immediately.
\end{proof}

\section{The horizontal nerve and technical result}

In this section, we review the horizontal nerve functor studied by Fiore--Pronk--Paoli in \cite{FPP}, which maps a double category to a simplicial object in categories. In \cref{sec:hornerve}, we recall the description of the horizontal nerve and its left adjoint. We then prove in \cref{sec:technical} that this nerve preserves certain pushouts---a technical result extending \cite[Theorem 10.7]{FPP} that will be useful later.

\subsection{The horizontal nerve} \label{sec:hornerve}

We denote by $\Cat$ the category of categories and functors. \emph{Double categories} are then defined as internal categories to $\Cat$, and we begin by recalling the notion of internal categories in a category with pullbacks that will be used throughout the paper.

\begin{defn}
    Let $\sM$ be a category with pullbacks. An \textbf{internal category} $\bA$ to $\sM$ is a diagram in $\sM$ of the form
\begin{tz}
    \node[](1) {$\bA_0$};
     \node[right of=1,xshift=.7cm](2) {$\bA_1$}; 
     \node[right of=2,xshift=1.3cm](3) {$\bA_1\times_{\bA_0}\bA_1$}; 

     \draw[->]($(2.west)+(0,6pt)$) to node[above,la]{$s$} ($(1.east)+(0,6pt)$);
     \draw[->]($(2.west)-(0,6pt)$) to node[below,la]{$t$} ($(1.east)-(0,6pt)$);
     \draw[->](1) to node[over,la]{$i$} (2);

     \draw[->](3) to node[above,la]{$c$} (2);
\end{tz}
    consisting of an object $\bA_0$ of \emph{objects}, an object $\bA_1$ of \emph{morphisms}, \emph{source} and \emph{target} morphisms $s$ and $t$, and \emph{identity} and \emph{composition} morphisms $i$ and $c$, such that composition is associative and unital. 

    An \textbf{internal functor} $G\colon \bA\to \bB$ between internal categories to $\sM$ consists of morphisms $G_0\colon \bA_0\to \bB_0$ and $G_1\colon \bA_1\to \bB_1$ in $\sM$ compatible with the structure of internal categories.

    We denote by $\Cat(\sM)$ the category of internal categories to $\sM$ and internal functors.
\end{defn}

We also recall the following construction, which will be used repeatedly.

\begin{defn}
    Let $\sM$ and $\sN$ be categories with pullbacks and $F\colon \sM\to \sN$ be a pullback-preserving functor. Then $F$ induces a functor 
    \[ \Cat(F)\colon \Cat(\sM)\to \Cat(\sN) \]
    between categories of internal categories given by sending an internal category $\bA$ to $\sM$ to the internal category to $\sN$
    \begin{tz}
    \node[](1) {$F(\bA_0)$};
     \node[right of=1,xshift=1.3cm](2) {$F(\bA_1)$}; 
     \node[right of=2,xshift=3.9cm](3) {$F(\bA_1\times_{\bA_0}\bA_1)\cong F(\bA_1)\times_{F(\bA_0)} F(\bA_1)$}; 

     \draw[->]($(2.west)+(0,6pt)$) to node[above,la]{$Fs$} ($(1.east)+(0,6pt)$);
     \draw[->]($(2.west)-(0,6pt)$) to node[below,la]{$Ft$} ($(1.east)-(0,6pt)$);
     \draw[->](1) to node[over,la]{$Fi$} (2);

     \draw[->](3) to node[above,la]{$Fc$} (2);
\end{tz}
    Note that this diagram satisfies the conditions of an internal category as $F$ is functorial. 
\end{defn}

We denote by $\DblCat\coloneqq \Cat(\Cat)$ the category of internal categories and internal functors to $\Cat$, known as \emph{double categories} and \emph{double functors}. Given a double category $\bA$, we refer to $\bA_0$ as the category of \emph{objects} and \emph{vertical morphisms}, and to $\bA_1$ as the category of \emph{horizontal morphisms} and \emph{squares}.  For more details about the category $\DblCat$, we direct the reader to \cite[\textsection3.1.1 and \textsection3.2.1]{Grandis}. We first recall different functors relating the categories $\Cat$ and $\DblCat$.

\begin{defn}
There are canonical inclusions 
\[ \bH\colon \Cat\cong \Cat(\Set)\to \Cat(\Cat)\cong\DblCat \quad \text{and} \quad \bV\colon \Cat\to \Cat(\Cat)\cong \DblCat, \]
given by the functor between categories of internal categories induced by the (pullback-preserving) inclusion $\Set\hookrightarrow \Cat$ and by the constant diagram functor, respectively. 

Explicitly, given a category $\sC$, the horizontal double category $\bH\sC$ has objects and horizontal morphisms given by the objects and morphisms of $\sC$, and only trivial vertical morphisms and squares. The vertical double category $\bV\sC$ admits a similar description by reversing the role of the horizontal and vertical morphisms. 

The \textbf{box product functor} $\boxtimes\colon \Cat\times \Cat\to \DblCat$ is then given by the composite 
\[ \Cat\times \Cat\xrightarrow{\bH\times \bV} \DblCat\times \DblCat \xrightarrow{-\times -}\DblCat \]
sending a pair $(\sC,\sD)$ of categories to the product $\sC\boxtimes \sD\coloneqq \bH\sC\times \bV\sD$. 
\end{defn}

\begin{rem} 
Recall that the inclusion functors $\bH,\bV\colon \Cat\to\DblCat$ admit right adjoint functors $\bfH,\bfV\colon \DblCat\to \Cat$ which send a double category to its underlying horizontal and vertical categories, respectively. 
\end{rem}

We now recall the horizontal nerve from \cite[Definition 5.1]{FPP}. We write $\Simp$ the category of finite posets $[n]\coloneqq \{0<1<\ldots<n\}$ and order-preserving maps between them. Since every poset can be viewed as a category, we often regard $[n]$ as a category without further specification.

\begin{rem}
The category $\DblCat$ is cartesian closed; see \cite[Lemma B2.3.15(ii)]{Elephant}. We denote by $[\bA,\bB]$ the internal hom between two double categories $\bA$ and $\bB$.
\end{rem}

\begin{defn}
The \textbf{horizontal nerve functor} $\Nh\colon \DblCat\to \Cat^{\Simpop}$ sends a double category~$\bA$ to the simplicial object in $\Cat$
\[ \Nh\bA\colon \Simp^\op\to \Cat, \quad [n]\mapsto \bfV[\bH[n],\bA]. \]
Here $\bfV[\bH[n],\bA]$ is the category whose objects are double functors $\bH[n]=[n]\boxtimes[0]\to \bA$ and whose morphisms are double functors $\bH[n]\times \bV[1]=[n]\boxtimes [1]\to \bA$.
\end{defn}

\begin{rem}
Explicitly, we have that $(\Nh\bA)_0$ is the category of objects and vertical morphisms in $\bA$, $(\Nh\bA)_1$ is the category of horizontal morphisms and squares in $\bA$, and 
\[ (\Nh\bA)_n\cong (\Nh\bA)_1\times_{(\Nh\bA)_0}\ldots \times_{(\Nh\bA)_0} (\Nh\bA)_1 \]
is the category of $n$ composable horizontal morphisms and $n$ horizontally composable squares in $\bA$. 
\end{rem}

By \cite[
Theorem 5.6]{FPP}, the horizontal nerve admits a left adjoint. 

\begin{prop}
    The horizontal nerve functor is part of an adjunction
    \begin{tz}
\node[](A) {$\Cat^{\Simpop}$};
\node[right of=A,xshift=1.4cm](B) {$\DblCat$};
\punctuation{B}{.};
\draw[->] ($(A.east)+(0,5pt)$) to node[above,la]{$\ch$} ($(B.west)+(0,5pt)$);
\draw[->] ($(B.west)-(0,5pt)$) to node[below,la]{$\Nh$} ($(A.east)-(0,5pt)$);
\node[la] at ($(A.east)!0.5!(B.west)$) {$\bot$};
\end{tz}
\end{prop}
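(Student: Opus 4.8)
This is \cite[Theorem 5.6]{FPP}; here is the argument we would give. The plan is to produce $\ch$ via the adjoint functor theorem for locally presentable categories. Both categories in play are locally presentable: the functor category $\Cat^{\Simpop}$ is locally presentable since $\Cat$ is and $\Simpop$ is small, and $\DblCat = \Cat(\Cat)$ is locally presentable, being the category of internal categories in the locally presentable category $\Cat$ (for instance, the category of $\Set$-models of a finite limit sketch). Thus it suffices to check that $\Nh$ preserves all small limits and is accessible; a left adjoint $\ch$ then exists by the adjoint functor theorem, since a limit-preserving accessible functor between locally presentable categories is a right adjoint.

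Preservation of limits is immediate once we unwind the definition. Limits in $\Cat^{\Simpop}$ are computed pointwise, so it is enough to see that, for each $[n] \in \Simp$, the functor $\bA \mapsto (\Nh\bA)_n = \bfV[\bH[n],\bA]$ preserves limits. But this functor is the composite of $[\bH[n], -]\colon \DblCat \to \DblCat$, which is right adjoint to $- \times \bH[n]$ by cartesian closedness of $\DblCat$, with $\bfV\colon \DblCat \to \Cat$, which is right adjoint to $\bV$; both are right adjoints, hence limit-preserving, so their composite is.

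For accessibility we argue similarly. Since $\DblCat$ is cartesian closed and locally presentable, $- \times \bH[n]$ preserves colimits, so its right adjoint $[\bH[n], -]$ is a right adjoint between locally presentable categories and therefore accessible; likewise $\bfV$ is accessible as a right adjoint. Hence $\bA \mapsto (\Nh\bA)_n$ is accessible for each $n$, and since colimits in $\Cat^{\Simpop}$ are computed pointwise, it follows that $\Nh$ is accessible. Invoking the adjoint functor theorem now produces the left adjoint $\ch$.

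If one prefers an explicit construction, postcomposition with the levelwise nerve gives a fully faithful embedding $\Cat^{\Simpop} \hookrightarrow \sSet^{\Simpop}$ under which $\Nh$ becomes the double nerve $\DblCat \to \sSet^{\Simpop}$, $\bA \mapsto \big(([n],[m]) \mapsto \Hom_{\DblCat}([n] \boxtimes [m], \bA)\big)$, namely the nerve associated to the functor $\boxtimes\colon \Simp \times \Simp \to \DblCat$. As $\DblCat$ is cocomplete, this double nerve has a left adjoint given by the left Kan extension of $\boxtimes$ along the Yoneda embedding (the usual colimit-of-representables formula), and composing it with the embedding $\Cat^{\Simpop} \hookrightarrow \sSet^{\Simpop}$ yields $\ch$, the full faithfulness of the embedding supplying the required hom-set bijection. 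There is no serious obstacle in either route: the first reduces entirely to standard facts about locally presentable categories together with the observation that $\Nh$ is assembled from right adjoints, and the only labor in the second is the routine bookkeeping identifying $\Nh$ with the double nerve through the levelwise nerve and unwinding the colimit formula.
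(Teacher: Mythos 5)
Your proposal is correct, but it cannot be "the same approach as the paper" for the simple reason that the paper offers no argument at all: the proposition is stated with a bare citation to [FPP, Theorem 5.6], where Fiore--Paoli--Pronk construct the left adjoint $\ch$ explicitly. Both of your routes are sound. The first (adjoint functor theorem) correctly reduces everything to standard facts: $\Cat^{\Simpop}$ and $\DblCat=\Cat(\Cat)$ are locally presentable, limits and filtered colimits in $\Cat^{\Simpop}$ are pointwise, and each level $\bA\mapsto \bfV[\bH[n],\bA]$ is a composite of the right adjoints $[\bH[n],-]$ and $\bfV$, hence limit-preserving and accessible; the only point worth making explicit is that you need a single accessibility rank $\kappa$ working for all $n$ simultaneously, which is harmless since there are only countably many levels. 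The second route is essentially the observation recorded in the paper's \cref{rem:doublenerve} that $\bN\cong N_*\Nh$ with $N_*$ fully faithful, so setting $\ch\coloneqq \bC\circ N_*$ (with $\bC$ the nerve-realization left adjoint obtained by left Kan extending $\boxtimes\colon\Simp\times\Simp\to\DblCat$ along the Yoneda embedding) gives the required natural bijection; this route has the advantage of producing a colimit formula for $\ch$, which is closer in spirit to FPP's explicit construction and to what the paper later uses (e.g.\ the computation of $\ch$ on box products in \cref{chonboxprod}), whereas the pure existence argument would not by itself yield such descriptions.
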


Finally, we describe the action of the left adjoint on box products, which we also introduce here for $\Cat^{\Simpop}$.

\begin{defn}
    There are canonical inclusions 
    \[ \iota\colon \Set^{\Simpop}\to \Cat^{\Simpop} \quad \text{and}\quad \cst\colon \Cat\to \Cat^{\Simpop} \]
    given by post-composition along the inclusion $\Set\hookrightarrow \Cat$ and by the constant diagram functor, respectively. 

    The \textbf{box product functor} $\boxtimes\colon \Set^{\Simpop}\times \Cat\to \Cat^{\Simpop}$ is then given by the composite
    \[ \Set^{\Simpop}\times \Cat\xrightarrow{\iota\times \cst}\Cat^{\Simpop}\times \Cat^{\Simpop}\xrightarrow{-\times -}\Cat^{\Simpop} \]
    sending a pair $(X,\sC)$ with $X\in \Set^{\Simpop}$ and $\sC\in \Cat$ to the product $X\boxtimes \sC\coloneqq \iota X\times \cst \sC$.
\end{defn}

\begin{rem} \label{rem:usualnerve}
    Recall the usual nerve functor $N\colon \Cat\to \Set^{\Simpop}$ sending a category $\sC$ to the simplicial set 
    \[ N\sC\colon \Simpop\to \Set, \quad [n]\mapsto \Cat([n],\sC). \]
    It is part of an adjunction 
    \begin{tz}
\node[](A) {$\Set^{\Simpop}$};
\node[right of=A,xshift=1.1cm](B) {$\Cat$};
\punctuation{B}{.};
\draw[->] ($(A.east)+(0,5pt)$) to node[above,la]{$c$} ($(B.west)+(0,5pt)$);
\draw[->] ($(B.west)-(0,5pt)$) to node[below,la]{$N$} ($(A.east)-(0,5pt)$);
\node[la] at ($(A.east)!0.5!(B.west)$) {$\bot$};
\end{tz}
\end{rem}

\begin{rem} \label{chonboxprod}
By \cite[Example 6.6]{FPP}, the left adjoint functor $\ch\colon \Cat^{\Simpop}\to \DblCat$ sends the box product $X\boxtimes \sC$ of a simplicial set $X$ and a category $\sC$ to the double category $\bH cX\times\bV\sC= cX\boxtimes \sC$.
\end{rem}

\subsection{Technical result} \label{sec:technical}

We now turn to the technical result. To prepare for this, we first introduce the following terminology.

\begin{defn}
A \textbf{sieve of posets} is an inclusion $P\subseteq Q$ of posets which is
\begin{itemize}[leftmargin=0.6cm]
\item \textbf{full}: for all $p,p'\in P$ such that $p\leq p'$ in $Q$, then $p\leq p'$ in $P$,
\item \textbf{down-closed}: for all $p\in P$ and $q\in Q$ such that $q\leq p$ in $Q$, then $q\in P$.
\end{itemize} 
\end{defn}

We further introduce the following property of sieves of posets, whose name is inspired by its relation to the notion of \emph{solid functors}.

\begin{defn}
   A sieve of posets $P\subseteq Q$ is \textbf{weakly solid} if, for all $p_1,p_2\in P$ and $q\in Q$ such that $p_1,p_2\leq q$, there is an element $p\in P$ such that $p_1,p_2\leq p\leq q$. 
\end{defn}

We begin with the following description of pushouts in $\Cat$ along a coproduct of a sieve of posets. Here, we use the fact that every poset can be viewed as a category and that order-preserving maps between posets correspond to functors between the associated categories. 

\begin{lemma} \label{helpful}
Let $P\subseteq Q$ be a sieve of posets, seen as a functor, and $C$ be a set, seen as a discrete category. Consider the following pushout in $\Cat$. 
\begin{tz}
\node[](1) {$C\times P$}; 
\node[below of=1](2) {$C\times Q$}; 
\node[right of=1,xshift=.7cm](3) {$\sA$}; 
\node[below of=3](4) {$\sP$};
\pushout{4};
\draw[right hook->] (1) to (2);
\draw[->] (1) to node[above,la]{$F$} (3); 
\draw[->] (2) to (4); 
\draw[->] (3) to (4);
\end{tz}
The category $\sP$ admits the following description. Objects in $\sP$ are of two forms: 
    \begin{enumerate}[leftmargin=0.85cm]
        \item an object $a\in \sA$; 
        \item a pair $(c,q)$ of an element $c\in C$ and an element $q\in Q\setminus P$. 
    \end{enumerate}
Morphisms in $\sP$ are of three forms: 
    \begin{enumerate}[leftmargin=0.85cm]
        \item a morphism $a\to b$ in $\sA$; 
        \item a pair $(c,q)\to (c,q')$ of an element $c\in C$ and a relation $q\leq q'$ with $q,q'\in Q\setminus P$; 
        \item a formal composite $a\to F(c,p)\to (c,q)$ of a morphism $a\to F(c,p)$ in $\sA$ with a pair $(c,p)\to (c,q)$ of an element $c\in C$ and a relation $p\leq q$ with $p\in P$ and $q\in Q\setminus P$;
    \end{enumerate}
    with relations coming from the relations in $\bA$ and in $C\times Q$.
\end{lemma}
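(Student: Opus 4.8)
The plan is to prove the lemma by identifying the described category with the pushout through its universal property. Write $\sP'$ for the category whose objects and morphisms are as in the statement, making the phrase ``relations coming from the relations in $\sA$ and in $C\times Q$'' precise as follows: a morphism of type~(3) is an equivalence class of pairs $(g\colon a\to F(c,p),\, p\leq q)$ with $p\in P$ and $q\in Q\setminus P$, where $(g,\, p\leq q)$ is identified with $(F(c,\, p\leq p')\circ g,\, p'\leq q)$ whenever $p\leq p'\leq q$ with $p'\in P$ (this encodes the factorisation of the relation $p\leq q$ of $C\times Q$ through $p'$), while morphisms of type~(1), resp.\ type~(2), are identified precisely when they are so in $\sA$, resp.\ in $C\times Q$. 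Composition in $\sP'$ is then forced: type~(1) morphisms compose as in $\sA$; type~(2) morphisms compose by transitivity in $Q$; the composite of a morphism $h\colon b\to a$ of $\sA$ followed by a type~(3) morphism $(g\colon a\to F(c,p),\, p\leq q)$ is $(g\circ h,\, p\leq q)$; and the composite of $(g,\, p\leq q)$ followed by a type~(2) morphism $(c,q)\to(c,q')$ is $(g,\, p\leq q')$, which is legitimate since $p\leq q\leq q'$. These exhaust the composable pairs, since a type~(1) morphism has source and target of type~(1), a type~(2) morphism source and target of type~(2), and a type~(3) morphism goes from an object of type~(1) to one of type~(2).

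First I would check that $\sP'$ is a genuine category: that composition is well defined on equivalence classes of type~(3) morphisms --- which uses only that $F$ is a functor --- and is associative and unital, with $\id_{(c,q)}$ the type~(2) morphism given by $q\leq q$. The structural point making the above list of composable pairs complete is that $P\subseteq Q$ is \emph{down-closed}: a type~(2) object is not an object of $\sA$, and any morphism of $C\times Q$ out of $(c,q)$ with $q\notin P$ lands in some $(c,q')$ with $q'\geq q$, hence still $q'\notin P$; so the only morphisms out of a type~(2) object are of type~(2), and no composite of morphisms coming from $\sA$ and from $C\times Q$ can ``return'' to $\sA$ once it has left it. Next I would set up the comparison cocone: the functor $\sA\to\sP'$ is the inclusion of type~(1) data, and the functor $C\times Q\to\sP'$ sends $(c,q)$ to $F(c,q)$ if $q\in P$ and to the type~(2) object $(c,q)$ otherwise, and sends a relation $q\leq q'$ to its $F$-image if $q,q'\in P$, to the type~(2) morphism if $q,q'\in Q\setminus P$, and to the type~(3) morphism $(\id_{F(c,q)},\, q\leq q')$ if $q\in P$ and $q'\in Q\setminus P$ --- the case $q\in Q\setminus P$, $q'\in P$ being excluded by down-closedness. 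One checks routinely that both are functors and that the square restricts to $F$ on $C\times P$.

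Then I would verify the universal property. Given a category $\sX$ with functors $G\colon \sA\to\sX$ and $H\colon C\times Q\to\sX$ satisfying $GF=H|_{C\times P}$, any functor $K\colon \sP'\to\sX$ compatible with the cocone is completely determined: $a\mapsto Ga$ and $(c,q)\mapsto H(c,q)$ on objects; $f\mapsto Gf$ and $(q\leq q')\mapsto H(c,\, q\leq q')$ on morphisms of types~(1) and~(2); and on a type~(3) morphism necessarily $K(g,\, p\leq q)=H(c,\, p\leq q)\circ G(g)$, since in $\sP'$ that morphism is the image of the relation $p\leq q$ under $C\times Q\to\sP'$ composed after the image of $g$ under $\sA\to\sP'$. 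It then remains to check that this assignment is well defined on equivalence classes of type~(3) morphisms --- for which the identity $GF=H|_{C\times P}$ together with functoriality of $G$ and $H$ replays exactly the computation defining the equivalence relation --- and that $K$ preserves composition, which is a short check against the four composition cases above; uniqueness is immediate as $K$ was forced at every step. This exhibits $\sP'$, with the above cocone, as the pushout, and hence $\sP'\cong\sP$.

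I expect the one genuinely delicate point to be the handling of morphisms of type~(3): the ``formal composite $a\to F(c,p)\to(c,q)$'' must be read as retaining the datum $p$ (up to the equivalence above), not merely as a morphism $a\to F(c,p)$ of $\sA$ together with the relation $p\leq q$ --- incomparable $p,p'\in P$ with $F(c,p)=F(c,p')$ yield distinct morphisms $a\to(c,q)$, reflecting that the pushout glues in the arrows $p\to q$ and $p'\to q$ separately --- so one must fix the correct equivalence relation and carry it through the well-definedness and functoriality verifications. Everything else is bookkeeping; note that only down-closedness of the sieve is used, so neither fullness nor the weakly solid hypothesis from the preceding definition plays any role in this particular lemma.
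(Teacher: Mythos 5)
Your proof is correct, but it takes a genuinely different route from the paper. The paper does not verify the universal property by hand: it invokes an existing description of such pushouts from Fiore--Paoli--Pronk (their Lemma 10.5, which applies because the inclusion $P\subseteq Q$ is \emph{full}) to obtain the objects together with a coarser presentation of the morphisms as formal composites $x\to a\to b\to y$ with $a\to b$ in $\sA$ and $x\to a$, $b\to y$ in $C\times(\Mor Q\setminus \Mor P)$ or identities, and then uses down-closedness only to rule out the possibility that $x\to a$ is non-trivial, thereby collapsing the list to the three stated types. Your argument instead builds the candidate category $\sP'$ explicitly, fixes the equivalence relation on type~(3) morphisms (the ``slide-up'' identification $(g,\,p\leq q)\sim(F(c,\,p\leq p')\circ g,\,p'\leq q)$ for $p\leq p'\leq q$ in $P$), and checks the universal property directly. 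What your approach buys is self-containedness and, importantly, an explicit congruence on type~(3) morphisms, which the paper only states loosely in the lemma and makes precise afterwards in \cref{rem:descriptiontype3}; your generated relation is compatible with that remark (the paper's moves (i) and (ii) are length-two zigzags of your slide-ups). What the paper's route buys is brevity, at the cost of outsourcing the combinatorics of free composites to the cited result.

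One correction to your closing remark: it is not true that only down-closedness is used. Fullness of the sieve enters your own argument, implicitly, at two places: when the comparison functor $C\times Q\to\sP'$ sends a relation $q\leq q'$ with $q,q'\in P$ to ``its $F$-image'' (this presupposes that the relation lies in $C\times P$, which is exactly fullness), and again when you invoke $GF=H|_{C\times P}$ to check that $K$ is well defined on equivalence classes. Without fullness the lemma itself is false: for $Q=\{x<y\}$ and $P$ the \emph{discrete} poset on $\{x,y\}$ (down-closed but not full), with $C=\{\ast\}$, $\sA=P$ and $F=\id$, the pushout is $Q$, which has a morphism not accounted for by the stated description. You are right, however, that the weakly solid condition plays no role in this lemma; it is only needed later for the squares in \cref{technicallemma}.
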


\begin{proof}
Since the inclusion $P\subseteq Q$ is full, we can apply \cite[Lemma 10.5]{FPP}. We obtain the desired description of the objects and we get that the morphisms in $\sP$ are of two forms:
\begin{rome}
    \item morphisms $(c,q)\to (c,q')$ consisting of an element $c\in C$ and a relation $q\leq q'$ with $q,q'\in Q\setminus P$;
    \item formal composites $x\to a\to b\to y$ where $a\to b$ is in $\sA$ and $x\to a$, $b\to y$ are either in $C\times (\Mor Q\setminus \Mor P)$ or identities.
\end{rome}
We recover the morphisms of type (2) as the morphisms of type (i). We now study the morphisms of type (ii). First, note that, since the inclusion $P\subseteq Q$ is down-closed, if $x\to a$ is in $C\times (\Mor Q\setminus \Mor P)$, then its target has to be in $Q\setminus P$ and so cannot be in $\sA$. Hence $x\to a$ must be an identity. Then the morphisms of type (1) are recovered from the case where $b\to y$ is an identity, and the ones of type (3) from the case where $b\to y$ is in $C\times (\Mor Q\setminus \Mor P)$.
\end{proof}

\begin{rem} \label{rem:descriptiontype3}
   Note that two formal composites of type (3)
   \[ a\to F(c,p_1)\to (c,q) \quad \text{and} \quad a\to F(c,p_2)\to (c,q) \]
   can be equal, and they are equal precisely if we are in one of the following situations: 
        \begin{rome}
            \item there is an element $p\in P$ such that $p_1,p_2\leq p\leq q$ and the composites 
            \[ a\to F(c,p_1) \to F(c,p) \quad \text{and} \quad a\to F(c,p_2)\to F(c,p) \]
            agree in $\sA$,
            \item there is an element $p\in P$ such that $p\leq p_1,p_2$ and a morphism $a\to F(c,p)$ in $\sA$ such that $a\to F(c,p_1)$ and $a\to F(c,p_2)$ factor as
            \[ a\to F(c,p)\to F(c,p_1) \quad \text{and} \quad a\to F(c,p)\to F(c,p_2). \]
        \end{rome}
\end{rem}

Using the above lemma, we give an explicit description of pushouts in $\DblCat$ along a box product of a category with a sieve of posets, assuming the sieve is weakly solid. This condition is necessary for understanding the squares of the resulting pushout double category.

\begin{lemma} \label{technicallemma}
Let $P\subseteq Q$ be a weakly solid sieve of posets, seen as a functor, and $\sC$ be a category. Consider the following pushout in $\DblCat$. 
\begin{tz}
\node[](1) {$\sC\boxtimes P$}; 
\node[below of=1](2) {$\sC\boxtimes Q$}; 
\node[right of=1,xshift=.7cm](3) {$\bA$}; 
\node[below of=3](4) {$\bP$}; 
\pushout{4};
\draw[right hook->] (1) to (2);
\draw[->] (1) to node[above,la]{$F$} (3); 
\draw[->] (2) to (4); 
\draw[->] (3) to (4);
\end{tz}
The double category $\bP$ admits the following description. Objects in $\bP$ are of two forms: 
    \begin{enumerate}[leftmargin=0.85cm]
        \item an object $a\in \bA$; 
        \item a pair $(c,q)$ of an object $c\in \sC$ and an element $q\in Q\setminus P$.
    \end{enumerate}
Horizontal morphisms in $\bP$ are of two forms: 
    \begin{enumerate}[leftmargin=0.85cm]
        \item a horizontal morphism $a\to a'$ in $\bA$; 
        \item a pair $(c,q)\to (c',q)$ of a morphism $c\to c'$ in $\sC$ and an element $q\in Q\setminus P$.
    \end{enumerate}
Vertical morphisms in $\bP$ are of three forms: 
    \begin{enumerate}[leftmargin=0.85cm]
        \item a vertical morphism $a\varrow b$ in $\bA$; 
        \item a pair $(c,q)\varrow (c,q')$ of an object $c\in \sC$ and a relation $q\leq q'$ with $q,q'\in Q\setminus P$;
        \item  a formal composite $a\varrow F(c,p)\varrow (c,q)$ of a vertical morphism $a\varrow F(c,p)$ in $\bA$ with a pair $(c,p)\varrow (c,q)$ of an object $c\in \sC$ and a relation $p\leq q$ with $p\in P$ and $q\in Q\setminus P$.
        \end{enumerate}
        Squares in $\bP$ are of three forms: 
    \begin{enumerate}[leftmargin=0.85cm]
        \item a square $\alpha$ in $\bA$; 
        \item a pair $(c\to c', q\leq q')$ of a morphism $c\to c'$ in $\sC$ and a relation $q\leq q'$ with $q,q'\in Q\setminus P$;
        \begin{tz}
        \node[](1) {$(c,q)$}; 
        \node[below of=1](2) {$(c,q')$}; 
        \node[right of=1,xshift=.9cm](3) {$(c',q)$}; 
        \node[below of=3](4) {$(c',q')$}; 
        
        \draw[->,pro] (1) to (2); 
        \draw[->] (1) to (3);
        \draw[->] (2) to (4); 
        \draw[->,pro] (3) to (4);
        
        \node at ($(1)!0.5!(4)$) {\rotatebox{270}{$\Rightarrow$}};
        \end{tz}
        \item a formal vertical composite of a square in $\bA$ with a pair $(c\to c', p\leq q)$ of a morphism $c\to c'$ in $\sC$ and a relation $p\leq q$ with $p\in P$ and $q\in Q\setminus P$.
        \begin{tz}
        \node[](0) {$a$};
        \node[right of=0,xshift=.9cm](10) {$a'$};
        \node[below of=0](1) {$F(c,p)$}; 
        \node[below of=1](2) {$(c,q)$}; 
        \node[below of=10](3) {$F(c',p)$}; 
        \node[below of=3](4) {$(c',q)$}; 
        
        \draw[->,pro] (0) to (1); 
        \draw[->] (0) to (10);
        \draw[->,pro] (10) to (3);
        \draw[->,pro] (1) to (2); 
        \draw[->] (1) to (3);
        \draw[->] (2) to (4); 
        \draw[->,pro] (3) to (4);
        
        \node at ($(0)!0.5!(3)$) {\rotatebox{270}{$\Rightarrow$}};
        \node at ($(1)!0.5!(4)$) {\rotatebox{270}{$\Rightarrow$}};
        \end{tz}
    \end{enumerate}
    These data satisfy relations coming from the relations in $\bA$ and in $\sC\boxtimes Q$. 
\end{lemma}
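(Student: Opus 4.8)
The plan is to construct the pushout double category $\bP$ explicitly, with the data described in the statement, and then to verify that it satisfies the universal property of the pushout; the main tool is \cref{helpful}, applied separately to the category $\bP_0$ of objects and vertical morphisms and to the category $\bP_1$ of horizontal morphisms and squares.

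The category $\bP_0$ is the easy part: the functor $(-)_0\colon\DblCat\to\Cat$ admits a right adjoint (the functor sending a category $\sD$ to the indiscrete internal category with $\sD\times\sD$ as object of morphisms), hence preserves all colimits. As $(\sC\boxtimes P)_0\cong\Ob(\sC)\times P$ and $(\sC\boxtimes Q)_0\cong\Ob(\sC)\times Q$ with $\Ob(\sC)$ a discrete category, applying $(-)_0$ to the pushout square and invoking \cref{helpful} with $C=\Ob(\sC)$, $\sA=\bA_0$ and $F=F_0$ gives precisely the objects and vertical morphisms listed, the ones of type (3) being the ``formal composites'' of \cref{helpful}. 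The functor $(-)_1$, by contrast, need not preserve colimits (a pushout along a non-sieve could create new horizontal composites), so instead I would \emph{define} $\bP_1$ as the pushout $\bA_1\sqcup_{(\sC\boxtimes P)_1}(\sC\boxtimes Q)_1$ in $\Cat$; since $(\sC\boxtimes P)_1\cong\Mor(\sC)\times P$ and $(\sC\boxtimes Q)_1\cong\Mor(\sC)\times Q$ with $\Mor(\sC)$ discrete, \cref{helpful} with $C=\Mor(\sC)$, $\sA=\bA_1$ and $F=F_1$ yields exactly the stated horizontal morphisms and squares, those of type (3) being the corresponding ``formal composites''. One then takes $s,t\colon\bP_1\to\bP_0$, $i\colon\bP_0\to\bP_1$ and the horizontal composition $c$ to be the evident maps induced on the pushouts by the structure maps of $\bA$ and of $\sC\boxtimes Q$. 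This makes sense because a composable pair of horizontal morphisms (or of squares) of $\bP$ never mixes the two types: the target of a morphism of the first kind is an object of $\bA$, whereas the source of a morphism of the second kind is a pair $(c,q)$ with $q\notin P$, and these can never agree. Granting the weak solidity discussion below, the pullback $\bP_1\times_{\bP_0}\bP_1$ is therefore again a pushout $\bA_2\sqcup_{(\sC\boxtimes P)_2}(\sC\boxtimes Q)_2$ in $\Cat$, where $(-)_2\coloneqq(-)_1\times_{(-)_0}(-)_1$ is the category of composable pairs, and $c$ is well defined on it.

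Verifying that $\bP$ with these data is a genuine double category is the main obstacle, and it is exactly here that weak solidity is used. The delicate point is the horizontal composition of squares of type (3): such a square is a vertical composite of a square of $\bA$ with a square of $\sC\boxtimes Q$ ``at a level $p\in P$'', and a composable pair of two such squares shares a type (3) vertical morphism as its middle boundary, which by \cref{rem:descriptiontype3} admits several presentations $a\varrow F(c,p_1)\varrow(c,q)$ and $a\varrow F(c,p_2)\varrow(c,q)$, identified precisely through a common element $p\in P$ with $p_1,p_2\le p\le q$. Weak solidity guarantees that any two elements $p_1,p_2\in P$ lying below a common $q\in Q$ admit such an upper bound $p\in P$ with $p\le q$ --- equivalently, the subposet of $P$ on the elements below $q$ is upward directed --- so that any composable pair of type (3) squares, and any two presentations of a single one, can be brought to a common level $p$ (raising the contributions from $\bA$ accordingly), after which the composite is formed componentwise using the composition of $\bA$ on the upper part and that of $\sC\boxtimes Q$ on the lower part, with independence of the chosen level following again from directedness. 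This is exactly what makes $\bP_1\times_{\bP_0}\bP_1$ the pushout claimed above, hence makes $c$ well defined; the remaining axioms --- associativity and unitality of the two compositions and the interchange law --- then follow from the corresponding axioms in $\bA$ and $\sC\boxtimes Q$ together with the pushout descriptions of $\bP_0$, $\bP_1$ and $\bP_1\times_{\bP_0}\bP_1$.

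Finally, the universal property is immediate: a double functor $\bP\to\bD$ is the same as a pair of functors $\bP_0\to\bD_0$, $\bP_1\to\bD_1$ compatible with $s,t,i,c$, and since, by construction, $\bP_0$, $\bP_1$ and $\bP_1\times_{\bP_0}\bP_1$ are pushouts in $\Cat$ and the structure maps of $\bP$ are the ones induced from those of $\bA$ and $\sC\boxtimes Q$, such a pair corresponds bijectively and naturally to a pair of double functors $\bA\to\bD$ and $\sC\boxtimes Q\to\bD$ agreeing after restriction along $\sC\boxtimes P$. Thus $\bP$ is the pushout $\bA\sqcup_{\sC\boxtimes P}(\sC\boxtimes Q)$, with the asserted description and with the relations inherited from those of $\bA$ and $\sC\boxtimes Q$ via \cref{helpful}.
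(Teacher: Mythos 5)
Your construction is correct, but it takes a genuinely different route from the paper. The paper never builds $\bP$ by hand: it uses that the pushout already exists in $\DblCat$, computes the underlying horizontal and vertical categories of $\bP$ by applying the colimit-preserving functors $\bfH$ and $\bfV$ and invoking \cref{helpful} (your $(-)_0$ argument via the codiscrete right adjoint is exactly the paper's $\bfV$ step), and then identifies the squares by induction on the number of squares in an allowable compatible arrangement in the sense of Fiore--Pronk--Paoli, with weak solidity used at the single point where two type (3) squares must be composed horizontally along a shared boundary presented at two different levels $p_1,p_2$. You instead assemble a candidate double category from levelwise pushouts in $\Cat$ ($\bP_0$ and $\bP_1$, both described by \cref{helpful}), prove the Segal-type identification of $\bP_1\times_{\bP_0}\bP_1$ with the pushout of the categories of horizontally composable pairs, and then verify the universal property directly. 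The paper's route buys automatic well-definedness: since the pushout exists and the FPP machinery describes its cells, there is nothing to check about axioms or universal properties. Your route buys independence from the arrangement machinery (only \cref{helpful} and \cref{rem:descriptiontype3} are used), at the price of verifying that horizontal composition is well defined, that the double category axioms hold, and that the universal property is satisfied; your reduction of the last two points to the pushout descriptions of $\bP_0$, $\bP_1$ and $\bP_1\times_{\bP_0}\bP_1$ is sound.

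The one step you should spell out more is the identification of $\bP_1\times_{\bP_0}\bP_1$ with $\bA_1\times_{\bA_0}\bA_1\sqcup_{(\sC\boxtimes P)_1\times_{(\sC\boxtimes P)_0}(\sC\boxtimes P)_1}(\sC\boxtimes Q)_1\times_{(\sC\boxtimes Q)_0}(\sC\boxtimes Q)_1$. Surjectivity on morphisms is what you describe: a composable pair of type (3) squares can be raised to a common level $p$ by weak solidity, and case (ii) of \cref{rem:descriptiontype3} must first be reduced to case (i), which again uses weak solidity (this reduction is carried out explicitly in the paper). But you also need injectivity: two common-level presentations mapping to the same pair of squares of $\bP$ must already be identified in that pushout, which requires choosing, for the mediating levels arising in each coordinate separately, a further common upper bound in $\{p\in P\mid p\leq q\}$ so that the two identifications can be performed simultaneously on the shared boundary. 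This is again directedness, but it is a separate check from the one you state. Similarly, associativity of horizontal composition is most cleanly handled by noting that the same argument identifies the triple fiber product with the corresponding pushout. With these points made explicit, your argument is complete.
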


\begin{proof}
Since the functor $\bfH\colon \DblCat\to \Cat$ is a left adjoint (see e.g.~\cite[Remark 4.6]{MSV1}) and so preserves pushouts, the underlying horizontal category of $\bP$ is the pushout of underlying horizontal categories, namely it is the pushout in $\Cat$
\begin{tz}
\node[](1) {$\sC\times \Ob P$}; 
\node[below of=1](2) {$\sC\times \Ob Q$}; 
\node[right of=1,xshift=1.1cm](3) {$\bfH\bA$}; 
\node[below of=3](4) {$\bfH\bP$}; 
\pushout{4};
\punctuation{4}{.};
\draw[right hook->] (1) to (2);
\draw[->] (1) to node[above,la]{$\bfH F$} (3); 
\draw[->] (2) to (4); 
\draw[->] (3) to (4);
\end{tz}
Hence, by \cite[Lemma 10.4]{FPP}, we have that $\bfH \bP=\bfH\bA\sqcup (\sC\times (\Ob Q\setminus \Ob P))$, and so the double category $\bP$ has objects and horizontal morphisms as described. 

Similarly, the underlying vertical category of $\bP$ is the pushout of underlying vertical categories, namely it is the pushout in $\Cat$
\begin{tz}
\node[](1) {$\Ob\sC\times P$}; 
\node[below of=1](2) {$\Ob\sC\times Q$}; 
\node[right of=1,xshift=1.1cm](3) {$\bfV\bA$}; 
\node[below of=3](4) {$\bfV\bP$}; 
\pushout{4};
\punctuation{4}{.};
\draw[right hook->] (1) to (2);
\draw[->] (1) to node[above,la]{$\bfV F$} (3); 
\draw[->] (2) to (4); 
\draw[->] (3) to (4);
\end{tz}
By \cref{helpful} applied to $C=\Ob\sC$ and $\sA=\bfV\bA$, we directly get that the vertical morphisms in $\bP$ are of the desired forms. 

We show that the squares in $\bP$ are of the desired forms by induction on the number $n$ of squares in an allowable compatible arrangement of squares in $\bA$ and $\sC\boxtimes Q$; see \cite[Definitions 3.4 and 3.5]{FPP}. If $n=1$, then this square is of type (1), (2), or of type (3) with square in $\bA$ being trivial. If $n>1$, suppose that the full length cut is horizontal. 
\begin{tz}
\coordinate[](a);
\coordinate[right of=a](b);
\coordinate[below of=a](c); 
\coordinate[below of=b](d); 
\coordinate[below of=c](e); 
\coordinate[below of=d](f);

\draw (a) -- (b);
\draw (a) -- (c);
\draw (b) -- (d);
\draw (c) -- (d);
\draw (e) -- (f);
\draw (c) -- (e);
\draw (d) -- (f);

\node[la] at ($(a)!0.5!(d)$) {$\alpha$}; 
\node[la] at ($(c)!0.5!(f)$) {$\beta$}; 
\end{tz}
By induction, the squares $\alpha$ and $\beta$ are of type (1), (2), or (3). By description of the horizontal morphisms in $\bP$, the full length cut is either completely in $\bA$ or completely in $\Mor\sC\times (Q\setminus P)$. If it is in $\bA$, then $\alpha$ must be of type (1) since this is the only type of square whose bottom boundary is in $\bA$. Then $\beta$ is either of type (1) or (3), and in both cases we can compose to obtain a square of type (1) or (3). If the full length cut is in $\Mor\sC\times (Q\setminus P)$, then the square $\beta$ must be of type (2) since this is the only type of square whose top boundary is in $\Mor\sC\times (Q\setminus P)$. Then $\alpha$ is either of type (2) or (3), and in both cases we can compose to obtain a square of type (2) or (3). 

Now suppose that the full length cut is vertical.
\begin{tz}
\coordinate[](a);
\coordinate[right of=a](b);
\coordinate[below of=a](c); 
\coordinate[below of=b](d); 
\coordinate[right of=b](e); 
\coordinate[below of=e](f);

\draw (a) -- (b);
\draw (a) -- (c);
\draw (b) -- (d);
\draw (c) -- (d);
\draw (e) -- (f);
\draw (b) -- (e);
\draw (d) -- (f);

\node[la] at ($(a)!0.5!(d)$) {$\alpha$}; 
\node[la] at ($(b)!0.5!(f)$) {$\beta$}; 
\end{tz}
By induction, the squares $\alpha$ and $\beta$ are of type (1), (2), or (3). By description of the vertical morphisms, the full length cut is of type (1), (2) or (3). If it is of type (1), i.e., it is completely in $\bA$, then both $\alpha$ and $\beta$ must be of type (1) as these are the only type of squares whose vertical boundaries lie in $\bA$, and we can compose them in $\bA$ to obtain a square of type (1). If the full length cut is of the form (2), i.e., it is in $\Ob\sC\times \Mor(Q\setminus P)$, then both $\alpha$ and $\beta$ must be of type (2) as these are the only type of squares whose vertical boundaries lie in $\Ob\sC\times \Mor(Q\setminus P)$, and we can compose them in $\sC\boxtimes (Q\setminus P)$ to obtain a square of type (2). Finally, if the full length cut is of type (3), then the squares $\alpha$ and $\beta$ must be of the form
\begin{tz}
        \node[](0) {$a_1$};
        \node[right of=0,xshift=.9cm](10) {$a$};
        \node[below of=0](1) {$F(c_1,p_1)$};
        \node[left of=1,xshift=.3cm] {$\alpha=$};
        \node[below of=1](2) {$(c_1,q)$}; 
        \node[below of=10](3) {$F(c,p_1)$}; 
        \node[below of=3](4) {$(c,q)$}; 
        
        \draw[->,pro] (0) to (1); 
        \draw[->] (0) to (10);
        \draw[->,pro] (10) to (3);
        \draw[->,pro] (1) to (2); 
        \draw[->] (1) to (3);
        \draw[->] (2) to (4); 
        \draw[->,pro] (3) to (4);
        
        \node at ($(0)!0.5!(3)$) {\rotatebox{270}{$\Rightarrow$}};
        \node at ($(1)!0.5!(4)$) {\rotatebox{270}{$\Rightarrow$}};

        \node[right of=10,xshift=2cm](0) {$a$};
        \node[right of=0,xshift=.9cm](10) {$a_2$};
        \node[below of=0](1) {$F(c,p_2)$};
        \node[left of=1,xshift=.3cm] {$\beta=$}; 
        \node[below of=1](2) {$(c,q)$}; 
        \node[below of=10](3) {$F(c_2,p_2)$}; 
        \node[below of=3](4) {$(c_2,q)$}; 
        
        \draw[->,pro] (0) to (1); 
        \draw[->] (0) to (10);
        \draw[->,pro] (10) to (3);
        \draw[->,pro] (1) to (2); 
        \draw[->] (1) to (3);
        \draw[->] (2) to (4); 
        \draw[->,pro] (3) to (4);
        
        \node at ($(0)!0.5!(3)$) {\rotatebox{270}{$\Rightarrow$}};
        \node at ($(1)!0.5!(4)$) {\rotatebox{270}{$\Rightarrow$}};
        \end{tz}
        where the top squares are in $\bA$, $c_1\to c$, $c\to c_2$ are morphisms in $\sC$, and $p_1,p_2\leq q$ with $p_1,p_2\in P$ and $q\in Q\setminus P$. Since the right boundary of $\alpha$ must be equal to the left boundary of $\beta$, we have that the composites 
        \[ a\varrow F(c,p_1) \varrow (c,q) \quad \text{and} \quad a\varrow F(c,p_2)\varrow (c,q) \]
        agree in $\bP$. By \cref{rem:descriptiontype3}, we must be in one of the following situations: 
        \begin{rome}
            \item there is an element $p\in P$ such that $p_1,p_2\leq p\leq q$ and the composites 
            \[ a\varrow F(c,p_1) \varrow F(c,p) \quad \text{and} \quad a\varrow F(c,p_2)\varrow F(c,p) \]
            agree in $\bA$,
            \item there is an element $p\in P$ such that $p\leq p_1,p_2$ and a vertical morphism $a\varrow F(c,p)$ in $\bA$ such that $a\varrow F(c,p_1)$ and $a\varrow F(c,p_2)$ factor as
            \[ a\varrow F(c,p)\varrow F(c,p_1) \quad \text{and} \quad a\varrow F(c,p)\varrow F(c,p_2). \]
        \end{rome}
        If we are in case (i), then the horizontal composite of $\alpha$ and $\beta$ is given by 
        \begin{tz}
        \node[](0) {$a_1$};
        \node[right of=0,xshift=.9cm](10) {$a$};
        \node[below of=0](1) {$F(c_1,p_1)$};
        \node[below of=1](2) {$F(c_1,p)$}; 
        \node[below of=2](20) {$(c_1,q)$}; 
        \node[below of=10](3) {$F(c,p_1)$}; 
        \node[below of=3](4) {$F(c,p)$}; 
        \node[below of=4](40) {$(c,q)$}; 
        
        \draw[->,pro] (0) to (1); 
        \draw[->] (0) to (10);
        \draw[->,pro] (10) to (3);
        \draw[->,pro] (1) to (2); 
        \draw[->] (1) to (3);
        \draw[->] (2) to (4); 
        \draw[->] (20) to (40); 
        \draw[->,pro] (3) to (4);
        \draw[->,pro] (2) to (20);
        \draw[->,pro] (4) to (40);
        
        \node at ($(0)!0.5!(3)$) {\rotatebox{270}{$\Rightarrow$}};
        \node at ($(1)!0.5!(4)$) {\rotatebox{270}{$\Rightarrow$}};
        \node at ($(2)!0.5!(40)$) {\rotatebox{270}{$\Rightarrow$}};

        \node[right of=10,xshift=.9cm](0) {$a$};
        \node[below of=0](1) {$F(c,p_2)$}; 
        \node[below of=1](2) {$F(c,p)$}; 
        \node[below of=2](20) {$(c,q)$}; 

        \draw[d](10) to (0); 
        \draw[d](4) to (2);
        \draw[d](40) to (20);
        \draw[->,pro] (2) to (20);

        \node at ($(10)!0.5!(2)$) {$=$};
        \node at ($(4)!0.5!(20)$) {$=$};

        \node[right of=0,xshift=.9cm](10) {$a_2$};
        \node[below of=10](3) {$F(c_2,p_2)$}; 
        \node[below of=3](4) {$F(c_2,p)$}; 
        \node[below of=4](40) {$(c_2,q)$}; 
        
        \draw[->,pro] (0) to (1); 
        \draw[->] (0) to (10);
        \draw[->,pro] (10) to (3);
        \draw[->,pro] (1) to (2); 
        \draw[->] (1) to (3);
        \draw[->] (2) to (4); 
        \draw[->] (20) to (40); 
        \draw[->,pro] (3) to (4);
        \draw[->,pro] (4) to (40);

        \node at ($(0)!0.5!(3)$) {\rotatebox{270}{$\Rightarrow$}};
        \node at ($(1)!0.5!(4)$) {\rotatebox{270}{$\Rightarrow$}};
        \node at ($(2)!0.5!(40)$) {\rotatebox{270}{$\Rightarrow$}};
        \end{tz}
        and so we can compose the top part in $\bA$ and the bottom part in $\sC\boxtimes Q$ to obtain a square of type (3). Finally, case (ii) can be recovered from case (i), since the inclusion $P\subseteq Q$ is weakly solid. Indeed, as $p_1,p_2\leq q$, there is an element $\overline{p}\in P$ such that $p_1,p_2\leq \overline{p}\leq q$ and the composites 
        \[ a\varrow F(c,p)\varrow F(c,p_1)\varrow F(c,\overline{p}) \quad \text{and} \quad a\varrow F(c,p)\varrow F(c,p_2)\varrow F(c,\overline{p}) \]
        agree in $\bA$. This concludes the proof.
\end{proof}

Using this description, we deduce that the horizontal nerve preserves such pushouts.

\begin{prop}\label{pushoutnerve}
Let $P\subseteq Q$ be a weakly solid sieve of posets, seen as a functor, and $\sC$ be a category. Consider the following pushout in $\DblCat$. 
\begin{tz}
\node[](1) {$\sC\boxtimes P$}; 
\node[below of=1](2) {$\sC\boxtimes Q$}; 
\node[right of=1,xshift=.7cm](3) {$\bA$}; 
\node[below of=3](4) {$\bP$}; 
\pushout{4};
\draw[right hook->] (1) to (2);
\draw[->] (1) to node[above,la]{$F$} (3); 
\draw[->] (2) to (4); 
\draw[->] (3) to (4);
\end{tz}
The horizontal nerve $\Nh\colon \DblCat\to \Cat^{\Simp^\op}$ preserves this pushout, meaning that the following square is a pushout in $\Cat^{\Simpop}$.
\begin{tz}
\node[](1) {$\Nh(\sC\boxtimes P)$}; 
\node[below of=1](2) {$\Nh(\sC\boxtimes Q)$}; 
\node[right of=1,xshift=1.4cm](3) {$\Nh\bA$}; 
\node[below of=3](4) {$\Nh\bP$}; 
\pushout{4};
\draw[->] (1) to (2);
\draw[->] (1) to node[above,la]{$\Nh F$} (3); 
\draw[->] (2) to (4); 
\draw[->] (3) to (4);
\end{tz}
\end{prop}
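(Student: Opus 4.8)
The plan is to reduce the assertion to a levelwise statement about pushouts in $\Cat$, and then to match both sides against the explicit descriptions provided by \cref{helpful,technicallemma}. Since colimits in $\Cat^{\Simpop}$ are computed levelwise, it suffices to prove that, for every $[n]\in\Simp$, the square obtained by evaluating the nerves at $[n]$ is a pushout in $\Cat$. To make this square concrete, I would first compute the two box-product corners. As $\Nh$ is a right adjoint it preserves products, and $\sC\boxtimes P=\bH\sC\times\bV P$ by definition, so $\Nh(\sC\boxtimes P)\cong\Nh(\bH\sC)\times\Nh(\bV P)$; a direct inspection of the relevant internal homs shows $\Nh(\bH\sC)\cong\iota(N\sC)$ and $\Nh(\bV P)\cong\cst P$, whence $\Nh(\sC\boxtimes P)\cong N\sC\boxtimes P$, and similarly with $Q$ in place of $P$. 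Evaluating at $[n]$, the left-hand vertical map of the square thus becomes the inclusion of categories $(N\sC)_n\times P\hookrightarrow(N\sC)_n\times Q$, in which $(N\sC)_n$ is a discrete category and $P\subseteq Q$ is a sieve of posets---exactly the situation treated in \cref{helpful}. Hence the pushout $\sP_n$ of $(\Nh\bA)_n\leftarrow(N\sC)_n\times P\to(N\sC)_n\times Q$ in $\Cat$ is given by the explicit description of \cref{helpful}, with $C=(N\sC)_n$, with $\sA=(\Nh\bA)_n$, and with the map $F$ there given by $(\Nh F)_n$.

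It then remains to show that the canonical comparison functor $\sP_n\to(\Nh\bP)_n$---induced at level $n$ by the image under $\Nh$ of the pushout cocone in $\DblCat$---is an isomorphism. By the remark following the definition of $\Nh$, the category $(\Nh\bP)_n$ is the category of $n$ composable horizontal morphisms and $n$ horizontally composable squares in $\bP$, and I would read these data off from the description of $\bP$ in \cref{technicallemma}. The decisive observation is that the three types of cells listed there have pairwise disjoint boundary patterns: type~(1) cells have all their boundary in $\bA$, type~(2) cells have all their boundary among the elements $(c,q)$ with $q\in Q\setminus P$, and type~(3) cells are vertical composites bridging the two (in particular, their vertical boundaries are exactly the type~(3) vertical morphisms). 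It follows that any horizontally composable string of such cells is homogeneous in type: a string of type~(1) cells is precisely a cell of $(\Nh\bA)_n$; a string of type~(2) cells amounts to an element of $(N\sC)_n$ together with a cell of the poset $Q\setminus P$; and a string of type~(3) cells is a formal vertical composite of a cell of $(\Nh\bA)_n$ with such a pair, with its top boundary constrained to lie in the image of $(\Nh F)_n$. These three cases correspond term by term to the three types of objects and morphisms in the description of $\sP_n$ from \cref{helpful}, and the correspondence is compatible with the maps coming from $(\Nh\bA)_n$ and from $(N\sC)_n\times Q$. Hence $\sP_n\to(\Nh\bP)_n$ is bijective on objects and on morphisms, so it is an isomorphism; as this is natural in $[n]$, we conclude that $\Nh\bP$ is the pushout in $\Cat^{\Simpop}$.

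The substantive part---and the main obstacle---is this last matching: one must check that strings of cells in $\bP$ are genuinely forced to be homogeneous in type, and that the ensuing bookkeeping reproduces \emph{exactly} the three types of objects and morphisms of \cref{helpful}, including a careful treatment of the type~(3) formal composites and of the identifications between them recorded in \cref{rem:descriptiontype3}. The weak solidity of $P\subseteq Q$ does not, however, reappear as an obstruction at this stage: it has already been used in \cref{technicallemma} to obtain the clean classification of the squares of $\bP$ on which the whole argument rests.
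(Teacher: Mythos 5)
Your outline is essentially the proof the paper has in mind: the paper's own argument is simply "proceed as in [FPP, Theorem 10.7], using the description of $\bP$ from \cref{technicallemma}", and your levelwise reduction, the identification $\Nh(\sC\boxtimes P)\cong N\sC\boxtimes P$, the appeal to \cref{helpful} for the levelwise pushout, and the type-by-type matching of horizontally composable strings in $\bP$ against that description is exactly how that argument goes. The homogeneity observation (composable strings of squares are all of type (1), (2), or (3), since the three classes of vertical morphisms are disjoint) is correct and is the right organizing principle.

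The one claim I would push back on is your final parenthetical that weak solidity "does not reappear" after \cref{technicallemma}. It does, precisely in the step you yourself flag as the main obstacle. A horizontally composable string of type~(3) squares comes with a priori different witnesses $p_1,\dots,p_n\in P$, one per square, and to exhibit the string as the image of a single type~(3) morphism of $\sP_n$ (a string of $\bA$-squares landing at one common level $p$, followed by $(\vec c,\,p\leq q)$) you must merge these witnesses. The composability of the vertical boundaries only tells you, via \cref{rem:descriptiontype3}, that adjacent witnesses are compatible either over some $p'\geq p_i,p_{i+1}$ with $p'\leq q$ (case (i)) or under some $p\leq p_i,p_{i+1}$ (case (ii)); in case (ii) you need weak solidity to push back up below $q$, and you then need it again (iterated) to find a single $p\in P$ with $p\leq q$ dominating all the witnesses and all the intermediate comparisons, so that the re-expressed $\bA$-squares become horizontally composable in $\bA$. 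Without this, surjectivity of the comparison $\sP_n\to(\Nh\bP)_n$ on morphisms (for $n\geq 2$) is not established; the classification of individual squares in \cref{technicallemma} alone does not give it, since it describes single cells, not strings. So the strategy is right and matches the paper, but the deferred verification does re-use the weak solidity hypothesis, in essentially the same way as the vertical full-length-cut case in the proof of \cref{technicallemma}.
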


\begin{proof}
The proof proceeds as in \cite[Theorem 10.7]{FPP} and relies on the description of the pushout double category $\bP$ given in \cref{technicallemma}.
\end{proof}

\section{The projective model structure}

In this section, we recall a result by Fiore–Pronk–Paoli in \cite{FPP}, which establishes the existence of the right-induced model structure on $\DblCat$ along the horizontal nerve  $\Nh\colon \DblCat\to \Cat^{\Simpop}$, when $\Cat^{\Simpop}$ is endowed with the projective model structure on simplicial objects in the Thomason model structure on $\Cat$. For this, we first review the main features of the Thomason model structure on $\Cat$ in \cref{sec:Thomason} before recalling the result of \cite{FPP} in \cref{sec:MSdblcat}. Finally, in \cref{sec:leftproper}, we show that the resulting model structure on $\DblCat$ is left proper.

\subsection{Thomason's model structure on \texorpdfstring{$\Cat$}{Cat}} \label{sec:Thomason}

The Thomason model structure on $\Cat$ was introduced by Thomason in \cite{Thomason} and is a model of the homotopy theory of spaces. It is induced from another such model, the Kan--Quillen model structure on simplicial sets, which we begin by recalling. We denote by $\sSet\coloneqq \Set^{\Simpop}$ the category of simplicial sets. 

\begin{notation}
    For $k\geq 0$, we write $\Simp[k]\in \sSet$ for the representable functor at $[k]\in \Simp$. We denote by $\partial \Simp[k]$ its boundary and by $\Lambda^t[k]$ its $t$-horn, for $0\leq t\leq n$. 
\end{notation}

The following result is due to Quillen \cite{Quillen}. 

\begin{theorem}
    There is a model structure on $\sSet$, whose weak equivalences are the weak homotopy equivalences and fibrations are the Kan fibrations. It is called the \emph{Kan--Quillen model structure} and we denote it by $\sSet_\Kan$

    Moreover, the model structure $\sSet_\Kan$ is cofibrantly generated with generating sets of cofibrations and trivial cofibrations given by 
\[ \{ \partial\Simp[k]\to \Simp[k]\mid k\geq 0\} \quad \text{and}\quad \{ \Lambda^t[k]\to \Simp[k]\mid k\geq 1, 0\leq t\leq k\}, \]
respectively, and it is left proper.
\end{theorem}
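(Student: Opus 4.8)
The plan is to follow Quillen's original construction \cite{Quillen}, which I would phrase in the modern language of cofibrantly generated model structures, via Kan's recognition theorem. First I would fix the candidate classes: the cofibrations as the monomorphisms, equivalently the maps in the saturation of $I\coloneqq\{\partial\Simp[k]\to\Simp[k]\mid k\geq 0\}$; the fibrations as the Kan fibrations, i.e.~the maps with the right lifting property against $J\coloneqq\{\Lambda^t[k]\to\Simp[k]\mid k\geq 1,\ 0\leq t\leq k\}$; and the weak equivalences as the weak homotopy equivalences, defined through the homotopy groups of the geometric realization (or, equivalently, of $\Ex^\infty$). Since $\sSet$ is a presheaf category it is locally presentable, and the class of weak homotopy equivalences satisfies two-out-of-three, is closed under retracts, and is accessible. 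By the recognition theorem it then suffices to check that the $I$-injectives are exactly the trivial fibrations and that the $J$-cell complexes are trivial cofibrations.

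The technical core is the theory of anodyne extensions. On the one hand, one shows that maps in the saturation of $J$ are monomorphisms that are weak homotopy equivalences: stability under transfinite composition uses that weak homotopy equivalences commute with filtered colimits along monomorphisms, and the generating case reduces to the fact that each horn inclusion becomes a deformation retract after geometric realization, invoking Quillen's theorem that geometric realization carries Kan fibrations to Serre fibrations. On the other hand, one shows that a map is an $I$-injective (a trivial fibration) precisely when it is a Kan fibration and a weak homotopy equivalence; the ``only if'' direction is elementary (an $I$-injective has a section and contractible fibers), while the ``if'' direction is the delicate one, handled via minimal fibrations or via $\Ex^\infty$. This last implication is the step I expect to be the main obstacle, since it is where genuine homotopy-theoretic input enters rather than formal lifting-property bookkeeping.

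Once the model structure is in place, cofibrant generation with the stated sets $I$ and $J$ is exactly the output of the recognition theorem. For left properness, I would simply note that every simplicial set is cofibrant, since the unique map $\emptyset\to X$ is always a monomorphism, and that any model category in which all objects are cofibrant is left proper---a pushout of a weak equivalence along a cofibration is again a weak equivalence; see e.g.~\cite[Corollary 13.1.3]{Hirschhorn}. Hence the claim follows.
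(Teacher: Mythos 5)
The paper does not prove this statement at all: it is recalled as Quillen's classical theorem and simply cited to \cite{Quillen}, so there is no internal argument for your proposal to diverge from. What you outline is the standard modern proof (Kan's recognition theorem with $I=\{\partial\Simp[k]\to\Simp[k]\}$ and $J=\{\Lambda^t[k]\to\Simp[k]\}$, the theory of anodyne extensions, and minimal fibrations or $\Ex^\infty$ for the key implication), and as a sketch it is essentially sound: verifying that $I\text{-inj}$ coincides with the Kan fibrations that are weak homotopy equivalences, together with the fact that $J$-cell complexes are trivial cofibrations, does discharge the hypotheses of the recognition theorem, and the deduction of left properness from the fact that every object is cofibrant is correct. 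Two remarks. First, a small misattribution: the fact that $|\Lambda^t[k]|\to|\Simp[k]|$ is a deformation retract is elementary (an inclusion of contractible CW complexes, in fact a strong deformation retract), and does not need Quillen's theorem that realization sends Kan fibrations to Serre fibrations; that theorem (or minimal fibrations) is what feeds the genuinely hard step you correctly single out, namely that a Kan fibration which is a weak homotopy equivalence has the right lifting property against the boundary inclusions, and also the closure of the anodyne-plus-weak-equivalence class under pushout, which you should route through realization and the gluing lemma in topological spaces rather than through a not-yet-available left properness of $\sSet_\Kan$. Second, that hard step is only named, not carried out; for a theorem of this vintage that is acceptable, but be aware that it is where all the homotopy-theoretic content lives, and any written version should either reproduce the minimal-fibration argument or cite it precisely (Quillen, Goerss--Jardine, or Cisinski), exactly as the paper itself does by deferring to \cite{Quillen}.
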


In order to recall the Thomason model structure, we need the following definition. 

\begin{defn}
For $n\geq 0$, the \textbf{barycentric subdivision} $\Sd\Simp[n]$ is the simplicial set given by the nerve of the poset $\cP\Simp[n]$ of non-degenerate simplices of $\Simp[n]$ ordered by face relations. In particular, the poset $\cP\Simp[n]$ is isomorphic to the poset of non-empty subsets of $\{0,1,\ldots,n\}$ ordered by inclusion. 

These assemble into a cosimplicial object $\Sd\colon \Simp\to \sSet$. By left Kan extending along the Yoneda embedding $\mathsf{y}\colon \Simp\to \sSet$, we get an adjunction 
\begin{tz}
\node[](A) {$\sSet$};
\node[right of=A,xshift=.9cm](B) {$\sSet$};
\punctuation{B}{.};
\draw[->] ($(A.east)+(0,5pt)$) to node[above,la]{$\Sd$} ($(B.west)+(0,5pt)$);
\draw[->] ($(B.west)-(0,5pt)$) to node[below,la]{$\Ex$} ($(A.east)-(0,5pt)$);
\node[la] at ($(A.east)!0.5!(B.west)$) {$\bot$};
\end{tz}
\end{defn}

The Thomason model structure on $\Cat$ is then induced from $\sSet_\Kan$ along the adjunction obtained by iterating the above adjunction twice and further composing with the adjunction $c\dashv N$ recalled in \cref{rem:usualnerve}, where $N\colon \Cat\to \Set^{\Simpop}\cong \sSet$ denotes the usual nerve functor. The following result appears as \cite[Theorem 4.9]{Thomason}.  

\begin{theorem}
The right-induced model structure on $\Cat$ from the Kan--Quillen model structure $\sSet_\Kan$ along the adjunction
\begin{tz}
\node[](A) {$\sSet_\Kan$};
\node[right of=A,xshift=1.2cm](B) {$\Cat$};
\draw[->] ($(A.east)+(0,5pt)$) to node[above,la]{$c\Sd^2$} ($(B.west)+(0,5pt)$);
\draw[->] ($(B.west)-(0,5pt)$) to node[below,la]{$\Ex^2N$} ($(A.east)-(0,5pt)$);
\node[la] at ($(A.east)!0.5!(B.west)$) {$\bot$};
\end{tz}
exists. It is called the \emph{Thomason model structure} and we denote it by $\Cat_\Thom$. 

Moreover, the model structure $\Cat_\Thom$ is cofibrantly generated with generating sets of cofibrations and trivial cofibrations given by 
\[ \{ c\Sd^2\partial\Simp[k]\to c\Sd^2\Simp[k]\mid k\geq 0\} \quad \text{and}\quad \{ c\Sd^2\Lambda^t[k]\to c\Sd^2\Simp[k]\mid k\geq 1, 0\leq t\leq k\} \]
respectively, and it is left proper.
\end{theorem}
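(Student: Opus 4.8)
The plan, following Thomason \cite[Theorem 4.9]{Thomason}, is to apply the right-induction (transfer) criterion for cofibrantly generated model categories, using that $\Cat$ is locally presentable. Set
\[ I \coloneqq \{c\Sd^2\partial\Simp[k]\to c\Sd^2\Simp[k]\mid k\geq 0\}, \qquad J\coloneqq \{c\Sd^2\Lambda^t[k]\to c\Sd^2\Simp[k]\mid k\geq 1,\ 0\leq t\leq k\} \]
in $\Cat$. Since $\Cat$ is locally finitely presentable and the sources and targets of the maps in $I$ and $J$ are finite categories, hence compact, the small object argument applies to $I$ and $J$; so by the transfer theorem in the form of \cite[Corollary 3.3.4]{HKRS} (the same tool used in \cref{criterionfitori}) it suffices to verify the \emph{acyclicity condition}: every transfinite composite of pushouts in $\Cat$ of maps in $J$ is sent by $\Ex^2 N$ to a weak homotopy equivalence of simplicial sets. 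Since the canonical map $X\to \Ex X$ is a weak homotopy equivalence for every $X$, the functor $\Ex^2$ preserves and reflects weak equivalences, so $\Ex^2 N F$ is a weak equivalence if and only if $NF$ is; one may therefore argue with $N$ in place of $\Ex^2 N$.

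The heart of the proof is the analysis of pushouts in $\Cat$ along the maps in $J$, carried out via Thomason's notion of a \emph{Dwyer map}: a full sieve inclusion of categories satisfying an additional cofibrancy-type (``cylinder'') condition. There are three ingredients. First, (i) each map $c\Sd^2\Lambda^t[k]\to c\Sd^2\Simp[k]$ --- and likewise each map $c\Sd^2\partial\Simp[k]\to c\Sd^2\Simp[k]$ --- is a Dwyer map; this is the main combinatorial step, and it uses that the double barycentric subdivision $\Sd^2 X$ of any simplicial set is (isomorphic to) the nerve of a poset, so that the maps above are explicit inclusions of posets whose structure can be checked by hand. Second, (ii) pushouts of Dwyer maps along arbitrary functors, and transfinite composites of Dwyer maps, are again Dwyer maps --- here one uses the corrected statement of Cisinski \cite{Cisinski}, Thomason's original claim requiring the weaker notion of ``pseudo-Dwyer map''. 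Third, (iii) the nerve functor sends a Dwyer map to a cofibration of simplicial sets, and sends a pushout square along a Dwyer map to a homotopy pushout square. Granting these, the acyclicity condition follows: since $\Sd^2 X$ is a poset nerve and the nerve is fully faithful, $N(c\Sd^2 X)\cong \Sd^2 X$ naturally; and since $\Sd$ preserves monomorphisms and, by $2$-out-of-$3$ with the natural weak equivalence $\Sd X\to X$, also preserves trivial cofibrations, each map $N(c\Sd^2\Lambda^t[k])\to N(c\Sd^2\Simp[k])$ is identified with the trivial cofibration $\Sd^2\Lambda^t[k]\to\Sd^2\Simp[k]$ of $\sSet_\Kan$. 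Hence in the homotopy pushout square of nerves obtained by applying $N$ to a pushout along a map of $J$, the opposite vertical map is again a weak equivalence, and this persists under transfinite composition because Dwyer maps have cofibrations as nerves.

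By the transfer theorem this produces the right-induced model structure $\Cat_\Thom$, cofibrantly generated by $I$ and $J$, with weak equivalences the functors whose nerve is a weak homotopy equivalence and fibrations those $F$ with $\Ex^2 N F$ a Kan fibration. For left properness, one uses that every cofibration of $\Cat_\Thom$ is a retract of an $I$-cell complex, hence --- by (i) and (ii) applied to $I$ --- a retract of a pseudo-Dwyer map; then (iii) shows that pushing a weak equivalence out along such a map yields a homotopy pushout square of nerves, in which the pushout of a weak equivalence is again a weak equivalence, and weak equivalences are stable under retracts. The main obstacle is step (i) together with (iii): presenting $c\Sd^2\Lambda^t[k]\to c\Sd^2\Simp[k]$ explicitly as a Dwyer map requires an intricate analysis of iterated subdivisions of horns, and showing that the nerve converts pushouts along Dwyer maps into homotopy pushouts --- with Cisinski's repair of Thomason's gap in (ii) --- is the genuinely nontrivial homotopical input of the theorem.
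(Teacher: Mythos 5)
The paper does not actually prove this statement: it is recalled from Thomason \cite[Theorem 4.9]{Thomason} (with the properness assertion also part of the cited literature), so the only meaningful comparison is with the cited source, and your outline is essentially that argument --- transfer along $c\Sd^2\dashv \Ex^2N$ via the small object argument, the Dwyer-map analysis of pushouts of the generating (trivial) cofibrations, and Cisinski's pseudo-Dwyer repair --- and it is correct in substance. Two small inaccuracies are worth flagging: the claim that $\Sd^2 X$ is a poset nerve for \emph{every} simplicial set $X$ is false in general (it fails already for quotients such as $\Simp[1]/\partial\Simp[1]$ at the first subdivision stage), but it does hold for the only cases you use, namely $\Simp[k]$, $\partial\Simp[k]$, $\Lambda^t[k]$, consistently with the explicit poset description in \cref{subdivincl}; and Cisinski's correction is not needed for stability of Dwyer maps under pushouts and transfinite composition (that part of Thomason's argument is correct as stated), but precisely for retracts, i.e., at the point where one argues that arbitrary cofibrations are pseudo-Dwyer, which is where you in fact invoke it in the left-properness paragraph. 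Neither point affects the validity of your outline.
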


In particular, the Thomason model structure on $\Cat$ is Quillen equivalent to the Kan--Quillen model structure on $\sSet$. This result was originally proven in \cite{FL}, and we refer the reader to \cite[Theorem 5.2.12]{Cisinski} for 
a streamlined proof.

\begin{prop} \label{QEThomason}
    The Quillen pair
\begin{tz}
\node[](A) {$\sSet_\Kan$};
\node[right of=A,xshift=1.6cm](B) {$\Cat_\Thom$};
\draw[->] ($(A.east)+(0,5pt)$) to node[above,la]{$c\Sd^2$} ($(B.west)+(0,5pt)$);
\draw[->] ($(B.west)-(0,5pt)$) to node[below,la]{$\Ex^2N$} ($(A.east)-(0,5pt)$);
\node[la] at ($(A.east)!0.5!(B.west)$) {$\bot$};
\end{tz}
    is a Quillen equivalence.
\end{prop}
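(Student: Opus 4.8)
The plan is to exploit that $\Cat_\Thom$ is, by construction, right-induced from $\sSet_\Kan$ along $\Ex^2 N$, so that $\Ex^2 N$ both preserves and reflects all weak equivalences. Recall that the Quillen pair is a Quillen equivalence exactly when, for every cofibrant object $X$ of $\sSet_\Kan$ and every fibrant object $\sC$ of $\Cat_\Thom$, a map $f\colon c\Sd^2 X\to \sC$ is a weak equivalence if and only if its adjunct $X\to \Ex^2 N\sC$ is one. As every simplicial set is cofibrant and the adjunct of $f$ factors as $X\xrightarrow{\eta_X}\Ex^2 N c\Sd^2 X\xrightarrow{\Ex^2 N f}\Ex^2 N\sC$, where $\eta$ denotes the unit of $c\Sd^2\dashv \Ex^2 N$, and since $\Ex^2 N$ preserves and reflects weak equivalences, a two-out-of-three argument reduces the whole statement to the following claim: the unit $\eta_X\colon X\to \Ex^2 N c\Sd^2 X$ is a weak homotopy equivalence for every simplicial set $X$.

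I would prove this claim by factoring $\eta_X$ along the decomposition of $c\Sd^2\dashv \Ex^2 N$ into $\Sd^2\dashv \Ex^2$ followed by the nerve adjunction $c\dashv N$, giving
\[ X\longrightarrow \Ex^2\Sd^2 X\xrightarrow{\ \Ex^2(\nu_{\Sd^2 X})\ }\Ex^2 N c\Sd^2 X, \]
where the first map is the unit of $\Sd^2\dashv \Ex^2$ and $\nu$ is the unit of $c\dashv N$. For the first map I would invoke Kan's classical results (see e.g.\ \cite{goerssjardine}): the natural transformations $\Sd\Rightarrow \id$ and $\id\Rightarrow \Ex$ are pointwise weak homotopy equivalences and $\Sd$ and $\Ex$ both preserve weak homotopy equivalences; two-out-of-three in the triangle $Y\to \Ex\Sd Y\to \Ex Y$ then shows that the unit of $\Sd\dashv \Ex$ is a natural weak equivalence, and composing this at $Y=X$ and $Y=\Sd X$ (and applying $\Ex$, which preserves weak equivalences) gives that $X\to \Ex^2\Sd^2 X$ is a weak equivalence. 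For the second map, since $\Ex$ preserves weak equivalences, it is enough to show that the unit $\nu_Y\colon Y\to N c Y$ of the nerve adjunction is a weak homotopy equivalence when $Y=\Sd^2 X$ is a double barycentric subdivision.

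This last step is where the real content lies, and it is the theorem of Fritsch--Latch \cite{FL} (see also \cite[Theorem 5.2.12]{Cisinski} for a streamlined proof). The point is that $Y\to N c Y$ is genuinely \emph{not} a weak equivalence for a general simplicial set $Y$---the fundamental-category functor $c$ can collapse homotopical information---and a single barycentric subdivision does not suffice either: one really needs $\Sd^2$, whose combinatorics is rigid enough (each $\Sd^2\Simp[n]$ is the nerve of a poset, and the boundary inclusions $\Sd^2\partial\Simp[n]\hookrightarrow\Sd^2\Simp[n]$ are sieve inclusions of posets of the sort analysed in \cref{helpful}) to make the comparison into a weak equivalence. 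To prove \cite{FL} I would argue by induction along the skeletal filtration of $X$: since $\Sd^2$ is a left adjoint, it turns this filtration into a cellular presentation of $\Sd^2 X$ built out of the maps $\Sd^2\partial\Simp[n]\to\Sd^2\Simp[n]$; one then checks that $c$ sends these to inclusions of categories that are well-behaved enough for the nerve to carry the resulting pushouts to homotopy pushouts (this uses left properness of $\sSet_\Kan$), and one verifies the base case $\Sd^2\Simp[n]\to N c\Sd^2\Simp[n]$ directly, computing $c\Sd^2\Simp[n]$ explicitly. Granting \cite{FL}, the displayed composite is a weak homotopy equivalence, which settles the claim and hence the proposition.
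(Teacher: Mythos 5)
Your reduction is correct, and it matches what the paper does: the paper offers no independent argument for \cref{QEThomason}, simply attributing the result to Fritsch--Latch \cite{FL} (with \cite[Theorem 5.2.12]{Cisinski} as a streamlined reference), which is exactly the statement you isolate as the crux, namely that $\Sd^2 X\to Nc\Sd^2 X$ (equivalently the unit $X\to \Ex^2Nc\Sd^2X$) is a weak equivalence for every simplicial set $X$. Your preliminary steps (the Quillen-equivalence criterion, the two-out-of-three reduction using that $\Ex^2N$ creates weak equivalences, and Kan's results on $\Sd\dashv\Ex$) are standard and correct, and your skeletal-induction sketch of \cite{FL} is only an outline, but since the paper itself defers to the same references this is essentially the same proof.
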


In fact, the weak equivalences in the Thomason model structure can be described simply as those functors whose nerve is a weak equivalence in $\sSet_\Kan$. The following preliminary result is a direct consequence of \cite[Lemma 3.7]{Kan}.

\begin{prop} \label{natbeta}
    There is a natural transformation of functors $\sSet\to \sSet$
    \[ \beta\colon \id_\sSet\Rightarrow \Ex \]
    which is levelwise a weak equivalence in $\sSet_\Kan$. It induces a natural transformation of functors $\Cat\to \sSet$
    \[ \beta^2N\colon N\Rightarrow \Ex^2 N \]
    which is levelwise a weak equivalence in $\sSet_\Kan$. 
\end{prop}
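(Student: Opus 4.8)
The plan is to build $\beta$ from the classical \emph{last vertex map}, to reduce the first assertion to the cited lemma of Kan, and then to obtain the second assertion formally from the first.

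First I would produce $\beta$. The cosimplicial object $\Sd\colon\Simp\to\sSet$ carries a canonical natural transformation $\lambda\colon\Sd\Rightarrow\id_\sSet$: on $[n]\in\Simp$, the order-preserving map $\cP\Simp[n]\to[n]$ sending a non-empty subset of $\{0,1,\ldots,n\}$ to its maximal element induces a map of simplicial sets $\Sd\Simp[n]=N(\cP\Simp[n])\to N([n])=\Simp[n]$, evidently natural in $[n]$. Since both $\Sd$ and $\id_\sSet$ are left Kan extensions along the Yoneda embedding $\Simp\to\sSet$, hence cocontinuous, $\lambda$ extends uniquely to a natural transformation of endofunctors of $\sSet$. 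I would then take $\beta\colon\id_\sSet\Rightarrow\Ex$ to be the mate of $\lambda$ under the adjunction $\Sd\dashv\Ex$; concretely, $\beta_X$ is the composite $X\xrightarrow{\eta_X}\Ex\Sd X\xrightarrow{\Ex\lambda_X}\Ex X$, where $\eta$ denotes the unit of $\Sd\dashv\Ex$.

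The substantive point is that every component $\beta_X$ is a weak equivalence in $\sSet_\Kan$, and for this I would invoke \cite[Lemma 3.7]{Kan}. The geometric content is that the last vertex map $\Sd\Simp[n]\to\Simp[n]$ is a weak equivalence — both sides are nerves of posets possessing a greatest element, hence contractible — and Kan bootstraps this, via a skeletal induction, to the statement that $\beta_X$ is a weak equivalence for arbitrary $X$. This is the only non-formal ingredient, and the step I expect to be the main obstacle; all the rest is routine manipulation of mates and the two-out-of-three property.

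Granting this, the second assertion is immediate. Let $\beta^2\colon\id_\sSet\Rightarrow\Ex^2$ be the iterate of $\beta$ whose component at a simplicial set $Y$ is $\beta_{\Ex Y}\circ\beta_Y=\Ex(\beta_Y)\circ\beta_Y$ (the two agree by naturality of $\beta$), so that $\beta^2_Y$ is the composite
\[ Y\xrightarrow{\beta_Y}\Ex Y\xrightarrow{\beta_{\Ex Y}}\Ex^2 Y. \]
Both maps here are components of $\beta$ — at $Y$ and at $\Ex Y$ — hence weak equivalences in $\sSet_\Kan$ by the first part, so $\beta^2_Y$ is a weak equivalence. Whiskering $\beta^2$ on the right with the nerve functor $N\colon\Cat\to\sSet$ produces the natural transformation $\beta^2 N\colon N\Rightarrow\Ex^2 N$ of functors $\Cat\to\sSet$, whose component at a category $\sC$ is $\beta^2_{N\sC}$, a weak equivalence in $\sSet_\Kan$ by the previous sentence. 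This is precisely the assertion.
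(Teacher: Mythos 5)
Your proposal is correct and follows the same route as the paper, which simply records the proposition as a direct consequence of \cite[Lemma 3.7]{Kan}; you spell out the standard construction of $\beta$ as the mate of the last-vertex map under $\Sd\dashv\Ex$ and defer the only substantive point (each $\beta_X$ being a weak equivalence) to that same lemma. The iteration to $\beta^2$ and the whiskering with $N$ are handled exactly as one would expect, so nothing is missing.
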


From this result and the definition of the weak equivalences in $\Cat_\Thom$, we deduce the desired characterization, originally proven in \cite[Proposition 2.4]{Thomason}.

\begin{prop} \label{cor:weThomason}
    A functor $F$ is a weak equivalence in $\Cat_\Thom$ if and only if its nerve~$NF$ is a weak equivalence in $\sSet_\Kan$.
\end{prop}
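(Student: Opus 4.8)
The plan is to combine the defining property of the right-induced model structure with the natural weak equivalence $\beta^2N$ of \cref{natbeta}. By construction, $\Cat_\Thom$ is right-induced from $\sSet_\Kan$ along the functor $\Ex^2N\colon \Cat\to \sSet_\Kan$, so by definition a functor $F$ is a weak equivalence in $\Cat_\Thom$ if and only if $\Ex^2NF$ is a weak equivalence in $\sSet_\Kan$. It therefore suffices to show that, for a functor $F\colon \sC\to \sD$, the map $\Ex^2NF$ is a weak equivalence in $\sSet_\Kan$ if and only if $NF$ is.

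To see this, I would write down the naturality square of the natural transformation $\beta^2N\colon N\Rightarrow \Ex^2N$ at the morphism $F$, whose horizontal edges are $NF\colon N\sC\to N\sD$ and $\Ex^2NF\colon \Ex^2N\sC\to \Ex^2N\sD$ and whose vertical edges are the components $\beta^2N_\sC\colon N\sC\to \Ex^2N\sC$ and $\beta^2N_\sD\colon N\sD\to \Ex^2N\sD$. By \cref{natbeta}, both vertical edges are weak equivalences in $\sSet_\Kan$. Applying the two-out-of-three property of weak equivalences to this commutative square then shows that $NF$ is a weak equivalence if and only if $\Ex^2NF$ is, which combined with the previous paragraph gives the stated characterization.

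I do not expect any genuine obstacle here: the statement is a formal consequence of \cref{natbeta} and the definition of right-induction. The only points needing a little care are applying two-out-of-three to the correct square and recording that ``levelwise a weak equivalence'' in \cref{natbeta} means precisely that every component $\beta^2N_\sC$ is a weak equivalence in $\sSet_\Kan$, which is exactly what makes the argument run uniformly in $\sC$ and $\sD$.
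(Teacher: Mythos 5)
Your proof is correct and follows exactly the route the paper takes: the paper deduces \cref{cor:weThomason} directly from \cref{natbeta} together with the fact that the weak equivalences of $\Cat_\Thom$ are by definition created by $\Ex^2N$, which is precisely your naturality-square-plus-two-out-of-three argument spelled out.
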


\subsection{Fiore--Pronk--Paoli's model structure on \texorpdfstring{$\DblCat$}{DblCat}} \label{sec:MSdblcat}

We now recall the desired result from \cite{FPP}. The resulting model structure on $\DblCat$ is induced from the projective model structure on $\Cat^{\Simpop}$, where $\Cat$ is endowed with the Thomason model structure.

\begin{prop} \label{Thomproj}
    The projective model structure on $(\Cat_\Thom)^{\Simpop}$ exists, and we denote it simply by $\projThom$.

    Moreover, the model structure $\projThom$ is cofibrantly generated with generating set of cofibrations 
\[ \{ \Simp[n]\boxtimes (c\Sd^2\partial\Simp[k]\to c\Sd^2\Simp[k])\mid n\geq 0, k\geq 0\} \]
and generating set of trivial cofibrations
\[ \{ \Simp[n]\boxtimes (c\Sd^2\Lambda^t[k]\to c\Sd^2\Simp[k])\mid n\geq 0, k\geq 1, 0\leq t\leq k\}, \]
and it is left proper.
\end{prop}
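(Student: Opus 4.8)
The plan is to obtain the projective model structure $\projThom$ by invoking the general existence result \cref{thm:existproj} for projective model structures, which applies as soon as $\Cat_\Thom$ is combinatorial. Thus the first step is to recall that $\Cat_\Thom$ is combinatorial: it is cofibrantly generated (as stated in the previous theorem), and $\Cat$ is locally presentable, so $\Cat_\Thom$ is combinatorial. By \cref{thm:existproj}, the projective model structure on $(\Cat_\Thom)^{\Simpop}$ then exists. Note that here I am using the identification $\Cat^{\Simpop}$ of the functor category, and the weak equivalences and fibrations are detected levelwise by evaluation at each $[n]\in\Simp$.

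Next I would identify the generating (trivial) cofibrations. For the projective model structure on a diagram category $\sM^\sJ$, the standard fact (see e.g.~\cite[Proposition 11.6.1 or the discussion around it]{Hirschhorn}) is that if $I$ (resp.~$J$) is a generating set of cofibrations (resp.~trivial cofibrations) of $\sM$, then $\{F_j i \mid j \in \sJ,\ i \in I\}$ is a generating set of cofibrations of $\sM^\sJ_\proj$, where $F_j \colon \sM \to \sM^\sJ$ is the left adjoint to evaluation at $j$ (the ``free diagram'' functor $X \mapsto \coprod_{\sJ([j],-)} X$), and similarly for trivial cofibrations with $J$. In our situation $\sJ = \Simpop$, so $F_{[n]}$ sends a category $\sC$ to the simplicial object $[m] \mapsto \coprod_{\Simp([m],[n])} \sC$, which is precisely $\Simp[n] \boxtimes \sC$ in the notation of \cref{chonboxprod} (recall $\Simp[n]$ is the representable simplicial set and $\Simp[n]\boxtimes\sC = \iota\Simp[n]\times\cst\sC$). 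Since $\boxtimes$ preserves colimits in the $\Cat$-variable, $F_{[n]}$ applied to a map $c\Sd^2\partial\Simp[k]\to c\Sd^2\Simp[k]$ gives exactly $\Simp[n]\boxtimes(c\Sd^2\partial\Simp[k]\to c\Sd^2\Simp[k])$, and likewise for the horn maps. Plugging in the generating sets of $\Cat_\Thom$ recalled above yields the two displayed sets.

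For left properness, I would use the known fact that the projective model structure on diagrams in a left proper model category is again left proper: pushouts, cofibrations, and weak equivalences in $\sM^\sJ_\proj$ are all computed levelwise (cofibrations are levelwise cofibrations by the description of generating cofibrations plus a retract/transfinite-composition argument, or directly since they are in particular Reedy and hence levelwise cofibrations; weak equivalences and the relevant pushouts are levelwise), so left properness is inherited levelwise from $\Cat_\Thom$, which is left proper by the preceding theorem. This can also be cited from \cite[Theorem 13.1.14 and Corollary 13.1.3]{Hirschhorn} or a similar reference.

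The main obstacle, though a minor one, is matching the ``free diagram'' generators $F_{[n]} i$ with the box-product notation $\Simp[n] \boxtimes (-)$: one needs the identification $F_{[n]}\sC \cong \Simp[n]\boxtimes\sC$ in $\Cat^{\Simpop}$, which follows from the co-Yoneda computation $(\iota\Simp[n]\times\cst\sC)_m = \Simp([m],[n])\times\sC = \coprod_{\Simp([m],[n])}\sC$ and the fact that $\iota$ and $\cst$, being left adjoints, preserve the relevant colimits; combined with the observation that $\boxtimes$ preserves colimits (in particular pushouts along $\partial\Simp[k]\hookrightarrow\Simp[k]$ and $\Lambda^t[k]\hookrightarrow\Simp[k]$) in the second variable. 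Everything else is a direct appeal to the cited general theorems on projective model structures.
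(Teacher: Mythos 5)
Your proposal is correct and follows exactly the route the paper intends for this (unproved, standard) statement: existence via \cref{thm:existproj} since $\Cat_\Thom$ is combinatorial, the generating sets obtained from the free-diagram functors $F_{[n]}$ together with the identification $F_{[n]}\sC\cong \Simp[n]\boxtimes\sC$, and left properness inherited levelwise because projective cofibrations are in particular levelwise cofibrations and pushouts and weak equivalences are computed levelwise. No gaps; only the exact Hirschhorn reference numbers for the properness statement would need to be double-checked.
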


By right-inducing along the horizontal nerve, we obtain the desired model structure on $\DblCat$, whose existence is proven in \cite[Theorem 7.13]{FPP}.

\begin{theorem} \label{projMS}
The right-induced model structure on $\DblCat$ from the projective model structure $\projThom$ along the adjunction
\begin{tz}
\node[](A) {$\projThom$};
\node[right of=A,xshift=1.4cm](B) {$\DblCat$};
\draw[->] ($(A.east)+(0,5pt)$) to node[above,la]{$\ch$} ($(B.west)+(0,5pt)$);
\draw[->] ($(B.west)-(0,5pt)$) to node[below,la]{$\Nh$} ($(A.east)-(0,5pt)$);
\node[la] at ($(A.east)!0.5!(B.west)$) {$\bot$};
\end{tz}
exists. We denote it by $\DblCat_\proj$.

Moreover, the model structure $\DblCat_\proj$ is cofibrantly generated with generating set of cofibrations 
\[ \{ [n]\boxtimes (c\Sd^2\partial\Simp[k]\to c\Sd^2\Simp[k])\mid n\geq 0, k\geq 0\} \]
and generating set of trivial cofibrations
\[ \{ [n]\boxtimes (c\Sd^2\Lambda^t[k]\to c\Sd^2\Simp[k])\mid n\geq 0, k\geq 1, 0\leq t\leq k\}. \]
\end{theorem}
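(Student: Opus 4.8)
The plan is to produce $\DblCat_\proj$ as a right-induced model structure via the transfer theorem for cofibrantly generated model categories (Kan's recognition theorem; see e.g.~\cite{Hirschhorn}). The category $\DblCat=\Cat(\Cat)$ is locally finitely presentable, hence cocomplete, and every set of maps in it permits the small object argument. By \cref{Thomproj}, $\projThom$ is cofibrantly generated, say with generating cofibrations $I$ and generating trivial cofibrations $J$ the two sets displayed there. The candidate generating sets in $\DblCat$ are then the images $\ch I$ and $\ch J$, and using \cref{chonboxprod} together with $c\Simp[n]=[n]$ these are precisely the two sets in the statement; e.g.~$\ch J=\{[n]\boxtimes(c\Sd^2\Lambda^t[k]\to c\Sd^2\Simp[k])\mid n\geq 0,\ k\geq 1,\ 0\leq t\leq k\}$. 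What remains is the acyclicity condition: that $\Nh$ sends every relative $\ch J$-cell complex to a weak equivalence in $\projThom$.

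Two inputs feed the verification. First, a direct computation (or the adjunction $\ch\dashv\Nh$ combined with \cref{chonboxprod}) shows $\Nh(\sC\boxtimes\sD)\cong N\sC\boxtimes\sD$ for categories $\sC,\sD$, where $N\sC$ is the nerve; in particular $\Nh\ch j\cong j$ for every $j\in J$. Second, and this is the key structural point, the inclusion $c\Sd^2\Lambda^t[k]\hookrightarrow c\Sd^2\Simp[k]$ is a weakly solid sieve of posets. Indeed, one barycentric subdivision sends a sub-simplicial-set $A\subseteq\Simp[k]$ to the poset nerve of the poset $\cP A$ of its non-degenerate simplices ordered by faces, with $\cP A\subseteq\cP\Simp[k]$ a full down-closed subposet; subdividing once more and applying $c$, the pair $(c\Sd^2\Lambda^t[k],c\Sd^2\Simp[k])$ identifies with (chains in $\cP\Lambda^t[k]$) $\subseteq$ (chains in $\cP\Simp[k]$), which is manifestly a sieve of posets and is weakly solid, since the union of two chains contained in a common chain $q$ is again a chain contained in $q$ and still belongs to the subposet. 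This is exactly where two subdivisions are needed: after a single subdivision the inclusion $\Sd\Lambda^t[k]\hookrightarrow\Sd\Simp[k]$ is a sieve of posets but fails to be weakly solid (already for $k=2$, $t=1$). The argument mirrors Thomason's handling of the generating trivial cofibrations in \cite{Thomason} and is carried out in \cite{FPP}.

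Granting these, acyclicity follows. A relative $\ch J$-cell complex is a transfinite composite of pushouts of coproducts of maps in $\ch J$, and by attaching cells one at a time it is a transfinite composite of pushouts of single maps $\ch j=[n]\boxtimes(c\Sd^2\Lambda^t[k]\to c\Sd^2\Simp[k])$ along maps $F\colon[n]\boxtimes c\Sd^2\Lambda^t[k]\to\bA$ in $\DblCat$. Since $c\Sd^2\Lambda^t[k]\hookrightarrow c\Sd^2\Simp[k]$ is a weakly solid sieve of posets, \cref{pushoutnerve} applies: $\Nh$ takes such a pushout square to a pushout in $\Cat^{\Simpop}$ whose left vertical map is $\Nh\ch j\cong j$, a generating trivial cofibration of $\projThom$; hence the corner map $\Nh\bA\to\Nh\bP$ is a pushout of a trivial cofibration, so itself a trivial cofibration in $\projThom$. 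Finally $\Nh$ preserves filtered colimits, as $\DblCat$ is locally finitely presentable with $\bH[n]$, $\bV[0]$, $\bV[1]$ finitely presentable, and therefore preserves transfinite composites; so $\Nh$ of the whole cell complex is a transfinite composite of trivial cofibrations in $\projThom$, in particular a weak equivalence. The transfer theorem then yields the right-induced model structure $\DblCat_\proj$, cofibrantly generated by $\ch I$ and $\ch J$ as asserted.

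The main difficulty is the structural claim that $c\Sd^2(\Lambda^t[k]\hookrightarrow\Simp[k])$ is a weakly solid sieve of posets, which is what unlocks \cref{pushoutnerve} and hence control over the horizontal nerve of a cell attachment; it is the two-subdivisions phenomenon, ultimately the same reason Thomason works with $\Sd^2$. The remaining ingredients, namely reading off the generating sets, reducing to single-cell attachments, and passing through transfinite composites, are the routine transfer machinery together with the (mild) fact that $\Nh$ commutes with filtered colimits.
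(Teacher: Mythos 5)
Your argument is correct, so let me mainly situate it relative to the paper: the paper does not prove \cref{projMS} at all, but simply recalls it from \cite[Theorem 7.13]{FPP}. What you have written is a self-contained reconstruction of that result using the transfer theorem together with the paper's own technical machinery, namely \cref{pushoutnerve} applied to the generating trivial cofibrations, plus the fact that $c\Sd^2\Lambda^t[k]\to c\Sd^2\Simp[k]$ is a weakly solid sieve of posets. The paper only records that last fact as a remark following \cref{subdivincl} (where the proof is carried out for $\partial\Simp[k]$ by deleting the top element of a chain), whereas your identification of $c\Sd^2$ of a subcomplex of $\Simp[k]$ with the poset of chains in the poset of its non-degenerate simplices, followed by the union-of-chains argument, is cleaner and handles the boundary and horn cases uniformly; your observation that one subdivision does not suffice (read $c\Sd$ for $\Sd$ there) is also correct and a nice justification of the $\Sd^2$. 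The auxiliary steps are all sound: $\Nh(\sC\boxtimes\sD)\cong N\sC\boxtimes\sD$ (hence $\Nh\ch j\cong j$ on the generating maps, via \cref{chonboxprod}), reduction of relative cell complexes to single-cell attachments, and preservation of transfinite composites by $\Nh$ because $[n]\boxtimes[0]$ and $[n]\boxtimes[1]$ are finitely presentable, so that acyclicity and the smallness hypotheses of the transfer theorem hold in the locally presentable category $\DblCat$. What your route buys is independence from the pushout analysis of \cite[Theorems 10.6--10.7]{FPP}, deriving the theorem instead from \cref{pushoutnerve}; this is consistent with the paper's logical order, since \cref{pushoutnerve} precedes \cref{projMS} and you supply the horn case of weak solidity yourself rather than relying on the later remark.
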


Finally, we characterize the weak equivalences in the above model structure through their double nerve. For this, we first relate $\Cat^{\Simpop}_\proj$ with the analogous model structure coming from $\sSet_\Kan$.

\begin{prop} \label{projKan}
    The projective model structure on $(\sSet_\Kan)^{\Simpop}$ exists, and we denote it simply by $\sSet^{\Simpop}_\proj$. 
    
    Moreover, the model structure $\sSet^{\Simpop}_\proj$ is cofibrantly generated, and left proper. 
\end{prop}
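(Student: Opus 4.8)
The plan is to deduce all three assertions from standard facts about projective model structures on diagram categories, applied to $\sM=\sSet_\Kan$ and the small indexing category $\sJ=\Simpop$.

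For existence, I would first record that $\sSet$ is locally presentable, being a presheaf category, and that $\sSet_\Kan$ is cofibrantly generated by Quillen's theorem recalled above; hence $\sSet_\Kan$ is combinatorial. Since $\Simpop$ is small, \cref{thm:existproj} then immediately yields the existence of the projective model structure $\sSet^{\Simpop}_\proj\coloneqq(\sSet_\Kan)^{\Simpop}_\proj$. For cofibrant generation, I would invoke the explicit description of the projective generating sets furnished by the construction of the model structure in \cite[Theorem 11.6.1]{Hirschhorn}: writing $F_{[n]}\colon \sSet\to \sSet^{\Simpop}$ for the left adjoint to evaluation at $[n]\in\Simp$, a generating set of cofibrations is $\{F_{[n]}(\partial\Simp[k]\to\Simp[k])\mid n\geq 0,\,k\geq 0\}$ and a generating set of trivial cofibrations is $\{F_{[n]}(\Lambda^t[k]\to\Simp[k])\mid n\geq 0,\,k\geq 1,\,0\leq t\leq k\}$, obtained by applying the functors $F_{[n]}$ to the generating sets of $\sSet_\Kan$.

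For left properness, the key observation is that every projective cofibration is in particular a levelwise cofibration in $\sSet_\Kan$: for a cofibration $i$ of $\sSet$, the map $F_{[n]}(i)$ is at level $[m]$ the coproduct $\coprod_{\Simp([m],[n])} i$ of copies of $i$, hence a cofibration, and the class of levelwise cofibrations is closed under pushouts, transfinite compositions, and retracts since these colimits in $\sSet^{\Simpop}$ are computed levelwise (evaluation functors being left adjoints). Now take a pushout in $\sSet^{\Simpop}_\proj$ of a weak equivalence along a projective cofibration; evaluating at any $[m]$ gives a pushout in $\sSet_\Kan$ of a weak equivalence along a cofibration, because both weak equivalences in $\sSet^{\Simpop}_\proj$ and colimits in $\sSet^{\Simpop}$ are computed levelwise. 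By left properness of $\sSet_\Kan$ this levelwise map is a weak equivalence, so the original pushout is a projective weak equivalence, and $\sSet^{\Simpop}_\proj$ is left proper.

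The argument is entirely routine and I do not anticipate any real obstacle; the only point requiring a moment's care is the remark that projective cofibrations are levelwise cofibrations, which is precisely what lets left properness descend from $\sSet_\Kan$, and this is the same mechanism underlying the left properness asserted in \cref{Thomproj}.
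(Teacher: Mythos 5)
Your argument is correct and is exactly the standard one that the paper implicitly relies on: the paper states this proposition without proof, as existence and cofibrant generation follow from \cref{thm:existproj} (i.e.\ \cite[Theorem 11.6.1]{Hirschhorn}) applied to the combinatorial model category $\sSet_\Kan$, with generating sets obtained by applying the left adjoints to evaluation. Your levelwise argument for left properness (projective cofibrations are levelwise cofibrations, weak equivalences and pushouts are levelwise) is the standard mechanism and matches what the paper also uses implicitly for \cref{Thomproj}.
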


The Quillen equivalence $c\Sd^2\dashv \Ex^2N$ from \cref{QEThomason} induces by post-composition the following Quillen equivalence.

\begin{prop}\label{QEprojThomason}
    The adjunction
    \begin{tz}
\node[](A) {$\projKan$};
\node[right of=A,xshift=1.5cm](B) {$\projThom$};
\draw[->] ($(A.east)+(0,5pt)$) to node[above,la]{$(c\Sd^2)_*$} ($(B.west)+(0,5pt)$);
\draw[->] ($(B.west)-(0,5pt)$) to node[below,la]{$(\Ex^2N)_*$} ($(A.east)-(0,5pt)$);
\node[la] at ($(A.east)!0.5!(B.west)$) {$\bot$};
\end{tz}
 is a Quillen equivalence, where $\projThom$ is right-induced from $\projKan$.  
\end{prop}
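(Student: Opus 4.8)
The plan is to argue in two steps: first identify $\projThom$ as the right-induced model structure along $(\Ex^2N)_*$, so that the displayed (post-composition) adjunction is automatically a Quillen pair; then upgrade this to a Quillen equivalence by reducing objectwise to the Quillen equivalence $c\Sd^2\dashv\Ex^2N$ of \cref{QEThomason}.

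For the first step, recall that weak equivalences and fibrations in a projective model structure are detected objectwise, and that $\Cat_\Thom$ is right-induced from $\sSet_\Kan$ along $c\Sd^2\dashv\Ex^2N$. Hence a map $f$ in $\Cat^{\Simpop}$ is a projective weak equivalence (resp.\ fibration) if and only if each component $f_n$ is a weak equivalence (resp.\ fibration) in $\Cat_\Thom$, equivalently each $\Ex^2Nf_n=\bigl((\Ex^2N)_*f\bigr)_n$ is one in $\sSet_\Kan$, equivalently $(\Ex^2N)_*f$ is a projective weak equivalence (resp.\ fibration) in $\projKan$. Since both projective model structures exist by \cref{Thomproj,projKan}, this exhibits $\projThom$ as right-induced from $\projKan$ along $(\Ex^2N)_*$, and in particular $(c\Sd^2)_*\dashv(\Ex^2N)_*$ is a Quillen pair.

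For the second step, I would first record that $\Ex^2N$ preserves \emph{all} weak equivalences: the functor $\Ex$ does, by applying two-out-of-three to the naturality squares of the natural weak equivalence $\beta$ of \cref{natbeta}, and $N$ does by \cref{cor:weThomason}. Since weak equivalences in the projective structures are objectwise, $(\Ex^2N)_*$ then also preserves all weak equivalences, and it reflects them because $\projThom$ is right-induced. For a Quillen pair whose right adjoint preserves and reflects all weak equivalences, two-out-of-three shows that being a Quillen equivalence is equivalent to the unit $\eta_X\colon X\to(\Ex^2N)_*(c\Sd^2)_*X$ being a weak equivalence for every cofibrant $X$. Now a cofibrant object $X$ of $\projKan$ is objectwise cofibrant, and $\eta_X$ is objectwise the unit $\eta_{X_n}\colon X_n\to\Ex^2Nc\Sd^2X_n$ of $c\Sd^2\dashv\Ex^2N$ at the cofibrant object $X_n$; factoring the derived unit $X_n\to\Ex^2N(Rc\Sd^2X_n)$ (a weak equivalence since $c\Sd^2\dashv\Ex^2N$ is a Quillen equivalence, with $R$ a fibrant replacement in $\Cat_\Thom$) as the composite of $\eta_{X_n}$ with $\Ex^2N$ applied to the weak equivalence $c\Sd^2X_n\to Rc\Sd^2X_n$, two-out-of-three gives that $\eta_{X_n}$ is a weak equivalence in $\sSet_\Kan$. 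Hence $\eta_X$ is an objectwise, hence projective, weak equivalence, which completes the argument.

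The argument is essentially formal once one knows that $\Ex^2N$ preserves and reflects \emph{all} weak equivalences, not merely those between fibrant objects; this is precisely the content of \cref{natbeta,cor:weThomason}, and I expect it to be the only nontrivial ingredient. Alternatively, one could invoke the general principle that post-composition with a Quillen equivalence between combinatorial model categories induces a Quillen equivalence between the associated projective diagram model structures, but the hands-on argument above keeps the section self-contained.
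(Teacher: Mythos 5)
Your argument is correct, but it takes a more hands-on route than the paper, which simply observes that the statement follows from \cref{QEThomason} together with the general fact that post-composition with a Quillen equivalence of cofibrantly generated (here combinatorial) model categories induces a Quillen equivalence of projective diagram model structures (\cite[Theorem 11.6.5]{Hirschhorn}); your closing remark about this ``alternative'' is in fact exactly the paper's proof. For the right-inducedness claim your first step coincides with the paper's: both unwind the objectwise definitions of the projective structures and the definition of $\Cat_\Thom$ as right-induced along $c\Sd^2\dashv\Ex^2N$. Where you differ is in reproving the relevant special case of Hirschhorn's theorem directly: you use that $\Ex^2N$ preserves all weak equivalences (via the natural weak equivalence $\beta$ of \cref{natbeta} and two-out-of-three, or via \cref{cor:weThomason}) and reflects them (by definition of the right-induced structure), so that $(\Ex^2N)_*$ does likewise levelwise, and then reduce the Quillen-equivalence condition to the plain unit being a weak equivalence on (objectwise cofibrant) projectively cofibrant diagrams, which follows levelwise from the derived-unit criterion for $c\Sd^2\dashv\Ex^2N$. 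All the individual steps check out --- the unit criterion in the presence of a weak-equivalence-preserving and -reflecting right adjoint is the standard one, projectively cofibrant objects are objectwise cofibrant, and the unit of the post-composition adjunction is computed levelwise. What your version buys is self-containedness and it makes explicit the key special feature (preservation of all weak equivalences by $\Ex^2N$) that the paper exploits repeatedly elsewhere; what the paper's citation buys is brevity and generality.
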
 

\begin{proof}
    The fact that it is a Quillen equivalence is a direct consequence of \cref{QEThomason} and \cite[Theorem 11.6.5]{Hirschhorn}. The fact that $\projThom$ is right-induced from $\projKan$ follows from the definition of the projective model structures and the definition the model structure $\Cat_\Thom$ as right-induced from $\sSet_\Kan$ along the adjunction $c\Sd^2\dashv \Ex^2 N$.
\end{proof}

We now recall the double nerve functor and prove the desired characterization. 

\begin{rem} \label{rem:doublenerve}
    Recall the double nerve functor $\bN\colon \DblCat\to \Set^{\Simpop\times \Simpop}\cong \sSet^{\Simpop}$ sending a double category $\bA$ to the simplicial set 
    \[ \bN\bA\colon \Simpop\times \Simpop\to \Set, \quad ([n],[k])\mapsto \DblCat([n]\boxtimes [k],\bA). \]
    It is part of an adjunction 
    \begin{tz}
\node[](A) {$\sSet^{\Simpop}$};
\node[right of=A,xshift=1.4cm](B) {$\DblCat$};
\punctuation{B}{.};
\draw[->] ($(A.east)+(0,5pt)$) to node[above,la]{$\bC$} ($(B.west)+(0,5pt)$);
\draw[->] ($(B.west)-(0,5pt)$) to node[below,la]{$\bN$} ($(A.east)-(0,5pt)$);
\node[la] at ($(A.east)!0.5!(B.west)$) {$\bot$};
\end{tz}
    A straightforward computation further shows that the double nerve functor coincides with the composite 
    \[ \DblCat\xrightarrow{N_h}\Cat^{\Simpop} \xrightarrow{N_*} \sSet^{\Simpop},  \]
    where $N_*$ is induced by post-composition along the usual nerve. 
\end{rem}

\begin{prop} \label{natbetadouble}
    The natural transformation of functors $\Cat^{\Simpop}\to \sSet^{\Simpop}$
    \[ (\beta^2N)_*\colon N_*\Rightarrow(\Ex^2N)_* \]
    induced by the natural transformation from \cref{natbeta} is levelwise a weak equivalence in $\sSet^{\Simpop}_\proj$. It induces a natural transformation of functors $\DblCat\to \sSet^{\Simpop}$
    \[ (\beta^2N)_*\Nh\colon \bN\cong N_*\Nh \Rightarrow (\Ex^2N)_*\Nh \]
    which is levelwise a weak equivalence in $\sSet^{\Simpop}_\proj$. 
\end{prop}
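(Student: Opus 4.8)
The plan is to deduce the statement formally from \cref{natbeta} together with the fact that weak equivalences in the projective model structure on $(\sSet_\Kan)^{\Simpop}$ are detected objectwise over $\Simpop$; there is essentially no hard step, and the only thing to watch is that the word ``levelwise'' is being used in two nested senses.

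First I would fix an object $X\colon \Simpop\to \Cat$ of $\Cat^{\Simpop}$ and identify the component of $(\beta^2N)_*$ at $X$. Since this natural transformation is obtained by post-composition, the map $(\beta^2N)_{*,X}\colon N_*X\to (\Ex^2N)_*X$ is, at each $[n]\in\Simpop$, the map $\beta^2 N_{X_{[n]}}\colon NX_{[n]}\to \Ex^2NX_{[n]}$, which is a weak equivalence in $\sSet_\Kan$ by \cref{natbeta}. Recalling that a map in $\sSet^{\Simpop}_\proj$ (\cref{projKan}) is a weak equivalence precisely when it is levelwise a weak equivalence in $\sSet_\Kan$, this says exactly that $(\beta^2N)_{*,X}$ is a weak equivalence in $\sSet^{\Simpop}_\proj$. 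As $X$ ranges over $\Cat^{\Simpop}$, this proves that $(\beta^2N)_*\colon N_*\Rightarrow(\Ex^2N)_*$ is levelwise a weak equivalence in $\sSet^{\Simpop}_\proj$.

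Then I would precompose with the horizontal nerve $\Nh\colon\DblCat\to\Cat^{\Simpop}$: for every double category $\bA$, the component of $(\beta^2N)_*\Nh$ at $\bA$ is $(\beta^2N)_{*,\Nh\bA}$, which is a weak equivalence in $\sSet^{\Simpop}_\proj$ by the previous paragraph. Combining this with the identification $\bN\cong N_*\Nh$ of \cref{rem:doublenerve} shows that $(\beta^2N)_*\Nh$ is a natural transformation $\bN\Rightarrow(\Ex^2N)_*\Nh$ that is levelwise a weak equivalence in $\sSet^{\Simpop}_\proj$, as desired. The only potential pitfall—rather than a genuine obstacle—is bookkeeping: not conflating ``levelwise over $\Simpop$'' (inside the notion of a weak equivalence of $\sSet^{\Simpop}_\proj$) with ``levelwise over $\Cat^{\Simpop}$'', respectively ``over $\DblCat$'' (for the natural transformations $(\beta^2N)_*$ and $(\beta^2N)_*\Nh$ themselves).
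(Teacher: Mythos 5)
Your argument is correct and is essentially the paper's own proof, which likewise reduces the statement to the levelwise weak equivalence $\beta^2N\colon N\Rightarrow \Ex^2N$ of \cref{natbeta} together with the fact that weak equivalences in $\sSet^{\Simpop}_\proj$ are detected objectwise in $\sSet_\Kan$; you merely spell out the unwinding of the post-composition and the precomposition with $\Nh$ (and the identification $\bN\cong N_*\Nh$) explicitly. No gaps.
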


\begin{proof}
    This follows directly from \cref{natalpha}, using that, by definition, a map is a weak equivalence in $\sSet^{\Simpop}_\proj$ if and only if it is levelwise a weak equivalence in $\sSet_\Kan$.
\end{proof}

\begin{prop} \label{prop:charwe}
    A double functor $F$ is a weak equivalence in $\DblCat_\proj$ if and only if its double nerve $\bN F$ in $\sSet^{\Simpop}$ is a weak equivalence in $\sSet^{\Simpop}_\proj$. 
\end{prop}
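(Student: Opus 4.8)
The plan is to unwind the definitions of the model structures involved and reduce to the characterization of weak equivalences in the Thomason model structure provided by \cref{cor:weThomason}. Recall from \cref{projMS} that $\DblCat_\proj$ is right-induced from $\projThom$ along the adjunction $\ch\dashv\Nh$; hence, by definition of right-induction, a double functor $F$ is a weak equivalence in $\DblCat_\proj$ if and only if $\Nh F$ is a weak equivalence in $\projThom$. Since $\projThom=(\Cat_\Thom)^{\Simpop}_\proj$ is a projective model structure (\cref{Thomproj}), this is equivalent to asking that, for every $[n]\in\Simp$, the functor $(\Nh F)_n$ be a weak equivalence in $\Cat_\Thom$.

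Next I would apply \cref{cor:weThomason} levelwise: for each $[n]$, the functor $(\Nh F)_n$ is a weak equivalence in $\Cat_\Thom$ if and only if its nerve $N(\Nh F)_n = (N_*\Nh F)_n$ is a weak homotopy equivalence in $\sSet_\Kan$. By definition of the projective model structure $\sSet^{\Simpop}_\proj=(\sSet_\Kan)^{\Simpop}_\proj$ (\cref{projKan}), this holds for all $[n]$ if and only if $N_*\Nh F$ is a weak equivalence in $\sSet^{\Simpop}_\proj$. Finally, by \cref{rem:doublenerve} the double nerve factors as $\bN\cong N_*\Nh$, so $N_*\Nh F$ is precisely $\bN F$; chaining the equivalences gives the claim.

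I do not expect a genuine obstacle: the statement is a bookkeeping assembly of \cref{projMS}, the definitions of the projective model structures, \cref{cor:weThomason}, and \cref{rem:doublenerve}. The only point requiring minor care is applying the identification $\bN\cong N_*\Nh$ consistently. Alternatively, one could avoid appealing to \cref{cor:weThomason} directly and instead combine \cref{QEprojThomason}—which exhibits $\projThom$ as right-induced from $\sSet^{\Simpop}_\proj$ along $(\Ex^2N)_*$, so that $F$ is a weak equivalence in $\DblCat_\proj$ if and only if $(\Ex^2N)_*\Nh F$ is one in $\sSet^{\Simpop}_\proj$—with \cref{natbetadouble}, which provides a natural transformation $\bN\Rightarrow(\Ex^2N)_*\Nh$ that is levelwise a weak equivalence in $\sSet^{\Simpop}_\proj$; two-out-of-three applied to the naturality square for $F$ then yields that $\bN F$ is a weak equivalence precisely when $(\Ex^2N)_*\Nh F$ is. Either route concludes the proof.
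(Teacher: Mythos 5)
Your proposal is correct, and your ``alternative'' route is precisely the paper's own proof: the paper cites \cref{natbetadouble} together with the fact that the weak equivalences of $\DblCat_\proj$ are created by $(\Ex^2N)_*\Nh$ from $\sSet^{\Simpop}_\proj$ (via \cref{projMS,QEprojThomason}), and concludes by two-out-of-three. Your primary route is only a mild repackaging of the same content, unwinding the right-induction through $\projThom$ and applying the $\Cat$-level characterization \cref{cor:weThomason} levelwise instead of the bisimplicial-level \cref{natbetadouble}; both rest on the natural transformation $\beta$ of \cref{natbeta}, so there is no substantive difference.
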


\begin{proof}
    This follows directly from \cref{natbetadouble} and the fact that the weak equivalences in $\DblCat_\proj$ are induced from those of $\sSet^{\Simpop}_\proj$ along the right adjoint functor $(\Ex^2N)_* N^h$ by \cref{projMS,QEprojThomason}. 
\end{proof}

\subsection{Left properness} \label{sec:leftproper}

We now aim to show that the model structure $\DblCat_\proj$ is left proper. To achieve this, we make use of \cref{pushoutnerve} and start by proving the following lemma.

\begin{lemma}\label{subdivincl}
For $k\geq 0$, the inclusion $c\Sd^2\partial\Simp[k]\to c\Sd^2\Simp[k]$ is a weakly solid sieve of posets.
\end{lemma}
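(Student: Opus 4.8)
The plan is to make the posets $c\Sd^2\Simp[k]$ and $c\Sd^2\partial\Simp[k]$, and the inclusion between them, completely explicit, and then check the two defining properties of a weakly solid sieve of posets by hand.

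Write $V\coloneqq\{0,1,\dots,k\}$. Using that $\Sd$ preserves colimits and monomorphisms and iterating the description $\Sd\Simp[n]=N(\cP\Simp[n])$ recalled above, one computes that $c\Sd^2\Simp[k]$ is isomorphic to the poset $Q$ whose elements are the non-empty chains $\sigma=(S_0\subsetneq S_1\subsetneq\dots\subsetneq S_m)$ of non-empty subsets $S_i\subseteq V$, ordered by subchain inclusion; here one also uses that the counit $cN\Rightarrow\id_\Cat$ is invertible, since $N$ is fully faithful. The simplicial set $\partial\Simp[k]$ is the subcomplex of $\Simp[k]$ obtained by discarding the unique top non-degenerate simplex $V$, so the same computation identifies $c\Sd^2\partial\Simp[k]$ with the subposet $P\subseteq Q$ consisting of those chains $\sigma$ none of whose terms $S_i$ equals $V$, and, by naturality of these identifications, the map $c\Sd^2\partial\Simp[k]\to c\Sd^2\Simp[k]$ is the inclusion $P\hookrightarrow Q$.

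Granting this combinatorial model, the verifications are short. The subposet $P\subseteq Q$ is full because $P$ carries exactly the subchain order restricted from $Q$, and it is down-closed because any subchain of a chain omitting $V$ again omits $V$; hence $P\subseteq Q$ is a sieve of posets. For weak solidity, let $\sigma_1,\sigma_2\in P$ and $\tau\in Q$ with $\sigma_1,\sigma_2\leq\tau$, and set $\sigma\coloneqq\sigma_1\cup\sigma_2$, the chain whose terms are the subsets of $V$ occurring in $\sigma_1$ or in $\sigma_2$. These all appear among the terms of $\tau$, which are totally ordered by inclusion, so $\sigma$ is indeed a chain and $\sigma\leq\tau$; moreover $\sigma$ is non-empty and omits $V$ (as $\sigma_1$ and $\sigma_2$ do), so $\sigma\in P$, and clearly $\sigma_1,\sigma_2\leq\sigma$. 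Thus $P\subseteq Q$ is weakly solid.

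The only real content lies in the second paragraph: iterating the barycentric subdivision correctly and checking that passing from $\Simp[k]$ to $\partial\Simp[k]$ corresponds precisely to discarding exactly those chains that involve the top cell $V$. Once that identification is secured, fullness and down-closedness are immediate, and weak solidity is witnessed simply by the union $\sigma_1\cup\sigma_2$, whose terms remain proper subsets of $V$ precisely because those of $\sigma_1$ and $\sigma_2$ do.
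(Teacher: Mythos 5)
Your proposal is correct and follows essentially the same route as the paper: both identify $c\Sd^2\Simp[k]$ with the poset of non-empty chains of non-empty subsets of $\{0,\ldots,k\}$ (ordered by subchain inclusion) and $c\Sd^2\partial\Simp[k]$ with the chains avoiding the top subset, and then produce an interpolating element in $P$. The only cosmetic differences are that you verify the sieve conditions directly from this model where the paper cites \cite[Proposition 4.2]{Thomason}, and your weak-solidity witness is the union $\sigma_1\cup\sigma_2$ rather than the paper's choice of $\tau$ with its top term $\{0,\ldots,k\}$ deleted---both witnesses work.
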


\begin{proof}
      The fact that the inclusion is a sieve follows from \cite[Proposition 4.2]{Thomason}.

      We prove that the inclusion is weakly solid. For this, recall that $c\Sd^2 \Simp[k]$ is the poset 
      \[ \{ (I_1,\ldots,I_r) \mid r\geq 1,\, I_1\subsetneq \ldots\subsetneq I_r\subseteq \{0,\ldots,k\} \}\]
      with order given by $(I_1,\ldots,I_r)\leq (J_1,\ldots,J_s)$ if and only if $\{I_1,\ldots,I_r\}\subseteq \{J_1,\ldots,J_s\}$. Then we see that $c\Sd^2\partial\Simp[k]$ is the subposet of $c\Sd^2 \Simp[k]$ containing the elements $(I_1,\ldots,I_r)$ such that $I_r\neq \{0,\ldots,k\}$. Now let $(I_1,\ldots,I_r)$ and $(J_1,\ldots,J_s)$ be elements in $c\Sd^2 \partial\Simp[k]$ and  $(K_1,\ldots,K_t)$ be an element in $c\Sd^2 \Simp[k]$ such that $(I_1,\ldots,I_r), (J_1,\ldots,J_s)\leq (K_1,\ldots,K_t)$. If $(K_1,\ldots,K_t)$ is in $c\Sd^2\partial\Simp[k]$, there is nothing to prove. Otherwise, we have $K_t=\{0,\ldots,k\}$ and so $(K_1,\ldots,K_{t-1})$ is an element in $c\Sd^2 \partial\Simp[k]$ is such that 
      \[ (I_1,\ldots,I_r), (J_1,\ldots,J_s)\leq (K_1,\ldots,K_{t-1})\leq (K_1,\ldots,K_t), \] as desired.
\end{proof}

\begin{rem}
    A similar proof shows that, for $k\geq 1$ and $0\leq t\leq k$, the inclusion functor $c\Sd^2\Lambda^t[k]\to c\Sd^2\Simp[k]$ is also a weakly solid sieve of posets. Hence \cref{technicallemma} gives another proof of the pushout description provided in \cite[Theorem 10.6]{FPP} in the case of the functor $c\Sd^2\Lambda^t[k]\to c\Sd^2\Simp[k]$.
\end{rem}

\begin{theorem} \label{thm:dblcatleftproper}
The model structure $\DblCat_\proj$ is left proper. 
\end{theorem}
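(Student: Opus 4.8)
The plan is to exploit that $\DblCat_\proj$ is right-induced from the left proper model structure $\projThom$ along the horizontal nerve $\Nh$, together with the fact that $\Nh$ detects weak equivalences: a double functor $F$ is a weak equivalence in $\DblCat_\proj$ if and only if $\Nh F$ is a weak equivalence in $\projThom$. Concretely, I would first study how $\Nh$ behaves with respect to relative $I$-cell complexes, where $I=\{[n]\boxtimes(c\Sd^2\partial\Simp[k]\to c\Sd^2\Simp[k])\mid n,k\geq 0\}$ is the generating set of cofibrations of $\DblCat_\proj$ from \cref{projMS}.

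The key point is that $\Nh$ preserves the pushout of an arbitrary double functor along any map of $I$, and sends such a map to a cofibration of $\projThom$. The first assertion is precisely \cref{pushoutnerve}: by \cref{subdivincl} each inclusion $c\Sd^2\partial\Simp[k]\to c\Sd^2\Simp[k]$ is a weakly solid sieve of posets, and $[n]$ is a category, so \cref{pushoutnerve} applies with $\sC=[n]$. For the second assertion, a direct computation shows $\Nh([n]\boxtimes\sD)\cong \Simp[n]\boxtimes\sD$ in $\Cat^{\Simpop}$ for every category $\sD$: indeed $\Nh$ preserves products (being a right adjoint), $\Nh\bH[n]$ is the discrete simplicial object $\iota\Simp[n]$, and $\Nh\bV\sD$ is the constant simplicial object $\cst\sD$; hence $\Nh$ carries the generating cofibration $[n]\boxtimes(c\Sd^2\partial\Simp[k]\to c\Sd^2\Simp[k])$ to the generating cofibration $\Simp[n]\boxtimes(c\Sd^2\partial\Simp[k]\to c\Sd^2\Simp[k])$ of $\projThom$ from \cref{Thomproj}. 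Finally, I would record that $\Nh$ preserves filtered colimits: at each level $n$ the sets of objects and of morphisms of $\bfV[\bH[n],-]$ are corepresented by the finitely presentable double categories $[n]\boxtimes[0]$ and $[n]\boxtimes[1]$, and filtered colimits in $\Cat^{\Simpop}$ are computed levelwise on objects and morphisms.

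Combining these ingredients, if $j$ is a relative $I$-cell complex, built by attaching the maps of $I$ one at a time along a transfinite composite, then applying $\Nh$ and using \cref{pushoutnerve} at successor stages and preservation of filtered colimits at limit stages shows that $\Nh$ preserves the pushout of any double functor along $j$, and that $\Nh j$ is a relative cell complex — in particular a cofibration — in $\projThom$. Now given a pushout in $\DblCat$ of a weak equivalence $F$ along such a $j$, the functor $\Nh$ turns it into a pushout in $\projThom$ of the weak equivalence $\Nh F$ along the cofibration $\Nh j$; left properness of $\projThom$ (\cref{Thomproj}) then forces the pushout of $\Nh F$ to be a weak equivalence, and hence so is the pushout of $F$ in $\DblCat_\proj$. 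Since every cofibration of $\DblCat_\proj$ is a retract of a relative $I$-cell complex, the pushout of $F$ along an arbitrary cofibration is a retract of its pushout along a relative $I$-cell complex, and weak equivalences are closed under retracts; this yields left properness of $\DblCat_\proj$.

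I expect the only genuinely non-routine ingredient to be \cref{pushoutnerve} itself, which is already in hand. The remaining steps — identifying $\Nh$ on box products, checking the finite presentability needed for preservation of filtered colimits, and the cell-by-cell plus retract bookkeeping — are routine, but must be carried out with enough care that all of the transfinite colimits appearing along the way are genuinely preserved by $\Nh$.
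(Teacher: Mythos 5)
Your proposal is correct and takes essentially the same route as the paper's proof: both hinge on \cref{subdivincl} and \cref{pushoutnerve}, the identification $\Nh([n]\boxtimes \sD)\cong \Simp[n]\boxtimes \sD$ on generating cofibrations, left properness of $\projThom$, and the fact that $\Nh$ detects weak equivalences since $\DblCat_\proj$ is right-induced along it. The only difference is bookkeeping: the paper first observes that weak equivalences in $\DblCat_\proj$ are closed under filtered colimits (via the double nerve $\bN$) and thereby reduces left properness to pushouts along the generating cofibrations, whereas you carry the entire relative cell complex through $\Nh$ (using its preservation of filtered colimits) and finish with the retract argument---both are standard and valid.
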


\begin{proof}
First note that the weak equivalences in $\DblCat_\proj$ are closed under filtered colimits. Indeed, by \cref{prop:charwe} they are created by the double nerve 
\[ \bN\colon \DblCat_\proj\to \sSet^{\Simpop}_\proj \]
which preserves filtered colimits, and the weak equivalences in $\sSet^{\Simpop}_\proj$ are closed under filtered colimits.

Hence, to show that the model structure $\DblCat_\proj$ is left proper, it is enough to show the following: for every generating cofibration $[n]\boxtimes (c\Sd^2\partial\Simp[k]\to c\Sd^2\Simp[k])$ in $\DblCat_\proj$, with $n\geq 0$ and $k\geq 0$, and every commutative diagram in $\DblCat$
\begin{tz}
\node[](1) {$[n]\boxtimes c\Sd^2\partial\Simp[k]$};
\node[below of=1](2) {$[n]\boxtimes c\Sd^2\Simp[k]$}; 
\node[right of=1,xshift=1.4cm](3) {$\bA$}; 
\node[below of=3](4) {$\bB$}; 
\node[right of=3,xshift=.2cm](5) {$\bC$}; 
\node[below of=5](6) {$\bD$}; 
\pushout{4};
\pushout{6};
\punctuation{6}{,};

\draw[right hook->] (1) to (2); 
\draw[->] (1) to (3); 
\draw[->] (2) to (4); 
\draw[right hook->] (3) to (4); 
\draw[->] (3) to node[above,la]{$\sim$} (5); 
\draw[->] (4) to (6); 
\draw[right hook->] (5) to (6); 
\end{tz}
where both squares are pushouts and the double functor $\bA\to \bC$ is a weak equivalence in $\DblCat_\proj$, then the double functor $\bB\to \bD$ is also a weak equivalence in $\DblCat_\proj$. 

By applying the horizontal nerve $\Nh$ to the above diagram, we get a commutative diagram in $\Cat^{\Simpop}$
\begin{tz}
\node[](1) {$\Simp[n]\boxtimes c\Sd^2\partial\Simp[k]$};
\node[below of=1](2) {$\Simp[n]\boxtimes c\Sd^2\Simp[k]$}; 
\node[right of=1,xshift=1.8cm](3) {$\Nh\bA$}; 
\node[below of=3](4) {$\Nh\bB$}; 
\node[right of=3,xshift=.8cm](5) {$\Nh\bC$}; 
\node[below of=5](6) {$\Nh\bD$}; 
\pushout{4};
\punctuation{6}{,};

\draw[right hook->] (1) to (2); 
\draw[->] (1) to (3); 
\draw[->] (2) to (4); 
\draw[right hook->] (3) to (4); 
\draw[->] (3) to node[above,la]{$\sim$} (5); 
\draw[->] (4) to (6); 
\draw[right hook->] (5) to (6); 
\end{tz}
where the left-hand and outer squares are pushouts by \cref{pushoutnerve,subdivincl}. Hence, the right-hand square is also a pushout, in which the map $\Nh\bA\to \Nh\bB$ is a cofibration (as a pushout of a generating cofibration) and the map $\Nh\bA\to \Nh\bC$ is a weak equivalence in $\projThom$ (by definition of the weak equivalence $\bA\to \bC$ in $\DblCat_\proj$). By left properness of $\projThom$, we get that the map $\Nh\bB\to \Nh\bD$ is a weak equivalence in $\projThom$ showing that $\bB\to \bD$ is a weak equivalence in $\DblCat_\proj$, as desired. 
\end{proof}

\section{The Quillen equivalence}

In this section, we aim to show that the model structure $\DblCat_\proj$ from \cref{projMS} is in fact Quillen equivalent to the model structure $\Cat^{\Simpop}_\proj$ from \cref{Thomproj}. To establish this, we use an analogous result by Horel in \cite{Horel}, who proves that there is a model structure on the category of internal categories to $\sSet$ which is induced from and Quillen equivalent to the projective model structure on $\sSet^{\Simpop}$ for simplicial objects in the Kan--Quillen model structure on $\sSet$. We first recall Horel’s result in \cref{sec:HorelMS}. Then, in \cref{sec:QP}, we relate Horel’s setting to ours via a commutative square of Quillen pairs, and in \cref{sec:QE}, we show that all of these Quillen pairs are Quillen equivalences, thereby achieving our goal.

\subsection{Horel's model structure on \texorpdfstring{$\Cat(\sSet)$}{Cat(sSet)}} \label{sec:HorelMS}

We denote by $\Cat(\sSet)$ the category of internal categories to simplicial sets and internal functors. Similarly to $\DblCat$ and $\Cat^{\Simpop}$, we introduce a box product for $\Cat(\sSet)$.

\begin{defn}
    There are canonical inclusions 
    \[ \iota\colon \Cat\cong \Cat(\Set)\to \Cat(\sSet) \quad \text{and} \quad \cst\colon \sSet\to \Cat(\sSet) \]
    given by the functor between categories of internal categories induced by the (pullback-preserving) inclusion $\Set\hookrightarrow \sSet$ and by the constant diagram functor, respectively. 

    The \textbf{box product functor} $\boxtimes\colon \Cat\times \sSet\to \Cat(\sSet)$ is then given by the composite
    \[ \Cat\times \sSet\xrightarrow{\iota\times \cst}\Cat(\sSet)\times \Cat(\sSet)\xrightarrow{-\times -} \Cat(\sSet)\]
    sending a pair $(\sC,X)$ with $\sC\in \Cat$ and $X\in \sSet$ to the product $\sC\boxtimes X\coloneqq \iota\sC\times \cst X$.
\end{defn}

\begin{rem}
    The full embedding $\cst\colon \sSet\to \Cat(\sSet)$ admits as a right adjoint the functor $(-)_0\colon \Cat(\sSet)\to \sSet$ which sends an internal category to $\sSet$ to its underlying simplicial set of objects. 
\end{rem}

Similarly to the model structure $\DblCat_\proj$, Horel's model structure on $\Cat(\sSet)$ is also defined as a right-induced model structure along a nerve, introduced in \cite[\textsection 3.6]{Horel}, which we now recall. 

\begin{rem}
The category $\Cat(\sSet)$ is cartesian closed; see \cite[Lemma B2.3.15(ii)]{Elephant} or \cite[Proposition 3.5]{Horel}. We denote by $[\bX,\bY]$ the internal hom between two internal categories $\bX$ and $\bY$ to $\sSet$.
\end{rem}

\begin{defn}
The \textbf{simplicial nerve functor} $N^\Simp\colon \Cat(\sSet)\to \sSet^{\Simpop}$ sends an internal category $\bX$ to $\sSet$ to the simplicial object in $\sSet$
\[ N^\Simp\bX\colon \Simpop\to \sSet, \quad [n]\mapsto [\iota[n],\bX]_0. \]
Here $[\iota[n],\bX]_0$ is the simplicial set whose value at $k\geq 0$ is given by the set of internal functor $[n]\boxtimes \Simp[k]\to \bX$. 
\end{defn}

By \cite[\textsection 3.6]{Horel}, the simplicial nerve admits a left adjoint. 

\begin{prop}
    The simplicial nerve functor is part of an adjunction
    \begin{tz}
\node[](A) {$\sSet^{\Simpop}$};
\node[right of=A,xshift=1.6cm](B) {$\Cat(\sSet)$};
\punctuation{B}{.};
\draw[->] ($(A.east)+(0,5pt)$) to node[above,la]{$c^\Simp$} ($(B.west)+(0,5pt)$);
\draw[->] ($(B.west)-(0,5pt)$) to node[below,la]{$N^\Simp$} ($(A.east)-(0,5pt)$);
\node[la] at ($(A.east)!0.5!(B.west)$) {$\bot$};
\end{tz}
\end{prop}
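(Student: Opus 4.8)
The plan is to recognize $N^\Simp$ as a \emph{nerve functor} in the sense of the classical nerve–realization paradigm, and to produce $c^\Simp$ as a left Kan extension along a Yoneda embedding. To this end, first consider the functor
\[ \cF\colon \Simp\times\Simp\to \Cat(\sSet),\qquad ([n],[k])\longmapsto [n]\boxtimes\Simp[k]=\iota[n]\times\cst\Simp[k], \]
which is functorial since $[n]\mapsto\iota[n]$, $[k]\mapsto\cst\Simp[k]$, and the cartesian product of $\Cat(\sSet)$ all are. Using the adjunctions $\cst\dashv(-)_0$ and $-\times\iota[n]\dashv[\iota[n],-]$, one unwinds the definition of $N^\Simp$ into bijections
\[ (N^\Simp\bX)_{n,k}=\bigl([\iota[n],\bX]_0\bigr)_k\cong \Cat(\sSet)(\cst\Simp[k],[\iota[n],\bX])\cong \Cat(\sSet)(\iota[n]\times\cst\Simp[k],\bX)=\Cat(\sSet)(\cF([n],[k]),\bX), \]
natural in $\bX\in\Cat(\sSet)$ and in $([n],[k])\in\Simp\times\Simp$. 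Under the identification $\sSet^{\Simpop}\cong\Set^{\Simpop\times\Simpop}$, this exhibits $N^\Simp$ as the functor $\bX\mapsto\Cat(\sSet)(\cF(-),\bX)$, i.e.\ as the nerve associated to $\cF$.

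Next I would invoke cocompleteness of the target: $\Set^{\Simpop\times\Simpop}$ is the free cocompletion of $\Simp\times\Simp$ along the Yoneda embedding $\mathsf{y}$, and $\Cat(\sSet)$ is cocomplete—being locally presentable as the category of models of a limit sketch in $\sSet$. Hence the left Kan extension $c^\Simp\coloneqq\mathrm{Lan}_{\mathsf{y}}\cF\colon\sSet^{\Simpop}\to\Cat(\sSet)$ exists, is cocontinuous, and is left adjoint to the nerve functor $\bX\mapsto\Cat(\sSet)(\cF(-),\bX)=N^\Simp\bX$. Concretely, $c^\Simp$ is computed by the coend
\[ c^\Simp Y\;\cong\;\int^{([n],[k])\in\Simp\times\Simp}Y_{n,k}\cdot\bigl([n]\boxtimes\Simp[k]\bigr), \]
equivalently as the colimit of $\cF$ over the category of elements of $Y$; in particular $c^\Simp$ carries the representable bisimplicial set at $([n],[k])$ to $[n]\boxtimes\Simp[k]$, which is the expected compatibility with the box products already in play.

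The one point that is not purely formal is the cocompleteness (local presentability) of $\Cat(\sSet)$: in contrast to the strict functor-category setting, this is \emph{not} the same as $\Cat^{\Simpop}$ and does not follow from a levelwise colimit construction, so one must argue via the general theory of internal categories in a locally presentable category that $\Cat(\sSet)$ again is locally presentable. Everything else is the standard nerve–realization adjunction. Alternatively, once $\Cat(\sSet)$ is known to be locally presentable and $N^\Simp$ is seen to preserve limits and filtered colimits (both of which follow from the adjunction descriptions above and the fact that $\iota[n]$ and $\cst\Simp[k]$ are compact), the adjoint functor theorem for locally presentable categories produces $c^\Simp$ directly. This is essentially the content of \cite[\textsection 3.6]{Horel}.
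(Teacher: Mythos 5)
Your argument is correct: the identification $(N^\Simp\bX)_{n,k}\cong\Cat(\sSet)([n]\boxtimes\Simp[k],\bX)$ (via $\cst\dashv(-)_0$ and cartesian closedness) together with cocompleteness of the locally presentable category $\Cat(\sSet)$ yields the left adjoint $c^\Simp=\mathrm{Lan}_{\mathsf{y}}\cF$ by the standard nerve--realization paradigm, and your computation of $c^\Simp$ on representables agrees with how the paper later uses it. The paper itself gives no proof and simply cites Horel's \textsection 3.6, which runs along essentially the same lines, so your proposal matches the intended argument.
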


 Horel's model structure on $\Cat(\sSet)$ is then right-induced from the projective model structure $\projKan$ along the simplicial nerve. The existence of such a model structure is proven in \cite[Theorem 5.2]{Horel}, and its left properness is established in \cite[Proposition 5.3]{Horel}.

\begin{theorem} \label{HorelMS}
The right-induced model structure on $\Cat(\sSet)$ from the projective model structure $\projKan$ along the adjunction
\begin{tz}
\node[](A) {$\projKan$};
\node[right of=A,xshift=1.7cm](B) {$\Cat(\sSet)$};
\draw[->] ($(A.east)+(0,5pt)$) to node[above,la]{$c^\Simp$} ($(B.west)+(0,5pt)$);
\draw[->] ($(B.west)-(0,5pt)$) to node[below,la]{$N^\Simp$} ($(A.east)-(0,5pt)$);
\node[la] at ($(A.east)!0.5!(B.west)$) {$\bot$};
\end{tz}
exists. We denote it by $\Cat(\sSet)_\proj$.

Moreover, the model structure $\Cat(\sSet)_\proj$ is cofibrantly generated, and left proper.
\end{theorem}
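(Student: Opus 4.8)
The plan is to obtain this model structure by the classical transfer theorem along the adjunction $c^\Simp\dashv N^\Simp$, in direct analogy with the construction of $\DblCat_\proj$ in \cref{projMS}. Since $\Cat(\sSet)$ is locally presentable (it is the category of internal categories in the locally presentable category $\sSet$) and $\projKan$ is combinatorial by \cref{projKan}, the small object argument is available, and by the transfer theorem of \cite[Corollary 3.3.4]{HKRS} the right-induced model structure on $\Cat(\sSet)$ exists as soon as the acyclicity condition holds; when it does, it is automatically cofibrantly generated, with generating cofibrations and trivial cofibrations the images under $c^\Simp$ of those of $\projKan$, namely
\[ \{[n]\boxtimes(\partial\Simp[k]\to\Simp[k])\mid n\geq 0,\ k\geq 0\} \quad\text{and}\quad \{[n]\boxtimes(\Lambda^t[k]\to\Simp[k])\mid n\geq 0,\ k\geq 1,\ 0\leq t\leq k\}, \]
using that $c^\Simp$ carries a box product $\Simp[n]\boxtimes X$ to $[n]\boxtimes X$ in $\Cat(\sSet)$, the analogue of \cref{chonboxprod} for the simplicial nerve.

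The crux is therefore the acyclicity condition: one must show that $N^\Simp$ sends every transfinite composition of pushouts of maps of the form $[n]\boxtimes(\Lambda^t[k]\to\Simp[k])$ to a weak equivalence in $\projKan$. As $N^\Simp$ preserves filtered colimits and the trivial cofibrations of $\projKan$ are closed under transfinite composition, it suffices to treat a single pushout of $[n]\boxtimes(\Lambda^t[k]\to\Simp[k])$ along a map $[n]\boxtimes\Lambda^t[k]\to\bX$, with pushout $\bY$, and to check that $N^\Simp\bX\to N^\Simp\bY$ is a weak equivalence in $\projKan$. I would do this through an explicit description of such pushouts of internal categories to $\sSet$---a free amalgamation analogous to, but considerably simpler than, the one in \cref{technicallemma}, since here there are only objects, morphisms, and relations (no squares), and the inclusion $\Lambda^t[k]\to\Simp[k]$ affects only the simplicial directions. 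From such a description one should be able to read off that at each simplicial level the map $N^\Simp\bX\to N^\Simp\bY$ is obtained from $N^\Simp\bX$ by pushing out a coproduct of copies of the anodyne inclusion $\Lambda^t[k]\to\Simp[k]$, hence is levelwise a trivial cofibration in $\sSet_\Kan$ and so a weak equivalence in $\projKan$. Controlling this amalgamation---in particular the new composites of morphisms it creates, and verifying that they do not disturb the nerve levels $\bX_1\times_{\bX_0}\cdots\times_{\bX_0}\bX_1$---is the main obstacle, and is precisely the $\Cat(\sSet)$ counterpart of the technical work carried out for $\DblCat$ in \cref{technicallemma,pushoutnerve}.

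Left properness can then be established exactly as in \cref{thm:dblcatleftproper}. One first notes that the weak equivalences of $\Cat(\sSet)_\proj$ are closed under filtered colimits, since $N^\Simp$ creates them, preserves filtered colimits, and the weak equivalences of $\projKan$ are closed under filtered colimits. It then suffices to show that pushouts of the generating cofibrations $[n]\boxtimes(\partial\Simp[k]\to\Simp[k])$ along arbitrary maps preserve weak equivalences; applying $N^\Simp$ and using the analogue of \cref{pushoutnerve}---that $N^\Simp$ preserves pushouts along these maps, again extracted from the free-amalgamation description---together with left properness of $\projKan$ yields the conclusion. Alternatively, one may simply cite \cite[Theorem 5.2 and Proposition 5.3]{Horel}, where both statements are proved.
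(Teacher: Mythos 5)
The paper does not prove this statement itself: it is recalled from Horel, with existence quoted as \cite[Theorem 5.2]{Horel} and left properness as \cite[Proposition 5.3]{Horel}. So the citation you offer in your final sentence is exactly the paper's ``proof'', and on that level your proposal agrees with the paper. The direct argument you sketch is a reasonable alternative and runs parallel to what the paper does for $\DblCat_\proj$: transfer via the acyclicity criterion of \cite{HKRS}, identification of the images of the generating (trivial) cofibrations as $[n]\boxtimes(\partial\Simp[k]\to\Simp[k])$ and $[n]\boxtimes(\Lambda^t[k]\to\Simp[k])$ (the analogue of \cref{chonboxprod}, which indeed holds since $c^\Simp$ is cocontinuous and agrees with $[n]\boxtimes-$ on representables, as used in \cref{commutativesquare}), then an explicit free-amalgamation description of pushouts along these maps and a statement that $N^\Simp$ preserves them, after which acyclicity and left properness follow as in \cref{thm:dblcatleftproper}. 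However, that free-amalgamation control in $\Cat(\sSet)$ --- the formal composites created by the gluing and their effect on the Segal levels $\bX_1\times_{\bX_0}\cdots\times_{\bX_0}\bX_1$ of the nerve, i.e.\ the $\Cat(\sSet)$ counterpart of \cref{technicallemma,pushoutnerve} --- is precisely the technical content of Horel's proof, and your sketch only names it as ``the main obstacle'' without carrying it out. So as a self-contained proof your proposal has a deferred core; as a reproduction of the paper's treatment (a citation to \cite{Horel}) it is complete.
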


The model structure on $\Cat(\sSet)$ is in fact Quillen equivalent to the projective model structure $\projKan$, as shown in \cite[Proposition 5.5]{Horel}. 

\begin{prop} \label{QEHorel}
    The Quillen pair
\begin{tz}
\node[](A) {$\projKan$};
\node[right of=A,xshift=1.9cm](B) {$\Cat(\sSet)_\proj$};
\draw[->] ($(A.east)+(0,5pt)$) to node[above,la]{$c^\Simp$} ($(B.west)+(0,5pt)$);
\draw[->] ($(B.west)-(0,5pt)$) to node[below,la]{$N^\Simp$} ($(A.east)-(0,5pt)$);
\node[la] at ($(A.east)!0.5!(B.west)$) {$\bot$};
\end{tz}
    is a Quillen equivalence.
\end{prop}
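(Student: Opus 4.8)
The plan is as follows; recall that the statement is \cite[Proposition 5.5]{Horel}. Since $\Cat(\sSet)_\proj$ is right-induced from $\projKan$ along $N^\Simp$ (see \cref{HorelMS}), the functor $N^\Simp$ both preserves and reflects all weak equivalences. I would invoke the standard criterion that a Quillen pair $F\dashv U$ whose right adjoint $U$ reflects weak equivalences is a Quillen equivalence if and only if the derived unit $A\to \bR U\,\bL F(A)$ is a weak equivalence for every cofibrant object $A$. Here $\bR N^\Simp = N^\Simp$ (no fibrant replacement is needed, as $N^\Simp$ preserves all weak equivalences) and $\bL c^\Simp(X)\simeq c^\Simp X$ for $X$ cofibrant (as $c^\Simp$ preserves weak equivalences between cofibrant objects), so the derived unit at a cofibrant $X$ coincides up to weak equivalence with the plain unit $\eta_X\colon X\to N^\Simp c^\Simp X$. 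Everything thus reduces to showing that $\eta_X$ is a weak equivalence in $\projKan$ for every cofibrant $X$.

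First I would compute $\eta$ on the generating data. A Yoneda argument applied to the adjunction $c^\Simp\dashv N^\Simp$ identifies $c^\Simp(\Simp[n]\boxtimes X)\cong [n]\boxtimes X$ for every simplicial set $X$ (the analogue of \cref{chonboxprod}), and a direct computation of the internal hom $[\iota[m],-]_0$ evaluated on $[n]\boxtimes X$ gives back $N^\Simp([n]\boxtimes X)\cong \Simp[n]\boxtimes X$, compatibly with the unit. Hence $\eta$ is an isomorphism on every object of the form $\Simp[n]\boxtimes X$; in particular on the domains $\Simp[n]\boxtimes\partial\Simp[k]$ and codomains $\Simp[n]\boxtimes\Simp[k]$ of the generating cofibrations of $\projKan$.

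Next I would bootstrap from the generators to an arbitrary cofibrant object $X$, which is a retract of a transfinite composite of pushouts of generating cofibrations. As weak equivalences of $\projKan$ are stable under retracts and filtered colimits, and $N^\Simp c^\Simp$ preserves filtered colimits ($c^\Simp$ is cocontinuous, and $N^\Simp$ is corepresented in each bidegree by a finitely presentable internal category), it suffices to treat one pushout $Y = X\sqcup_A B$ along a generating cofibration $A\to B$, assuming inductively that $\eta_X$ is a weak equivalence. Applying $c^\Simp$ yields a pushout $c^\Simp Y = c^\Simp X\sqcup_{c^\Simp A}c^\Simp B$ along a cofibration of $\Cat(\sSet)_\proj$. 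The essential input is a rigidification-type lemma, playing for $N^\Simp$ the role that \cref{pushoutnerve} plays for $\Nh$: namely that $N^\Simp$ preserves such a pushout, so that $N^\Simp c^\Simp Y\cong N^\Simp c^\Simp X\sqcup_{N^\Simp c^\Simp A}N^\Simp c^\Simp B$. Since $\eta$ is an isomorphism on $A$ and $B$, the map $\eta_Y$ is then identified with the pushout of the weak equivalence $\eta_X$ along a cofibration, hence is a weak equivalence by left properness of $\projKan$ (\cref{projKan}); this closes the induction.

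The hard part will be this rigidification lemma — that $N^\Simp$ preserves the pushouts of $c^\Simp$ of the generating cofibrations. Unlike the purely formal adjoint manipulations used elsewhere, it requires an explicit combinatorial description of the pushout internal category to $\sSet$ together with a verification that its simplicial nerve is computed by the naive pushout of simplicial nerves — exactly the analysis performed for the horizontal nerve $\Nh$ in \cref{technicallemma,pushoutnerve}, transposed from double categories to Horel's internal categories in $\sSet$.
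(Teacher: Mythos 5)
The paper does not actually prove this statement: \cref{QEHorel} is imported verbatim from Horel's work, cited as [Proposition 5.5, Horel], so there is no in-paper argument to compare against. That said, your reconstruction is essentially Horel's own strategy (reduce, via the right-induced structure and the standard criterion, to showing the plain unit $\eta_X\colon X\to N^\Simp c^\Simp X$ is a weak equivalence for cofibrant $X$, then do a cellular induction starting from $c^\Simp(\Simp[n]\boxtimes X)\cong [n]\boxtimes X$ and $N^\Simp([n]\boxtimes X)\cong \Simp[n]\boxtimes X$), and all of your formal steps check out.

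The one point worth correcting is your assessment of the ``hard part''. The rigidification lemma you defer is far easier than its double-categorical analogue (\cref{technicallemma,pushoutnerve}), because the combinatorial difficulties there come from pushouts of \emph{categories} generating new free composites, and that phenomenon simply does not occur here. Indeed, $\Cat(\sSet)\cong \Cat^{\Simpop}$ (pullbacks in $\sSet$ are levelwise, so an internal category in $\sSet$ is the same as a simplicial object in $\Cat$), under which identification colimits in $\Cat(\sSet)$ are computed levelwise in the simplicial direction and $N^\Simp$ becomes the levelwise ordinary nerve (up to swapping the two simplicial directions). The map $c^\Simp(\Simp[n]\boxtimes(\partial\Simp[k]\to\Simp[k]))=[n]\boxtimes(\partial\Simp[k]\to\Simp[k])$ is, at each simplicial level $m$, the coproduct inclusion $\coprod_{(\partial\Simp[k])_m}[n]\hookrightarrow \coprod_{(\Simp[k])_m}[n]$, so a pushout along it is just $\bA_m\sqcup\coprod_{(\Simp[k])_m\setminus(\partial\Simp[k])_m}[n]$ at each level; the nerve preserves coproducts, hence preserves these pushouts on the nose. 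A pleasant byproduct is that your induction then shows $\eta$ is an \emph{isomorphism} (not merely a weak equivalence) on every cellular, hence every cofibrant, object of $\projKan$ -- equivalently, cofibrant simplicial spaces satisfy the strict Segal condition -- so the appeal to left properness and the gluing lemma can be dispensed with entirely, and the same levelwise observation handles the filtered-colimit and retract steps.
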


\subsection{Four Quillen pairs} \label{sec:QP}

Since the right adjoint $\Ex^2N$ of the below left adjunction preserves pullbacks, it induces a functor $\Cat(\Ex^2N)\colon\DblCat\cong \Cat(\Cat)\to \Cat(\sSet)$ between categories of internal categories,
which is part of an adjunction as below right.
\begin{tz}
\node[](A) {$\sSet$};
\node[right of=A,xshift=.9cm](B) {$\Cat$};
\draw[->] ($(A.east)+(0,5pt)$) to node[above,la]{$c\Sd^2$} ($(B.west)+(0,5pt)$);
\draw[->] ($(B.west)-(0,5pt)$) to node[below,la]{$\Ex^2 N$} ($(A.east)-(0,5pt)$);
\node[la] at ($(A.east)!0.5!(B.west)$) {$\bot$};

\node[right of=B,xshift=2cm](A) {$\Cat(\sSet)$};
\node[right of=A,xshift=1.7cm](B) {$\DblCat$};
\draw[->] ($(A.east)+(0,5pt)$) to node[above,la]{$\Cat(c\Sd^2)$} ($(B.west)+(0,5pt)$);
\draw[->] ($(B.west)-(0,5pt)$) to node[below,la]{$\Cat(\Ex^2 N)$} ($(A.east)-(0,5pt)$);
\node[la] at ($(A.east)!0.5!(B.west)$) {$\bot$};
\end{tz}
Despite the notation, the left adjoint $\Cat(c\Sd^2)$ is \emph{not} given by applying the functor $c\Sd^2$ levelwise, as the latter does not preserve pullbacks. However, its existence is guaranteed by the application of the Adjoint Functor Theorem, and we can describe its action on box products as follows. 

\begin{rem} \label{cat(csd)onboxprod}
    The left adjoint $\Cat(c\Sd^2)\colon \Cat(\sSet)\to \DblCat$ sends the box product $\sC\boxtimes X$ of a category $\sC$ and a simplicial set $X$ to the double category $\sC\boxtimes c\Sd^2 X$. 
\end{rem}

We now show that we have a commutative square of adjunctions relating $\sSet^{\Simpop}$, $\Cat^{\Simpop}$, $\Cat(\sSet)$, and $\DblCat$. For this, we first introduce a box product functor for the category $\sSet^{\Simpop}$ as well. 

\begin{defn}
    There are canonical inclusions 
    \[ \iota\colon \Set^{\Simpop}\to \sSet^{\Simpop}\quad \text{and} \quad \cst\colon \sSet\to \sSet^{\Simpop} \]
    given by post-composition along the inclusion $\Set\hookrightarrow \sSet$ and by the constant diagram functor, respectively. 
    
    The \textbf{box product} $\boxtimes\colon \Set^{\Simpop}\times \sSet\to \sSet^{\Simpop}$ is then given by the composite 
    \[ \Set^{\Simpop}\times \sSet\hookrightarrow \sSet^{\Simpop}\times \sSet^{\Simpop} \xrightarrow{\times} \sSet^{\Simpop} \]
    sending a pair $(X,Y)$ with $X\in \Set^{\Simpop}$ and $Y\in \sSet$ to the product $X\boxtimes Y\coloneqq \iota X\times \cst Y$. 
\end{defn}

\begin{prop} \label{commutativesquare}
We have a commutative square of adjunctions 
\begin{tz}
    \node[](1) {$\sSet^{\Simpop}$}; 
    \node[below of=1,yshift=-.5cm](2) {$\Cat(\sSet)$}; 
    \node[right of=1,xshift=1.7cm](3) {$\Cat^{\Simpop}$}; 
    \node[below of=3,yshift=-.5cm](4) {$\DblCat$};
\punctuation{4}{.};

    \draw[->] ($(1.east)+(0,5pt)$) to node[above,la]{$(c\Sd^2)_*$} ($(3.west)+(0,5pt)$);
\draw[->] ($(3.west)-(0,5pt)$) to node[below,la]{$(\Ex^2 N)_*$} ($(1.east)-(0,5pt)$);
\node[la] at ($(1.east)!0.5!(3.west)$) {$\bot$};
\draw[->] ($(2.east)+(0,5pt)$) to node[above,la]{$\Cat(c\Sd^2)$} ($(4.west)+(0,5pt)$);
\draw[->] ($(4.west)-(0,5pt)$) to node[below,la]{$\Cat(\Ex^2 N)$} ($(2.east)-(0,5pt)$);
\node[la] at ($(2.east)!0.5!(4.west)$) {$\bot$};

\draw[->] ($(3.south)-(5pt,0)$) to node[left,la]{$\ch$} ($(4.north)-(5pt,0)$); 
\draw[->] ($(4.north)+(5pt,0)$) to node[right,la]{$\Nh$} ($(3.south)+(5pt,0)$); 
\node[la] at ($(3.south)!0.5!(4.north)$) {\rotatebox{90}{$\bot$}};
\draw[->] ($(1.south)-(5pt,0)$) to node[left,la]{$c^\Simp$} ($(2.north)-(5pt,0)$); 
\draw[->] ($(2.north)+(5pt,0)$) to node[right,la]{$N^\Simp$} ($(1.south)+(5pt,0)$); 
\node[la] at ($(1.south)!0.5!(2.north)$) {\rotatebox{90}{$\bot$}};
\end{tz}
\end{prop}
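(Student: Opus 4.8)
The plan is to reduce the assertion to a comparison of functors and then carry out a short diagram chase with the adjunctions at hand. Saying that the square of adjunctions commutes means that its square of left adjoints $\ch\circ(c\Sd^2)_*$ and $\Cat(c\Sd^2)\circ c^\Simp$, both from $\sSet^{\Simpop}$ to $\DblCat$, agree up to natural isomorphism; since adjoints are unique up to natural isomorphism, this is equivalent to the square of right adjoints $(\Ex^2N)_*\circ\Nh$ and $N^\Simp\circ\Cat(\Ex^2N)$, both from $\DblCat$ to $\sSet^{\Simpop}$, agreeing up to natural isomorphism. I would establish the latter by computing, for a double category $\bA$, the value of each composite in an arbitrary bidegree $([n],[k])\in\Simp\times\Simp$ and identifying both with the common set $\DblCat([n]\boxtimes c\Sd^2\Simp[k],\bA)$.

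For the first composite, unwinding the definitions of $\Nh$ and of post-composition by $\Ex^2N$, and then using in turn the adjunctions $\Sd^2\dashv\Ex^2$ and $c\dashv N$ on simplicial sets, the adjunction $\bV\dashv\bfV$, the cartesian closedness of $\DblCat$, and the defining formula $\sC\boxtimes\sD=\bH\sC\times\bV\sD$ of the box product $\Cat\times\Cat\to\DblCat$, one obtains natural bijections
\begin{align*}
\big((\Ex^2N)_*\Nh\bA\big)_{([n],[k])}
&=\big(\Ex^2N\,\bfV[\bH[n],\bA]\big)_k
=\sSet\big(\Sd^2\Simp[k],\,N\,\bfV[\bH[n],\bA]\big)\\
&\cong\Cat\big(c\Sd^2\Simp[k],\,\bfV[\bH[n],\bA]\big)
\cong\DblCat\big(\bV c\Sd^2\Simp[k],\,[\bH[n],\bA]\big)\\
&\cong\DblCat\big(\bH[n]\times\bV c\Sd^2\Simp[k],\,\bA\big)
=\DblCat\big([n]\boxtimes c\Sd^2\Simp[k],\,\bA\big).
\end{align*}
For the second composite, by definition of the simplicial nerve its value in bidegree $([n],[k])$ is $\Cat(\sSet)\big([n]\boxtimes\Simp[k],\Cat(\Ex^2N)\bA\big)$; combining the adjunction $\Cat(c\Sd^2)\dashv\Cat(\Ex^2N)$ with the identity $\Cat(c\Sd^2)([n]\boxtimes\Simp[k])=[n]\boxtimes c\Sd^2\Simp[k]$ of \cref{cat(csd)onboxprod} rewrites this as $\DblCat\big([n]\boxtimes c\Sd^2\Simp[k],\bA\big)$ as well. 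Each arrow in both chains is a naturally occurring bijection, natural in $[n]$, in $[k]$, and in $\bA$, so the chains assemble into a natural isomorphism $(\Ex^2N)_*\Nh\cong N^\Simp\Cat(\Ex^2N)$ of functors $\DblCat\to\sSet^{\Simpop}$; passing back to left adjoints yields the claimed commutative square of adjunctions.

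An alternative that leans more directly on the box-product computations already available would be to verify instead that the two left adjoints agree. Both $\ch(c\Sd^2)_*$ and $\Cat(c\Sd^2)c^\Simp$ are cocontinuous, hence determined up to natural isomorphism by their restriction to the representables $\Simp[n]\boxtimes\Simp[k]$, which generate $\sSet^{\Simpop}$ under colimits; on such an object, $(c\Sd^2)_*$ sends it to $\Simp[n]\boxtimes c\Sd^2\Simp[k]$ (a levelwise computation, as $c\Sd^2$ preserves coproducts) and $c^\Simp$ sends it to $[n]\boxtimes\Simp[k]$ by the very definition of $N^\Simp$, so that \cref{chonboxprod} and \cref{cat(csd)onboxprod} respectively identify the resulting double categories $\ch(\Simp[n]\boxtimes c\Sd^2\Simp[k])$ and $\Cat(c\Sd^2)([n]\boxtimes\Simp[k])$ with the same $[n]\boxtimes c\Sd^2\Simp[k]$. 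I do not expect a genuine obstacle in either approach: the argument is a routine chase, and the only point requiring some care is that the square involves four \emph{different} box-product functors (one for each of $\sSet^{\Simpop}$, $\Cat^{\Simpop}$, $\Cat(\sSet)$, and $\DblCat$), so one must invoke the correct one, and the correct instance of \cref{chonboxprod} or \cref{cat(csd)onboxprod}, at each corner.
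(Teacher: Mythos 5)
Your proposal is correct, and in fact contains the paper's own argument: the ``alternative'' you sketch at the end---checking that the two composites of left adjoints agree on the representables $\Simp[n]\boxtimes\Simp[k]$ via \cref{chonboxprod} and \cref{cat(csd)onboxprod}, and concluding by cocontinuity/density---is exactly the proof given in the paper. Your primary route is the Yoneda-dual version of the same check: instead of computing the left adjoints on representables, you compute the composites of right adjoints $(\Ex^2N)_*\Nh$ and $N^\Simp\Cat(\Ex^2N)$ in each bidegree $([n],[k])$ and identify both with $\DblCat([n]\boxtimes c\Sd^2\Simp[k],\bA)$, using $\Sd^2\dashv\Ex^2$, $c\dashv N$, $\bV\dashv\bfV$, cartesian closedness of $\DblCat$, and the adjunction $\Cat(c\Sd^2)\dashv\Cat(\Ex^2N)$ together with \cref{cat(csd)onboxprod}. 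Both chains of bijections are correct and natural, so this works; what it buys is that the definitions of the nerves are unwound explicitly rather than hidden in the box-product formulas, at the cost of a slightly longer chase. Your closing remark about keeping track of the four different box products is indeed the only point of care, and you handle it correctly.
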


\begin{proof}
    We show that the square of left adjoints commutes. Since $\sSet^{\Simpop}$ is a presheaf category, it is enough to show that the composite of left adjoints agree on representables. Let $\Simp[n]\boxtimes \Simp[k]$ be a representable in $\sSet^{\Simpop}$, for $n,k\geq 0$. Then, using \cref{chonboxprod}, we have isomorphisms in $\DblCat$ 
    \[ c^h((c\Sd^2)_*(\Simp[n]\boxtimes \Simp[k]))\cong c^h(\Simp[n]\boxtimes c\Sd^2 \Simp[k]) \cong [n]\boxtimes c\Sd^2 \Simp[k] \]
    and, using \cref{cat(csd)onboxprod}, we have isomorphisms in $\DblCat$ 
    \[ \Cat(c\Sd^2)(c^\Simp(\Simp[n]\boxtimes \Simp[k])) \cong \Cat(c\Sd^2)([n]\boxtimes \Simp[k]) \cong [n]\boxtimes c\Sd^2\Simp[k].\]
    As the above isomorphisms are natural in $n,k$, we get the desired natural isomorphism $c^h(c\Sd^2)_*\cong \Cat(c\Sd^2)c^\Simp$.
\end{proof}

 As a consequence, we obtain the following result. 

 \begin{theorem} \label{thm:Quillenpairs}
    We have a commutative square of Quillen pairs
    \begin{tz}
    \node[](1) {$\sSet^{\Simpop}_\proj$}; 
    \node[below of=1,yshift=-.5cm](2) {$\Cat(\sSet)_\proj$}; 
    \node[right of=1,xshift=2.2cm](3) {$\Cat^{\Simpop}_\proj$}; 
    \node[below of=3,yshift=-.5cm](4) {$\DblCat_\proj$};
\punctuation{4}{,};

    \draw[->] ($(1.east)+(0,5pt)$) to node[above,la]{$(c\Sd^2)_*$} ($(3.west)+(0,5pt)$);
\draw[->] ($(3.west)-(0,5pt)$) to node[below,la]{$(\Ex^2 N)_*$} ($(1.east)-(0,5pt)$);
\node[la] at ($(1.east)!0.5!(3.west)$) {$\bot$};
\draw[->] ($(2.east)+(0,5pt)$) to node[above,la]{$\Cat(c\Sd^2)$} ($(4.west)+(0,5pt)$);
\draw[->] ($(4.west)-(0,5pt)$) to node[below,la]{$\Cat(\Ex^2 N)$} ($(2.east)-(0,5pt)$);
\node[la] at ($(2.east)!0.5!(4.west)$) {$\bot$};

\draw[->] ($(3.south)-(5pt,0)$) to node[left,la]{$\ch$} ($(4.north)-(5pt,0)$); 
\draw[->] ($(4.north)+(5pt,0)$) to node[right,la]{$\Nh$} ($(3.south)+(5pt,0)$); 
\node[la] at ($(3.south)!0.5!(4.north)$) {\rotatebox{90}{$\bot$}};
\draw[->] ($(1.south)-(5pt,0)$) to node[left,la]{$c^\Simp$} ($(2.north)-(5pt,0)$); 
\draw[->] ($(2.north)+(5pt,0)$) to node[right,la]{$N^\Simp$} ($(1.south)+(5pt,0)$); 
\node[la] at ($(1.south)!0.5!(2.north)$) {\rotatebox{90}{$\bot$}};
\end{tz}
    where all model structures are right-induced from $\sSet^{\Simpop}_\proj$. 
\end{theorem}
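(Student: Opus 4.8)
The plan is to deduce everything from the commutative square of \emph{adjunctions} in \cref{commutativesquare} together with the transitivity of right-induction; with this in hand, three of the four edges are Quillen pairs for free, and only one edge needs a short argument.

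First I would record the elementary observation that right-induced model structures compose: if $\sN$ is right-induced from $\sM$ along a right adjoint $U\colon \sN\to \sM$, and $\sM$ is itself right-induced from $\sL$ along $V\colon \sM\to \sL$, then $\sN$ is right-induced from $\sL$ along the composite $VU$, since a map $f$ in $\sN$ is a (trivial) fibration exactly when $Uf$ is one in $\sM$, exactly when $VUf$ is one in $\sL$. Applying this, I note that $\Cat(\sSet)_\proj$ is right-induced from $\sSet^{\Simpop}_\proj=\projKan$ along $N^\Simp$ by \cref{HorelMS}; that $\Cat^{\Simpop}_\proj=\projThom$ is right-induced from $\sSet^{\Simpop}_\proj$ along $(\Ex^2 N)_*$ by \cref{QEprojThomason}; and that $\DblCat_\proj$ is right-induced from $\projThom$ along $\Nh$ by \cref{projMS}, hence, by the transitivity just recalled, right-induced from $\sSet^{\Simpop}_\proj$ along the composite $(\Ex^2 N)_*\Nh$. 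Together with the tautological case of $\sSet^{\Simpop}_\proj$ over itself, this establishes that all four model structures are right-induced from $\sSet^{\Simpop}_\proj$.

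Next I would treat the Quillen pairs. For the three edges $c^\Simp\dashv N^\Simp$, $(c\Sd^2)_*\dashv (\Ex^2 N)_*$, and $\ch\dashv \Nh$, the target model structure is right-induced along the displayed adjunction by the results cited above, so each of these is automatically a Quillen pair (the defining adjunction of any right-induced model structure being a Quillen pair). It remains to show that $\Cat(c\Sd^2)\dashv \Cat(\Ex^2 N)$ is a Quillen pair between $\Cat(\sSet)_\proj$ and $\DblCat_\proj$, i.e.\ that $\Cat(\Ex^2 N)\colon \DblCat_\proj\to \Cat(\sSet)_\proj$ is a right Quillen functor, which amounts to checking that it preserves fibrations and trivial fibrations. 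Taking right adjoints throughout the commutative square of left adjoints of \cref{commutativesquare} yields a natural isomorphism $(\Ex^2 N)_*\,\Nh\cong N^\Simp\,\Cat(\Ex^2 N)$ of functors $\DblCat\to \sSet^{\Simpop}$. Given a fibration (resp.\ trivial fibration) $p$ in $\DblCat_\proj$, the map $\Cat(\Ex^2 N)(p)$ is a fibration (resp.\ trivial fibration) in $\Cat(\sSet)_\proj$ if and only if $N^\Simp\Cat(\Ex^2 N)(p)$ is one in $\sSet^{\Simpop}_\proj$, since $\Cat(\sSet)_\proj$ is right-induced from $\sSet^{\Simpop}_\proj$ along $N^\Simp$; but by the isomorphism this map is $(\Ex^2 N)_*\Nh(p)$, which is a fibration (resp.\ trivial fibration) in $\sSet^{\Simpop}_\proj$ precisely because $\DblCat_\proj$ is right-induced from $\sSet^{\Simpop}_\proj$ along $(\Ex^2 N)_*\Nh$. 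Hence $\Cat(\Ex^2 N)$ is right Quillen, and since the square of adjunctions commutes by \cref{commutativesquare}, we obtain the desired commutative square of Quillen pairs.

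I expect no serious obstacle: the argument is entirely formal. The only points that require a little care are the transitivity of right-induction and the (automatic) passage from a commuting square of left adjoints to the corresponding commuting square of right adjoints, both of which are routine.
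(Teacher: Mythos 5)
Your proposal is correct and follows essentially the same route as the paper: the paper likewise combines the right-inducedness of the three cited model structures (\cref{HorelMS,QEprojThomason,projMS}) with the commutativity of the square of adjunctions from \cref{commutativesquare} to handle the fourth edge, your explicit check that $\Cat(\Ex^2 N)$ preserves (trivial) fibrations via $(\Ex^2 N)_*\Nh\cong N^\Simp\,\Cat(\Ex^2 N)$ being just an unpacking of the paper's one-line conclusion. The transitivity of right-induction you record (stated for weak equivalences and fibrations, not only (trivial) fibrations) is exactly the implicit ingredient in the paper's proof, so there is no substantive difference.
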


\begin{proof}
    By definition, the model structures $\Cat(\sSet)_\proj$ and $\DblCat_\proj$ are right-induced along the vertical adjunctions from $\projKan$ and $\projThom$, respectively, and the model structure $\projThom$ is right-induced along the top adjunction from $\projKan$ by \cref{QEprojThomason}. As a consequence, the model structure $\Cat(\sSet)_\proj$ is also right-induced from $\DblCat_\proj$ along the bottom adjunction, since the square commutes by \cref{commutativesquare}.
\end{proof}

\subsection{Four Quillen equivalences} \label{sec:QE}

We now aim to show that the Quillen pairs in \cref{thm:Quillenpairs} are all Quillen equivalences. Since the left and top adjunctions in the diagram have already been established to be Quillen equivalences by \cref{QEHorel,QEprojThomason}, it suffices to prove that either the right or bottom adjunction is a Quillen equivalence. We prove the following.

\begin{theorem}\label{thm:quilleneq}
    The Quillen pair 
\begin{tz}
\node[](A) {$\Cat(\sSet)_\proj$};
\node[right of=A,xshift=2.2cm](B) {$\DblCat_\proj$};
\draw[->] ($(A.east)+(0,5pt)$) to node[above,la]{$\Cat(c\Sd^2)$} ($(B.west)+(0,5pt)$);
\draw[->] ($(B.west)-(0,5pt)$) to node[below,la]{$\Cat(\Ex^2 N)$} ($(A.east)-(0,5pt)$);
\node[la] at ($(A.east)!0.5!(B.west)$) {$\bot$};
\end{tz}
    is a Quillen equivalence.
\end{theorem}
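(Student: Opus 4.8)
The plan is to deduce the statement from the commutative square of Quillen pairs in \cref{thm:Quillenpairs}, in which the top and left adjunctions are already Quillen equivalences by \cref{QEprojThomason,QEHorel}. Write $\Phi\coloneqq \ch(c\Sd^2)_*\cong \Cat(c\Sd^2)c^\Simp\colon \sSet^{\Simpop}_\proj\to \DblCat_\proj$ for the common composite of the left adjoints, with right adjoint $(\Ex^2N)_*\Nh\cong N^\Simp\Cat(\Ex^2N)$. Since $c^\Simp\dashv N^\Simp$ is a Quillen equivalence, two-out-of-three for Quillen equivalences reduces the theorem to showing that $\Phi\dashv (\Ex^2N)_*\Nh$ is a Quillen equivalence. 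The right adjoint here preserves and reflects \emph{all} weak equivalences: $\Nh$ does so because $\DblCat_\proj$ is right-induced along it, and $(\Ex^2N)_*$ does so by \cref{natbeta,cor:weThomason} applied levelwise. Consequently the derived unit of $\Phi\dashv (\Ex^2N)_*\Nh$ agrees with the underived one, and by the standard recognition criterion it will suffice to prove that the unit $Y\to (\Ex^2N)_*\Nh\Phi Y$ is a weak equivalence for every cofibrant $Y\in \sSet^{\Simpop}_\proj$.

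The key step is the claim that for every cofibrant object $\bZ$ of $\projThom$, the unit $\bZ\to \Nh\ch\bZ$ of $\ch\dashv\Nh$ is an \emph{isomorphism}. I would prove this by a cellular induction. By \cref{chonboxprod} the functor $\ch$ sends each generating cofibration $\Simp[n]\boxtimes(c\Sd^2\partial\Simp[k]\to c\Sd^2\Simp[k])$ of $\projThom$ to the generating cofibration $[n]\boxtimes(c\Sd^2\partial\Simp[k]\to c\Sd^2\Simp[k])$ of $\DblCat_\proj$, and a direct computation gives $\Nh(\sC\boxtimes\sD)\cong N\sC\boxtimes\sD$ for categories $\sC,\sD$, so $\Nh$ sends this cofibration back and the unit is an isomorphism on the domains and codomains of the generating cofibrations. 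Now $\ch$ preserves all colimits, $\Nh$ preserves filtered colimits (compare the proof of \cref{thm:dblcatleftproper}), and $\Nh$ preserves pushouts along these generating cofibrations by \cref{pushoutnerve,subdivincl}; since isomorphisms are closed under pushouts, transfinite composition, and retracts, the class of objects on which the unit is an isomorphism then contains all cell complexes, hence all cofibrant objects.

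Granting this, fix a cofibrant $Y\in \sSet^{\Simpop}_\proj$. Then $(c\Sd^2)_*Y$ is cofibrant in $\projThom$, so by the claim the unit $(c\Sd^2)_*Y\to \Nh\ch(c\Sd^2)_*Y$ is an isomorphism, and the unit of $\Phi\dashv (\Ex^2N)_*\Nh$ at $Y$ factors as
\[ Y\longrightarrow (\Ex^2N)_*(c\Sd^2)_*Y\xrightarrow{\ \cong\ } (\Ex^2N)_*\Nh\ch(c\Sd^2)_*Y, \]
where the first map is the unit of $(c\Sd^2)_*\dashv (\Ex^2N)_*$. As $(c\Sd^2)_*\dashv (\Ex^2N)_*$ is a Quillen equivalence by \cref{QEprojThomason} whose right adjoint preserves all weak equivalences, this first map is a weak equivalence; hence so is the unit of $\Phi\dashv (\Ex^2N)_*\Nh$ at $Y$. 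Therefore $\Phi\dashv (\Ex^2N)_*\Nh$ is a Quillen equivalence, and by the reduction above so is $\Cat(c\Sd^2)\dashv \Cat(\Ex^2N)$.

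The main obstacle is the cellular induction of the second paragraph establishing that $\Nh\ch$ is naturally isomorphic to the identity on cofibrant objects of $\projThom$. This is precisely where \cref{pushoutnerve} becomes indispensable: the right adjoint $\Nh$ is not cocontinuous, so controlling it on cell complexes relies on the nontrivial fact that the horizontal nerve preserves pushouts along box products of a category with the weakly solid sieves of posets $c\Sd^2\partial\Simp[k]\hookrightarrow c\Sd^2\Simp[k]$.
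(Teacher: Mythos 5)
Your argument is correct, but it takes a genuinely different route from the paper. The paper never analyzes cofibrant objects of $\projThom$: it reduces to showing that $(\Cat(\Ex^2N))_\infty$ is an equivalence of underlying $\infty$-categories, replaces $\Ex^2N$ by the test-category functor $i^*_\Simp$ (whose adjoint $i_\Simp$ preserves pullbacks), lifts the zigzag $\Ex^2N\Leftarrow N\Rightarrow i^*_\Simp$ of levelwise weak equivalences through $\Cat(-)$ via \cref{lemma:ptwisewe}, and concludes from \cref{prop:HoiDeltaeq}, where the unit and counit of $\Cat(i_\Simp)\dashv\Cat(i^*_\Simp)$ are levelwise weak equivalences by Cisinski--Maltsiniotis; Horel's equivalence \cref{QEHorel} is not needed for this step, only later in \cref{thm:allQE}. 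You instead attack the composite adjunction $\ch(c\Sd^2)_*\dashv(\Ex^2N)_*\Nh$ directly via the unit criterion, with the key input that the unit of $\ch\dashv\Nh$ is an isomorphism on cofibrant objects of $\projThom$, proved by cellular induction from \cref{pushoutnerve,subdivincl}, and you then recover the bottom adjunction by two-out-of-three using \cref{QEHorel}. This works: the reduction is sound (the right adjoints preserve and reflect all weak equivalences by the right-induced definitions, so the underived unit computes the derived one), and the cellular induction is legitimate since $\ch$ is cocontinuous, $\Nh$ preserves filtered colimits and coproducts (needed for limit stages and the $k=0$ cells), and \cref{pushoutnerve} applies to pushouts along $[n]\boxtimes(c\Sd^2\partial\Simp[k]\to c\Sd^2\Simp[k])$ by \cref{subdivincl}. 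Two routine points you assert should be spelled out: that the unit is an isomorphism (not merely that source and target are isomorphic) at the box products $\Simp[n]\boxtimes\sD$, which follows most cleanly from full faithfulness of $\Nh$ together with the identification $\Nh(\sC\boxtimes\sD)\cong N\sC\boxtimes\sD$, and the filtered-colimit/coproduct preservation just mentioned. What each approach buys: yours is more combinatorial and self-contained modulo Horel, exploits \cref{pushoutnerve} beyond its use for left properness, and yields the stronger intermediate fact that cofibrant objects of $\projThom$ satisfy the strict Segal condition (so $\ch\dashv\Nh$ itself is seen to be a Quillen equivalence directly); the paper's route avoids all cell-complex bookkeeping, reuses general test-category machinery, and produces the comparison of $\Cat(\Ex^2N)$, $\Cat(N)$, and $\Cat(i^*_\Simp)$ as a byproduct, at the cost of invoking \cite[Theorem 4.1.26]{Cisinski} and \cite[Proposition 1.5.13]{Maltsiniotis}.
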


To prove this result, we introduce the following notations. 

\begin{notation}
    Let $\sM$ be a relative category $\sM$, i.e., a category $\sM$ equipped with a wide subcategory, whose morphisms are referred to as \emph{weak equivalences}. We denote by $\sM_{\infty}$ its underlying $\infty$-category. It can be modeled as the $\infty$-localization of the nerve of $\sM$ at the weak equivalences. 

    The correspondence $\sM \mapsto \sM_{\infty}$ is (pseudo) $2$-functorial in the sense that:
    \begin{itemize}[leftmargin=0.6cm]
        \item a functor $F \colon \sM \to \sN$ between relative categories preserving weak equivalences induces a functor $F_{\infty} \colon \sM_{\infty} \to \sN_{\infty}$ between underlying $\infty$-categories,
        \item  a natural transformation $\alpha \colon F \Rightarrow G$ between functors preserving weak equivalences induces a natural transformation $\alpha_{\infty} \colon F_{\infty} \to G_{\infty}$,
    \end{itemize}
    in such a way which is compatible with composition of functors and natural transformations.
\end{notation}

\begin{ex}
    If $\sM$ is a model category, we also denote by $\sM_{\infty}$ the underlying $\infty$-category of its underlying relative category. 
\end{ex}

\begin{rem} \label{rem:liftQE}
    If $\alpha \colon F \Rightarrow G$ is a natural transformation of functors $\sM \to \sN$ between relative categories that preserve weak equivalences, then $\alpha$ is levelwise a weak equivalence in $\sN$ if and only if the induced transformation $\alpha_\infty\colon F_\infty\Rightarrow G_\infty$ is a natural isomorphism.
    
    It follows easily that, if $F\colon \sM\to \sN$ is a (right) Quillen functor between model categories preserving all weak equivalences, then $F$ is a Quillen equivalence if and only if the induced functor $F_\infty\colon \sM_\infty\to \sN_\infty$ is an equivalence of $\infty$-categories.
\end{rem}

Since the functor $\Cat(\Ex^2N)$ preserves all weak equivalences by \cref{thm:Quillenpairs}, it induces a functor between underlying $\infty$-categories 
\[
(\Cat(\Ex^2 N))_\infty\colon (\DblCat_\proj)_\infty \to (\Cat(\sSet)_\proj)_\infty.
\]
To prove \cref{thm:quilleneq}, it suffices to prove that this functor is an equivalence of $\infty$-categories. In order to do so, we replace the functor $\Ex^2 N$ with an equivalent, more convenient functor $i^*_\Simp\colon \Cat\to \sSet$, which admits a left adjoint that also preserves pullbacks. We recall the definition of these functors from \cite[\textsection 4.1.2]{Cisinski}. 

\begin{defn}
Consider the functor $i_{\Simp} \colon \sSet \to \Cat$ sending a simplicial set $X$ to its category of elements $\Simp/X$. The functor $i_\Simp$ admits a right adjoint given by the functor $i_{\Simp}^* \colon \Cat \to \sSet$ sending a category $\sC$ to the simplicial set 
\[ i_\Simp^* \sC\colon \Simpop\to \Set, \quad
    [k] \mapsto \Cat(\Simp/[k],\sC).
\] 
\end{defn}

We now show that the functors $\Ex^2 N$ and $i^*_\Simp$ are related by a zig-zag of levelwise weak equivalences. As we have already compare $\Ex^2N$ with the nerve functor $N$ in \cref{natbeta}, it remains to compare $N$ with $i^*_\Simp$.

\begin{prop} \label{natalpha}
    There is a natural transformations of functors $\Cat\to \Cat$
    \[ \alpha\colon i_\Simp N\Rightarrow \id_\Cat, \]
    which is levelwise a weak equivalence in $\Cat_\Thom$. Together with the unit $\eta\colon \id_\sSet\Rightarrow i_{\Simp}^*i_{\Simp}$ of the adjunction $i_\Simp\dashv i^*_\Simp$, it induces a natural transformation of functors $\Cat\to \sSet$
    \[ \gamma\colon N \xRightarrow{\eta N} i_{\Simp}^*i_{\Simp}N \xRightarrow{i_{\Simp}^*\alpha} i_{\Simp}^*, \]
    which is levelwise a weak equivalence in $\sSet_\Kan$. 
\end{prop}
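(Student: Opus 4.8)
The plan is to realize $\alpha$ as the \emph{last-vertex transformation}. Recall that an object of $i_\Simp N\sC = \Simp/N\sC$ is a pair $([k],\sigma)$ with $\sigma\colon[k]\to\sC$ a functor, and a morphism $([k],\sigma)\to([m],\tau)$ is an order-preserving map $f\colon[k]\to[m]$ with $\tau f=\sigma$. I would define $\alpha_\sC\colon\Simp/N\sC\to\sC$ on objects by $([k],\sigma)\mapsto\sigma(k)$, and on the morphism $f$ above by the arrow $\sigma(k)=\tau(f(k))\to\tau(m)$ that $\tau$ assigns to the relation $f(k)\le m$ in $[m]$. Functoriality is immediate, and since post-composing with a functor $G\colon\sC\to\sD$ commutes with taking last vertices, this is natural in $\sC$, producing the desired $\alpha\colon i_\Simp N\Rightarrow\id_\Cat$.

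The crux is showing that each $\alpha_\sC$ is a weak equivalence in $\Cat_\Thom$. By \cref{cor:weThomason} this reduces to $N\alpha_\sC$ being a weak equivalence in $\sSet_\Kan$, for which I would invoke Quillen's Theorem~A: it suffices to prove that for every $c\in\sC$ the comma category $\alpha_\sC/c$ has weakly contractible nerve. I would identify $\alpha_\sC/c\cong\Simp/N(\sC/c)$: an object of $\alpha_\sC/c$ is a triple $([k],\sigma,\varphi)$ with $\varphi\colon\sigma(k)\to c$, and such a triple is exactly the datum of the $k$-simplex $[k]\to\sC/c$, $i\mapsto\bigl(\sigma(i)\xrightarrow{\varphi\circ\sigma(i\le k)}c\bigr)$, of $N(\sC/c)$; a short check shows this bijection respects morphisms. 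Since $\sC/c$ has a terminal object (namely $\id_c$), its nerve $N(\sC/c)$ is weakly contractible, and hence so is the nerve of its category of simplices $\Simp/N(\sC/c)$ --- this is the standard fact that $N(\Simp/Y)$ is weakly equivalent to $Y$ for every simplicial set $Y$ (for instance because $Y\cong\colim_{\Simp/Y}\Simp[\bullet]$ exhibits $Y$ as a homotopy colimit, so $|N(\Simp/Y)|\simeq|Y|$). Theorem~A then gives that $N\alpha_\sC$, hence $\alpha_\sC$, is a weak equivalence.

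With $\alpha$ available, $\gamma$ is obtained formally by setting $\gamma_\sC\coloneqq i_\Simp^*(\alpha_\sC)\circ\eta_{N\sC}$, which is natural in $\sC$. To see that $\gamma$ is levelwise a weak equivalence in $\sSet_\Kan$, I would use two basic properties of the functor $i_\Simp^*$ and the adjunction $i_\Simp\dashv i_\Simp^*$, recorded in \cite[\textsection 4.1]{Cisinski} (and reflecting that $\Simp$ is a test category in the sense of \cite{Maltsiniotis}): first, $i_\Simp^*$ preserves weak equivalences, so that $i_\Simp^*(\alpha_\sC)$ is a weak equivalence by the previous paragraph; and second, the unit $\eta\colon\id_\sSet\Rightarrow i_\Simp^*i_\Simp$ is levelwise a weak equivalence in $\sSet_\Kan$, so that $\eta_{N\sC}$ is one. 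Hence $\gamma_\sC$ is a composite of weak equivalences.

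The main obstacle is the middle paragraph: setting up the isomorphism $\alpha_\sC/c\cong\Simp/N(\sC/c)$ correctly (the bookkeeping with simplices of $N(\sC/c)$ and with the morphisms of the comma category), together with the input that the category of simplices of a weakly contractible simplicial set is weakly contractible. By contrast, the passage from $\alpha$ to $\gamma$ is purely formal once the two cited properties of $i_\Simp^*$ are in hand.
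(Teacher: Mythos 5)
Your proof is correct, and for the second half it coincides with the paper's: the two inputs you use --- that the unit $\eta\colon \id_\sSet\Rightarrow i_\Simp^* i_\Simp$ is a levelwise weak equivalence and that $i_\Simp^*\colon \Cat_\Thom\to\sSet_\Kan$ preserves all weak equivalences --- are exactly what the paper extracts from the fact that $\Simp$ is a test category, citing \cite{Maltsiniotis}. Where you genuinely diverge is the first half. The paper does not construct $\alpha$ at all: it obtains both the transformation $i_\Simp N\Rightarrow \id_\Cat$ and its levelwise asphericity in one stroke by citing a general result of Cisinski (\cite[Theorem 4.1.26 and Example 4.1.29]{Cisinski}) applied to the canonical inclusion $\Simp\to\Cat$. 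You instead give the classical hands-on argument: the last-vertex functor $\Simp/N\sC\to\sC$, its naturality, and then Quillen's Theorem~A via the identification $\alpha_\sC/c\cong \Simp/N(\sC/c)$, contractibility of $N(\sC/c)$ (terminal object $\id_c$), and the standard fact that the category of simplices of a weakly contractible simplicial set has weakly contractible nerve, concluding with \cref{cor:weThomason}. Your comma-category identification and the Theorem~A bookkeeping check out, and since the later applications (\cref{cor:zigzag1}, \cref{cor:zigzag}) only need the existence of some such levelwise weak equivalence, your explicit choice of $\alpha$ is harmless. The trade-off: your route is more self-contained and exhibits the map concretely, at the cost of importing Theorem~A and the simplex-category lemma (whose parenthetical justification via the homotopy colimit decomposition $Y\cong\colim_{\Simp/Y}\Simp[n]$ is itself a standard but not entirely free fact), whereas the paper's citation buys brevity and stays inside the test-category machinery it is already relying on.
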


\begin{proof}
    The first statement is an application of \cite[Theorem 4.1.26]{Cisinski}, in the case where $A=\Simp$ and $i\colon \Simp\to \Cat$ is the canonical inclusion, using \cite[Example 4.1.29]{Cisinski}.

    The second statement follows directly from the fact that the category $\Simp$ is a test category by \cite[Proposition 1.5.13]{Maltsiniotis}. Indeed, by definition of a test category (see \cite[\textsection1.3.7]{Maltsiniotis}), we have that the unit $\eta\colon \id_\sSet\Rightarrow i_{\Simp}^*i_{\Simp}$ is levelwise a weak equivalence in $\sSet_\Kan$, and that the functor $i^*_\Simp\colon \Cat_\Thom\to \sSet_\Kan$ preserves all weak equivalences. Hence, combining with the first statement, the desired composite $\gamma\colon N\Rightarrow i^*_\Simp$ is levelwise a weak equivalence in~$\sSet_\Kan$.
\end{proof}

Combining \cref{natbeta,natalpha}, we get the following result.

\begin{cor} \label{cor:zigzag1}
    There are natural transformations of functors $\Cat\to \sSet$
    \[
    \Ex^2N \xLeftarrow{\beta^2N} N \xRightarrow{\gamma}i_{\Simp}^*,
    \]
    which are levelwise a weak equivalence in $\sSet_\Kan$.
\end{cor}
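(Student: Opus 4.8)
The plan is to simply assemble the two natural transformations that have already been constructed, so the proof is a matter of bookkeeping. First I would invoke \cref{natbeta}, which directly provides the natural transformation $\beta^2 N\colon N \Rightarrow \Ex^2 N$ of functors $\Cat \to \sSet$ and records that it is levelwise a weak equivalence in $\sSet_\Kan$; this supplies the left-pointing leg of the desired span, with source $N$. Then I would invoke \cref{natalpha}, which constructs $\gamma\colon N \Rightarrow i^*_\Simp$ as the composite $i^*_\Simp\alpha \circ \eta N$ and establishes that it too is levelwise a weak equivalence in $\sSet_\Kan$; this supplies the right-pointing leg, again with source $N$. Since both legs are natural transformations of functors $\Cat \to \sSet$ sharing the common source $N$, placing them side by side yields exactly the displayed zig-zag $\Ex^2 N \xLeftarrow{\beta^2 N} N \xRightarrow{\gamma} i^*_\Simp$, with both arrows levelwise weak equivalences in $\sSet_\Kan$.

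There is essentially no obstacle here: all the real content has been absorbed into \cref{natbeta,natalpha}, and this corollary merely packages their conclusions into a single span of functors $\Cat \to \sSet$. The only point worth checking is that both transformations are genuinely stated with source $N$ and land in $\sSet$ (rather than in $\Cat^{\Simpop}$ or some variant), which is the case, so no further massaging is required. Accordingly, the proof reduces to the single line that the claim follows by combining \cref{natbeta,natalpha}. This span is precisely what is needed in the sequel to compare $\Ex^2 N$ with $i^*_\Simp$ at the level of underlying $\infty$-categories, via \cref{rem:liftQE}.
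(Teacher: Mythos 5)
Your proposal is correct and matches the paper exactly: the corollary is stated there as an immediate consequence of combining \cref{natbeta} (giving $\beta^2N\colon N\Rightarrow \Ex^2N$) and \cref{natalpha} (giving $\gamma\colon N\Rightarrow i_\Simp^*$), with no further argument needed. Your bookkeeping check that both legs share the source $N$ and land in $\sSet$ is the only content, and it is handled as in the paper.
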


We now aim to deduce that this induces a zig-zag of levelwise weak equivalences between the induced functors $\Cat(\Ex^2 N)$ and $\Cat(i^*_\Simp)$. To achieve this, we will make use of the following lemma. 

\begin{lemma} \label{lemma:ptwisewe}
    Let $\sM$ and $\sN$ be model categories, $F,G\colon \sM\to \sN$ be pullback-preserving functors, and $\alpha\colon F\Rightarrow G$ be a natural transformation. If $\alpha$ is levelwise a weak equivalence in $\sN$, then the induced natural transformation of functors $\Cat(\sM)\to \Cat(\sN)$ between categories of internal categories to $\sM$ and $\sN$
    \[
    \Cat(\alpha)\colon \Cat(F) \Rightarrow \Cat(G)
    \]
    is levelwise a weak equivalence in $\Cat(\sN)$. Here an internal functor $\bA\to \bB$ in $\Cat(\sN)$ is a weak equivalence if and only if, for every $n\geq 0$, the induced map 
    \[ \bA_1\times_{\bA_0}\ldots \times_{\bA_0}\bA_1\to \bB_1\times_{\bB_0}\ldots \times_{\bB_0}\bB_1\]
    is a weak equivalence in $\sN$.
\end{lemma}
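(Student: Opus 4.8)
The plan is to fix an internal category $\bA$ in $\Cat(\sM)$ and prove that the internal functor $\Cat(\alpha)_{\bA}\colon \Cat(F)(\bA)\to \Cat(G)(\bA)$ is a weak equivalence in $\Cat(\sN)$; since $\bA$ is arbitrary, this is precisely the assertion that $\Cat(\alpha)$ is levelwise a weak equivalence. Unwinding the definition of weak equivalence in $\Cat(\sN)$ recalled in the statement, it suffices to show that, for every $n\geq 0$, the induced map
\[ F(\bA_1)\times_{F(\bA_0)}\ldots \times_{F(\bA_0)} F(\bA_1)\longrightarrow G(\bA_1)\times_{G(\bA_0)}\ldots \times_{G(\bA_0)} G(\bA_1) \]
is a weak equivalence in $\sN$.

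The heart of the argument is to recognize this map as a single component of $\alpha$. Since $F$ preserves pullbacks, the canonical comparison $F(\bA_1\times_{\bA_0}\ldots \times_{\bA_0}\bA_1)\to F(\bA_1)\times_{F(\bA_0)}\ldots \times_{F(\bA_0)} F(\bA_1)$ is an isomorphism, and likewise for $G$; under these identifications I expect the displayed map to agree with the component $\alpha_{\bA_1\times_{\bA_0}\ldots \times_{\bA_0}\bA_1}$ of $\alpha$ at the $n$-fold fibre product of $\bA_1$ over $\bA_0$ (for $n=0$ this fibre product is $\bA_0$, so the map is simply $\alpha_{\bA_0}$, and for $n=1$ it is $\alpha_{\bA_1}$). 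To check this I would observe that both maps commute with the canonical projections onto the copies of $F(\bA_1)$ and $G(\bA_1)$ --- for the component of $\alpha$ by naturality applied to the projections $\bA_1\times_{\bA_0}\ldots \times_{\bA_0}\bA_1\to \bA_1$, and for the displayed map by construction --- and then invoke the uniqueness clause in the universal property of the limit $G(\bA_1)\times_{G(\bA_0)}\ldots \times_{G(\bA_0)} G(\bA_1)$ to conclude that they coincide.

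Granting this identification, the displayed map is a weak equivalence in $\sN$ simply because it is a component of $\alpha$, which is levelwise a weak equivalence by hypothesis; hence $\Cat(\alpha)_{\bA}$ is a weak equivalence in $\Cat(\sN)$ for every $\bA$, and $\Cat(\alpha)$ is levelwise a weak equivalence, as claimed. I do not anticipate any real obstacle: the statement is formal once the definitions are unwound, and the only step needing a moment's care is the compatibility of the induced map on $n$-fold fibre products with the single component $\alpha_{\bA_1\times_{\bA_0}\ldots \times_{\bA_0}\bA_1}$, which rests on pullback-preservation of $F$ and $G$ together with naturality of $\alpha$.
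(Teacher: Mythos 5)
Your proposal is correct and follows essentially the same argument as the paper: both identify the induced map on $n$-fold fibre products with the single component $\alpha_{\bA_1\times_{\bA_0}\ldots\times_{\bA_0}\bA_1}$, using pullback-preservation of $F$ and $G$ together with naturality of $\alpha$, and conclude from the levelwise hypothesis. Your extra remark on uniqueness in the universal property just spells out the identification the paper states more briefly.
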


\begin{proof}
    Let $\bA$ be an internal category to $\sM$. For every $n\geq 0$, we need to show that the map induced by the maps $\alpha_{\bA_0}$ and $\alpha_{\bA_1}$
    \[ F(\bA_1)\times_{F(\bA_0)}\ldots \times_{F(\bA_0)}F(\bA_1)\to G(\bA_1)\times_{G(\bA_0)}\ldots \times_{G(\bA_0)}G(\bA_1) \]
    is a weak equivalence in $\sN$. Since $F,G$ preserve pullbacks and $\alpha$ is natural, the above map can be identified with the map 
    \[ \alpha_{\bA_1\times_{\bA_0}\ldots \times_{\bA_0}\bA_1}\colon F(\bA_1\times_{\bA_0}\ldots \times_{\bA_0}\bA_1)\to G(\bA_1\times_{\bA_0}\ldots \times_{\bA_0}\bA_1) \]
    which is a weak equivalence in $\sN$, by assumption. 
\end{proof}

\begin{rem} \label{rem:weininternal}
    Note that the weak equivalences in $\Cat(\sN)$, as defined in the above lemma, coincide in the case where $\sN=\Cat_\Thom,\,\sSet_\Kan$ with the weak equivalences of the model structures $\DblCat_\proj$ and $\Cat(\sSet)_\proj$, as these are right-induced along the nerves from $\Cat^{\Simpop}_\proj$ and $\sSet^{\Simpop}_\proj$, respectively.
\end{rem}

The following result is then a direct application of \cref{lemma:ptwisewe} to the natural transformations from \cref{cor:zigzag1}, using \cref{rem:weininternal}.

\begin{prop}\label{cor:zigzag}
    The natural transformations of functors $\DblCat\to \Cat(\sSet)$
    \[
    \Cat(\Ex^2N)  \xLeftarrow{\Cat(\beta^2N)}\Cat(N) \xRightarrow{\Cat(\gamma)} \Cat(i_{\Simp}^*),
    \]
    induced by the natural transformations from \cref{cor:zigzag1}, are levelwise a weak equivalence in $\Cat(\sSet)_\proj$. 
\end{prop}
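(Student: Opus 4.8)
The plan is to obtain this as an immediate application of \cref{lemma:ptwisewe} to the zig-zag of \cref{cor:zigzag1}. First I would observe that the three functors $N$, $\Ex^2 N$, and $i_\Simp^*$ from $\Cat$ to $\sSet$ all preserve pullbacks: the nerve $N$ is right adjoint to $c$, the functor $\Ex$ is right adjoint to $\Sd$, and $i_\Simp^*$ is right adjoint to $i_\Simp$, so each of them---and hence also the composite $\Ex^2 N$---preserves all limits. Consequently $\Cat(-)$ may be applied to each of them, yielding the functors $\Cat(N), \Cat(\Ex^2 N), \Cat(i_\Simp^*)\colon \DblCat=\Cat(\Cat)\to \Cat(\sSet)$, and it may be applied to the natural transformations $\beta^2 N\colon N \Rightarrow \Ex^2 N$ and $\gamma\colon N \Rightarrow i_\Simp^*$ of \cref{cor:zigzag1} to produce the natural transformations $\Cat(\beta^2 N)$ and $\Cat(\gamma)$ appearing in the statement.

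Next I would apply \cref{lemma:ptwisewe} twice, taking $\sM=\Cat_\Thom$ and $\sN=\sSet_\Kan$: once with $F=N$, $G=\Ex^2 N$, and $\alpha=\beta^2 N$, and once with $F=N$, $G=i_\Simp^*$, and $\alpha=\gamma$. In each case the only hypothesis to verify is that $\alpha$ is levelwise a weak equivalence in $\sSet_\Kan$, which is precisely the content of \cref{cor:zigzag1}. The lemma then yields that $\Cat(\beta^2 N)$ and $\Cat(\gamma)$ are levelwise weak equivalences in $\Cat(\sSet)$, where ``weak equivalence'' is taken in the sense of the Segal-type condition in the statement of \cref{lemma:ptwisewe}.

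Finally I would invoke \cref{rem:weininternal} to identify this class of weak equivalences in $\Cat(\sSet)$ with the weak equivalences of the model structure $\Cat(\sSet)_\proj$, which is right-induced along the simplicial nerve from $\sSet^{\Simpop}_\proj$; this completes the argument. I do not expect any genuine obstacle: the only point requiring attention is the verification that $N$, $\Ex^2 N$, and $i_\Simp^*$ preserve pullbacks, so that $\Cat(-)$ and \cref{lemma:ptwisewe} apply, and this is immediate from the adjunctions recalled in the preceding sections.
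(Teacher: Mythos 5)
Your proposal is correct and follows exactly the paper's argument: the paper proves this proposition as a direct application of \cref{lemma:ptwisewe} to the natural transformations of \cref{cor:zigzag1}, identifying the resulting weak equivalences via \cref{rem:weininternal}. Your additional check that $N$, $\Ex^2N$, and $i_\Simp^*$ preserve pullbacks (being right adjoints) is left implicit in the paper but is exactly the right point to verify.
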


 \begin{rem}
     As a consequence of \cref{cor:zigzag}, we have that the functors 
    \[ \Cat(N),\Cat(i_{\Simp}^*)\colon \DblCat_\proj\to \Cat(\sSet)_\proj \]
    also preserve (and reflect) all weak equivalences, as $\Cat(\Ex^2N)$ does so by \cref{thm:Quillenpairs}. In particular, they induce functors between underlying $\infty$-categories.
 \end{rem}    

 As a direct consequence of \cref{cor:zigzag}, we get the following.

 \begin{cor} \label{cor:htypeqfunctors}
     The functors between underlying $\infty$-categories
    \[
    (\DblCat_\proj)_\infty\to (\Cat(\sSet)_\proj)_\infty
    \]
    induced by $\Cat(\Ex^2N)$, $\Cat(N)$, and $\Cat(i_{\Simp}^*)$ are all naturally equivalent.
 \end{cor}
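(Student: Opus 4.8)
The plan is to deduce this as a purely formal consequence of \cref{cor:zigzag}, using the $2$-functoriality of the assignment $\sM\mapsto \sM_\infty$ together with \cref{rem:liftQE}. Recall from \cref{cor:zigzag} that we have a zig-zag of natural transformations of functors $\DblCat\to \Cat(\sSet)$
\[
\Cat(\Ex^2N) \xLeftarrow{\Cat(\beta^2N)} \Cat(N) \xRightarrow{\Cat(\gamma)} \Cat(i_\Simp^*),
\]
each of which is levelwise a weak equivalence in $\Cat(\sSet)_\proj$. The first thing to record is that all three functors $\Cat(\Ex^2N)$, $\Cat(N)$, and $\Cat(i_\Simp^*)$ preserve all weak equivalences: this holds for $\Cat(\Ex^2N)$ by \cref{thm:Quillenpairs}, and hence for the other two by the zig-zag above, as already observed in the remark following \cref{cor:zigzag}.

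Next I would apply the $2$-functoriality of $\sM\mapsto \sM_\infty$ to the transformations $\Cat(\beta^2N)$ and $\Cat(\gamma)$. Since all the functors involved preserve weak equivalences, each of these transformations induces a natural transformation between the corresponding induced functors on underlying $\infty$-categories, giving a zig-zag
\[
(\Cat(\Ex^2N))_\infty \Leftarrow (\Cat(N))_\infty \Rightarrow (\Cat(i_\Simp^*))_\infty \colon (\DblCat_\proj)_\infty \to (\Cat(\sSet)_\proj)_\infty.
\]
Then I would invoke \cref{rem:liftQE}: since $\Cat(\beta^2N)$ and $\Cat(\gamma)$ are levelwise weak equivalences in $\Cat(\sSet)_\proj$, the two induced natural transformations in this zig-zag are natural isomorphisms of functors of $\infty$-categories. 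Composing them, we conclude that $(\Cat(\Ex^2N))_\infty$, $(\Cat(N))_\infty$, and $(\Cat(i_\Simp^*))_\infty$ are all naturally equivalent, as claimed.

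There is no genuine obstacle here: the entire content of the statement is already contained in \cref{cor:zigzag}, and the only step requiring any care is verifying that all three functors preserve weak equivalences so that the $2$-functoriality of $\sM\mapsto \sM_\infty$ can be applied to the transformations and their induced counterparts are well defined — and this follows immediately from \cref{thm:Quillenpairs} and \cref{cor:zigzag}.
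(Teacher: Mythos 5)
Your argument is correct and is exactly the paper's intended reasoning: the paper states \cref{cor:htypeqfunctors} as a direct consequence of \cref{cor:zigzag}, relying (as you do) on the preceding observation that all three functors preserve weak equivalences and on \cref{rem:liftQE} to turn the levelwise weak equivalences into natural isomorphisms of induced functors on underlying $\infty$-categories.
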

 
 The proof of \cref{thm:quilleneq} is then completed by the following proposition.

\begin{prop} \label{prop:HoiDeltaeq}
    The functor $\Cat(i^*_\Simp)\colon \DblCat\to \Cat(\sSet)$ induces an equivalence of $\infty$-categories
    \[ (\Cat(i_\Simp^*))_\infty\colon (\DblCat_\proj)_\infty \to (\Cat(\sSet)_\proj)_\infty. \]
\end{prop}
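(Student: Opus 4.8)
The plan is to produce an explicit homotopy inverse to $(\Cat(i^*_\Simp))_\infty$, namely $(\Cat(i_\Simp))_\infty$, by lifting the adjunction $i_\Simp\dashv i^*_\Simp$ through the $2$-functor $\Cat(-)$ on categories with pullbacks and then through the localization $(-)_\infty$. Since $i^*_\Simp$ preserves all limits as a right adjoint and $i_\Simp$ preserves pullbacks, the pair $i_\Simp\dashv i^*_\Simp$ is an adjunction in the $2$-category of categories with pullbacks, pullback-preserving functors, and natural transformations, with unit $\eta\colon \id_\sSet\Rightarrow i^*_\Simp i_\Simp$ and counit $\epsilon\colon i_\Simp i^*_\Simp\Rightarrow \id_\Cat$. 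Applying the $2$-functor $\Cat(-)$ then yields an adjunction $\Cat(i_\Simp)\dashv \Cat(i^*_\Simp)$ between $\Cat(\sSet)$ and $\DblCat$ with unit $\Cat(\eta)$ and counit $\Cat(\epsilon)$. To conclude it suffices to show that $\Cat(\eta)$ and $\Cat(\epsilon)$ are levelwise weak equivalences in $\Cat(\sSet)_\proj$ and $\DblCat_\proj$ respectively: then, since $\Cat(i_\Simp)$ and $\Cat(i^*_\Simp)$ both preserve all weak equivalences, \cref{rem:liftQE} shows that the induced unit $\Cat(\eta)_\infty$ and counit $\Cat(\epsilon)_\infty$ are natural isomorphisms, so that $(\Cat(i^*_\Simp))_\infty$ and $(\Cat(i_\Simp))_\infty$ are mutually inverse equivalences of $\infty$-categories.

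By \cref{lemma:ptwisewe} together with \cref{rem:weininternal}, the transformations $\Cat(\eta)$ and $\Cat(\epsilon)$ are levelwise weak equivalences as soon as $\eta$ is a levelwise weak equivalence in $\sSet_\Kan$ and $\epsilon$ is a levelwise weak equivalence in $\Cat_\Thom$. The first of these is part of \cref{natalpha}. For the second I would first observe that $i_\Simp\colon \sSet_\Kan\to \Cat_\Thom$ preserves weak equivalences: by \cref{cor:weThomason} the nerve $N$ reflects Thomason weak equivalences, and since $\gamma\colon N\Rightarrow i^*_\Simp$ from \cref{natalpha} is a levelwise weak equivalence, the functor $i^*_\Simp$ also reflects them; hence, given a weak equivalence $f$ in $\sSet_\Kan$, applying two-out-of-three to the naturality square of the levelwise weak equivalence $\eta\colon \id_\sSet\Rightarrow i^*_\Simp i_\Simp$ shows that $i^*_\Simp i_\Simp f$ is a weak equivalence, whence so is $i_\Simp f$.

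It then remains to show that $\epsilon\colon i_\Simp i^*_\Simp\Rightarrow \id_\Cat$ is a levelwise weak equivalence in $\Cat_\Thom$, and this is where the real work lies. Here I would use the factorization $\gamma=(i^*_\Simp\alpha)\circ(\eta N)$ from \cref{natalpha}, together with the naturality of $\epsilon$ and the triangle identity $(\epsilon i_\Simp)\circ(i_\Simp\eta)=\id_{i_\Simp}$, to compute that, as natural transformations $i_\Simp N\Rightarrow\id_\Cat$,
\[ \epsilon\circ (i_\Simp\gamma)=\alpha. \]
Since $\alpha$ is a levelwise weak equivalence in $\Cat_\Thom$ by \cref{natalpha}, and $i_\Simp\gamma$ is one as well (as $\gamma$ is a levelwise weak equivalence and $i_\Simp$ preserves weak equivalences by the previous paragraph), two-out-of-three forces $\epsilon$ to be a levelwise weak equivalence, as needed.

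Finally, to legitimize passing to the $\infty$-categorical level I would note that $\Cat(i^*_\Simp)$ preserves all weak equivalences by \cref{cor:zigzag} and \cref{thm:Quillenpairs}, while $\Cat(i_\Simp)$ does so because $i_\Simp$ preserves pullbacks and weak equivalences and the weak equivalences of $\Cat(\sSet)_\proj$ and $\DblCat_\proj$ are detected as in \cref{lemma:ptwisewe,rem:weininternal}. The only genuinely delicate point is the identity $\epsilon\circ(i_\Simp\gamma)=\alpha$ and the resulting fact that the \emph{underived} counit of $i_\Simp\dashv i^*_\Simp$ is a pointwise weak equivalence; this is precisely what makes $i^*_\Simp$ — rather than $\Ex^2N$, whose left adjoint $c\Sd^2$ does not preserve pullbacks — the convenient functor to work with, since it allows one to build an honest $\infty$-categorical inverse rather than merely comparing the corners of the commutative square of \cref{thm:Quillenpairs}.
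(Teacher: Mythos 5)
Your proposal is correct and follows essentially the same route as the paper: both arguments lift the adjunction $i_\Simp\dashv i^*_\Simp$ through $\Cat(-)$, show that its unit and counit are pointwise weak equivalences in $\sSet_\Kan$ and $\Cat_\Thom$ respectively, transfer this to $\Cat(\sSet)_\proj$ and $\DblCat_\proj$ via \cref{lemma:ptwisewe,rem:weininternal}, and conclude with \cref{rem:liftQE}. The only difference is that the paper gets the pointwise weak invertibility of the unit and counit directly from the test-category property of $\Simp$, whereas you rederive the counit statement (and the fact that $i_\Simp$ preserves weak equivalences) from $\alpha$, $\gamma$, and $\eta$ of \cref{natalpha} via the triangle-identity computation $\epsilon\circ(i_\Simp\gamma)=\alpha$ and two-out-of-three --- a correct, slightly more self-contained variant of the same argument.
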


\begin{proof}
    First recall from \cite[Corollary 3.2.13]{Cisinski} that the left adjoint $i_{\Simp} \colon \sSet \to \Cat$ preserves pullbacks. Hence the composites $i_{\Simp}i_{\Simp}^* \colon \Cat \to \Cat$ and $i_{\Simp}^*i_{\Simp} \colon \sSet \to \sSet$ also preserve pullbacks.
    
    Now, since $\Simp$ is a test category by \cite[Proposition 1.5.13]{Maltsiniotis}, by definition (see \cite[\textsection 1.3.7]{Maltsiniotis}), the unit and counit of the adjunction 
    \begin{tz}
\node[](A) {$\sSet$};
\node[right of=A,xshift=.9cm](B) {$\Cat$};
\draw[->] ($(A.east)+(0,5pt)$) to node[above,la]{$i_{\Simp}$} ($(B.west)+(0,5pt)$);
\draw[->] ($(B.west)-(0,5pt)$) to node[below,la]{$i_\Simp^*$} ($(A.east)-(0,5pt)$);
\node[la] at ($(A.east)!0.5!(B.west)$) {$\bot$};
\end{tz}
are levelwise a weak equivalence in $\sSet_\Kan$ and $\Cat_\Thom$, respectively. Using \cref{lemma:ptwisewe,rem:weininternal}, it follows that the unit and counit of the adjunction
\begin{tz}
\node[](A) {$\Cat(\sSet)$};
\node[right of=A,xshift=1.6cm](B) {$\DblCat$};
\draw[->] ($(A.east)+(0,5pt)$) to node[above,la]{$\Cat(i_{\Simp})$} ($(B.west)+(0,5pt)$);
\draw[->] ($(B.west)-(0,5pt)$) to node[below,la]{$\Cat(i_\Simp^*)$} ($(A.east)-(0,5pt)$);
\node[la] at ($(A.east)!0.5!(B.west)$) {$\bot$};
\end{tz}
are also levelwise a weak equivalence in $\Cat(\sSet)_\proj$ and $\DblCat_\proj$, respectively. Hence the induced adjunction between underlying $\infty$-categories is an equivalence.
\end{proof}

\begin{proof}[Proof of \cref{thm:quilleneq}]
    To show that the Quillen pair $\Cat(c\Sd^2)\dashv \Cat(\Ex^2 N)$ is a Quillen equivalence, it suffices to show that the induced functor between underlying $\infty$-categories $(\Cat(\Ex^2 N))_\infty\colon (\DblCat_\proj)_\infty \to (\Cat(\sSet)_\proj)_\infty$ is an equivalence of $\infty$-categories. This follows directly from \cref{cor:htypeqfunctors,prop:HoiDeltaeq}.
\end{proof}

As a consequence, we have the following result.

\begin{theorem} \label{thm:allQE} 
    We have a commutative square of Quillen equivalences
    \begin{tz}
    \node[](1) {$\sSet^{\Simpop}_\proj$}; 
    \node[below of=1,yshift=-.5cm](2) {$\Cat(\sSet)_\proj$}; 
    \node[right of=1,xshift=2.2cm](3) {$\Cat^{\Simpop}_\proj$}; 
    \node[below of=3,yshift=-.5cm](4) {$\DblCat_\proj$};
\punctuation{4}{,};

    \draw[->] ($(1.east)+(0,5pt)$) to node[above,la]{$(c\Sd^2)_*$} ($(3.west)+(0,5pt)$);
\draw[->] ($(3.west)-(0,5pt)$) to node[below,la]{$(\Ex^2 N)_*$} ($(1.east)-(0,5pt)$);
\node[la] at ($(1.east)!0.5!(3.west)$) {$\bot$};
\draw[->] ($(2.east)+(0,5pt)$) to node[above,la]{$\Cat(c\Sd^2)$} ($(4.west)+(0,5pt)$);
\draw[->] ($(4.west)-(0,5pt)$) to node[below,la]{$\Cat(\Ex^2 N)$} ($(2.east)-(0,5pt)$);
\node[la] at ($(2.east)!0.5!(4.west)$) {$\bot$};

\draw[->] ($(3.south)-(5pt,0)$) to node[left,la]{$\ch$} ($(4.north)-(5pt,0)$); 
\draw[->] ($(4.north)+(5pt,0)$) to node[right,la]{$\Nh$} ($(3.south)+(5pt,0)$); 
\node[la] at ($(3.south)!0.5!(4.north)$) {\rotatebox{90}{$\bot$}};
\draw[->] ($(1.south)-(5pt,0)$) to node[left,la]{$c^\Simp$} ($(2.north)-(5pt,0)$); 
\draw[->] ($(2.north)+(5pt,0)$) to node[right,la]{$N^\Simp$} ($(1.south)+(5pt,0)$); 
\node[la] at ($(1.south)!0.5!(2.north)$) {\rotatebox{90}{$\bot$}};
\end{tz}
    where all model structures are right-induced from $\sSet^{\Simpop}_\proj$. 
\end{theorem}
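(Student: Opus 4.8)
The plan is to deduce the statement from the results already assembled, since essentially all of the work is done. By \cref{thm:Quillenpairs}, the four displayed adjunctions form a commutative square of Quillen pairs and all four model structures are right-induced from $\sSet^{\Simpop}_\proj$; in fact each of the four \emph{edges} of the square is a right-induction, meaning that the model structure on the source of each right adjoint is right-induced from the one on its target along that right adjoint (this is \cref{projMS} for $\Nh$, \cref{QEprojThomason} for $(\Ex^2N)_*$, \cref{HorelMS} for $N^\Simp$, and the proof of \cref{thm:Quillenpairs} for $\Cat(\Ex^2N)$). Hence it only remains to check that each of the four Quillen pairs is a Quillen equivalence. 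Three of them are already known to be so: the left-hand adjunction $c^\Simp\dashv N^\Simp$ by \cref{QEHorel}, the top adjunction $(c\Sd^2)_*\dashv(\Ex^2N)_*$ by \cref{QEprojThomason}, and the bottom adjunction $\Cat(c\Sd^2)\dashv\Cat(\Ex^2N)$ by \cref{thm:quilleneq}. So the only thing left to prove is that the right-hand adjunction $\ch\dashv\Nh$ is a Quillen equivalence.

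Since each of the four right adjoints is a right-induction functor, it preserves (and reflects) all weak equivalences; therefore, by \cref{rem:liftQE}, each is a Quillen equivalence if and only if the induced functor on underlying $\infty$-categories is an equivalence of $\infty$-categories. Moreover, the commutativity of the square of right adjoints in \cref{thm:Quillenpairs}, together with the $2$-functoriality of $\sM\mapsto\sM_\infty$, yields a commutative square of functors between the underlying $\infty$-categories $(\sSet^{\Simpop}_\proj)_\infty$, $(\Cat^{\Simpop}_\proj)_\infty$, $(\Cat(\sSet)_\proj)_\infty$, and $(\DblCat_\proj)_\infty$.

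It then suffices to apply the two-out-of-three property for equivalences of $\infty$-categories to this commutative square: three of its four edges are equivalences by the previous paragraph, hence so is the fourth, namely the functor $(\DblCat_\proj)_\infty\to(\Cat^{\Simpop}_\proj)_\infty$ induced by $\Nh$. By \cref{rem:liftQE} this is equivalent to saying that $\ch\dashv\Nh$ is a Quillen equivalence, which completes the proof. I do not expect any genuine obstacle here; the only point requiring a little care is to be sure that every right adjoint in the square is indeed a right-induction functor, so that \cref{rem:liftQE} applies to all four edges, and one could equally well phrase the final step as a two-out-of-three argument directly at the level of Quillen equivalences, using that $(\Ex^2N)_*\circ\Nh$ agrees with the composite right Quillen equivalence $N^\Simp\circ\Cat(\Ex^2N)$.
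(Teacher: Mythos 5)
Your proposal is correct and follows essentially the same route as the paper: the left, top, and bottom adjunctions are Quillen equivalences by \cref{QEHorel,QEprojThomason,thm:quilleneq}, the right-inducedness is already contained in \cref{thm:Quillenpairs}, and the right-hand adjunction $\ch\dashv\Nh$ is deduced by $2$-out-of-$3$. The paper simply invokes $2$-out-of-$3$ for Quillen equivalences directly on the commutative square, so your detour through underlying $\infty$-categories via \cref{rem:liftQE} is a valid but unnecessary elaboration, as you yourself note in your closing remark.
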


\begin{proof}
    The left, top, and bottom Quillen pairs are Quillen equivalences by \cref{QEHorel,QEprojThomason,thm:quilleneq}. Hence, by $2$-out-of-$3$ for Quillen equivalences, we get that the right Quillen pair is also a Quillen equivalence.
\end{proof}

Finally, as the double nerve preserves all weak equivalences by \cref{prop:charwe}, it defines a functor between underlying $\infty$-categories, which can be seen to be an equivalence by \cref{natbetadouble,thm:allQE}. 

\begin{cor}
    The double nerve functor $\bN\colon \DblCat\to \sSet^{\Simpop}$ induces an equivalence of $\infty$-categories 
    \[ \bN_\infty\colon (\DblCat_\proj)_\infty\to (\sSet^{\Simpop}_\proj)_\infty. \]
\end{cor}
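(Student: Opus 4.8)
The plan is to factor the double nerve, up to a natural levelwise weak equivalence, through the right adjoints of the commutative square of Quillen equivalences in \cref{thm:allQE}, and then to observe that each of those right adjoints induces an equivalence of underlying $\infty$-categories.

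First I would record that $\bN_\infty$ is well-defined: by \cref{prop:charwe} the double nerve $\bN\colon \DblCat_\proj \to \sSet^{\Simpop}_\proj$ preserves all weak equivalences, hence induces a functor on underlying $\infty$-categories. Next, using \cref{rem:doublenerve} to write $\bN \cong N_*\Nh$, I would invoke \cref{natbetadouble}: the natural transformation $(\beta^2N)_*\Nh\colon \bN \cong N_*\Nh \Rightarrow (\Ex^2N)_*\Nh$ is levelwise a weak equivalence in $\sSet^{\Simpop}_\proj$. The composite $(\Ex^2N)_*\Nh$ also preserves all weak equivalences, being the composite of the right adjoints $\Nh\colon \DblCat_\proj \to \Cat^{\Simpop}_\proj$ and $(\Ex^2N)_*\colon \Cat^{\Simpop}_\proj \to \sSet^{\Simpop}_\proj$, each of which preserves all weak equivalences because the corresponding model structures are right-induced (\cref{thm:allQE}). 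Therefore \cref{rem:liftQE} yields a natural isomorphism $\bN_\infty \cong \big((\Ex^2N)_*\Nh\big)_\infty$.

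It then remains to see that $\big((\Ex^2N)_*\Nh\big)_\infty$ is an equivalence. By \cref{thm:allQE}, both $\Nh\colon \DblCat_\proj \to \Cat^{\Simpop}_\proj$ and $(\Ex^2N)_*\colon \Cat^{\Simpop}_\proj \to \sSet^{\Simpop}_\proj$ are right Quillen equivalences preserving all weak equivalences, so by \cref{rem:liftQE} each induces an equivalence of underlying $\infty$-categories; hence so does their composite. Combining with the previous step, $\bN_\infty$ is an equivalence of $\infty$-categories, as claimed.

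Since essentially all the work has already been carried out in \cref{thm:quilleneq,thm:allQE}, I do not expect a real obstacle here; the only point that needs care is to verify that every functor appearing preserves \emph{all} weak equivalences, and not merely those between fibrant objects, so that the passage to underlying $\infty$-categories and the application of \cref{rem:liftQE} are legitimate. This is exactly where one uses that each model structure in the square of \cref{thm:allQE} is right-induced from $\sSet^{\Simpop}_\proj$.
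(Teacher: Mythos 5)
Your argument is correct and is essentially the paper's proof: the paper also deduces the corollary by noting that $\bN$ preserves all weak equivalences (\cref{prop:charwe}), replacing $\bN$ by $(\Ex^2N)_*\Nh$ up to the levelwise weak equivalence of \cref{natbetadouble}, and concluding from \cref{thm:allQE} that this composite of weak-equivalence-preserving right Quillen equivalences induces an equivalence of underlying $\infty$-categories. Your extra care about all functors preserving all weak equivalences (via right-induction) is exactly the point the paper relies on implicitly.
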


\section{Double categorical model of \texorpdfstring{$(\infty,1)$}{(infinity,1)}-categories}

We now aim to localize the model structure on $\DblCat$ to obtain a model of $(\infty,1)$-categories. To achieve this, we first perform a localization in \cref{sec:Segal} with respect to the image of the Segal maps in $\sSet^{\Simpop}$, resulting in a double categorical model of Segal spaces. Then, in \cref{sec:CSS}, we further localize with respect to the image of the completeness map in $\sSet^{\Simpop}$, yielding the desired double categorical model of $(\infty,1)$-categories. 

\subsection{Segal localization} \label{sec:Segal}

We now want to localize the model structure $\DblCat_\proj$ in order to obtain a model for Segal spaces. For this, recall that the Quillen equivalent model $\sSet^{\Simpop}_\proj$ is localized at the \emph{spine inclusions} to obtain such a model. 

\begin{defn}
    For $n\geq 2$, let $\Simp[1]\amalg^h_{[0]} \ldots \amalg^h_{[0]} \Simp[1]$ denote the homotopy colimit in $\sSet^{\Simpop}_\proj$ of $n$ copies of $\Simp[1]$ under $\Simp[0]$, as illustrated by the following diagram in $\Set^{\Simpop}$, seen as a diagram in $\sSet^{\Simpop}$ through the canonical inclusion $\iota\colon \Set^{\Simpop}\to \sSet^{\Simpop}$.
\begin{tz}
    \node[](1) {$\Simp[1]$}; 
    \node[above right of=1,xshift=.5cm](2) {$\Simp[0]$};
    \node[below right of=2,xshift=.5cm](3) {$\Simp[1]$}; 
    \node[above right of=3,xshift=.5cm](4){$\Simp[0]$}; 
    \node[below right of=4,white,xshift=.5cm](5) {$\Simp[1]$}; 
    \node[below right of=4,xshift=.5cm] {$\ldots$}; 
    \node[above right of=5,xshift=.5cm](6){$\Simp[0]$};
    \node[below right of=6,xshift=.5cm](7) {$\Simp[1]$}; 

    \draw[->](2) to node[above,la]{$d^0$} (1);
    \draw[->](2) to node[above,la,xshift=3pt]{$d^1$} (3);
    \draw[->](4) to node[above,la]{$d^0$} (3);
    \draw[->](4) to node[above,la,xshift=3pt]{$d^1$} (5);
    \draw[->](6) to node[above,la]{$d^0$} (5);
    \draw[->](6) to node[above,la,xshift=3pt]{$d^1$} (7);
\end{tz}

The \textbf{spine inclusion} is the map $\Simp[1]\amalg^h_{[0]} \ldots \amalg^h_{[0]} \Simp[1]\to \Simp[n]$ induced by the maps in~$\Simp$
\[ \rho_i\colon [1]\to [n], \quad 0\mapsto i-1,\, 1\mapsto i, \]
for $1\leq i\leq n$. 
\end{defn}

\begin{notation} \label{loc:Seg}
    Let $\Seg$ denote the set of morphisms in $\sSet^{\Simpop}_\proj$
\[ \Seg\coloneqq  \{ (\Simp[1]\amalg^h_{[0]} \ldots \amalg^h_{[0]} \Simp[1]\to \Simp[n])\boxtimes \Simp[0]\mid n\geq 2\}.  \]
We also denote by $\Seg$ the image of $\Seg$ under the derived functors of the left Quillen functors from \cref{thm:Quillenpairs}. Hence we have sets of morphisms 
\begin{itemize}[leftmargin=0.6cm]
    \item $\Seg\coloneqq \bL( c^h (c \Sd^2)_*)(\Seg)$ in $\DblCat_\proj$, 
    \item $\Seg\coloneqq \bL c^h(\Seg)$ in $\Cat^{\Simpop}_\proj$,
    \item $\Seg\coloneqq \bL c^\Simp(\Seg)$ in $\Cat(\sSet)_\proj$.
\end{itemize} 
\end{notation}

Since the model structures $\DblCat_\proj$, $\Cat^{\Simpop}_\proj$, $\Cat(\sSet)_\proj$, and $\sSet^{\Simpop}_\proj$ are left proper and combinatorial by \cref{Thomproj,thm:dblcatleftproper,projKan,HorelMS}, by \cref{thm:locexist}, we can localize them at the sets $\Seg$.

\begin{prop}
    The $\Seg$-localized model structures $\DblCat_\Seg$, $\Cat^{\Simpop}_\Seg$, $\Cat(\sSet)_\Seg$, and $\sSet^{\Simpop}_\Seg$ exist. 
\end{prop}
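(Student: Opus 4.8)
The plan is simply to invoke the general existence theorem for left Bousfield localizations. Recall from \cref{thm:locexist} that for any left proper, combinatorial model category $\sM$ and any set $S$ of morphisms, the $S$-localized model structure $\sM_S$ exists. So it suffices to verify that each of the four model structures $\sSet^{\Simpop}_\proj$, $\Cat^{\Simpop}_\proj$, $\Cat(\sSet)_\proj$, and $\DblCat_\proj$ is left proper and combinatorial, and then to apply \cref{thm:locexist} once in each, taking $S$ to be the corresponding copy of $\Seg$ introduced in \cref{loc:Seg}.

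First, left properness has already been recorded in each case: \cref{projKan} for $\sSet^{\Simpop}_\proj$, \cref{Thomproj} for $\Cat^{\Simpop}_\proj$, \cref{HorelMS} for $\Cat(\sSet)_\proj$, and \cref{thm:dblcatleftproper} for $\DblCat_\proj$. For combinatoriality, each of these model structures is cofibrantly generated---by \cref{projKan,Thomproj,HorelMS,projMS} respectively---so it only remains to observe that the underlying categories are locally presentable. For $\sSet^{\Simpop}$ and $\Cat^{\Simpop}$ this is immediate, as these are categories of functors from a small category into the locally presentable categories $\Set$ and $\Cat$. For $\Cat(\sSet)$ and $\DblCat = \Cat(\Cat)$, the category of internal categories to a locally presentable category is the category of models of a finite-limit sketch in a locally presentable category, hence again locally presentable; this is standard.

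With both hypotheses in hand, \cref{thm:locexist} applies directly to produce $\sSet^{\Simpop}_\Seg$, $\Cat^{\Simpop}_\Seg$, $\Cat(\sSet)_\Seg$, and $\DblCat_\Seg$. There is essentially no obstacle here: the argument is just a bookkeeping of hypotheses established earlier in the paper, the only mildly non-formal point being the local presentability of the internal-category constructions $\Cat(\sSet)$ and $\DblCat$, which is well known.
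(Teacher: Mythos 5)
Your proof is correct and is essentially the paper's own argument: the paper likewise observes that $\sSet^{\Simpop}_\proj$, $\Cat^{\Simpop}_\proj$, $\Cat(\sSet)_\proj$, and $\DblCat_\proj$ are left proper and combinatorial (citing \cref{projKan,Thomproj,HorelMS,thm:dblcatleftproper}) and then applies \cref{thm:locexist} to the sets $\Seg$ of \cref{loc:Seg}. Your extra remark on local presentability of $\Cat(\sSet)$ and $\DblCat$ just makes explicit a point the paper leaves implicit.
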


\begin{rem} \label{rem:explicitsetSeg}
    As $\Simp[n]$ is cofibrant in $\sSet^{\Simpop}_\proj$, for all $n\geq 0$, the model structures $\DblCat_\Seg$, $\Cat^{\Simpop}_\Seg$, and $\Cat(\sSet)_\Seg$ coincide with the localizations
    \begin{itemize}[leftmargin=0.6cm]
    \item of $\DblCat_\proj$ at the set $\{ ([1]\amalg_{[0]}^h\ldots \amalg_{[0]}^h [1]\to [n])\boxtimes [0]\mid n\geq 0\}$, 
    \item of $\Cat^{\Simpop}_\proj$ at the set $\{ (\Simp[1]\amalg^h_{[0]} \ldots \amalg^h_{[0]} \Simp[1]\to \Simp[n])\boxtimes [0]\mid n\geq 0\}$,
    \item of $\Cat(\sSet)_\proj$ at the set $\{ ([1]\amalg_{[0]}^h\ldots \amalg_{[0]}^h [1]\to [n])\boxtimes \Simp[0]\mid n\geq 0\}$.
\end{itemize} 
   We give an explicit description of the homotopy colimit $([1]\amalg_{[0]}^h\ldots \amalg_{[0]}^h [1])\boxtimes [0]$ in $\DblCat_\proj$ in \cref{sec:application}.
\end{rem}

By applying \cref{thm:RIMS} to the commutative square of Quillen equivalences from \cref{thm:allQE}, we directly get the following. 

\begin{theorem} \label{squareSeg}
    We have a commutative square of Quillen equivalences
    \begin{tz}
    \node[](1) {$\sSet^{\Simpop}_\Seg$}; 
    \node[below of=1,yshift=-.5cm](2) {$\Cat(\sSet)_\Seg$}; 
    \node[right of=1,xshift=2.1cm](3) {$\Cat^{\Simpop}_\Seg$}; 
    \node[below of=3,yshift=-.5cm](4) {$\DblCat_\Seg$};
\punctuation{4}{,};

    \draw[->] ($(1.east)+(0,5pt)$) to node[above,la]{$(c\Sd^2)_*$} ($(3.west)+(0,5pt)$);
\draw[->] ($(3.west)-(0,5pt)$) to node[below,la]{$(\Ex^2 N)_*$} ($(1.east)-(0,5pt)$);
\node[la] at ($(1.east)!0.5!(3.west)$) {$\bot$};
\draw[->] ($(2.east)+(0,5pt)$) to node[above,la]{$\Cat(c\Sd^2)$} ($(4.west)+(0,5pt)$);
\draw[->] ($(4.west)-(0,5pt)$) to node[below,la]{$\Cat(\Ex^2 N)$} ($(2.east)-(0,5pt)$);
\node[la] at ($(2.east)!0.5!(4.west)$) {$\bot$};

\draw[->] ($(3.south)-(5pt,0)$) to node[left,la]{$\ch$} ($(4.north)-(5pt,0)$); 
\draw[->] ($(4.north)+(5pt,0)$) to node[right,la]{$\Nh$} ($(3.south)+(5pt,0)$); 
\node[la] at ($(3.south)!0.5!(4.north)$) {\rotatebox{90}{$\bot$}};
\draw[->] ($(1.south)-(5pt,0)$) to node[left,la]{$c^\Simp$} ($(2.north)-(5pt,0)$); 
\draw[->] ($(2.north)+(5pt,0)$) to node[right,la]{$N^\Simp$} ($(1.south)+(5pt,0)$); 
\node[la] at ($(1.south)!0.5!(2.north)$) {\rotatebox{90}{$\bot$}};
\end{tz}
    where all model structures are right-induced. 
\end{theorem}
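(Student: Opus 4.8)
The plan is to deduce the statement as a purely formal consequence of the machinery assembled in \cref{sec:indvslocMS}, applied edge by edge to the commutative square of Quillen equivalences from \cref{thm:allQE}. Everything needed is already in place: the four model structures $\sSet^{\Simpop}_\proj$, $\Cat^{\Simpop}_\proj$, $\Cat(\sSet)_\proj$, and $\DblCat_\proj$ are left proper and combinatorial by \cref{Thomproj,thm:dblcatleftproper,projKan,HorelMS}, each of them is right-induced from $\sSet^{\Simpop}_\proj$, and the vertical and horizontal adjunctions are all Quillen equivalences. Thus for every edge $F\dashv U$ of the square the hypotheses of \cref{thm:RIMS,prop:QPloc} are met for the pair (source, target) and the pair of localizing sets $(S,\bL F(S))$.

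Concretely, the first thing to record is the bookkeeping of the localizing sets. By \cref{loc:Seg}, the set $\Seg$ in each of $\Cat^{\Simpop}$, $\Cat(\sSet)$, and $\DblCat$ is the derived image of the set $\Seg$ in $\sSet^{\Simpop}$ along the corresponding left Quillen functor of the square; since the square of left adjoints commutes by \cref{commutativesquare} and left derived functors of composable left Quillen functors compose, it follows that the set at the target of \emph{any} edge is the derived image, along that edge's left adjoint, of the set at its source. With this in hand, the argument is a short iteration of \cref{thm:RIMS,prop:QPloc}. Applying them to the top edge $(c\Sd^2)_*\dashv(\Ex^2N)_*$ (which exhibits $\Cat^{\Simpop}_\proj$ as right-induced from $\sSet^{\Simpop}_\proj$ by \cref{QEprojThomason}) shows that $\Cat^{\Simpop}_\Seg$ is right-induced from $\sSet^{\Simpop}_\Seg$ by a Quillen equivalence; applying them to the left edge $c^\Simp\dashv N^\Simp$ (using \cref{HorelMS}) gives the same for $\Cat(\sSet)_\Seg$; and applying them to the right edge $\ch\dashv\Nh$ (using \cref{projMS}) shows that $\DblCat_\Seg$ is right-induced from $\Cat^{\Simpop}_\Seg$ by a Quillen equivalence. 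Composing the last two right-inductions then exhibits $\DblCat_\Seg$ as right-induced from $\sSet^{\Simpop}_\Seg$, and \cref{prop:QPloc} applied to the bottom edge $\Cat(c\Sd^2)\dashv\Cat(\Ex^2N)$ makes the induced adjunction between $\Cat(\sSet)_\Seg$ and $\DblCat_\Seg$ a Quillen equivalence as well. Finally, commutativity of the square at the localized level is automatic: localization does not alter the underlying adjunctions, and the square of left adjoints already commutes by \cref{commutativesquare}.

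I do not expect a substantive obstacle, since the content is entirely contained in \cref{thm:RIMS,prop:QPloc,thm:allQE}. The only point requiring a moment of care is precisely the bookkeeping flagged above — verifying that the set ``$\Seg$ in $\DblCat$'' is the derived image of ``$\Seg$ in $\Cat^{\Simpop}$'' rather than merely of ``$\Seg$ in $\sSet^{\Simpop}$'', so that \cref{thm:RIMS} genuinely applies to the right edge of the square — and this is settled by the commutativity of the square of left adjoints together with the composability of left derived functors.
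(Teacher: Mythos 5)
Your proposal is correct and follows essentially the same route as the paper, whose proof consists precisely of applying \cref{thm:RIMS} (together with \cref{prop:QPloc}) to the commutative square of Quillen equivalences of \cref{thm:allQE}. The extra bookkeeping you carry out — checking via \cref{commutativesquare} and composability of left derived functors that the set $\Seg$ at the target of each edge is the derived image of the set at its source, so that \cref{thm:RIMS} applies edge by edge — is exactly the point the paper leaves implicit, and your treatment of it is accurate.
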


Similarly to the projective case, we can characterize the weak equivalences in $\DblCat_\Seg$ through their double nerve. 

\begin{prop} 
    A double functor $F$ is a weak equivalence in $\DblCat_\Seg$ if and only if its double nerve $\bN F$ is a weak equivalence in $\sSet^{\Simpop}_\Seg$. 
\end{prop}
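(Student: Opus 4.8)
The plan is to repeat, in the localized setting, the argument that proves \cref{prop:charwe}. First, I would record that \cref{squareSeg} already supplies what is needed: since every model structure appearing there is right-induced from $\sSet^{\Simpop}_\Seg$, the model structure $\DblCat_\Seg$ is in particular right-induced from $\sSet^{\Simpop}_\Seg$ along the composite right adjoint $(\Ex^2N)_*\Nh\colon \DblCat_\Seg\to \sSet^{\Simpop}_\Seg$. Consequently a double functor $F$ is a weak equivalence in $\DblCat_\Seg$ precisely when $(\Ex^2N)_*\Nh F$ is a weak equivalence in $\sSet^{\Simpop}_\Seg$.

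Next I would compare $\bN\cong N_*\Nh$ with $(\Ex^2N)_*\Nh$ as functors $\DblCat\to \sSet^{\Simpop}$. By \cref{natbetadouble}, the natural transformation $(\beta^2N)_*\Nh\colon \bN\Rightarrow (\Ex^2N)_*\Nh$ is levelwise a weak equivalence in $\sSet^{\Simpop}_\proj$; and since $\sSet^{\Simpop}_\Seg$ is a left Bousfield localization of $\sSet^{\Simpop}_\proj$, every weak equivalence of the latter is in particular a weak equivalence of the former, so $(\beta^2N)_*\Nh$ is also levelwise a weak equivalence in $\sSet^{\Simpop}_\Seg$. Applying the two-out-of-three property to the naturality square of this transformation at a double functor $F\colon \bA\to \bB$ --- whose two horizontal legs $\bN\bA\to (\Ex^2N)_*\Nh\bA$ and $\bN\bB\to (\Ex^2N)_*\Nh\bB$ are weak equivalences in $\sSet^{\Simpop}_\Seg$ --- yields that $\bN F$ is a weak equivalence in $\sSet^{\Simpop}_\Seg$ if and only if $(\Ex^2N)_*\Nh F$ is. Combining this with the first step proves the statement.

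I do not anticipate any real obstacle here: the argument is entirely formal and is the verbatim localized analogue of \cref{prop:charwe}. The single point that deserves a moment of attention is the (immediate) observation that the natural weak equivalence $(\beta^2N)_*\Nh$ survives the passage to the localization, which holds simply because left Bousfield localization only enlarges the class of weak equivalences.
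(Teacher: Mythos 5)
Your argument is correct and is exactly the paper's proof, just written out in more detail: the paper likewise combines the right-inducedness of $\DblCat_\Seg$ from $\sSet^{\Simpop}_\Seg$ along $(\Ex^2N)_*\Nh$ (\cref{squareSeg}) with the natural levelwise weak equivalence of \cref{natbetadouble}, which persists in the localization. Nothing is missing.
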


\begin{proof}
    This follows directly from \cref{natbetadouble} and the fact that the weak equivalences in $\DblCat_\Seg$ are induced from those of $\sSet^{\Simpop}_\Seg$ along the right adjoint functor $(\Ex^2N)_* N^h$ by \cref{squareSeg}.
\end{proof}

Finally, as the double nerve preserves all weak equivalences by the above result, it defines a functor between underlying $\infty$-categories, which can be seen to be an equivalence by \cref{natbetadouble,squareSeg}.

\begin{cor}
    The double nerve functor $\bN\colon \DblCat\to \sSet^{\Simpop}$ induces an equivalence of $\infty$-categories 
    \[ \bN_\infty\colon (\DblCat_\Seg)_\infty\to (\sSet^{\Simpop}_\Seg)_\infty. \]
\end{cor}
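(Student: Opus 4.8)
The plan is to follow the strategy already used for the corresponding corollary in the projective case, assembling the ingredients furnished by \cref{natbetadouble,squareSeg}. The preceding proposition shows that $\bN$ sends the weak equivalences of $\DblCat_\Seg$ to weak equivalences of $\sSet^{\Simpop}_\Seg$, so it descends to a functor $\bN_\infty\colon(\DblCat_\Seg)_\infty\to(\sSet^{\Simpop}_\Seg)_\infty$ on underlying $\infty$-categories; it then remains to verify that $\bN_\infty$ is an equivalence.

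First I would look at the right-adjoint edges of the commutative square of Quillen equivalences in \cref{squareSeg}: the functors $\Nh\colon\DblCat_\Seg\to\Cat^{\Simpop}_\Seg$ and $(\Ex^2 N)_*\colon\Cat^{\Simpop}_\Seg\to\sSet^{\Simpop}_\Seg$ are right Quillen equivalences, and since every model structure in that square is right-induced, each of these functors preserves (and reflects) all weak equivalences. Hence, by \cref{rem:liftQE}, each of them induces an equivalence of underlying $\infty$-categories, and therefore so does their composite. Since $(\Ex^2 N)_*\Nh$ preserves all weak equivalences, the compatibility of $(-)_\infty$ with composition gives that
\[ \bigl((\Ex^2 N)_*\Nh\bigr)_\infty\colon(\DblCat_\Seg)_\infty\to(\sSet^{\Simpop}_\Seg)_\infty \]
is an equivalence of $\infty$-categories.

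Next I would bring in \cref{natbetadouble}, which provides a natural transformation $\bN\cong N_*\Nh\Rightarrow(\Ex^2 N)_*\Nh$ that is levelwise a weak equivalence in $\sSet^{\Simpop}_\proj$; as the Segal localization only enlarges the class of weak equivalences, this transformation is still levelwise a weak equivalence in $\sSet^{\Simpop}_\Seg$. Because both $\bN$ and $(\Ex^2 N)_*\Nh$ preserve all weak equivalences, \cref{rem:liftQE} turns it into a natural isomorphism $\bN_\infty\cong\bigl((\Ex^2 N)_*\Nh\bigr)_\infty$. Combining this with the previous step, $\bN_\infty$ is naturally isomorphic to an equivalence of $\infty$-categories, and hence is itself one, which is the claim.

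I do not expect a genuine obstacle: the argument is essentially bookkeeping, and the only two points deserving a moment's care are that the natural transformation of \cref{natbetadouble} survives the passage to the Segal localization---immediate, since localizing only adds weak equivalences---and that the right-inducedness of the model structures in \cref{squareSeg} is precisely what guarantees that $\Nh$ and $(\Ex^2 N)_*$ preserve all weak equivalences, so that the $2$-functoriality of $(-)_\infty$ recorded in \cref{rem:liftQE} can be applied.
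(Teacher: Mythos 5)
Your proposal is correct and follows essentially the same route as the paper: the paper's own justification is exactly that $\bN$ preserves all weak equivalences by the preceding characterization, and that the zig-zag comparison of \cref{natbetadouble} together with the right-induced Quillen equivalences of \cref{squareSeg} (via \cref{rem:liftQE}) identifies $\bN_\infty$ with the equivalence induced by $(\Ex^2 N)_*\Nh$. Your only elaborations—checking that the levelwise weak equivalence survives the Segal localization and treating the two right Quillen edges separately before composing—are harmless fillings-in of the same argument.
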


\subsection{Complete Segal localization} \label{sec:CSS}

We now aim to further localize $\DblCat_\Seg$ in order to obtain a model for complete Segal spaces, and so $(\infty,1)$-categories. For this, recall that the Quillen equivalent model $\sSet^{\Simpop}_\Seg$ is localized at the \emph{completeness map} to obtain such a model. 

\begin{defn}
    Let $\Simp[0]\amalg^h_{\Simp[1]} \Simp[3]\amalg^h_{\Simp[1]}\Simp[0]$ denote the homotopy colimit in $\sSet^{\Simpop}_\proj$ of the following diagram in $\Set^{\Simpop}$, seen as a diagram in $\sSet^{\Simpop}$ through the canonical inclusion $\iota\colon \Set^{\Simpop}\to \sSet^{\Simpop}$. 
    \begin{tz}
    \node[](1) {$\Simp[0]$}; 
    \node[above right of=1,xshift=.5cm](2) {$\Simp[1]$};
    \node[below right of=2,xshift=.5cm](3) {$\Simp[3]$}; 
    \node[above right of=3,xshift=.5cm](4){$\Simp[1]$}; 
    \node[below right of=4,xshift=.5cm](5) {$\Simp[0]$};  

    \draw[->](2) to node[above,la]{$!$} (1);
    \draw[->](2) to node[above,la,xshift=7pt]{$d^3d^1$} (3);
    \draw[->](4) to node[above,la,xshift=-4pt]{$d^0d^1$} (3);
    \draw[->](4) to node[above,la]{$!$} (5);
\end{tz}

    The \textbf{completeness map} is the unique map $\Simp[0]\amalg^h_{\Simp[1]} \Simp[3]\amalg^h_{\Simp[1]}\Simp[0]\to \Simp[0]$.
\end{defn}

\begin{notation}
    Let $\CSS$ denote the set of morphisms in $\sSet^{\Simpop}_\proj$
\[\CSS\coloneqq \Seg\cup \{ (\Simp[0]\amalg^h_{\Simp[1]} \Simp[3]\amalg^h_{\Simp[1]}\Simp[0]\to \Simp[0])\boxtimes \Simp[0]\}. \]
We also denote by $\CSS$ the image of $\CSS$ under the derived functors of the left Quillen functors from \cref{thm:Quillenpairs}. Hence we have sets of morphisms 
\begin{itemize}[leftmargin=0.6cm]
    \item $\CSS\coloneqq \bL( c^h (c \Sd^2)_*)(\CSS)$ in $\DblCat_\proj$, 
    \item $\CSS\coloneqq \bL c^h(\CSS)$ in $\Cat^{\Simpop}_\proj$,
    \item $\CSS\coloneqq \bL c^\Simp(\CSS)$ in $\Cat(\sSet)_\proj$.
\end{itemize} 
\end{notation} 

As before, since the model structures $\DblCat_\proj$, $\Cat^{\Simpop}_\proj$, $\Cat(\sSet)_\proj$, and $\sSet^{\Simpop}_\proj$ are left proper and combinatorial by \cref{Thomproj,thm:dblcatleftproper,projKan,HorelMS}, by \cref{thm:locexist}, we can localize them at the sets $\CSS$.

\begin{prop}
    The $\CSS$-localized model structures $\DblCat_\CSS$, $\Cat^{\Simpop}_\CSS$, $\Cat(\sSet)_\CSS$, and $\sSet^{\Simpop}_\CSS$ exist. 
\end{prop}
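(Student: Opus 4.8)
The plan is to deduce this directly from the existence theorem for left Bousfield localizations recalled in \cref{thm:locexist}, exactly as was done for the $\Seg$-localized model structures in the preceding proposition. That theorem asserts that, for any left proper, combinatorial model category $\sM$ and any set $S$ of morphisms in $\sM$, the $S$-localized model structure $\sM_S$ exists. Hence the proof amounts to checking, for each of the four categories in turn, that (a) the relevant projective model structure is left proper and combinatorial, and (b) $\CSS$ is a genuine set of morphisms in it.

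For (a): the model structure $\sSet^{\Simpop}_\proj$ is left proper and combinatorial by \cref{projKan}; $\Cat^{\Simpop}_\proj = \projThom$ is left proper and combinatorial by \cref{Thomproj}; $\Cat(\sSet)_\proj$ is left proper and combinatorial by \cref{HorelMS}; and $\DblCat_\proj$ is combinatorial (indeed cofibrantly generated, by \cref{projMS}) and left proper by \cref{thm:dblcatleftproper}. For (b): in $\sSet^{\Simpop}_\proj$ the set $\CSS = \Seg \cup \{(\Simp[0]\amalg^h_{\Simp[1]}\Simp[3]\amalg^h_{\Simp[1]}\Simp[0]\to \Simp[0])\boxtimes \Simp[0]\}$ is a set by construction, and its images $\bL(c^h(c\Sd^2)_*)(\CSS)$ in $\DblCat_\proj$, $\bL c^h(\CSS)$ in $\Cat^{\Simpop}_\proj$, and $\bL c^\Simp(\CSS)$ in $\Cat(\sSet)_\proj$ under the derived left Quillen functors of \cref{thm:Quillenpairs} are again sets of morphisms, as fixed in the notation just introduced. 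Applying \cref{thm:locexist} to each of these four pairs $(\sM, S)$ then yields the four $\CSS$-localized model structures.

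There is essentially no genuine obstacle here; the only point requiring any care is that all four of the input model structures have indeed been verified to be left proper and combinatorial --- for $\DblCat_\proj$ this relied on the somewhat less routine left-properness argument of \cref{thm:dblcatleftproper}, which in turn used the pushout-preservation result \cref{pushoutnerve}. One could equally phrase the argument more economically by localizing only $\sSet^{\Simpop}_\proj$ and transporting along the Quillen equivalences of \cref{thm:allQE} (as in the statement of \cref{squareSeg}), but since each projective model structure is already known to be left proper and combinatorial, the direct application of \cref{thm:locexist} is the simplest route and is the one I would take.
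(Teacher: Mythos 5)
Your proposal is correct and matches the paper's argument exactly: the paper also notes that $\DblCat_\proj$, $\Cat^{\Simpop}_\proj$, $\Cat(\sSet)_\proj$, and $\sSet^{\Simpop}_\proj$ are left proper and combinatorial (by \cref{Thomproj,thm:dblcatleftproper,projKan,HorelMS}) and then applies \cref{thm:locexist} to the sets $\CSS$. No further comment is needed.
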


\begin{rem} \label{rem:explicitsetCSS}
    As $\Simp[n]$ is cofibrant in $\sSet^{\Simpop}_\proj$, for all $n\geq 0$, the model structures $\DblCat_\CSS$, $\Cat^{\Simpop}_\CSS$, and $\Cat(\sSet)_\CSS$ coincide with the localizations
    \begin{itemize}[leftmargin=0.6cm]
    \item of $\DblCat_\proj$ at the set $\Seg\cup \{([0]\amalg_{[1]}^h [3]\amalg_{[1]} [0]\to [0])\boxtimes [0]\}$, 
    \item of $\Cat^{\Simpop}_\proj$ at the set $\Seg\cup \{ (\Simp[0]\amalg^h_{\Simp[1]} \Simp[3]\amalg^h_{\Simp[1]}\Simp[0]\to \Simp[0])\boxtimes\Simp[0] \}$,
    \item of $\Cat(\sSet)_\proj$ at the set $\Seg\cup \{([0]\amalg_{[1]}^h [3]\amalg_{[1]} [0]\to [0])\boxtimes [0]\}$,
\end{itemize} 
    where the localizations with respect to $\Seg$ can be computed as in \cref{rem:explicitsetSeg}. We give an explicit description of the homotopy colimit $([0]\amalg_{[1]}^h [3]\amalg_{[1]} [0])\boxtimes [0]$ in $\DblCat_\proj$ in \cref{sec:application}.
\end{rem}

By applying \cref{thm:RIMS} to the commutative square of Quillen equivalences from \cref{thm:allQE}, we directly get the following.

\begin{theorem} \label{squareCSS}
    We have a commutative square of Quillen equivalences
    \begin{tz}
    \node[](1) {$\sSet^{\Simpop}_\CSS$}; 
    \node[below of=1,yshift=-.5cm](2) {$\Cat(\sSet)_\CSS$}; 
    \node[right of=1,xshift=2.2cm](3) {$\Cat^{\Simpop}_\CSS$}; 
    \node[below of=3,yshift=-.5cm](4) {$\DblCat_\CSS$};
\punctuation{4}{,};

    \draw[->] ($(1.east)+(0,5pt)$) to node[above,la]{$(c\Sd^2)_*$} ($(3.west)+(0,5pt)$);
\draw[->] ($(3.west)-(0,5pt)$) to node[below,la]{$(\Ex^2 N)_*$} ($(1.east)-(0,5pt)$);
\node[la] at ($(1.east)!0.5!(3.west)$) {$\bot$};
\draw[->] ($(2.east)+(0,5pt)$) to node[above,la]{$\Cat(c\Sd^2)$} ($(4.west)+(0,5pt)$);
\draw[->] ($(4.west)-(0,5pt)$) to node[below,la]{$\Cat(\Ex^2 N)$} ($(2.east)-(0,5pt)$);
\node[la] at ($(2.east)!0.5!(4.west)$) {$\bot$};

\draw[->] ($(3.south)-(5pt,0)$) to node[left,la]{$\ch$} ($(4.north)-(5pt,0)$); 
\draw[->] ($(4.north)+(5pt,0)$) to node[right,la]{$\Nh$} ($(3.south)+(5pt,0)$); 
\node[la] at ($(3.south)!0.5!(4.north)$) {\rotatebox{90}{$\bot$}};
\draw[->] ($(1.south)-(5pt,0)$) to node[left,la]{$c^\Simp$} ($(2.north)-(5pt,0)$); 
\draw[->] ($(2.north)+(5pt,0)$) to node[right,la]{$N^\Simp$} ($(1.south)+(5pt,0)$); 
\node[la] at ($(1.south)!0.5!(2.north)$) {\rotatebox{90}{$\bot$}};
\end{tz}
    where all model structures are right-induced. 
\end{theorem}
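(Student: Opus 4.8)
The plan is to deduce the statement from \cref{thm:allQE} by localizing each of its four edges, exactly as in the proof of \cref{squareSeg}. First I would record that the four unlocalized model structures appearing in \cref{thm:allQE} --- namely $\sSet^{\Simpop}_\proj$, $\Cat^{\Simpop}_\proj$, $\Cat(\sSet)_\proj$, and $\DblCat_\proj$ --- are all left proper and combinatorial by \cref{Thomproj,thm:dblcatleftproper,projKan,HorelMS}, so by \cref{thm:locexist} all of their $\CSS$-localizations exist.

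The key point is that the localizing sets fit together coherently around the square. By construction, the set $\CSS$ in each of $\Cat^{\Simpop}_\proj$, $\Cat(\sSet)_\proj$, and $\DblCat_\proj$ is the image of the set $\CSS$ in $\sSet^{\Simpop}_\proj$ under the relevant composite of derived left Quillen functors from \cref{thm:Quillenpairs}. Since the square of left adjoints commutes by \cref{commutativesquare}, the two derived composites into $\DblCat_\proj$ agree, so the set $\CSS$ in $\DblCat_\proj$ is at the same time $\bL\ch$ of the set $\CSS$ in $\Cat^{\Simpop}_\proj$ and $\bL\Cat(c\Sd^2)$ of the set $\CSS$ in $\Cat(\sSet)_\proj$. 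Consequently every edge of \cref{thm:allQE} is a Quillen equivalence whose target carries the localization at the derived image of the source's localizing set.

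With this in hand, I would apply \cref{prop:QPloc} to each of the four edges to obtain Quillen equivalences between the corresponding $\CSS$-localizations; commutativity of the resulting square is automatic, since the underlying adjunctions are the same as in \cref{thm:allQE}. Finally, since in \cref{thm:allQE} the model structures $\Cat^{\Simpop}_\proj$, $\Cat(\sSet)_\proj$, and $\DblCat_\proj$ are each right-induced from another model structure in the square along a Quillen equivalence, \cref{thm:RIMS} applies to each such edge and shows that after $\CSS$-localization each remains right-induced; chasing the square then shows that all of $\Cat^{\Simpop}_\CSS$, $\Cat(\sSet)_\CSS$, and $\DblCat_\CSS$ are right-induced from $\sSet^{\Simpop}_\CSS$. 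There is no real obstacle beyond what already appeared in \cref{squareSeg}: the only thing to keep track of is the compatibility of the localizing sets around the square, which is precisely what \cref{commutativesquare} supplies.
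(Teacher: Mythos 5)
Your proposal is correct and follows essentially the same route as the paper, which simply applies \cref{thm:RIMS} to the commutative square of Quillen equivalences from \cref{thm:allQE} (with \cref{prop:QPloc} supplying the Quillen equivalences between the localizations). Your extra remark that the localizing sets are compatible around the square via \cref{commutativesquare} is a point the paper leaves implicit in its definition of the sets $\CSS$, and it is a worthwhile detail to make explicit, but it does not change the argument.
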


We again have the following characterization of the weak equivalences in $\DblCat_\CSS$ through their double nerve. 

\begin{prop}
    A double functor $F$ is a weak equivalence in $\DblCat_\CSS$ if and only if its double nerve $\bN F$ is a weak equivalence in $\sSet^{\Simpop}_\CSS$. 
\end{prop}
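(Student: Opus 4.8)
The plan is to follow verbatim the argument used in the projective and Segal cases (see \cref{prop:charwe} and its Segal analogue). First I would invoke \cref{squareCSS}: since all four model structures appearing there are right-induced from $\sSet^{\Simpop}_\CSS$, the model structure $\DblCat_\CSS$ is in particular right-induced from $\sSet^{\Simpop}_\CSS$ along the composite right adjoint $(\Ex^2N)_*\Nh\colon \DblCat\to \sSet^{\Simpop}$. By definition of a right-induced model structure, this means that a double functor $F$ is a weak equivalence in $\DblCat_\CSS$ if and only if $(\Ex^2N)_*\Nh F$ is a weak equivalence in $\sSet^{\Simpop}_\CSS$.

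It then remains to replace $(\Ex^2N)_*\Nh$ by the double nerve $\bN\cong N_*\Nh$. Here I would use \cref{natbetadouble}, which provides a natural transformation $(\beta^2N)_*\Nh\colon \bN\Rightarrow (\Ex^2N)_*\Nh$ that is levelwise a weak equivalence in $\sSet^{\Simpop}_\proj$. Since $\sSet^{\Simpop}_\CSS$ is a left Bousfield localization of $\sSet^{\Simpop}_\proj$, it has at least as many weak equivalences, so this natural transformation is also levelwise a weak equivalence in $\sSet^{\Simpop}_\CSS$. Applying the two-out-of-three property in $\sSet^{\Simpop}_\CSS$ to the naturality square of $(\beta^2N)_*\Nh$ at $F$ shows that $\bN F$ is a weak equivalence in $\sSet^{\Simpop}_\CSS$ if and only if $(\Ex^2N)_*\Nh F$ is. Combined with the previous paragraph, this gives the desired equivalence.

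I do not expect any genuine obstacle: the proof is purely formal once \cref{squareCSS} and \cref{natbetadouble} are in hand, exactly as in the $\proj$ and $\Seg$ cases. The single point that deserves a line of justification is that the comparison transformation of \cref{natbetadouble} remains a levelwise weak equivalence after localizing at $\CSS$, and this is immediate since left Bousfield localization never discards weak equivalences. As in the earlier sections, this characterization also yields that $\bN$ preserves all weak equivalences for the $\CSS$-model structures, hence induces a functor on underlying $\infty$-categories which, by \cref{natbetadouble,squareCSS}, is an equivalence $\bN_\infty\colon (\DblCat_\CSS)_\infty\to (\sSet^{\Simpop}_\CSS)_\infty$.
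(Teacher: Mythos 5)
Your proposal is correct and follows essentially the same route as the paper, which deduces the statement from \cref{natbetadouble} together with the fact that the weak equivalences of $\DblCat_\CSS$ are right-induced from those of $\sSet^{\Simpop}_\CSS$ along $(\Ex^2N)_*\Nh$ via \cref{squareCSS}. The details you supply---that the comparison transformation remains a levelwise weak equivalence after localization and the two-out-of-three argument---are exactly the steps the paper leaves implicit.
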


\begin{proof}
    This follows directly from \cref{natbetadouble} and the fact that the weak equivalences in $\DblCat_\CSS$ are induced from those of $\sSet^{\Simpop}_\CSS$ along the right adjoint functor $(\Ex^2N)_* N^h$ by \cref{squareCSS}.
\end{proof}

Finally, as the double nerve preserves all weak equivalences by the above result, it defines a functor between underlying $\infty$-categories, which can be seen to be an equivalence by \cref{natbetadouble,squareCSS}.

\begin{cor}
    The double nerve functor $\bN\colon \DblCat\to \sSet^{\Simpop}$ induces an equivalence of $\infty$-categories 
    \[ \bN_\infty\colon (\DblCat_\CSS)_\infty\to (\sSet^{\Simpop}_\CSS)_\infty. \]
\end{cor}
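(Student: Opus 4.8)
The plan is to mirror exactly the argument already used for the projective and Segal localizations. First I would invoke the proposition immediately preceding this corollary: it shows that $\bN$ carries weak equivalences of $\DblCat_\CSS$ to weak equivalences of $\sSet^{\Simpop}_\CSS$. Hence $\bN$ preserves all weak equivalences and descends to a functor $\bN_\infty\colon (\DblCat_\CSS)_\infty\to(\sSet^{\Simpop}_\CSS)_\infty$ on underlying $\infty$-categories; the content of the corollary is precisely that this induced functor is an equivalence.

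To prove that, I would first replace $\bN$, up to natural weak equivalence, by the composite $(\Ex^2 N)_*\Nh$. Using the factorization $\bN\cong N_*\Nh$ recorded in \cref{rem:doublenerve} together with the natural transformation $(\beta^2N)_*\Nh\colon \bN\Rightarrow(\Ex^2 N)_*\Nh$ of \cref{natbetadouble}, which is levelwise a weak equivalence in $\sSet^{\Simpop}_\proj$ and hence also in the localization $\sSet^{\Simpop}_\CSS$, one obtains from \cref{rem:liftQE} a natural isomorphism $\bN_\infty\cong\bigl((\Ex^2 N)_*\Nh\bigr)_\infty$. So it suffices to show that $\bigl((\Ex^2 N)_*\Nh\bigr)_\infty$ is an equivalence of $\infty$-categories.

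Finally, $(\Ex^2 N)_*\Nh$ is the composite of the two right Quillen functors forming the right and bottom edges of the commutative square of \cref{squareCSS}, namely $\Nh\colon \DblCat_\CSS\to\Cat^{\Simpop}_\CSS$ followed by $(\Ex^2 N)_*\colon\Cat^{\Simpop}_\CSS\to\sSet^{\Simpop}_\CSS$. By \cref{squareCSS} both are Quillen equivalences, and since all the model structures there are right-induced, both right adjoints preserve all weak equivalences; hence by \cref{rem:liftQE} each induces an equivalence of underlying $\infty$-categories, and therefore so does their composite. Chaining this with the natural isomorphism of the previous paragraph yields that $\bN_\infty$ is an equivalence.

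The argument is essentially bookkeeping, and the only point requiring a moment's care is checking that the natural transformation $(\beta^2N)_*\Nh$ stays a levelwise weak equivalence and that the two right adjoints still preserve all weak equivalences after passing to the $\CSS$-localized structures. Both are automatic: a left Bousfield localization only enlarges the class of weak equivalences, and the right-inducedness of the localized structures is exactly what \cref{squareCSS} asserts, so nothing new beyond \cref{natbetadouble,squareCSS} is needed.
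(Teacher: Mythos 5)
Your argument is correct and is essentially the paper's own proof, just written out in more detail: the paper likewise combines the preceding characterization of weak equivalences, the levelwise weak equivalence $\bN\Rightarrow(\Ex^2 N)_*\Nh$ of \cref{natbetadouble}, and the Quillen equivalences with weak-equivalence-preserving right adjoints from \cref{squareCSS} to conclude that $\bN_\infty$ is an equivalence.
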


\section{Double categorical model of \texorpdfstring{$\infty$}{infinity}-groupoids}

We now aim to localize the model structure on $\DblCat$ to obtain a model for $\infty$-groupoids. In \cref{sec:gpdloc}, we perform this localization at a relevant set of maps and show that the resulting localized model structure is Quillen equivalent to the Thomason model structure on $\Cat$. However, this is not the first double-categorical model for $\infty$-groupoids to appear in the literature: Fiore--Paoli construct in \cite{FP} a Thomason-like model structure on $\DblCat$ (and more generally on $n$-fold categories) that is Quillen equivalent to the Kan--Quillen model structure on $\sSet$. In \cref{sec:GpdvsThom}, we show that our localized model structure and their Thomason model structure have the same weak equivalences, but that the identity functor at $\DblCat$ does not induce a Quillen equivalence between them.

\subsection{Groupoidal localization} \label{sec:gpdloc}

We now want to localize the model structure $\DblCat_\proj$ in order to obtain a model of $\infty$-groupoids. For this, we localize at the following set of maps. 

\begin{notation}
    Let $\Gpd$ denote the set of morphisms in $\sSet^{\Simpop}_\proj$
\[\Gpd \coloneqq \{ (\Simp[n]\to \Simp[0])\boxtimes \Simp[0]\mid n\geq 0\}. \]
We also denote by $\Gpd$ the image of $\Gpd$ under the derived functors of the left Quillen functors from \cref{thm:Quillenpairs}. Hence we have sets of morphisms 
\begin{itemize}[leftmargin=0.6cm]
    \item $\Gpd\coloneqq \bL( c^h (c \Sd^2)_*)(\Gpd)$ in $\DblCat_\proj$, 
    \item $\Gpd\coloneqq \bL c^h(\Gpd)$ in $\Cat^{\Simpop}_\proj$,
    \item $\Gpd\coloneqq \bL c^\Simp(\Gpd)$ in $\Cat(\sSet)_\proj$.
\end{itemize} 
\end{notation}

As before, since the model structures $\DblCat_\proj$, $\Cat^{\Simpop}_\proj$, $\Cat(\sSet)_\proj$, and $\sSet^{\Simpop}_\proj$ are left proper and combinatorial by \cref{Thomproj,thm:dblcatleftproper,projKan,HorelMS}, by \cref{thm:locexist}, we can localize them at the sets $\Gpd$.

\begin{prop}
    The $\Gpd$-localized model structures $\DblCat_\Gpd$, $\Cat^{\Simpop}_\Gpd$, $\Cat(\sSet)_\Gpd$, and $\sSet^{\Simpop}_\Gpd$ exist. 
\end{prop}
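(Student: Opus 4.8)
The plan is to deduce all four existence statements at once from the general existence theorem for left Bousfield localizations, \cref{thm:locexist}: for a left proper, combinatorial model category $\sM$ and a \emph{set} $S$ of morphisms, the $S$-localized model structure $\sM_S$ exists, with the $S$-local objects as its fibrant objects. So the work reduces to checking, for each of $\sSet^{\Simpop}_\proj$, $\Cat^{\Simpop}_\proj$, $\Cat(\sSet)_\proj$, and $\DblCat_\proj$, that it is left proper and combinatorial, and that the associated set $\Gpd$ is a genuine set.

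First I would collect the left properness inputs: $\sSet^{\Simpop}_\proj$ is left proper by \cref{projKan}, $\Cat^{\Simpop}_\proj$ by \cref{Thomproj}, $\Cat(\sSet)_\proj$ by \cref{HorelMS}, and $\DblCat_\proj$ by \cref{thm:dblcatleftproper}. For combinatoriality I would note that cofibrant generation is recorded in \cref{projKan,Thomproj,HorelMS,projMS} (with explicit generating sets in the $\sSet^{\Simpop}$, $\Cat^{\Simpop}$, and $\DblCat$ cases), while local presentability of the underlying categories is standard: $\sSet^{\Simpop}$ and $\Cat^{\Simpop}$ are functor categories valued in locally presentable categories, and $\DblCat=\Cat(\Cat)$ and $\Cat(\sSet)$ are locally presentable as categories of internal categories, being models of a limit sketch. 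Finally, each $\Gpd$ is visibly a small set: in $\sSet^{\Simpop}_\proj$ it is indexed by $n\geq 0$, and in the three other categories it is the image of this set under the derived left Quillen functors of \cref{thm:Quillenpairs}, hence still a set. With all hypotheses of \cref{thm:locexist} verified in each case, the four localized model structures exist.

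There is no real obstacle here: the statement is a bookkeeping consequence of results already in place, exactly parallel to the Segal and complete Segal localizations discussed above. The only point meriting a brief remark is the local presentability of $\DblCat$ and $\Cat(\sSet)$, which is nonetheless immediate from their description as categories of internal categories and is in any case implicit in the cited cofibrant-generation statements.
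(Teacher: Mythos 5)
Your argument is exactly the paper's: the four projective model structures are left proper and combinatorial (by \cref{Thomproj,thm:dblcatleftproper,projKan,HorelMS}), so the existence of the $\Gpd$-localizations follows from \cref{thm:locexist}. Your additional remarks on local presentability of $\DblCat$ and $\Cat(\sSet)$ and on $\Gpd$ being a set are correct but are left implicit in the paper.
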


\begin{rem}
    As $\Simp[n]$ is cofibrant in $\sSet^{\Simpop}_\proj$, for all $n\geq 0$, the model structures $\DblCat_\Gpd$, $\Cat^{\Simpop}_\Gpd$, and $\Cat(\sSet)_\Gpd$ coincide with the localizations
    \begin{itemize}[leftmargin=0.6cm]
    \item of $\DblCat_\proj$ at the set $\{ ([n]\to [0])\boxtimes [0]\mid n\geq 0\}$, 
    \item of $\Cat^{\Simpop}_\proj$ at the set $\{ (\Simp[n]\to \Simp[0])\boxtimes [0]\mid n\geq 0\}$,
    \item of $\Cat(\sSet)_\proj$ at the set $\{ ([n]\to [0])\boxtimes \Simp[0]\mid n\geq 0\}$.
\end{itemize} 
\end{rem}

By applying \cref{thm:RIMS} to the commutative square of Quillen equivalences from \cref{thm:allQE}, we directly get the following.

\begin{theorem} \label{thm:gpdsquare}
    We have a commutative square of Quillen equivalences
    \begin{tz}
    \node[](1) {$\sSet^{\Simpop}_\Gpd$}; 
    \node[below of=1,yshift=-.5cm](2) {$\Cat(\sSet)_\Gpd$}; 
    \node[right of=1,xshift=2.2cm](3) {$\Cat^{\Simpop}_\Gpd$}; 
    \node[below of=3,yshift=-.5cm](4) {$\DblCat_\Gpd$};
\punctuation{4}{,};

    \draw[->] ($(1.east)+(0,5pt)$) to node[above,la]{$(c\Sd^2)_*$} ($(3.west)+(0,5pt)$);
\draw[->] ($(3.west)-(0,5pt)$) to node[below,la]{$(\Ex^2 N)_*$} ($(1.east)-(0,5pt)$);
\node[la] at ($(1.east)!0.5!(3.west)$) {$\bot$};
\draw[->] ($(2.east)+(0,5pt)$) to node[above,la]{$\Cat(c\Sd^2)$} ($(4.west)+(0,5pt)$);
\draw[->] ($(4.west)-(0,5pt)$) to node[below,la]{$\Cat(\Ex^2 N)$} ($(2.east)-(0,5pt)$);
\node[la] at ($(2.east)!0.5!(4.west)$) {$\bot$};

\draw[->] ($(3.south)-(5pt,0)$) to node[left,la]{$\ch$} ($(4.north)-(5pt,0)$); 
\draw[->] ($(4.north)+(5pt,0)$) to node[right,la]{$\Nh$} ($(3.south)+(5pt,0)$); 
\node[la] at ($(3.south)!0.5!(4.north)$) {\rotatebox{90}{$\bot$}};
\draw[->] ($(1.south)-(5pt,0)$) to node[left,la]{$c^\Simp$} ($(2.north)-(5pt,0)$); 
\draw[->] ($(2.north)+(5pt,0)$) to node[right,la]{$N^\Simp$} ($(1.south)+(5pt,0)$); 
\node[la] at ($(1.south)!0.5!(2.north)$) {\rotatebox{90}{$\bot$}};
\end{tz}
    where all model structures are right-induced. 
\end{theorem}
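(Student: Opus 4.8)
The plan is to apply \cref{thm:RIMS} to each edge of the commutative square of Quillen equivalences established in \cref{thm:allQE}, exactly as was done to obtain \cref{squareSeg,squareCSS}. Recall first that the four model structures $\sSet^{\Simpop}_\proj$, $\Cat^{\Simpop}_\proj$, $\Cat(\sSet)_\proj$, and $\DblCat_\proj$ are all left proper and combinatorial by \cref{Thomproj,thm:dblcatleftproper,projKan,HorelMS}, and that in \cref{thm:allQE} each of the latter three is right-induced from $\sSet^{\Simpop}_\proj$; in particular $\Cat^{\Simpop}_\proj$ and $\Cat(\sSet)_\proj$ are right-induced from $\sSet^{\Simpop}_\proj$, while $\DblCat_\proj$ is right-induced from $\Cat^{\Simpop}_\proj$ along $c^h\dashv N^h$ and also from $\Cat(\sSet)_\proj$ along $\Cat(c\Sd^2)\dashv\Cat(\Ex^2N)$. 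Moreover, by construction the four copies of $\Gpd$ are the images of the set $\Gpd\subseteq\sSet^{\Simpop}_\proj$ under the derived left adjoints of the square; since the square of left adjoints commutes (\cref{commutativesquare}) and the derived functor of a composite of left Quillen functors is the composite of their derived functors, these four sets are mutually compatible along every edge of the square---for instance, the copy of $\Gpd$ in $\DblCat_\proj$ is the image under $\bL c^h$ of the copy of $\Gpd$ in $\Cat^{\Simpop}_\proj$, and also the image under $\bL\Cat(c\Sd^2)$ of the copy of $\Gpd$ in $\Cat(\sSet)_\proj$.

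With this bookkeeping settled, I would invoke \cref{thm:RIMS} once along each edge. Applied to the top Quillen equivalence $(c\Sd^2)_*\dashv(\Ex^2N)_*$, it shows that $\Cat^{\Simpop}_\Gpd$ is right-induced from $\sSet^{\Simpop}_\Gpd$; applied to the left Quillen equivalence $c^\Simp\dashv N^\Simp$, it shows that $\Cat(\sSet)_\Gpd$ is right-induced from $\sSet^{\Simpop}_\Gpd$; applied to the right Quillen equivalence $c^h\dashv N^h$, it shows that $\DblCat_\Gpd$ is right-induced from $\Cat^{\Simpop}_\Gpd$; and applied to the bottom Quillen equivalence $\Cat(c\Sd^2)\dashv\Cat(\Ex^2N)$, it shows that $\DblCat_\Gpd$ is right-induced from $\Cat(\sSet)_\Gpd$. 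Hence all four localized model structures are right-induced, ultimately from $\sSet^{\Simpop}_\Gpd$. By \cref{prop:QPloc}, each of the four Quillen equivalences of \cref{thm:allQE} induces a Quillen equivalence between the respective $\Gpd$-localizations, so every edge of the square becomes a Quillen equivalence. Finally, the square of localized adjunctions commutes simply because localization leaves the underlying functors unchanged, so commutativity is inherited from \cref{commutativesquare}.

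I do not expect a genuine obstacle: this statement is the groupoidal analogue of \cref{squareSeg,squareCSS}, and the proof is a direct application of \cref{thm:RIMS} together with \cref{prop:QPloc,commutativesquare}. The only point that requires a moment's attention is the compatibility of the four localizing sets across the corners of the square, which is precisely the bookkeeping recorded in the first paragraph and which was already implicit in the way the sets $\Gpd$ were defined.
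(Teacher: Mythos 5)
Your proposal is correct and follows exactly the paper's route: the paper's proof of this theorem is the one-line observation that applying \cref{thm:RIMS} to the commutative square of Quillen equivalences of \cref{thm:allQE} (with the Quillen equivalences of the localizations supplied by \cref{prop:QPloc}) yields the statement, and your argument is simply the spelled-out version of that, including the compatibility of the four sets $\Gpd$ which is built into their definitions via the commutativity of the square of derived left adjoints.
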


Similarly to the other cases, we have the following characterization of the weak equivalences in $\DblCat_\Gpd$ through their double nerve. 

\begin{prop} \label{cor:charweGpd}
    A double functor $F$ is a weak equivalence in $\DblCat_\Gpd$ if and only if its double nerve $\bN F$ is a weak equivalence in $\sSet^{\Simpop}_\Gpd$. 
\end{prop}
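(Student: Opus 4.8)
The plan is to follow verbatim the template already used for the projective, Segal, and complete Segal cases (\cref{prop:charwe} and its analogues in \cref{sec:Segal,sec:CSS}): the statement reduces to two inputs, namely that $\DblCat_\Gpd$ is right-induced along a concrete right adjoint, and that this right adjoint agrees with the double nerve $\bN$ up to a natural levelwise weak equivalence.

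First I would unwind \cref{thm:gpdsquare}: it asserts that $\DblCat_\Gpd$ is right-induced from $\sSet^{\Simpop}_\Gpd$ along the composite right adjoint $(\Ex^2N)_*\Nh\colon \DblCat\to \sSet^{\Simpop}$. Hence, directly from the definition of a right-induced model structure, a double functor $F$ is a weak equivalence in $\DblCat_\Gpd$ if and only if $(\Ex^2N)_*\Nh F$ is a weak equivalence in $\sSet^{\Simpop}_\Gpd$.

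Next I would invoke \cref{natbetadouble}, which supplies a natural transformation $(\beta^2N)_*\Nh\colon \bN\cong N_*\Nh\Rightarrow (\Ex^2N)_*\Nh$ that is levelwise a weak equivalence in $\sSet^{\Simpop}_\proj$. Since $\sSet^{\Simpop}_\Gpd$ is a left Bousfield localization of $\sSet^{\Simpop}_\proj$, every weak equivalence of $\sSet^{\Simpop}_\proj$ is also a weak equivalence of $\sSet^{\Simpop}_\Gpd$; so this natural transformation is in particular a weak equivalence in $\sSet^{\Simpop}_\Gpd$. Applying naturality at $F$ together with the two-out-of-three property in $\sSet^{\Simpop}_\Gpd$, the map $\bN F$ is a weak equivalence in $\sSet^{\Simpop}_\Gpd$ if and only if $(\Ex^2N)_*\Nh F$ is one; combining this with the previous paragraph yields the claim.

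I do not anticipate any genuine obstacle: the argument is pure bookkeeping, identical in form to the preceding subsections. The only subtlety worth flagging is to read ``weak equivalence in $\sSet^{\Simpop}_\Gpd$'' with the localized class of weak equivalences, and to observe explicitly that \cref{natbetadouble} is stated for $\sSet^{\Simpop}_\proj$, so one must note that localization only enlarges the weak equivalences before quoting two-out-of-three. In its most economical form the proof is simply: this follows directly from \cref{natbetadouble} and the fact that the weak equivalences in $\DblCat_\Gpd$ are induced from those of $\sSet^{\Simpop}_\Gpd$ along the right adjoint functor $(\Ex^2N)_*\Nh$ by \cref{thm:gpdsquare}.
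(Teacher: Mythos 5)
Your proposal is correct and is exactly the paper's argument: the paper's proof likewise combines the right-inducedness of $\DblCat_\Gpd$ along $(\Ex^2N)_*\Nh$ from \cref{thm:gpdsquare} with the natural levelwise weak equivalence of \cref{natbetadouble} (your remark that localization only enlarges the weak equivalences being the implicit bookkeeping the paper leaves unstated). No gaps.
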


\begin{proof}
    This follows directly from \cref{natbetadouble} and the fact that the weak equivalences in $\DblCat_\Gpd$ are induced from those of $\sSet^{\Simpop}_\Gpd$ along the right adjoint functor $(\Ex^2N)_* N^h$ by \cref{thm:gpdsquare}.
\end{proof}

As the double nerve preserves all weak equivalences by the above remark, it defines a functor between underlying $\infty$-categories, which can be seen to be an equivalence by \cref{natbetadouble,squareSeg}.

\begin{cor}
    The double nerve functor $\bN\colon \DblCat\to \sSet^{\Simpop}$ induces an equivalence of $\infty$-categories 
    \[ \bN_\infty\colon (\DblCat_\Gpd)_\infty\to (\sSet^{\Simpop}_\Gpd)_\infty. \]
\end{cor}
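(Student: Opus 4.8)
The plan is to follow the template of the corresponding corollaries in the projective, Segal, and complete Segal cases. By \cref{cor:charweGpd}, the double nerve $\bN$ preserves (and in fact reflects) weak equivalences between the $\Gpd$-localized model structures, so it induces a functor $\bN_\infty\colon (\DblCat_\Gpd)_\infty\to (\sSet^{\Simpop}_\Gpd)_\infty$ on underlying $\infty$-categories; it remains only to check this functor is an equivalence.

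First I would factor the double nerve through the horizontal nerve. Recall from \cref{rem:doublenerve} that $\bN\cong N_*\Nh$, and that by \cref{natbetadouble} the natural transformation $(\beta^2N)_*\Nh\colon \bN\cong N_*\Nh\Rightarrow (\Ex^2N)_*\Nh$ is levelwise a weak equivalence in $\sSet^{\Simpop}_\proj$, hence also in the localization $\sSet^{\Simpop}_\Gpd$, since localizing only enlarges the class of weak equivalences. By \cref{rem:liftQE}, this natural transformation induces a natural isomorphism $\bN_\infty\cong ((\Ex^2N)_*)_\infty\circ (\Nh)_\infty$ of functors between underlying $\infty$-categories, where the right-hand side uses the factorization $(\Ex^2N)_*\Nh=(\Ex^2N)_*\circ\Nh$ provided by the commutative square.

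Then I would invoke \cref{thm:gpdsquare}: the right-hand Quillen pair $\ch\dashv\Nh$ between $\Cat^{\Simpop}_\Gpd$ and $\DblCat_\Gpd$ and the top Quillen pair $(c\Sd^2)_*\dashv(\Ex^2N)_*$ between $\sSet^{\Simpop}_\Gpd$ and $\Cat^{\Simpop}_\Gpd$ are both Quillen equivalences. Since all four model structures in that square are right-induced, the right adjoints $\Nh$ and $(\Ex^2N)_*$ create, and in particular preserve, all weak equivalences; hence by \cref{rem:liftQE} the induced functors $(\Nh)_\infty$ and $((\Ex^2N)_*)_\infty$ are equivalences of $\infty$-categories. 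Composing, $\bN_\infty\cong ((\Ex^2N)_*)_\infty\circ(\Nh)_\infty$ is an equivalence of $\infty$-categories, as desired.

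There is no real obstacle here: all the needed ingredients have already been assembled, and the argument is formally identical to the Segal and complete Segal cases. The only points worth spelling out are that \cref{natbetadouble} still applies after localization (immediate, as noted above) and that $\Nh$ and $(\Ex^2N)_*$ preserve \emph{all} weak equivalences of the localized structures, not merely those between fibrant objects --- which holds because these structures are right-induced, so their weak equivalences are detected in $\sSet^{\Simpop}_\Gpd$.
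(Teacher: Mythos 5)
Your proposal is correct and follows essentially the same route as the paper: factor $\bN$ through $(\Ex^2N)_*\Nh$ via the levelwise weak equivalence of \cref{natbetadouble}, then use that the right adjoints in the square of \cref{thm:gpdsquare} preserve all weak equivalences (the structures being right-induced) together with \cref{rem:liftQE} to conclude that the induced functors on underlying $\infty$-categories are equivalences. The paper merely states this more tersely; your write-up fills in exactly the intended details.
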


The fact that these model structures are models of $\infty$-groupoids is justified by the following result. For this, recall that the functor $\cst\colon \sSet\to \sSet^{\Simpop}$ admits a right adjoint $(-)_0\colon \sSet^{\Simpop}\to \sSet$ given by evaluating an object in $\sSet^{\Simpop}$ at~$0$.

\begin{prop} \label{modelofgpd}
    The adjunction
    \begin{tz}
\node[](A) {$\sSet_\Kan$};
\node[right of=A,xshift=1.5cm](B) {$\sSet^{\Simpop}_{\Gpd}$};
\draw[->] ($(A.east)+(0,5pt)$) to node[above,la]{$\cst$} ($(B.west)+(0,5pt)$);
\draw[->] ($(B.west)-(0,5pt)$) to node[below,la]{$(-)_0$} ($(A.east)-(0,5pt)$);
\node[la] at ($(A.east)!0.5!(B.west)$) {$\bot$};
\end{tz}
    is a Quillen equivalence. 
\end{prop}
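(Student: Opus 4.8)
The plan is to verify that $\cst \dashv (-)_0$ is a Quillen pair and then to check the standard recognition criterion for Quillen equivalences (see, e.g., \cite{Hirschhorn}): that the right adjoint $(-)_0$ reflects weak equivalences between fibrant objects, and that the derived unit is a weak equivalence at every cofibrant object. That $\cst\dashv(-)_0$ is a Quillen pair is immediate: weak equivalences and fibrations in $\sSet^{\Simpop}_\proj$ are detected levelwise, so $(-)_0=\mathrm{ev}_{[0]}$ is right Quillen for $\sSet^{\Simpop}_\proj$, and composing with the left Quillen identity functor $\sSet^{\Simpop}_\proj\to\sSet^{\Simpop}_\Gpd$ shows that $\cst\colon\sSet_\Kan\to\sSet^{\Simpop}_\Gpd$ is left Quillen.

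The heart of the argument, and the step I expect to require the most care, is the identification of the fibrant objects of $\sSet^{\Simpop}_\Gpd$ with the homotopically constant simplicial spaces. For this I would first observe that $\Simp[n]\boxtimes\Simp[0]$ is nothing but $\iota\Simp[n]$, and that $\iota\Simp[n]$ is the free cell $F_{[n]}(\Simp[0])$, where $F_{[n]}$ denotes the left adjoint to evaluation $(-)_n=\mathrm{ev}_{[n]}\colon\sSet^{\Simpop}\to\sSet$; since $F_{[n]}$ is left Quillen and $\Simp[0]$ is cofibrant, $\iota\Simp[n]$ is projectively cofibrant, and for any projectively fibrant $X$ one gets $\bR\Hom_{\sSet^{\Simpop}}(\Simp[n]\boxtimes\Simp[0],X)\simeq\bR\Hom_{\sSet}(\Simp[0],X_n)\simeq X_n$. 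Under this identification the map $(\Simp[n]\to\Simp[0])\boxtimes\Simp[0]$ induces on derived homs the structure map $\sigma_n\colon X_0\to X_n$ associated to the unique morphism $[n]\to[0]$ in $\Simp$. Hence a projectively fibrant $X$ is $\Gpd$-local if and only if each $\sigma_n$ is a weak equivalence in $\sSet_\Kan$; and since every vertex $[0]\to[n]$ produces a retraction $X_n\to X_0$ of $\sigma_n$, this is equivalent to $X$ being homotopically constant.

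Granting this, both conditions follow quickly. For the derived unit: every $Y\in\sSet_\Kan$ is cofibrant and $\cst Y$ is cofibrant in $\sSet^{\Simpop}_\Gpd$; picking a Kan fibrant replacement $Y\xrightarrow{\sim}\hat Y$, the map $\cst Y\to\cst\hat Y$ is a levelwise trivial cofibration, and $\cst\hat Y$ is projectively fibrant and homotopically constant, hence $\Gpd$-local, so $\cst\hat Y$ is a fibrant replacement of $\cst Y$ in $\sSet^{\Simpop}_\Gpd$. As the unit $\eta_Y\colon Y\to(\cst Y)_0=Y$ is the identity, the derived unit is the weak equivalence $Y\xrightarrow{\sim}\hat Y=(\cst\hat Y)_0$. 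For reflection of weak equivalences between fibrant objects: if $X$ and $X'$ are $\Gpd$-local and $f_0\colon X_0\to X'_0$ is a weak equivalence, then comparing the naturality squares for the $\sigma_n$'s and using two-out-of-three shows that $f_n\colon X_n\to X'_n$ is a weak equivalence for all $n$; thus $f$ is a levelwise weak equivalence, hence a weak equivalence in $\sSet^{\Simpop}_\proj$ and therefore in $\sSet^{\Simpop}_\Gpd$, using \cref{thm:locexist} to the effect that weak equivalences between fibrant objects of a left Bousfield localization coincide with those of the ambient model structure. This completes the verification.
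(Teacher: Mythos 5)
Your proof is correct, but it runs through a different recognition criterion than the paper. The paper invokes a lemma of [EG, Lemma 3.1]: since $\cst\colon \sSet_\Kan\to\sSet^{\Simpop}_\Gpd$ preserves cofibrations and weak equivalences and also \emph{reflects} weak equivalences, it suffices to check that the \emph{underived counit} $\cst(X_0)\to X$ at a fibrant object $X$ is a weak equivalence --- which is immediate from $\Gpd$-locality, since its $n$-th component is exactly $X_0\to X_n$. You instead use the standard criterion (derived unit at cofibrant objects plus reflection of weak equivalences between fibrant objects by the right adjoint), which forces you to produce an explicit fibrant replacement of $\cst Y$ (namely $\cst\hat Y$ for a Kan fibrant replacement $Y\to\hat Y$, which is indeed $\Gpd$-local) and to run the two-out-of-three argument showing that $(-)_0$ reflects weak equivalences between local objects; both steps are carried out correctly, including the observation that levelwise weak equivalences are $\Gpd$-local equivalences and that, by \cref{thm:locexist}, weak equivalences between local objects are the projective ones. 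The common core of the two arguments is the identification of the fibrant objects of $\sSet^{\Simpop}_\Gpd$ as the projectively fibrant, homotopically constant objects; you justify this more carefully than the paper does, by noting that $\Simp[n]\boxtimes\Simp[0]=\iota\Simp[n]$ is the free cell on $\Simp[0]$ at level $[n]$, hence projectively cofibrant with $\bR\Hom(\iota\Simp[n],X)\simeq X_n$, so that locality against $(\Simp[n]\to\Simp[0])\boxtimes\Simp[0]$ is precisely the condition that $X_0\to X_n$ be a weak equivalence. In short: the paper's route is shorter because its criterion only requires the plain counit check, while yours is the more standard textbook argument at the cost of the extra unit and reflection verifications; both hinge on the same locality computation.
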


\begin{proof}
    It is straightforward to check that the functor $\cst\colon \sSet_\Kan\to \sSet^{\Simpop}_\Gpd$ preserves cofibrations and weak equivalences, and that it also reflects weak equivalences. Hence, to prove that $\cst\dashv (-)_0$ is a Quillen equivalence, by \cite[Lemma 3.1]{EG} it remains to show that the plain counit at a fibrant object is a weak equivalence. 
    
    Given a fibrant object $X$ in~$\sSet^{\Simpop}_\Gpd$, the counit at $X$ is given by the map $\cst(X_0)\to X$ whose $n$-th component, for $n\geq 0$, is the map $X_0\to X_n$ induced by the unique map $[n]\to [0]$ in $\Simp$. Since $X$ is fibrant in $\sSet^{\Simpop}_\Gpd$, it is local with respect to $\Simp[n]\to \Simp[0]$, and so the map $X_0\to X_n$ is a weak equivalence in $\sSet_\Kan$. This shows that $\cst(X_0)\to X$ is a weak equivalence in $\sSet^{\Simpop}_\Gpd$, as desired. 
\end{proof}

Finally, we prove that the model category $\DblCat_\Gpd$ is Quillen equivalent to the Thomason model structure $\Cat_\Thom$ through the adjunction $\bV\dashv \bfV$. 

\begin{lemma} \label{cstvsV}
    We have a commutative square of adjunctions
    \begin{tz}
    \node[](1) {$\sSet$}; 
    \node[below of=1,yshift=-.5cm](2) {$\Cat$}; 
    \node[right of=1,xshift=1.5cm](3) {$\sSet^{\Simpop}$}; 
    \node[below of=3,yshift=-.5cm](4) {$\DblCat$};
\punctuation{4}{.};

    \draw[->] ($(1.east)+(0,5pt)$) to node[above,la]{$\cst$} ($(3.west)+(0,5pt)$);
\draw[->] ($(3.west)-(0,5pt)$) to node[below,la]{$(-)_0$} ($(1.east)-(0,5pt)$);
\node[la] at ($(1.east)!0.5!(3.west)$) {$\bot$};
\draw[->] ($(2.east)+(0,5pt)$) to node[above,la]{$\bV$} ($(4.west)+(0,5pt)$);
\draw[->] ($(4.west)-(0,5pt)$) to node[below,la]{$\bfV$} ($(2.east)-(0,5pt)$);
\node[la] at ($(2.east)!0.5!(4.west)$) {$\bot$};

\draw[->] ($(3.south)-(5pt,0)$) to node[left,la]{$\ch(c\Sd^2)_*$} ($(4.north)-(5pt,0)$); 
\draw[->] ($(4.north)+(5pt,0)$) to node[right,la]{$(\Ex^2 N)_*\Nh$} ($(3.south)+(5pt,0)$); 
\node[la] at ($(3.south)!0.5!(4.north)$) {\rotatebox{90}{$\bot$}};
\draw[->] ($(1.south)-(5pt,0)$) to node[left,la]{$c\Sd^2$} ($(2.north)-(5pt,0)$); 
\draw[->] ($(2.north)+(5pt,0)$) to node[right,la]{$\Ex^2 N$} ($(1.south)+(5pt,0)$); 
\node[la] at ($(1.south)!0.5!(2.north)$) {\rotatebox{90}{$\bot$}};
\end{tz}
\end{lemma}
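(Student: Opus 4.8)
The plan is to prove the square of \emph{right} adjoints commutes up to natural isomorphism; the square of left adjoints then commutes formally by uniqueness of adjoints, and the two isomorphisms are mates of one another, so that the whole diagram is a commutative square of adjunctions in the usual sense. Concretely, the two composite right adjoints $\DblCat \to \sSet$ around the square are $\Ex^2 N \circ \bfV$ and $(-)_0 \circ (\Ex^2 N)_* \Nh$, and it suffices to exhibit a natural isomorphism between these two functors.

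The key input will be the identification $(\Nh \bA)_0 \cong \bfV \bA$, natural in $\bA \in \DblCat$. Indeed, by definition $(\Nh\bA)_0 = \bfV[\bH[0], \bA]$, and $\bH[0] = [0]\boxtimes[0]$ is the terminal double category; since $\DblCat$ is cartesian closed, $[\bH[0], \bA] \cong \bA$, whence $(\Nh\bA)_0 \cong \bfV\bA$. (This is just the familiar statement that level $0$ of the horizontal nerve recovers the underlying vertical category.) Since $(\Ex^2 N)_*$ is post-composition with $\Ex^2 N$, evaluating at $0$ and using this identification gives, naturally in $\bA$,
\[ \bigl( (\Ex^2 N)_* \Nh \bA \bigr)_0 = \Ex^2 N \bigl( (\Nh\bA)_0 \bigr) \cong \Ex^2 N(\bfV \bA). \]
Hence $(-)_0 \circ (\Ex^2 N)_* \Nh \cong \Ex^2 N \circ \bfV$ as functors $\DblCat \to \sSet$, which is the commutativity of the square of right adjoints.

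Passing to left adjoints, the left adjoint of $\Ex^2 N \circ \bfV$ is $\bV \circ c\Sd^2$ and the left adjoint of $(-)_0 \circ (\Ex^2 N)_* \Nh$ is $\ch(c\Sd^2)_* \circ \cst$; since naturally isomorphic functors have naturally isomorphic left adjoints, we obtain $\bV \circ c\Sd^2 \cong \ch(c\Sd^2)_* \circ \cst$ as well, completing the verification. There is essentially no obstacle here beyond keeping the bookkeeping straight---in particular, recognising $\bH[0]$ as the terminal double category and matching the horizontal and vertical embeddings with the constant-diagram and evaluation functors. As an alternative, one could instead verify directly that the square of left adjoints agrees on the representables $\Simp[k]$, arguing as in the proof of \cref{commutativesquare} and using the formula for $\ch$ on box products from \cref{chonboxprod}, at the cost of a slightly longer computation.
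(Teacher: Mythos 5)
Your proposal is correct, but it runs in the opposite direction from the paper's proof. The paper verifies commutativity on the \emph{left} adjoints: since $\sSet$ is a presheaf category it suffices to check on the representables $\Simp[k]$, and there one computes $c^h(c\Sd^2)_*(\cst\Simp[k])\cong c^h(\Simp[0]\boxtimes c\Sd^2\Simp[k])\cong [0]\boxtimes c\Sd^2\Simp[k]\cong \bV c\Sd^2\Simp[k]$ using the box-product formula for $\ch$ from \cref{chonboxprod}, exactly parallel to the argument for \cref{commutativesquare} --- this is the ``alternative'' you mention at the end. You instead check the \emph{right} adjoints directly, via the identification $(\Nh\bA)_0=\bfV[\bH[0],\bA]\cong\bfV\bA$ (valid since $\bH[0]=[0]\boxtimes[0]$ is terminal and $\DblCat$ is cartesian closed; the paper itself records that $(\Nh\bA)_0$ is the category of objects and vertical morphisms), and then pass to left adjoints by uniqueness of adjoints/mates. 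Your route is arguably more economical: it needs neither the density-of-representables argument nor \cref{chonboxprod}, only the level-$0$ description of the horizontal nerve and the evaluation description of $(-)_0$; what the paper's route buys is uniformity with the proof of \cref{commutativesquare}, keeping all such verifications on the left-adjoint side where the box-product formulas live. Both arguments are complete and yield the same commutative square of adjunctions.
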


\begin{proof}
    We show that the square of left adjoints commutes. Since $\sSet$ is a presheaf category, it is enough to show that the composite of left adjoints agree on representables. Let $\Simp[k]$ be a representable in $\sSet$, for $k\geq 0$. Then, using the definitions of the box products and \cref{chonboxprod}, we have isomorphisms in $\DblCat$
    \[ c^h(c\Sd^2)_*(\cst\Simp[k])\cong c^h(\Simp[0]\boxtimes c\Sd^2\Simp[k])\cong [0]\boxtimes c\Sd^2\Simp[k] \cong \bV c\Sd^2\Simp[k]. \]
    As these isomorphisms are natural in $k$, we get a natural isomorphism $c^h(c\Sd^2)_*\cst\cong \bV c\Sd^2$, as desired.    
\end{proof}

\begin{prop} \label{prop:CatvsDblCat}
    The adjunction 
    \begin{tz}
\node[](A) {$\Cat_\Thom$};
\node[right of=A,xshift=1.8cm](B) {$\DblCat_{\Gpd}$};
\draw[->] ($(A.east)+(0,5pt)$) to node[above,la]{$\bV$} ($(B.west)+(0,5pt)$);
\draw[->] ($(B.west)-(0,5pt)$) to node[below,la]{$\bfV$} ($(A.east)-(0,5pt)$);
\node[la] at ($(A.east)!0.5!(B.west)$) {$\bot$};
\end{tz}
    is a Quillen equivalence. 
\end{prop}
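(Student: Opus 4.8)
The plan is to read off the statement from the commutative square of adjunctions in \cref{cstvsV} by a $2$-out-of-$3$ argument for Quillen equivalences, placing the model structures $\sSet_\Kan$, $\Cat_\Thom$, $\sSet^{\Simpop}_\Gpd$, and $\DblCat_\Gpd$ at the four corners and using that the other three edges of that square are already known to be Quillen equivalences.

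First I would record that $\bV\colon \Cat_\Thom\to\DblCat_\Gpd$ is a left Quillen functor. Since $\DblCat_\Gpd$ is a left Bousfield localization of $\DblCat_\proj$ and the identity $\DblCat_\proj\to\DblCat_\Gpd$ is left Quillen, it suffices to check that $\bV\colon \Cat_\Thom\to\DblCat_\proj$ is left Quillen. As $\bH[0]$ is the terminal double category, one has $\bV\sC\cong[0]\boxtimes\sC$ naturally in $\sC\in\Cat$; hence $\bV$ carries the generating cofibration $c\Sd^2\partial\Simp[k]\to c\Sd^2\Simp[k]$ of $\Cat_\Thom$ to the generating cofibration $[0]\boxtimes(c\Sd^2\partial\Simp[k]\to c\Sd^2\Simp[k])$ of $\DblCat_\proj$ (the case $n=0$ in \cref{projMS}), and likewise sends each generating trivial cofibration $c\Sd^2\Lambda^t[k]\to c\Sd^2\Simp[k]$ to a generating trivial cofibration of $\DblCat_\proj$. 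Since $\Cat_\Thom$ is cofibrantly generated and $\bV$ is a left adjoint, this shows $\bV$ is left Quillen.

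Next I would collect the three inputs. The left edge $c\Sd^2\colon \sSet_\Kan\to\Cat_\Thom$ is a Quillen equivalence by \cref{QEThomason}; the top edge $\cst\colon \sSet_\Kan\to\sSet^{\Simpop}_\Gpd$ is a Quillen equivalence by \cref{modelofgpd}; and the right edge $\ch(c\Sd^2)_*\colon \sSet^{\Simpop}_\Gpd\to\DblCat_\Gpd$ factors as the composite of the top map $(c\Sd^2)_*\colon \sSet^{\Simpop}_\Gpd\to\Cat^{\Simpop}_\Gpd$ and the right map $\ch\colon \Cat^{\Simpop}_\Gpd\to\DblCat_\Gpd$ of the square in \cref{thm:gpdsquare}, both of which are Quillen equivalences, so the right edge is a Quillen equivalence as well.

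Finally, since the square of left adjoints in \cref{cstvsV} commutes, we have $\bV\circ c\Sd^2\cong\ch(c\Sd^2)_*\circ\cst$ as left Quillen functors $\sSet_\Kan\to\DblCat_\Gpd$. The right-hand side, being a composite of Quillen equivalences, is a Quillen equivalence, and $c\Sd^2$ is a Quillen equivalence, so by $2$-out-of-$3$ for Quillen equivalences, $\bV\colon \Cat_\Thom\to\DblCat_\Gpd$ is a Quillen equivalence. The only step that is not purely formal is the verification that $\bV$ is left Quillen, and even that reduces to the identification $\bV\sC\cong[0]\boxtimes\sC$; I do not anticipate any real difficulty.
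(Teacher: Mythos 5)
Your proof is correct and follows essentially the same route as the paper: the commutative square of \cref{cstvsV}, the fact that the left, top, and right adjunctions are Quillen equivalences (by \cref{QEThomason}, \cref{modelofgpd}, and \cref{thm:gpdsquare}), and then $2$-out-of-$3$ for Quillen equivalences. The only (harmless) deviation is in verifying that $\bV\dashv\bfV$ is a Quillen pair: you check it on the left adjoint side, using $\bV\sC\cong[0]\boxtimes\sC$ and the generating (trivial) cofibrations of $\DblCat_\proj$ together with the localization functor, whereas the paper deduces it on the right adjoint side from the fact that $\Cat_\Thom$ and $\DblCat_\Gpd$ are right-induced along the vertical adjunctions.
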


\begin{proof}
    By \cref{cstvsV}, we have a commutative square of adjunctions
    \begin{tz}
    \node[](1) {$\sSet_\Kan$}; 
    \node[below of=1,yshift=-.5cm](2) {$\Cat_\Thom$}; 
    \node[right of=1,xshift=1.8cm](3) {$\sSet^{\Simpop}_\Gpd$}; 
    \node[below of=3,yshift=-.5cm](4) {$\DblCat_\Gpd$};
\punctuation{4}{,};

    \draw[->] ($(1.east)+(0,5pt)$) to node[above,la]{$\cst$} ($(3.west)+(0,5pt)$);
\draw[->] ($(3.west)-(0,5pt)$) to node[below,la]{$(-)_0$} ($(1.east)-(0,5pt)$);
\node[la] at ($(1.east)!0.5!(3.west)$) {$\bot$};
\draw[->] ($(2.east)+(0,5pt)$) to node[above,la]{$\bV$} ($(4.west)+(0,5pt)$);
\draw[->] ($(4.west)-(0,5pt)$) to node[below,la]{$\bfV$} ($(2.east)-(0,5pt)$);
\node[la] at ($(2.east)!0.5!(4.west)$) {$\bot$};

\draw[->] ($(3.south)-(5pt,0)$) to node[left,la]{$\ch(c\Sd^2)_*$} ($(4.north)-(5pt,0)$); 
\draw[->] ($(4.north)+(5pt,0)$) to node[right,la]{$(\Ex^2 N)_*\Nh$} ($(3.south)+(5pt,0)$); 
\node[la] at ($(3.south)!0.5!(4.north)$) {\rotatebox{90}{$\bot$}};
\draw[->] ($(1.south)-(5pt,0)$) to node[left,la]{$c\Sd^2$} ($(2.north)-(5pt,0)$); 
\draw[->] ($(2.north)+(5pt,0)$) to node[right,la]{$\Ex^2 N$} ($(1.south)+(5pt,0)$); 
\node[la] at ($(1.south)!0.5!(2.north)$) {\rotatebox{90}{$\bot$}};
\end{tz}
    where the left, right, and top adjunctions are Quillen equivalences by \cref{QEThomason,thm:gpdsquare,modelofgpd}. Using that $\Cat_\Thom$ and $\DblCat_\Gpd$ are right-induced along the vertical adjunctions from $\sSet_\Kan$ and $\sSet^{\Simpop}_\Gpd$, respectively, we see that the bottom adjunction is a Quillen pair, which is then also a Quillen equivalence by $2$-out-of-$3$.
\end{proof}

\subsection{Comparison with Fiore--Paoli's model structure on \texorpdfstring{$\DblCat$}{DblCat}} \label{sec:GpdvsThom}

Fiore--Paoli have considered yet another model structure on double categories in \cite{FP} giving a model of $\infty$-groupoids. We now recall their result.

\begin{rem}
    The diagonal functor $\diag\colon \Simp\to \Simp\times \Simp$ induces by left Kan extension and pre-composition an adjunction
    \begin{tz}
\node[](B) {$\sSet$};
\node[right of=B,xshift=1.2cm](C) {$\sSet^{\Simpop}$};
\punctuation{C}{,};
\draw[->] ($(B.east)+(0,5pt)$) to node[above,la]{$\diag_!$} ($(C.west)+(0,5pt)$);
\draw[->] ($(C.west)-(0,5pt)$) to node[below,la]{$\diag^*$} ($(B.east)-(0,5pt)$);
\node[la] at ($(B.east)!0.5!(C.west)$) {$\bot$};
\end{tz}
\end{rem}

The following result appears as the case $n=2$ of \cite[Theorem 8.2]{FP}.

\begin{theorem}
The right-induced model structure on $\DblCat$ from the Kan--Quillen model structure $\sSet_\Kan$ along the composite of adjunctions
\begin{tz}
\node[](A) {$\sSet_\Kan$};
\node[right of=A,xshift=1.2cm](B) {$\sSet$};
\node[right of=B,xshift=1.2cm](C) {$\sSet^{\Simpop}$};
\node[right of=C,xshift=1.4cm](D) {$\DblCat$};
\draw[->] ($(A.east)+(0,5pt)$) to node[above,la]{$\Sd^2$} ($(B.west)+(0,5pt)$);
\draw[->] ($(B.west)-(0,5pt)$) to node[below,la]{$\Ex^2$} ($(A.east)-(0,5pt)$);
\node[la] at ($(A.east)!0.5!(B.west)$) {$\bot$};
\draw[->] ($(C.east)+(0,5pt)$) to node[above,la]{$\bC$} ($(D.west)+(0,5pt)$);
\draw[->] ($(D.west)-(0,5pt)$) to node[below,la]{$\bN$} ($(C.east)-(0,5pt)$);
\node[la] at ($(C.east)!0.5!(D.west)$) {$\bot$};
\draw[->] ($(B.east)+(0,5pt)$) to node[above,la]{$\diag_!$} ($(C.west)+(0,5pt)$);
\draw[->] ($(C.west)-(0,5pt)$) to node[below,la]{$\diag^*$} ($(B.east)-(0,5pt)$);
\node[la] at ($(B.east)!0.5!(C.west)$) {$\bot$};
\end{tz}
exists. It is called the \emph{Thomason model structure} and we denote it by $\DblCat_\Thom$. 

Moreover, the model structure $\DblCat_\Thom$ is cofibrantly generated with generating sets of cofibrations and trivial cofibrations given by 
\[ \{ \bC\diag_!\Sd^2(\partial\Simp[k]\to \Simp[k])\mid k\geq 0\} \quad \text{and}\quad \{ \bC\diag_!\Sd^2(\Lambda^t[k]\to \Simp[k])\mid k\geq 1, 0\leq t\leq k\}, \]
respectively.
\end{theorem}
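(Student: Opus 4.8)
The plan is to invoke \cite[Theorem 8.2]{FP}, where the statement is established for $n$-fold categories and of which we need only the case $n=2$. For completeness, let me describe the shape of the argument, which runs parallel to Thomason's construction of $\Cat_\Thom$. The model structure is produced by \emph{right-induction} along the displayed composite adjunction, whose right adjoint is $U\coloneqq\Ex^2\diag^*\bN\colon\DblCat\to\sSet$. One applies the transfer theorem (see e.g.~\cite[Theorem 11.3.2]{Hirschhorn}): since $\sSet_\Kan$ is cofibrantly generated and $\DblCat$ is locally presentable, it suffices to verify that (i) the domains of the images under $\bC\diag_!\Sd^2$ of the generating sets of $\sSet_\Kan$ are small in $\DblCat$, and (ii) the \emph{acyclicity condition}: every pushout in $\DblCat$ of a map $\bC\diag_!\Sd^2(\Lambda^t[k]\to\Simp[k])$, and every transfinite composite thereof, is sent by $U$ to a weak homotopy equivalence in $\sSet$.

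Step (i) is routine: the functors $\bN$, $\diag^*$, and $\Ex^2$ each preserve filtered colimits --- $\bN$ because every $[n]\boxtimes[k]$ is a compact object of $\DblCat$, $\diag^*$ because it is restriction along $\diag\colon\Simp\to\Simp\times\Simp$, and $\Ex^2$ because each $\Sd^2\Simp[k]$ is a finite simplicial set --- so $U$ preserves filtered colimits and every object of $\DblCat$ is small. As weak equivalences in $\sSet_\Kan$ are moreover closed under filtered colimits, for (ii) it suffices to treat a single pushout; one may also absorb the $\Ex^2$ exactly as in \cite[Proposition 2.4]{Thomason} (compare \cref{cor:weThomason}), so that it is enough to show that $\diag^*\bN$ sends such pushouts to weak homotopy equivalences of simplicial sets.

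The heart of the matter --- and the step I expect to be the main obstacle --- is the double-subdivision lemma behind (ii): a pushout of $\bC\diag_!\Sd^2(\Lambda^t[k]\to\Simp[k])$ in $\DblCat$ should remain a homotopy pushout after applying $\bN$. In Thomason's $1$-categorical setting this is the statement that $c\Sd^2$ of a horn inclusion is a \emph{Dwyer map} of posets and that the nerve carries pushouts along Dwyer maps to homotopy pushouts; Fiore--Paoli prove the $n$-fold categorical analogue. Concretely, one would identify the pushout double category explicitly --- in the spirit of \cref{technicallemma}, using that $c\Sd^2\Lambda^t[k]\to c\Sd^2\Simp[k]$ is a weakly solid sieve of posets --- verify that $\bN$ of it is a homotopy pushout of bisimplicial sets, and combine this with the contractibility of $\Sd^2\Lambda^t[k]\hookrightarrow\Sd^2\Simp[k]$ to conclude acyclicity. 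Granting this, the transfer theorem yields the model structure and identifies its generating cofibrations and trivial cofibrations with the displayed images of the generating sets of $\sSet_\Kan$, which proves the claim.
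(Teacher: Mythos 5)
Your proposal matches the paper exactly: the paper gives no proof of this statement, simply recording it as the case $n=2$ of \cite[Theorem 8.2]{FP}, which is precisely your citation. Your supplementary sketch of the transfer argument (smallness plus acyclicity via Dwyer-type pushouts, in the style of Thomason) is a fair outline of how Fiore--Paoli establish it, so there is nothing to add beyond the reference.
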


The following characterization of the weak equivalences in $\DblCat_\Thom$ can be found in \cite[Proposition 8.1]{FP}, or can be deduced from \cref{natbeta}.

\begin{prop} \label{weinDblCatThom}
    A double functor $F$ is a weak equivalence in $\DblCat_\Thom$ if and only if the diagonal of its double nerve $\diag^*\bN F$ is a weak equivalence in $\sSet_\Kan$. 
\end{prop}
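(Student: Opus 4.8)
The plan is to invoke the defining property of a right-induced model structure together with the comparison natural transformation $\beta$ recorded in \cref{natbeta}. By construction, $\DblCat_\Thom$ is right-induced from $\sSet_\Kan$ along the adjunction whose right adjoint is the composite $\Ex^2\diag^*\bN\colon \DblCat\to \sSet$ of the three right adjoints $\bN$, $\diag^*$, and $\Ex^2$. Hence, by definition of right-induction, a double functor $F$ is a weak equivalence in $\DblCat_\Thom$ if and only if $\Ex^2\diag^*\bN F$ is a weak equivalence in $\sSet_\Kan$. It therefore suffices to show that, for an arbitrary map $g$ in $\sSet$, the map $\Ex^2 g$ is a weak equivalence in $\sSet_\Kan$ if and only if $g$ is; applying this to $g=\diag^*\bN F$ then yields the statement.

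For this, recall from \cref{natbeta} the natural transformation $\beta\colon \id_\sSet\Rightarrow \Ex$, which is levelwise a weak equivalence in $\sSet_\Kan$. Since $\Ex$ preserves weak equivalences in $\sSet_\Kan$, iterating $\beta$ produces a natural transformation $\beta^2\colon \id_\sSet\Rightarrow \Ex^2$ which is again levelwise a weak equivalence in $\sSet_\Kan$. Now, given a map $g\colon X\to Y$ in $\sSet$, naturality of $\beta^2$ provides a commutative square with horizontal maps $g$ and $\Ex^2 g$ and vertical maps $\beta^2_X\colon X\to \Ex^2 X$ and $\beta^2_Y\colon Y\to \Ex^2 Y$, both of which are weak equivalences in $\sSet_\Kan$. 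By the two-out-of-three property of weak equivalences, $g$ is a weak equivalence in $\sSet_\Kan$ if and only if $\Ex^2 g$ is. Taking $g=\diag^*\bN F$ and combining with the first paragraph proves the proposition.

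I do not anticipate any genuine obstacle here: the result is a direct combination of the characterization of weak equivalences in a right-induced model structure with the fact that $\Ex^2$ is naturally weakly equivalent to the identity on $\sSet_\Kan$. Alternatively, one can simply cite \cite[Proposition 8.1]{FP}. The only minor point to spell out is that the iterate $\beta^2\colon \id_\sSet\Rightarrow \Ex^2$ is levelwise a weak equivalence in $\sSet_\Kan$, which follows from \cref{natbeta} together with the fact that $\Ex$ is a homotopical functor on $\sSet_\Kan$.
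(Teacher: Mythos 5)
Your proposal is correct and follows essentially the same route as the paper: the paper disposes of this statement by citing \cite[Proposition 8.1]{FP} or noting it "can be deduced from \cref{natbeta}", and your argument is exactly that deduction spelled out — unwind the definition of the right-induced model structure (weak equivalences created by $\Ex^2\diag^*\bN$) and cancel the $\Ex^2$ using the natural levelwise weak equivalence $\beta^2\colon \id_\sSet\Rightarrow \Ex^2$ together with two-out-of-three. The only detail worth keeping explicit is the one you already flag, namely that $\beta^2$ is levelwise a weak equivalence (either because $\Ex$ preserves weak equivalences, or simply by taking the component $\beta_{\Ex X}$, which is a component of $\beta$ itself).
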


The Thomason model structure on $\DblCat$ is proven to be a model of $\infty$-groupoids in the case $n=2$ of \cite[Proposition 9.27]{FP}.

\begin{prop}
    The Quillen pair
\begin{tz}
\node[](A) {$\sSet_\Kan$};
\node[right of=A,xshift=2.2cm](B) {$\DblCat_\Thom$};
\draw[->] ($(A.east)+(0,5pt)$) to node[above,la]{$\bC\diag_!\Sd^2$} ($(B.west)+(0,5pt)$);
\draw[->] ($(B.west)-(0,5pt)$) to node[below,la]{$\Ex^2\diag^*\bN$} ($(A.east)-(0,5pt)$);
\node[la] at ($(A.east)!0.5!(B.west)$) {$\bot$};
\end{tz}
    is a Quillen equivalence.
\end{prop}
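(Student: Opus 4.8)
The plan is to reduce, by means of \cref{rem:liftQE}, to a statement about underlying $\infty$-categories, and then to chain together equivalences already recorded in the paper. Since $\DblCat_\Thom$ is right-induced along the adjunction $\bC\diag_!\Sd^2\dashv\Ex^2\diag^*\bN$, the right adjoint $\Ex^2\diag^*\bN$ creates, and in particular preserves, all weak equivalences; hence by \cref{rem:liftQE} the Quillen pair is a Quillen equivalence if and only if the induced functor $(\Ex^2\diag^*\bN)_\infty\colon(\DblCat_\Thom)_\infty\to(\sSet_\Kan)_\infty$ is an equivalence of $\infty$-categories. Applying the natural weak equivalence $\beta^2\colon\id_\sSet\Rightarrow\Ex^2$ of \cref{natbeta} to $\diag^*\bN(-)$, this functor is naturally equivalent to $(\diag^*\bN)_\infty$, so it will be enough to show that $(\diag^*\bN)_\infty$ is an equivalence.

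The first main step will be to prove that $\diag^*\colon\sSet^{\Simpop}_\Gpd\to\sSet_\Kan$ is a Quillen equivalence. It is a left Quillen functor out of the injective model structure on $\sSet^{\Simpop}$ — it preserves monomorphisms and levelwise weak equivalences, the latter being the standard fact that the diagonal of a levelwise weak equivalence of bisimplicial sets is a weak equivalence of simplicial sets — and since it sends each localizing map $\Simp[n]\boxtimes\Simp[0]\to\Simp[0]\boxtimes\Simp[0]$ to the weak equivalence $\Simp[n]\to\Simp[0]$, it remains left Quillen after $\Gpd$-localization. Now $\diag^*\cst=\id_\sSet$, and $\cst\colon\sSet_\Kan\to\sSet^{\Simpop}_\Gpd$ is a left Quillen equivalence by \cref{modelofgpd}, so two-out-of-three for Quillen equivalences yields that $\diag^*$ is a Quillen equivalence; as every object is cofibrant in the injective presentation, $\diag^*$ also preserves all weak equivalences of $\sSet^{\Simpop}_\Gpd$, and hence $(\diag^*)_\infty\colon(\sSet^{\Simpop}_\Gpd)_\infty\to(\sSet_\Kan)_\infty$ is an equivalence of $\infty$-categories.

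Combining this with the characterizations of weak equivalences in \cref{cor:charweGpd} and \cref{weinDblCatThom}, a double functor $F$ will be a weak equivalence in $\DblCat_\Gpd$ if and only if $\bN F$ is a weak equivalence in $\sSet^{\Simpop}_\Gpd$, if and only if $\diag^*\bN F$ is a weak equivalence in $\sSet_\Kan$ (using that $(\diag^*)_\infty$ is an equivalence and hence detects weak equivalences), if and only if $F$ is a weak equivalence in $\DblCat_\Thom$; thus $\DblCat_\Thom$ and $\DblCat_\Gpd$ have the same weak equivalences, and therefore the same underlying $\infty$-category. Under this identification, $(\diag^*\bN)_\infty$ becomes the composite of the equivalence $\bN_\infty\colon(\DblCat_\Gpd)_\infty\to(\sSet^{\Simpop}_\Gpd)_\infty$ obtained in the $\Gpd$-localized case with the equivalence $(\diag^*)_\infty$ from the previous step, so it is an equivalence, which finishes the argument. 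I expect the second step — the Quillen equivalence $\diag^*\colon\sSet^{\Simpop}_\Gpd\to\sSet_\Kan$, i.e.\ the assertion that on $\Gpd$-local objects the diagonal computes evaluation at $0$ — to be the main point, as it is also what lets one identify the weak equivalences of the two model structures on $\DblCat$; alternatively, one may simply invoke the case $n=2$ of \cite[Proposition 9.27]{FP}.
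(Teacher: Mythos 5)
The paper does not actually prove this proposition: it is quoted verbatim as the case $n=2$ of \cite[Proposition 9.27]{FP}, so your closing remark ("alternatively, invoke Fiore--Paoli") is exactly what the authors do. Your main argument is therefore a genuinely different, self-contained route built from the paper's own results: you reduce via \cref{rem:liftQE} and \cref{natbeta} to showing that $(\diag^*\bN)_\infty$ is an equivalence, prove that $\diag^*$ descends to a Quillen equivalence $\sSet^{\Simpop}_\Gpd\to\sSet_\Kan$ by two-out-of-three against $\cst$ (\cref{modelofgpd}), and then factor $(\diag^*\bN)_\infty\simeq(\diag^*)_\infty\circ\bN_\infty$ using \cref{cor:charweGpd}, \cref{weinDblCatThom}, and the equivalence $\bN_\infty\colon(\DblCat_\Gpd)_\infty\to(\sSet^{\Simpop}_\Gpd)_\infty$. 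There is no circularity, since none of the results you invoke depend on the present proposition. What your route buys is independence from \cite{FP}: as a by-product you re-derive \cref{rem:diagofwe} without citing Rasekh, and you re-prove the later proposition that $\DblCat_\Gpd$ and $\DblCat_\Thom$ share weak equivalences. What the citation buys the authors is brevity and the statement for general $n$-fold categories.

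Two points should be tightened. First, in the two-out-of-three step you take $\diag^*$ to be left Quillen out of the \emph{injective} $\Gpd$-localization, while \cref{modelofgpd} exhibits $\cst$ as a left Quillen equivalence into the \emph{projective} one (which is what $\sSet^{\Simpop}_\Gpd$ denotes in the paper); both functors must be Quillen for the same model structure before two-out-of-three applies. The cheapest repair is to stay with the injective localization throughout and observe that $\cst$ is also a left Quillen equivalence into it: it preserves monomorphisms and all weak equivalences, and since the injective and projective localizations have the same weak equivalences (a fact the paper itself uses in \cref{rem:diagofwe}), its total derived functor is unchanged, hence still an equivalence of homotopy categories. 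This keeps your ``all objects are injectively cofibrant'' argument, which is what gives that $\diag^*$ preserves all $\Gpd$-local equivalences. Second, the step where $(\diag^*)_\infty$ being an equivalence lets you \emph{detect} weak equivalences uses that weak equivalences in a model category are saturated (a map is a weak equivalence if and only if it becomes invertible in the homotopy category); this is standard but deserves a word, as it is the reason the two-sided ``if and only if'' chain identifying the weak equivalences of $\DblCat_\Thom$ and $\DblCat_\Gpd$ goes through.
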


\begin{rem}
   One might hope to establish a direct Quillen equivalence between $\Cat_\Thom$ and $\DblCat_\Thom$, similar to the one obtained for $\DblCat_\Gpd$ in \cref{prop:CatvsDblCat}. However, this seems to be obstructed by the fact that the functor $\diag^*\colon \sSet^{\Simpop} \to \sSet$ does \emph{not} restrict to a functor $\diag^*\colon \DblCat \to \Cat$.
\end{rem}

In particular, the model structures $\DblCat_{\Gpd}$ and $\DblCat_{\Thom}$ represent the same homotopy theory, namely that of $\infty$-groupoids. Furthermore, they have the same weak equivalences, as we now show. 

\begin{rem} \label{rem:diagofwe}
    By \cite[Theorem 2.11]{Rasekh}, a map $f$ is a weak equivalence in the localization of the injective model structure on $(\sSet_\Kan)^{\Simpop}$ at the set $\Gpd$ if and only if its diagonal $\diag^* f$ is a weak equivalence in $\sSet_\Kan$. Since the localization $\sSet^{\Simpop}_\Gpd$ has the same weak equivalences, we deduce that $f$ is a weak equivalence in $\sSet^{\Simpop}_\Gpd$ if and only if $\diag^* f$ is a weak equivalence in $\sSet_\Kan$.
\end{rem}

\begin{prop}
    A double functor $F$ is a weak equivalence in $\DblCat_\Gpd$ if and only if it is a weak equivalence in $\DblCat_\Thom$.
\end{prop}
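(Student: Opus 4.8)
The plan is simply to splice together the three characterizations of weak equivalences that have just been recorded, since each of $\DblCat_\Gpd$ and $\DblCat_\Thom$ has its weak equivalences described in terms of the double nerve. First I would invoke \cref{cor:charweGpd}: a double functor $F$ is a weak equivalence in $\DblCat_\Gpd$ if and only if its double nerve $\bN F$ is a weak equivalence in $\sSet^{\Simpop}_\Gpd$. Next I would apply \cref{rem:diagofwe} to the map $\bN F$ in $\sSet^{\Simpop}$, which tells us that $\bN F$ is a weak equivalence in $\sSet^{\Simpop}_\Gpd$ if and only if its diagonal $\diag^*\bN F$ is a weak equivalence in $\sSet_\Kan$. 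Finally, by \cref{weinDblCatThom}, this last condition is exactly the statement that $F$ is a weak equivalence in $\DblCat_\Thom$. Chaining these three ``if and only if'' statements gives the desired equivalence.

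Since all of the substantive content is already packaged in the cited results---\cref{cor:charweGpd} relying on the right-induction set up in \cref{thm:gpdsquare} together with \cref{natbetadouble}, \cref{rem:diagofwe} relying on Rasekh's theorem, and \cref{weinDblCatThom} relying on \cref{natbeta}---there is no genuine obstacle in this proof. The only point that needs a moment of care is that the notion of ``weak equivalence in $\sSet^{\Simpop}_\Gpd$'' occurring in \cref{cor:charweGpd} is literally the same as the one occurring in \cref{rem:diagofwe}; but this is immediate, as both refer to the $\Gpd$-localization of $\sSet^{\Simpop}_\proj$, whose weak equivalences are by definition the $\Gpd$-local equivalences and, as noted in \cref{rem:diagofwe}, coincide with those of the corresponding localization of the injective model structure. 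Hence the chain of equivalences is valid, and the proof is complete.
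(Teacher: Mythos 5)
Your proposal is correct and coincides with the paper's own proof: both argue by chaining the three characterizations from \cref{cor:charweGpd}, \cref{rem:diagofwe}, and \cref{weinDblCatThom} applied to $\bN F$ and its diagonal. No gaps; the extra remark about the two notions of weak equivalence in $\sSet^{\Simpop}_\Gpd$ agreeing is fine and matches what the paper implicitly uses.
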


\begin{proof}
    By \cref{cor:charweGpd}, the double functor $F$ is a weak equivalence in $\DblCat_\Gpd$ if and only if its double nerve $\bN F$ is a weak equivalence in $\sSet^{\Simpop}_\Gpd$. By \cref{rem:diagofwe}, this is the case if and only if its diagonal $\diag^*\bN F$ is a weak equivalence in $\sSet_\Kan$. By \cref{weinDblCatThom}, this holds if and only if $F$ is a weak equivalence in $\DblCat_\Thom$.
\end{proof}

\begin{cor}
    The identity functors in both directions induce equivalences of $\infty$-categories 
    \[ (\DblCat_\Gpd)_\infty\simeq (\DblCat_\Thom)_\infty. \]
\end{cor}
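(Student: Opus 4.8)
The plan is to exploit the fact, built into the very definition of $\sM\mapsto\sM_\infty$ recalled in the excerpt, that the $\infty$-category underlying a model category depends only on its underlying relative category, that is, on the pair given by the underlying category together with its class of weak equivalences. First I would note that, by the preceding proposition, a double functor is a weak equivalence in $\DblCat_\Gpd$ if and only if it is one in $\DblCat_\Thom$; since the two model structures moreover share the same underlying category $\DblCat$, they determine one and the same relative category, and hence one and the same $\infty$-localization.

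Concretely, I would run the argument through the $2$-functoriality of $\sM\mapsto\sM_\infty$. The identity functor $\id\colon\DblCat\to\DblCat$ preserves weak equivalences both as a functor $\DblCat_\Gpd\to\DblCat_\Thom$ and as a functor $\DblCat_\Thom\to\DblCat_\Gpd$, so it induces functors $\id_\infty\colon(\DblCat_\Gpd)_\infty\to(\DblCat_\Thom)_\infty$ and $\id_\infty\colon(\DblCat_\Thom)_\infty\to(\DblCat_\Gpd)_\infty$ between underlying $\infty$-categories. Their composites (in either order) are induced by $\id\colon\DblCat\to\DblCat$ preserving weak equivalences, hence are naturally equivalent to the identity by compatibility of $(-)_\infty$ with composition. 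Therefore both induced functors are mutually inverse equivalences of $\infty$-categories, giving $(\DblCat_\Gpd)_\infty\simeq(\DblCat_\Thom)_\infty$.

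There is essentially no obstacle here: the corollary is an immediate formal consequence of the equality of weak equivalence classes established in the previous proposition, together with the fact—already emphasised in the excerpt—that the $\infty$-localization only uses the relative-category structure and not the full model structure. The only point to state carefully is precisely that last fact, so that passing from $\DblCat_\Gpd$ to $\DblCat_\Thom$ does not change $(-)_\infty$.
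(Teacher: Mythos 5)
Your argument is correct and is exactly the intended one: the paper states this as an immediate corollary of the preceding proposition (equality of the weak equivalence classes), since $(-)_\infty$ depends only on the underlying relative category, so no further proof is given. Your spelled-out use of $2$-functoriality of $(-)_\infty$ and the identity functors is just the explicit version of that observation.
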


However, while the identity functor induces an equivalence of underlying $\infty$-categories, it fails to provide a Quillen equivalence, as shown by the next two results.

\begin{rem} \label{rem:cSd1}
    Recall that $\Sd^2\Simp[1]$ can be computed as the following pushout in $\sSet$.
    \begin{tz}
        \node[](1) {$\Simp[0]$}; 
        \node[below of=1](2) {$\Simp[1]$}; 
        \node[right of=1,xshift=1.6cm](3) {$\Simp[1]$}; 
        \node[below of=3](4) {$\Simp[1]\amalg_{\Simp[0]} \Simp[1]$}; 
        \pushout{4};

        \draw[->](1) to node[left,la]{$d^0$} (2); 
        \draw[->](1) to node[above,la]{$d^0$} (3); 
        \draw[->](3) to node[right,la]{$\iota_1$} (4); 
          \draw[->](2) to node[below,la]{$\iota_2$} (4); 
    \end{tz}
\end{rem}

\begin{prop}
    The identity functor $\id\colon \DblCat_\Gpd\to \DblCat_\Thom$ is not left Quillen.
\end{prop}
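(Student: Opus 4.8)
The plan is to pass to the right adjoint. If $\id\colon\DblCat_\Gpd\to\DblCat_\Thom$ were left Quillen, then its right adjoint $\id\colon\DblCat_\Thom\to\DblCat_\Gpd$ would be right Quillen, and hence would send fibrant objects to fibrant objects. So it suffices to exhibit a double category which is fibrant in $\DblCat_\Thom$ but not in $\DblCat_\Gpd$. I would take $\bH\sG$, the horizontal double category on the one-object groupoid $\sG$ whose automorphism group is a nontrivial group $G$, e.g.\ $G=\mathbb Z/2$.

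\emph{Fibrancy in $\DblCat_\Thom$.} Since $\bH\sG$ has only trivial vertical morphisms and trivial squares, a double functor $[m]\boxtimes[n]\to\bH\sG$ is precisely a functor $[m]\to\sG$, for all $m,n\geq 0$; hence the double nerve $\bN(\bH\sG)$ is constant in the second simplicial variable with value $N\sG$, so in particular $\diag^*\bN(\bH\sG)\cong N\sG$. As $\sG$ is a groupoid, $N\sG$ is a Kan complex, and $\Ex$ preserves Kan complexes, so $\Ex^2\diag^*\bN(\bH\sG)$ is a Kan complex. Since $\DblCat_\Thom$ is right-induced from $\sSet_\Kan$ along $\Ex^2\diag^*\bN$, this shows $\bH\sG$ is fibrant in $\DblCat_\Thom$.

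\emph{Non-fibrancy in $\DblCat_\Gpd$.} By the remark following its definition, $\DblCat_\Gpd$ is the left Bousfield localization of $\DblCat_\proj$ at the set $\{\bH[n]\to\ast\mid n\geq 0\}$, with $\ast=\bH[0]$ the terminal double category; its fibrant objects are therefore the objects local with respect to these maps. I would compute the required derived mapping spaces by transporting along the double nerve $\bN\colon\DblCat_\proj\to\sSet^{\Simpop}_\proj$, which preserves all weak equivalences and induces an equivalence of underlying $\infty$-categories. Under $\bN$, the cofibrant objects $\ast$ and $\bH[n]$ correspond to the representable bisimplicial sets at $([0],[0])$ and $([n],[0])$, while $\bH\sG$ corresponds to $\bN(\bH\sG)$, which is levelwise discrete, hence projectively fibrant. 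Because $\bN(\bH\sG)$ is levelwise discrete, all of its cotensors by simplicial sets are again $\bN(\bH\sG)$, so the simplicial mapping space $\Map_{\sSet^{\Simpop}}(\bN(\bH[n]),\bN(\bH\sG))$ is the discrete simplicial set on $\bN(\bH\sG)_{n,0}=\Cat([n],\sG)=(N\sG)_n$. In particular $\bR\Hom_{\DblCat_\proj}(\ast,\bH\sG)$ is a point, whereas $\bR\Hom_{\DblCat_\proj}(\bH[1],\bH\sG)$ is the discrete set $G$, and the restriction map along $\bH[1]\to\ast$ is the inclusion $\{e\}\hookrightarrow G$ of the identity, which is not a weak equivalence since $G$ is nontrivial. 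Thus $\bH\sG$ is not $\Gpd$-local, so not fibrant in $\DblCat_\Gpd$. This contradicts the first paragraph, so $\id\colon\DblCat_\Gpd\to\DblCat_\Thom$ is not left Quillen.

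The formal part --- right Quillen functors preserve fibrant objects, and the fibrant objects of a left Bousfield localization are the local objects --- is immediate, so the main point requiring attention is the identification of the derived mapping spaces $\bR\Hom_{\DblCat_\proj}(\bH[n],\bH\sG)$; the crucial input that makes this clean is that $\bH\sG$ is levelwise discrete (equivalently, $\bN(\bH\sG)$ is discrete in one of the two simplicial directions), so that these spaces carry no higher homotopy and reduce to the discrete sets of objects, respectively horizontal morphisms, of $\bH\sG$.
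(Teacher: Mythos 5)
Your argument is correct, but it takes a genuinely different route from the paper. The paper works on the left-adjoint side: it takes the generating cofibration $[0]\boxtimes c\Sd^2(\partial\Simp[1]\to\Simp[1])$ of $\DblCat_\Gpd$, exhibits the explicit trivial fibration $!\boxtimes\id_{[1]}\colon [1]\boxtimes[1]\to[0]\boxtimes[1]$ in $\DblCat_\Thom$ (checked by applying $\Ex^2\diag^*\bN$), and verifies by hand that the resulting lifting problem has no solution, so that map is not a cofibration in $\DblCat_\Thom$. You instead pass to the right adjoint and test fibrant objects: $\bH\sG$, for $\sG$ a one-object groupoid with nontrivial automorphism group $G$, is fibrant in $\DblCat_\Thom$ because $\Ex^2\diag^*\bN(\bH\sG)\cong \Ex^2 N\sG$ is a Kan complex, but it is not $\Gpd$-local, since $\bN(\bH\sG)$ is constant in the vertical direction, $\bN\bH[n]$ is the cofibrant representable at $([n],[0])$, and hence $\bR\Hom_{\DblCat_\proj}(\bH[n],\bH\sG)$ is the discrete set $(N\sG)_n$, with restriction along $\bH[1]\to\bH[0]$ the non-equivalence $\{e\}\hookrightarrow G$; by \cref{thm:locexist} the fibrant objects of $\DblCat_\Gpd$ are exactly the $\Gpd$-local ones, giving the contradiction. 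Both proofs are valid; the paper's is purely combinatorial and self-contained, while yours is more conceptual, isolating the homotopical reason for the failure (Thomason-fibrant double categories need not have horizontally locally constant homotopy type) at the cost of invoking the localization machinery and the fact that $\bN_\infty$ is an equivalence of underlying $\infty$-categories (plus the standard identification of derived homs with mapping spaces there). Two small points to tighten: projective fibrancy of $\bH\sG$ should be checked through the inducing functor $(\Ex^2 N)_*\Nh$ rather than $\bN$ itself (it holds because $\Ex$ preserves discrete simplicial sets, which are Kan), and your remark that cotensors of a levelwise discrete object are unchanged uses that the test simplicial sets $\Simp[k]$ are connected, which is all you need.
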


\begin{proof}
     We show that the generating cofibration $[0]\boxtimes (c\Sd^2(\partial \Simp[1]\to \Simp[1])$ in $\DblCat_\Gpd$ is not a cofibration in $\DblCat_\Thom$. Using \cref{rem:cSd1}, this cofibration is given by the double functor 
     \[ \id_{[0]}\boxtimes (\iota_1 d^1 + \iota_2 d^1)\colon [0]\boxtimes ([0]\amalg [0])\to [0]\boxtimes ([1]\amalg_{[0]} [1]). \]
     Next, note that the following double functor is a trivial fibration in $\DblCat_\Thom$.
     \[ !\boxtimes \id_{[1]}\colon [1]\boxtimes [1]\to [0]\boxtimes [1] \]
    Indeed, after applying the functor $\diag^* \bN\colon \DblCat\to \sSet$, we obtain the trivial fibration in $\sSet_\Kan$
    \[ !\times \id_{\Simp[1]}\colon \Simp[1]\times \Simp[1]\to \Simp[1], \]
    which remains a trivial fibration after applying the functor $\Ex^2$ by \cite[Corollary 2.1.27]{Cisinski} and \cref{natbeta}. However, there is no lift in the following commutative diagram in~$\DblCat$.
    \begin{tz}
        \node[](1) {$[0]\boxtimes ([0]\amalg [0])$}; 
        \node[below of=1](2) {$[0]\boxtimes ([1]\amalg_{[0]} [1])$}; 
        \node[right of=1,xshift=3.3cm](3) {$[1]\boxtimes [1]$}; 
        \node[below of=3](4) {$[0]\boxtimes [1]$}; 

        \draw[->](1) to node[left,la]{$\id_{[0]}\boxtimes (\iota_1 d^1 + \iota_2 d^1)$} (2); 
        \draw[->](1) to node[above,la]{$(d^1\boxtimes d^1) + (d^0\boxtimes d^0)$} (3); 
        \draw[->](3) to node[right,la]{$!\boxtimes \id_{[1]}$} (4); 
          \draw[->](2) to node[below,la]{$\id_{[0]}\boxtimes (! +\id_{[1]})$} (4); 
    \end{tz}
    This shows that $\id_{[0]}\boxtimes (\iota_1 d^1 + \iota_2 d^1)$ is not a cofibration in $\MSdiag$.
\end{proof}

\begin{prop}
    The identity functor $\id\colon \DblCat_\Thom\to \DblCat_\Gpd$ is not left Quillen.
\end{prop}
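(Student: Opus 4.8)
We argue by contradiction, parallel to the proof of the preceding proposition. Suppose that $\id\colon \DblCat_\Thom\to\DblCat_\Gpd$ were left Quillen. Since left Bousfield localization does not change the cofibrations, the cofibrations of $\DblCat_\Gpd$ coincide with those of $\DblCat_\proj$; hence $\id\colon \DblCat_\Thom\to\DblCat_\proj$ would preserve cofibrations, and it suffices to exhibit a generating cofibration $\bC\diag_!\Sd^2(\partial\Simp[k]\to\Simp[k])$ of $\DblCat_\Thom$ that is \emph{not} a cofibration in $\DblCat_\proj$. As the domains $\bC\diag_!\Sd^2\partial\Simp[k]$ are discrete double categories, and hence cofibrant in $\DblCat_\proj$, this reduces to finding some $k$ for which either the codomain $\bC\diag_!\Sd^2\Simp[k]$ fails to be cofibrant in $\DblCat_\proj$ (note that it \emph{is} cofibrant in $\DblCat_\Thom$, being the image of the cofibrant object $\Simp[k]$ under the left Quillen functor $\bC\diag_!\Sd^2$), or, more directly, for which the inclusion $\bC\diag_!\Sd^2\partial\Simp[k]\to\bC\diag_!\Sd^2\Simp[k]$ does not have the left lifting property against some trivial fibration of $\DblCat_\proj$, in the spirit of the trivial fibration $!\boxtimes\id_{[1]}$ used in the previous proof.

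A useful constraint for locating the obstruction is that the horizontal nerve preserves cofibrant objects: $\Nh$ sends the generating cofibration $[n]\boxtimes(c\Sd^2\partial\Simp[k]\to c\Sd^2\Simp[k])$ of $\DblCat_\proj$ to the generating cofibration $\Simp[n]\boxtimes(c\Sd^2\partial\Simp[k]\to c\Sd^2\Simp[k])$ of $\projThom$, and by \cref{subdivincl} the map $c\Sd^2\partial\Simp[k]\to c\Sd^2\Simp[k]$ is a weakly solid sieve of posets, so by \cref{pushoutnerve} the functor $\Nh$ preserves pushouts of these generating cofibrations; as $\Nh$ also preserves filtered colimits and retracts, it takes the cellular presentation of a cofibrant object of $\DblCat_\proj$ to one in $\projThom$, and since the evaluation functors $\mathrm{ev}_n$ are left Quillen, the levelwise categories $(\Nh\bA)_n$ of a $\DblCat_\proj$-cofibrant object $\bA$ are $\Cat_\Thom$-cofibrant, hence posets. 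By contrast, the ``diagonal'' cells $\bC\diag_!\Sd^2\Simp[k]$ are assembled by gluing box products $[p]\boxtimes[p]$ of subdivided simplices along their faces — faces which for $k\ge 2$ are not, in general, sieves — so that subdivision in the horizontal and the vertical directions is entangled, whereas every generating cell of $\DblCat_\proj$ subdivides only the vertical direction, keeping the horizontal one a bare representable $[n]$. This mismatch is what makes $\bC\diag_!\Sd^2\Simp[k]$ fail to be built from $\DblCat_\proj$-cells.

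The hard part is to make this precise for a specific value of $k$: one must pin down a concrete trivial fibration $p$ of $\DblCat_\proj$ together with an explicit lifting problem for $\bC\diag_!\Sd^2(\partial\Simp[k]\to\Simp[k])$ against $p$ that admits no solution (equivalently, show that the cofibrant replacement of $\bC\diag_!\Sd^2\Simp[k]$ in $\DblCat_\proj$ has no section fixing the image of $\bC\diag_!\Sd^2\partial\Simp[k]$). This amounts to a finite combinatorial analysis of the double category $\bC\diag_!\Sd^2\Simp[k]$ — identifying the non-degenerate squares coming from the entangled box products $[p]\boxtimes[p]$ and checking that no $\DblCat_\proj$-trivial fibration can extend a chosen filling — and carrying out that analysis is the main obstacle of the proof. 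Once it is done, the absence of a lift shows that $\bC\diag_!\Sd^2(\partial\Simp[k]\to\Simp[k])$ is not a cofibration in $\DblCat_\Gpd$, contradicting the assumption that $\id\colon \DblCat_\Thom\to\DblCat_\Gpd$ is left Quillen.
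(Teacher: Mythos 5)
You have the right overall strategy, which is the same as the paper's: since cofibrations are unchanged under left Bousfield localization, it suffices to exhibit one generating cofibration $\bC\diag_!\Sd^2(\partial\Simp[k]\to\Simp[k])$ of $\DblCat_\Thom$ that fails the left lifting property against some trivial fibration of $\DblCat_\proj$. But the proposal stops exactly where the mathematical content begins: you explicitly defer ``the hard part'' of producing a concrete trivial fibration and an unsolvable lifting problem, and nothing in your text pins down a value of $k$, a candidate fibration, or the combinatorial reason no lift exists. That missing step is essentially the whole proof. In the paper it is carried out for $k=1$: using the pushout description of $\Sd^2\Simp[1]$, the generating cofibration is $\iota_1(d^1\boxtimes d^1)+\iota_2(d^0\boxtimes d^0)\colon ([0]\boxtimes[0])\amalg([0]\boxtimes[0])\to([1]\boxtimes[1])\amalg_{[0]\boxtimes[0]}([1]\boxtimes[1])$; one forms the double category $\bP=([1]\boxtimes[0])\amalg_{[0]\boxtimes[0]}([0]\boxtimes[1])$ and checks that the collapse $\bP\to[1]\boxtimes[0]$ is a trivial fibration already in $\DblCat_\proj$ (the double nerve preserves this pushout, levelwise the map is $(\coprod_{n+1}\Simp[0])\amalg\Simp[1]\to\coprod_{n+2}\Simp[0]$, a trivial Kan fibration, and this is preserved by $\Ex^2$); finally one writes down an explicit commuting square against this fibration and verifies by inspection of $\bP$ that it admits no lift. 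Without such a witness your argument does not establish the statement.

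A secondary caution: your intermediate ``useful constraint'' is shakier than it looks. The claim that the domains $\bC\diag_!\Sd^2\partial\Simp[k]$ are discrete only holds for $k=1$, and the proposed cofibrancy test (levelwise posets in the horizontal nerve of a $\DblCat_\proj$-cofibrant object) would not obviously detect the failure: for $k=1$ the codomain $([1]\boxtimes[1])\amalg_{[0]\boxtimes[0]}([1]\boxtimes[1])$ has poset-like nerve levels, and the paper never argues via non-cofibrancy of the codomain at all, but directly via the failure of a lifting property of the map. So even as a reduction, that paragraph would need substantial additional work, whereas the direct lifting argument for $k=1$ is short and is the route you should complete.
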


\begin{proof}
    We show that the generating cofibration $\bC \diag_! \Sd^2(\partial \Simp[1]\to \Simp[1])$ in $\DblCat_\Thom$ is not a cofibration in $\DblCat_\Gpd$. Using \cref{rem:cSd1}, this cofibration is given by the double functor
    \[
    \iota_1(d^1\boxtimes d^1)+\iota_2(d^0\boxtimes d^0)\colon ([0]\boxtimes [0])\amalg ([0]\boxtimes [0])\to ([1]\boxtimes [1])\amalg_{[0]\boxtimes [0]} ([1]\boxtimes [1]). \]
    Next, consider the following pushout diagram in $\DblCat$.
    \begin{tz}
        \node[](1) {$[0]\boxtimes [0]$}; 
        \node[below of=1](2) {$[0]\boxtimes [1]$}; 
        \node[right of=1,xshift=1.5cm](3) {$[1]\boxtimes [0]$}; 
        \node[below of=3](4) {$\bP$}; 
        \pushout{4};
        \node[below of=4,xshift=2cm](5) {$[1]\boxtimes [0]$}; 

        \draw[->](1) to node[left,la]{$\id_{[0]}\boxtimes d^1$} (2); 
        \draw[->](1) to node[above,la]{$d^1\boxtimes \id_{[0]}$} (3); 
        \draw[->](3) to node[right,la]{$\iota_{1,0}$} (4); 
          \draw[->](2) to node[below,la]{$\iota_{0,1}$} (4);
          \draw[->,dashed](4) to (5); 
          \draw[->,bend right=15](2) to node[below,la,yshift=-3pt]{$\id_{[0]}\boxtimes !$} (5);
          \draw[->,bend left=20](3) to node[right,la]{$\id_{[1]\boxtimes [0]}$} (5);
    \end{tz}
    We show that the double functor $\bP\to [1]\boxtimes [0]$ is a trivial fibration in $\DblCat_\Gpd$. For this, note that the double nerve $\bN\colon \DblCat\to \sSet^{\Simpop}$ preserves the pushout $\bP$. Hence, the map obtained after applying the functor $\bN\colon \DblCat\to \sSet^{\Simpop}$ is given at level $n\geq 0$ by the trivial fibration in $\sSet_\Kan$
    \[ \textstyle (\coprod_{n+1} \id_{\Simp[0]}) +!\colon (\coprod_{n+1} \Simp[0])\amalg \Simp[1]\to \coprod_{n+2} \Simp[0], \]
    which remains a trivial fibration after applying the functor $\Ex^2$ by \cite[Corollary 2.1.27]{Cisinski} and \cref{natbeta}. Hence the map $(\Ex^2)_* \bN(\bP\to [1]\boxtimes [0])$ is a trivial fibration in $\sSet^{\Simpop}_\proj$, as desired. However, there is no lift in the following commutative diagram in~$\DblCat$.
    \begin{tz}
        \node[](1) {$([0]\boxtimes [0])\amalg ([0]\boxtimes [0])$}; 
        \node[below of=1](2) {$([1]\boxtimes [1])\amalg_{[0]\boxtimes[0]} ([1]\boxtimes [1])$}; 
        \node[right of=1,xshift=5cm](3) {$\bP$}; 
        \node[below of=3](4) {$[1]\boxtimes [0]$}; 

        \draw[->](1) to node[left,la]{$\iota_1(d^1\boxtimes d^1)+\iota_2(d^0\boxtimes d^0)$} (2); 
        \draw[->](1) to node[above,la]{$\iota_{0,1}(\id_{[0]} \boxtimes d^0) + \iota_{1,0}(d^0 \boxtimes \id_{[0]})$} (3); 
        \draw[->](3) to (4); 
          \draw[->](2) to node[below,la]{$(\id_{[1]}\boxtimes !)+ (!\boxtimes !)$} (4); 
    \end{tz}
    This shows that $\iota_1(d^1\boxtimes d^1)+\iota_2(d^0\boxtimes d^0)$ is not a cofibration in $\DblCat_\Gpd$.
\end{proof}

\section{Homotopy colimits}

In this section, we give an explicit formula for the homotopy colimits in the model structures on $\DblCat$ considered here. To do so, we first recall in \cref{sec:hocolimcat} that the Grothendieck construction serves as a model for the homotopy colimit of diagrams valued in the Thomason model structure on $\Cat$. Then, in \cref{sec:hocolimdblcat}, we show that applying the Grothendieck construction levelwise yields a model for the homotopy colimit of diagrams valued in any of the model structures on $\DblCat$ constructed in this paper. Finally, in \cref{sec:application}, we compute the homotopy colimits in $\DblCat$ used to describe the sets $\Seg$ and~$\CSS$. 

\subsection{Homotopy colimits in Thomason's model structure on \texorpdfstring{$\Cat$}{Cat}} \label{sec:hocolimcat}

Let us fix a small category $\sJ$. We start by recalling the Grothendieck construction of a $\Cat$-valued functor.

\begin{defn} \label{def:GC}
    Given a functor $F \colon \sJ\to \Cat$, its \textbf{Grothendieck construction} is the category $\int_\sJ F$ such that:
\begin{itemize}[leftmargin=0.6cm]
    \item an object is a pair $(j,x)$ of an object of $j\in \sJ$ and an object of $x\in Fj$,
    \item a morphism $(j,x) \to (j',x')$ is a pair $(s,u)$ of a morphism $s \colon j\to j'$ in $\sJ$ and a morphism $u \colon Fs(x) \to x'$ in $Fj'$, with 
    \begin{itemize}
        \item the composite of $(s,u) \colon (j,x) \to (j',x')$ and $(s',u') \colon (j',x') \to (j'',x'')$ given by $(s'\circ s,u'\circ Fs'(u))$, 
        \item identity at $(j,x)$ is given by $(\id_j,\id_{x})$.
    \end{itemize}
\end{itemize}
This construction extends to a functor $\int_\sJ\colon \Cat^\sJ\to \Cat$. 
\end{defn}

Recall now the following theorem due to Thomason, whose original statement appears as \cite[Theorem 1.2]{Thomasoncolim}.

\begin{theorem} \label{GChocolim}
    The Grothendieck construction
    \[ \textstyle \int_\sJ \colon (\Cat_\Thom)^\sJ \to \Cat_\Thom\]
    is a model for the homotopy colimit functor in the Thomason model structure $\Cat_\Thom$.
\end{theorem}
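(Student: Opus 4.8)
The statement is essentially Thomason's theorem \cite[Theorem 1.2]{Thomasoncolim}, so the plan is to recall the structure of its proof and add the glue that turns it into a \emph{model for the homotopy colimit functor} in the sense of \cref{sec:hocolimMS}. Recall that $\hocolim_\sJ$ in $\Cat_\Thom$ is, by definition, $\colim_\sJ\circ\, Q$ for a chosen projective cofibrant replacement $Q$ on $(\Cat_\Thom)^\sJ$, so the task is to produce a natural zigzag of levelwise weak equivalences between $\int_\sJ$ and $\colim_\sJ\circ\, Q$. First I would note that $\int_\sJ\colon\Cat^\sJ\to\Cat$ preserves all colimits --- equivalently, is a left adjoint, by the adjoint functor theorem applied to the presentable categories $\Cat^\sJ$ and $\Cat$ --- and that consequently there is a canonical natural transformation $\pi\colon\int_\sJ\Rightarrow\colim_\sJ$ (defined on objects by $(j,x)\mapsto[x]$). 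Granting that (a) $\int_\sJ$ preserves the weak equivalences of $\Cat_\Thom$, and (b) $\pi_P\colon\int_\sJ P\to\colim_\sJ P$ is a weak equivalence for every projective-cofibrant $P\in(\Cat_\Thom)^\sJ$, one obtains, by applying (a) to the cofibrant replacement $\alpha_F\colon QF\to F$ and (b) to $QF$, the desired natural zigzag
\[ \int_\sJ F \;\xleftarrow{\ \int_\sJ(\alpha_F)\ }\; \int_\sJ QF \;\xrightarrow{\ \pi_{QF}\ }\; \colim_\sJ QF = \hocolim_\sJ F. \]

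Both (a) and (b) are obtained from Thomason's comparison: for every $F\colon\sJ\to\Cat$ there is a zigzag of weak equivalences in $\sSet_\Kan$, natural in $F$, between the nerve $N(\int_\sJ F)$ and the homotopy colimit $\hocolim_\sJ(N\circ F)$ computed in $\sSet_\Kan$. Given this, (a) follows from \cref{cor:weThomason} --- the nerve detects weak equivalences of $\Cat_\Thom$ --- together with the fact that $\hocolim_\sJ\colon(\sSet_\Kan)^\sJ\to\sSet_\Kan$ preserves levelwise weak equivalences, using the naturality of the comparison zigzag. For (b), applying $N$ and \cref{cor:weThomason} again, it suffices to check that the canonical augmentation $\hocolim_\sJ(N\circ P)\to N(\colim_\sJ P)$ is a weak equivalence in $\sSet_\Kan$, since up to homotopy it is identified with $N(\pi_P)$ through Thomason's comparison (a compatibility built into the latter's construction). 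Running a cellular induction on $P$ over the generating projective cofibrations $F^{j_0}(c\Sd^2\partial\Simp[k]\to c\Sd^2\Simp[k])$ (with $F^{j_0}\dashv\mathrm{ev}_{j_0}$), and invoking left properness of $\sSet_\Kan$ and the gluing lemma for monomorphisms, reduces this to the free diagrams $P=F^{j_0}(c)$: here $\int_\sJ F^{j_0}(c)\cong\sJ_{j_0/}\times c$, $\colim_\sJ F^{j_0}(c)\cong c$, and the augmentation becomes the projection $N(\sJ_{j_0/})\times Nc\to Nc$, which is a weak equivalence because the coslice $\sJ_{j_0/}$ has an initial object and hence contractible nerve.

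The main obstacle is Thomason's comparison theorem itself, which I would simply cite: it is proved in \cite{Thomasoncolim} by realizing $N(\int_\sJ F)$ as the diagonal of a bisimplicial set assembled from the nerves of the $F(j)$ indexed over chains in $\sJ$, comparing it with the Bousfield--Kan simplicial replacement computing $\hocolim_\sJ(N\circ F)$, showing the resulting map is a levelwise weak equivalence by a cofinality argument of the flavor of Quillen's Theorem A, and concluding with the realization lemma for bisimplicial sets. The only genuinely delicate point in the remaining bookkeeping is that $N$ does not preserve projective cofibrations, which is why the cellular induction in (b) must be carried out on the $\sSet_\Kan$ side, where all the relevant maps are monomorphisms; the rest --- verifying (a) and assembling the zigzag above --- is routine.
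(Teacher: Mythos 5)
Your architecture (the zigzag $\int_\sJ F \leftarrow \int_\sJ QF \xrightarrow{\pi_{QF}} \colim_\sJ QF$, reduced to (a) weak-equivalence preservation and (b) $\pi_P$ being a weak equivalence on projectively cofibrant $P$) is viable and genuinely different from the paper's, and (a) is fine granting Thomason's comparison. The gap is in (b). The cellular induction over the generating cofibrations $F^{j_0}(c\Sd^2\partial\Simp[k]\to c\Sd^2\Simp[k])$ requires, at each cell attachment, identifying $N(\colim_\sJ P_{n+1})$ (and, for the source of the augmentation, the levelwise nerves $N\circ P_{n+1}$) with homotopy pushouts of nerves. Since $N$ does not preserve pushouts of categories, the inductive step needs the fact that the nerve carries pushouts along (coproducts of) the maps $c\Sd^2\partial\Simp[k]\to c\Sd^2\Simp[k]$ --- Thomason's Dwyer maps, the sieve inclusions of \cref{subdivincl} --- to homotopy pushouts in $\sSet_\Kan$. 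This is Thomason's central technical lemma and is \emph{not} supplied by ``left properness of $\sSet_\Kan$ and the gluing lemma for monomorphisms'', which only compare homotopy pushouts once both sides are already known to be homotopy pushouts. Your closing diagnosis (``$N$ does not preserve projective cofibrations'') misses the point: the obstruction is that $N$ does not preserve the colimits by which a cellular diagram and its colimit are built, so the induction cannot simply be ``carried out on the $\sSet_\Kan$ side''. Relatedly, the identification of $N(\pi_P)$ with the Bousfield--Kan augmentation ``through Thomason's comparison'' is asserted as a built-in compatibility; your conclusion that $\pi_P$ is a weak equivalence rests on it, so it needs an argument or a precise citation.

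For contrast, the paper's proof avoids both issues: it does not invoke Thomason's comparison theorem or any cellular analysis. It uses that $\int_\sJ$ preserves Thomason weak equivalences and has a right adjoint $\cH_\sJ(\sC)=\sC^{\sJ_{-/}}$ (both from \cite{Maltsiniotis}); since every coslice $\sJ_{j/}$ has an initial object, the canonical map $\cst\Rightarrow\cH_\sJ$ is a levelwise weak equivalence, and passing to underlying $\infty$-categories and taking mates shows that $\int_\sJ\circ Q\Rightarrow\hocolim_\sJ$ is a levelwise weak equivalence by \cref{rem:liftQE}, which gives the zigzag directly. Both arguments ultimately rest on the contractibility of the coslices $\sJ_{j_0/}$ --- you use it cell by cell, the paper uses it once, globally --- but the adjunction argument dispenses with the Dwyer-map bookkeeping altogether. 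If you keep your route, add the Dwyer-map/nerve-pushout lemma as an explicit input and verify the augmentation compatibility; as written, step (b) does not go through.
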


\begin{proof}
    First, recall that $\int_J$ preserves weak equivalences by \cite[Proposition~2.3.1]{Maltsiniotis}. By \cite[Proposition 3.1.2]{Maltsiniotis}, the functor $\int_{\sJ}\colon \Cat^{\sJ}\to \Cat$ has a right adjoint given by the functor
    \[
   \cH_{\sJ} \colon \Cat \to \Cat^{\sJ}, \quad
        \sC \mapsto \sC^{\sJ_{-/}}.
    \]
    Since for every object $j\in \sJ$, the slice category $\sJ_{j/}$ has an initial object, the unique functor $\sJ_{j/} \to [0]$ admits a left adjoint. As $\sC^{(-)}\colon \Cat^{\op}\to \Cat$ is a $2$-functor, the induced functor
    \[
    \sC\cong \sC^{[0]} \longrightarrow \sC^{\sJ_{j/}}=\cH_\sJ(\sC)(j)
    \]
    admits a right adjoint, and so it is a weak equivalence in $\Cat_\Thom$; see \cite[Chapter 1, Corollary~1]{QuillenKTheory} or \cite[Proposition 1.1.9]{Maltsiniotis}. In other words, we have a canonical natural transformation of functors $\Cat\to \Cat^\sJ$
    \[
    \cst\Longrightarrow  \cH_{\sJ} 
    \]
    which is levelwise a weak equivalence in $(\Cat_\Thom)^{\sJ}_\proj$. In particular, since $\cst$ preserves all weak equivalences, this proves that $\cH_{\sJ}$ also does. By adjunction, we also have a natural transformation of functors $\Cat^{\sJ} \to \Cat$
    \[
    \textstyle \int_{\sJ} \Longrightarrow \colim_{\sJ}.
    \]
    Now, let $Q \colon \Cat^{\sJ} \to \Cat^{\sJ}$ be a chosen cofibrant replacement functor for the model structure $(\Cat_\Thom)^{\sJ}_\proj$, so that $\hocolim_{\sJ}=\colim_{\sJ}\circ Q$. In particular, by precomposing with $Q$, we obtain adjunctions
    \[
    \textstyle \int_{\sJ}\circ Q \dashv \cH_{\sJ}\circ Q \quad \text{and} \quad \hocolim_{\sJ} \dashv \cst \circ Q 
    \]
    and natural transformations
    \[
    \textstyle \int_{\sJ}\circ Q \Longrightarrow \hocolim_{\sJ} \quad \text{and} \quad \cst \circ Q \Longrightarrow \cH_{\sJ}\circ Q.
    \]
    All the functors involved preserve weak equivalences, so we get induced adjunctions and natural transformations at the level of underlying $\infty$-categories. As $\cst \circ Q \Rightarrow \cH_{\sJ}\circ Q$ is levelwise a weak equivalence in $(\Cat_\Thom)^{\sJ}_\proj$, the induced natural transformation at the level of underlying $\infty$-categories is a natural isomorphism, and thus so is the natural transformation induced by its mate $\int_{\sJ}\circ Q \Rightarrow \hocolim_{\sJ}$. This proves that the latter is levelwise a weak equivalence in $\Cat_\Thom$ by \cref{rem:liftQE}. 

    Hence we have a zigzag of levelwise weak equivalences in $\Cat_\Thom$
    \[
    \textstyle \int_J \Longleftarrow \int_J \circ Q \Longrightarrow \hocolim_{\sJ},
    \]
    where the left-hand natural transformation is induced by $Q \Rightarrow \id$. In other words, we have that $\int_\sJ\simeq \hocolim_\sJ$, as desired.
\end{proof}

The upshot of the previous result is that the Grothendieck construction has an easy explicit description, making it much more convenient to use than, for instance, $\colim_\sJ \circ Q$, where $Q\colon \Cat \to \Cat$ is a cofibrant replacement functor for $\Cat_\Thom$.

\subsection{Homotopy colimits in the model structures on \texorpdfstring{$\DblCat$}{DblCat}} \label{sec:hocolimdblcat}

We now aim to show that we also have an easy explicit formula for homotopy colimits in the different model structures on $\DblCat$ considered in the paper. To achieve this, we first study the case of the corresponding model structures on $\Cat^{\Simpop}$.

\begin{rem}
    The Grothendieck construction functor $\int_\sJ\colon \Cat^\sJ\to \Cat$ induces by post-compo\-sition a functor 
    \[ \textstyle (\int_\sJ)_*\colon \Cat^{\Simpop\times \sJ}\cong (\Cat^\sJ)^{\Simpop}\to \Cat^{\Simpop}. \]
    It sends a functor $F \colon \Simpop\times \sJ \to \Cat$ to the functor 
    \[ \textstyle \Simpop\to \Cat, \quad [n]\mapsto \int_\sJ F([n],-). \]
\end{rem}

\begin{prop} \label{GCinproj}
    Let $\sJ$ be a small category. The functor
    \[
    \textstyle (\int_\sJ)_* \colon (\Cat_{\proj}^{\Simpop})^\sJ \to \Cat_{\proj}^{\Simpop}
    \]
    is a model for the homotopy colimit functor.
\end{prop}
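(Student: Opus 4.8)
The plan is to obtain \cref{GCinproj} as an immediate consequence of the two results established above. By \cref{GChocolim}, the Grothendieck construction
\[ \textstyle \int_\sJ \colon (\Cat_\Thom)^\sJ \to \Cat_\Thom \]
is a model for the homotopy colimit functor. On the other hand, \cref{hocoliminproj} asserts that, for any combinatorial model category $\sM$ and small categories $\sJ$ and $\sK$, a model $\cL_\sJ\colon \sM^\sJ\to \sM$ for the homotopy colimit functor induces, by postcomposition, a model $(\cL_\sJ)_*\colon (\sM^\sK_\proj)^\sJ\to \sM^\sK_\proj$ for the homotopy colimit functor. The whole proof therefore consists in checking that this abstract mechanism applies in our situation.

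First I would record that $\Cat_\Thom$ is combinatorial: it is cofibrantly generated, and $\Cat$ is locally presentable. I would then invoke \cref{hocoliminproj} with $\sM = \Cat_\Thom$, $\sK = \Simpop$, and $\cL_\sJ = \int_\sJ$, the hypothesis that $\int_\sJ$ is a model for the homotopy colimit functor being supplied by \cref{GChocolim}. Since $\sM^\sK_\proj = (\Cat_\Thom)^{\Simpop}_\proj = \Cat^{\Simpop}_\proj$, this yields that
\[ \textstyle (\int_\sJ)_* \colon (\Cat^{\Simpop}_\proj)^\sJ \to \Cat^{\Simpop}_\proj \]
is a model for the homotopy colimit functor, which is exactly the statement.

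The only point requiring a moment of care is that the functor $(\int_\sJ)_*$ produced by \cref{hocoliminproj}, defined through the canonical isomorphisms $(\Cat^\sJ)^{\Simpop}\cong \Cat^{\Simpop\times\sJ}\cong (\Cat^{\Simpop})^\sJ$, coincides with the postcomposition functor described in the remark preceding the statement, namely the one sending $F\colon \Simpop\times \sJ\to \Cat$ to the functor $[n]\mapsto \int_\sJ F([n],-)$. This is a direct unwinding of the definitions of postcomposition and of the above isomorphisms, so I do not expect any genuine obstacle here; the proof is in essence a citation of \cref{GChocolim,hocoliminproj}.
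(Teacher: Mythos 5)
Your argument is exactly the paper's proof: \cref{GCinproj} is obtained by combining \cref{GChocolim} with \cref{hocoliminproj} in the case $\sM=\Cat_\Thom$ and $\sK=\Simpop$, and your additional checks (combinatoriality of $\Cat_\Thom$, identification of the postcomposition functor) are routine and correct.
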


\begin{proof}
    This is obtained by combining \cref{GChocolim} with \cref{hocoliminproj} in the case where $\sK=\Simpop$ and $\sM=\Cat_\Thom$.
\end{proof}

\begin{prop} \label{GCinloc}
    Let $\sJ$ be a small category. The functors
    \[
    \textstyle (\int_\sJ)_* \colon (\Cat_{\Seg}^{\Simpop})^\sJ \to \Cat_{\Seg}^{\Simpop}, \quad (\int_\sJ)_* \colon (\Cat_{\CSS}^{\Simpop})^\sJ \to \Cat_{\CSS}^{\Simpop}, \quad (\int_\sJ)_* \colon (\Cat_{\Gpd}^{\Simpop})^\sJ \to \Cat_{\Gpd}^{\Simpop}
    \]
    are models for the homotopy colimit functor.
\end{prop}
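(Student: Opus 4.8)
The plan is to deduce all three statements uniformly from \cref{GCinproj,hocoliminloc}. By \cref{Thomproj}, the projective model structure $\Cat^{\Simpop}_\proj$ is left proper and combinatorial, and by \cref{GCinproj} the functor $(\int_\sJ)_*\colon (\Cat^{\Simpop}_\proj)^\sJ\to \Cat^{\Simpop}_\proj$ is a model for the homotopy colimit functor. Each of the localized model structures $\Cat^{\Simpop}_\Seg$, $\Cat^{\Simpop}_\CSS$, and $\Cat^{\Simpop}_\Gpd$ is, by definition, the $S$-localization of $\Cat^{\Simpop}_\proj$, where $S$ is the relevant set ($\Seg$, $\CSS$, or $\Gpd$) of morphisms in $\Cat^{\Simpop}_\proj$ obtained by deriving the left Quillen functors from \cref{thm:Quillenpairs}. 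Hence I would simply invoke \cref{hocoliminloc} with $\sM=\Cat^{\Simpop}_\proj$ and $\cL_\sJ=(\int_\sJ)_*$, which gives that $(\int_\sJ)_*$ remains a model for the homotopy colimit functor in $(\Cat^{\Simpop}_\proj)_S=\Cat^{\Simpop}_S$ for each of these choices of $S$. This is precisely the claimed statement.

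Concretely, the single substantive point to address is the hypothesis-checking needed to apply \cref{hocoliminloc}: one must observe that $\Cat^{\Simpop}_S$ really is the $S$-localization of $\Cat^{\Simpop}_\proj$ in the sense of that proposition, so that the cofibrations (and hence any chosen projective cofibrant replacement functor used to build $\hocolim_\sJ$) are unchanged by passing to the localization. This identification is already recorded in \cref{rem:explicitsetSeg,rem:explicitsetCSS} for the Segal and complete Segal cases, and in the analogous remark preceding \cref{thm:gpdsquare} for the groupoidal case. With that in place, the zig-zag of levelwise weak equivalences $(\int_\sJ)_*\simeq \hocolim_\sJ$ produced by \cref{GCinproj} in $\Cat^{\Simpop}_\proj$ consists of levelwise weak equivalences in $\Cat^{\Simpop}_\proj$, hence a fortiori in each localization $\Cat^{\Simpop}_S$, which is exactly what is required.

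I do not expect any real obstacle here: all the genuine work (the Grothendieck construction computing homotopy colimits in $\Cat_\Thom$, its compatibility with diagram categories, and the stability of models for homotopy colimits under left Bousfield localization) has already been carried out in \cref{GChocolim,hocoliminproj,hocoliminloc}. The only care needed is notational bookkeeping to make sure the three localizing sets named $\Seg$, $\CSS$, $\Gpd$ in $\Cat^{\Simpop}_\proj$ are the ones defining the respective model structures, so that \cref{hocoliminloc} applies verbatim.
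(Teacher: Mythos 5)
Your proposal is correct and follows exactly the paper's own argument: the paper likewise proves this by combining \cref{GCinproj} with \cref{hocoliminloc}, taking $\sM=\Cat^{\Simpop}_\proj$ and $S=\Seg,\CSS,\Gpd$. The extra bookkeeping you mention (that the localized model structures are precisely the $S$-localizations of $\Cat^{\Simpop}_\proj$, so the projective cofibrant replacement still works) is accurate and implicit in the paper's one-line proof.
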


\begin{proof}
    This is obtained by combining \cref{GCinproj} with \cref{hocoliminloc} in the case where $\sM=\Cat^{\Simpop}_\proj$ and $S=\Seg,\CSS,\Gpd$.
\end{proof}

Building on the above results, we now show that the functor $(\int_\sJ)_*\colon (\Cat^{\Simpop})^\sJ\to \Cat^{\Simpop}$ restricts appropriately to a functor $\DblCat^\sJ\to \DblCat$, and provides a model for the homotopy colimit in the corresponding model structures on $\DblCat$.

\begin{lemma}\label{lemma:intpullback}
    The Grothendieck construction functor $\int_\sJ \colon \Cat^\sJ\to \Cat$ preserves pullbacks.
\end{lemma}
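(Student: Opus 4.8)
The plan is to reduce everything to the fact that pullbacks in $\Cat^\sJ$, pullbacks in $\Cat$, and the Grothendieck construction all admit completely explicit descriptions on objects and on morphisms, and then to match the two sides by hand.

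First I would recall that limits in a functor category are computed pointwise, so a pullback square in $\Cat^\sJ$ amounts to functors $F,G,H\colon \sJ\to \Cat$ with natural transformations $\alpha\colon F\Rightarrow H$ and $\beta\colon G\Rightarrow H$, the pullback being the functor $F\times_H G\colon \sJ\to \Cat$ sending $j$ to the pullback $Fj\times_{Hj}Gj$ in $\Cat$ and a morphism $s\colon j\to j'$ to $Fs\times_{Hs}Gs$. Since pullbacks in $\Cat$ are in turn computed separately on objects and on morphisms, an object of $Fj\times_{Hj}Gj$ is a pair $(x,z)$ with $x\in Fj$, $z\in Gj$ and $\alpha_j(x)=\beta_j(z)$, and likewise for morphisms. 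Applying $\int_\sJ$ and using \cref{def:GC}, I would then compare the canonical comparison functor
\[ \textstyle \Phi\colon \int_\sJ(F\times_H G)\longrightarrow \int_\sJ F\times_{\int_\sJ H}\int_\sJ G \]
with the explicit descriptions of source and target.

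On objects: an object of $\int_\sJ F\times_{\int_\sJ H}\int_\sJ G$ is a pair consisting of $(j,x)\in \int_\sJ F$ and $(k,z)\in \int_\sJ G$ with equal image $(j,\alpha_j(x))=(k,\beta_k(z))$ in $\int_\sJ H$, which forces $j=k$ and $\alpha_j(x)=\beta_j(z)$, i.e.\ exactly an object $(j,(x,z))$ of $\int_\sJ(F\times_H G)$. On morphisms: a morphism from $(j,(x,z))$ to $(j',(x',z'))$ in the pullback is a pair of a morphism $(s,u)\colon (j,x)\to(j',x')$ in $\int_\sJ F$ and a morphism $(s',v)\colon (j,z)\to(j',z')$ in $\int_\sJ G$ with equal image in $\int_\sJ H$; using naturality of $\alpha$ and $\beta$ (so that the image of $(s,u)$ is $(s,\alpha_{j'}(u))$ and that of $(s',v)$ is $(s',\beta_{j'}(v))$, after identifying $\alpha_{j'}(Fs(x))=Hs(\alpha_j(x))$ and similarly for $\beta$), this forces $s=s'$ and $\alpha_{j'}(u)=\beta_{j'}(v)$, i.e.\ $(u,v)$ is a morphism $(Fs(x),Gs(z))\to(x',z')$ in $Fj'\times_{Hj'}Gj'$, which is exactly a morphism of $\int_\sJ(F\times_H G)$. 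Thus $\Phi$ is a bijection on objects and on morphisms, and one checks directly from the composition and identity formulas in \cref{def:GC} that it is a functor; hence it is an isomorphism of categories.

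I do not expect any genuine obstacle; the only point requiring care is the bookkeeping in the morphism comparison, where naturality of $\alpha$ and $\beta$ must be invoked to compute the images of morphisms of $\int_\sJ F$ and $\int_\sJ G$ in $\int_\sJ H$. For context, note that $\int_\sJ$ does not preserve the terminal object (the Grothendieck construction of the constant functor at $[0]$ is $\sJ$), so only connected limits such as pullbacks can be preserved; a more structural proof could proceed via the identification of $\Cat^\sJ$ with split opfibrations over $\sJ$ and the fact that the forgetful functor $\Cat/\sJ\to \Cat$ creates connected limits, but the direct verification above is shorter and self-contained.
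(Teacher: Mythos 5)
Your direct verification is correct, and your bookkeeping on morphisms (using naturality of $\alpha$ and $\beta$ to identify the images of $(s,u)$ and $(s',v)$ in $\int_\sJ H$, forcing $s=s'$ and $\alpha_{j'}(u)=\beta_{j'}(v)$) is exactly the point that needs care; the comparison functor is indeed an isomorphism of categories. The paper, however, takes the structural route that you only sketch in your closing remark: it observes that the canonical projections $\int_\sJ F\to \sJ$ upgrade the Grothendieck construction to a functor $\int_\sJ\colon \Cat^\sJ\to \Cat_{/\sJ}$, cites Maltsiniotis (Proposition 3.1.2) for the fact that this functor is a right adjoint, hence preserves all limits, and then uses that the forgetful functor $\Cat_{/\sJ}\to \Cat$ preserves connected limits, so pullbacks are preserved by the composite. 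That argument is three lines long given the cited adjunction (which the paper also exploits in its proof that $\int_\sJ$ models homotopy colimits), and it explains conceptually why only connected limits survive --- consistent with your observation that the terminal object is not preserved since $\int_\sJ$ of the constant diagram at $[0]$ is $\sJ$. Your elementary computation buys self-containedness and an explicit inverse to the comparison functor, at the cost of some hand-checking of functoriality; either proof is acceptable.
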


\begin{proof}
    Since, for every functor $F\colon \sJ\to \Cat$, there is a canonical projection $\int_\sJ F\to \sJ$, the Grothendieck construction defines a functor $\int_\sJ\colon \Cat^\sJ\to \Cat_{/\sJ}$. By \cite[Proposition 3.1.2]{Maltsiniotis}, this functor is a right adjoint. As a consequence, the functor $\int_\sJ\colon \Cat^\sJ\to \Cat$ preserves all connected limits, and in particular pullbacks.
\end{proof}

\begin{prop}
    The functor $(\int_\sJ)_*\colon \Cat^{\Simpop\times \sJ}\cong (\Cat^\sJ)^{\Simpop}\to \Cat^\sJ$ restricts to a functor $(\int_\sJ)_* \colon \DblCat^\sJ\to \DblCat$ making the following square commute.
    \begin{tz}
        \node[](1) {$\DblCat^\sJ$}; 
        \node[below of=1](2) {$(\Cat^{\Simpop})^\sJ$}; 
        \node[right of=1,xshift=1.6cm](3) {$\DblCat$}; 
        \node[below of=3](4) {$\Cat^{\Simpop}$}; 

        \draw[->](1) to node[left,la]{$(N_h)_*$} (2); 
        \draw[->](1) to node[above,la]{$(\int_\sJ)_*$} (3); 
        \draw[->](3) to node[right,la]{$N_h$} (4); 
          \draw[->](2) to node[below,la]{$(\int_\sJ)_*$} (4); 
    \end{tz}
\end{prop}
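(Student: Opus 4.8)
The plan is to obtain the restricted functor as $\Cat(\int_\sJ)$, the functor between categories of internal categories induced by the pullback-preserving functor $\int_\sJ$. By \cref{lemma:intpullback}, the Grothendieck construction $\int_\sJ\colon \Cat^\sJ\to \Cat$ preserves pullbacks, so by the construction of $\Cat(-)$ on a pullback-preserving functor recalled in \cref{sec:hornerve} it induces a functor $\Cat(\int_\sJ)\colon \Cat(\Cat^\sJ)\to \Cat(\Cat)=\DblCat$. Since all limits, and in particular pullbacks, in the functor category $\Cat^\sJ$ are computed pointwise, an internal category in $\Cat^\sJ$ is precisely a $\sJ$-indexed diagram of internal categories in $\Cat$ (the internal-category data and axioms being detected objectwise); that is, there is a canonical isomorphism of categories $\Cat(\Cat^\sJ)\cong \Cat(\Cat)^\sJ=\DblCat^\sJ$. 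Under this identification, $\Cat(\int_\sJ)$ becomes the sought functor $(\int_\sJ)_*\colon \DblCat^\sJ\to \DblCat$.

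It then remains to check that this functor is compatible with $(\int_\sJ)_*\colon (\Cat^{\Simpop})^\sJ\cong(\Cat^\sJ)^{\Simpop}\to \Cat^{\Simpop}$, i.e.\ that the square commutes, which I would do by unwinding both composites. Let $F\colon \sJ\to \DblCat$ correspond to an internal category $\bA$ in $\Cat^\sJ$, with object part $\bA_0$ and morphism part $\bA_1$ in $\Cat^\sJ$. On the one hand, $N_h\bigl(\Cat(\int_\sJ)(F)\bigr)$ is the simplicial object in $\Cat$ whose $n$-th level is the iterated pullback $\int_\sJ\bA_1\times_{\int_\sJ\bA_0}\dots\times_{\int_\sJ\bA_0}\int_\sJ\bA_1$ computed in $\Cat$. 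On the other hand, $(\int_\sJ)_*\bigl((N_h)_*F\bigr)$ is the simplicial object whose $n$-th level is $\int_\sJ\bigl(\bA_1\times_{\bA_0}\dots\times_{\bA_0}\bA_1\bigr)$, the iterated pullback being computed in $\Cat^\sJ$. These two are identified by the canonical comparison between $\int_\sJ$ of a pullback and the pullback of $\int_\sJ$; this comparison is an isomorphism precisely because of \cref{lemma:intpullback}, and with the standard conventions for $\Cat(-)$ it is the identity, so the square commutes on the nose. The same comparison matches the simplicial operators on the two sides.

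An equivalent, more hands-on route — possibly worth spelling out — is to recall that the essential image of the fully faithful horizontal nerve $N_h$ is the full subcategory of $\Cat^{\Simpop}$ on the \emph{Segal objects}, those $X$ whose Segal maps $X_n\to X_1\times_{X_0}\dots\times_{X_0}X_1$ are isomorphisms, by the explicit description of $N_h\bA$. Given $F\colon \sJ\to \DblCat$, each $N_hF(j)$ is a Segal object; since pullbacks in $\Cat^\sJ$ are pointwise, the simplicial object $(N_h)_*F$, regarded as an object of $(\Cat^\sJ)^{\Simpop}$, is again a Segal object, now in $\Cat^\sJ$. Applying $\int_\sJ$ levelwise and using \cref{lemma:intpullback} once more, $(\int_\sJ)_*\bigl((N_h)_*F\bigr)$ is a Segal object in $\Cat$, hence lies in the image of $N_h$; this produces the double category $(\int_\sJ)_*F$, and naturality in $F$ together with commutativity of the square is then immediate.

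I do not anticipate a genuine difficulty here: all the conceptual weight is carried by \cref{lemma:intpullback} together with the objectwise computation of pullbacks in $\Cat^\sJ$, so what remains is bookkeeping. The only points requiring a little care are fixing conventions so that the square commutes strictly rather than merely up to canonical isomorphism, and invoking the standard fact that a Segal object in a category with pullbacks is canonically the horizontal nerve of an internal category, its composition being forced and automatically associative and unital by the simplicial identities.
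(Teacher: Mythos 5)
Your proposal is correct, and your second, ``hands-on'' route is exactly the paper's proof: the horizontal nerve is fully faithful with essential image the simplicial objects in $\Cat$ satisfying the strict Segal conditions, which are preserved by levelwise application of $\int_\sJ$ since pullbacks in $\Cat^\sJ$ are pointwise and $\int_\sJ$ preserves pullbacks (\cref{lemma:intpullback}). Your first route via $\Cat(\int_\sJ)$ and the identification $\Cat(\Cat^\sJ)\cong\DblCat^\sJ$ is just a mild repackaging of the same idea, so no further comparison is needed.
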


\begin{proof}
    The horizontal nerve $\Nh\colon \DblCat\to \Cat^{\Simpop}$ is fully faithful with essential image given by the simplicial objects in $\Cat$ satisfying the (strict) Segal conditions. Since these conditions are expressed in terms of iterated pullbacks, we conclude by \cref{lemma:intpullback}.
\end{proof}

In particular, one can extract the following explicit description of the double categories that arise as images of the functor $(\int_\sJ)_*$.

\begin{rem} \label{explicitGCdblcat}
    Given a functor $F \colon \sJ\to \DblCat$, then the double category $(\int_\sJ)_* F$ is such that:
\begin{itemize}[leftmargin=0.6cm]
    \item an object is a pair $(j,x)$ of an object of $j\in \sJ$ and an object of $x\in Fj$,
    \item a vertical morphism $(j,x) \varrow (j',x')$ is a pair $(s,u)$ of a morphism $s \colon j\to j'$ in $\sJ$ and a vertical morphism $u \colon Fs(x) \varrow x'$ in $Fj'$, with 
    \begin{itemize}
        \item compositions and identities defined as in \cref{def:GC},
    \end{itemize}
    \item a horizontal morphism $(j,x)\to (j,y)$ is a horizontal morphism $f\colon x\to y$ in $Fj$, with
    \begin{itemize}
        \item compositions and identities given by those of $Fj$;
    \end{itemize}
    there is no horizontal morphism between $(j,x)$ and $(k,y)$ for $j\neq k$, 
    \item a square as below left is a square in $Fj'$ as below right,
    \begin{tz}
        \node[](1) {$(j,x)$}; 
        \node[below of=1](2) {$(j',x')$}; 
        \node[right of=1,xshift=.9cm](3) {$(j,y)$}; 
        \node[below of=3](4) {$(j',y')$}; 
        
        \draw[->,pro] (1) to node[left,la]{$(s,u)$} (2); 
        \draw[->] (1) to node[above,la]{$f$} (3);
        \draw[->] (2) to node[below,la]{$f'$} (4); 
        \draw[->,pro] (3) to node[right,la]{$(s,v)$} (4);
        
        \node at ($(1)!0.5!(4)$) {\rotatebox{270}{$\Rightarrow$}};

        \node[right of=3,xshift=2cm](1) {$Fs(x)$}; 
        \node[below of=1](2) {$x'$}; 
        \node[right of=1,xshift=.9cm](3) {$Fs(y)$}; 
        \node[below of=3](4) {$y'$}; 
        
       \draw[->,pro] (1) to node[left,la]{$u$} (2); 
        \draw[->] (1) to node[above,la]{$Fs(f)$} (3);
        \draw[->] (2) to node[below,la]{$f'$} (4); 
        \draw[->,pro] (3) to node[right,la]{$v$} (4);
        
        \node at ($(1)!0.5!(4)$) {\rotatebox{270}{$\Rightarrow$}};
        \end{tz}
        with 
        \begin{itemize}
        \item vertical compositions and identities defined as in \cref{def:GC},
        \item horizontal compositions and identities given by those of $Fj'$;
    \end{itemize}
        there is no square between $(s,u)$ and $(t,v)$ for $s\neq t$.
    \end{itemize}
\end{rem}

We conclude with the desired result. 

\begin{theorem}\label{thm:homotopycolim}
    Let $\sJ$ be a small category. The functors
    \[
    \textstyle (\int_\sJ)_* \colon (\DblCat_{\proj})^\sJ \to \DblCat_{\proj}, \quad (\int_\sJ)_* \colon (\DblCat_{\Seg})^\sJ \to \DblCat_{\Seg}, \]\[ \textstyle(\int_\sJ)_* \colon (\DblCat_{\CSS})^\sJ \to \DblCat_{\CSS}, \quad (\int_\sJ)_* \colon (\DblCat_{\Gpd})^\sJ \to \DblCat_{\Gpd}
    \]
    are models for the homotopy colimit functor.
\end{theorem}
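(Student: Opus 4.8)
The plan is to reduce the statement to its counterpart over $\Cat^{\Simpop}$, which is already available from \cref{GCinproj,GCinloc}. Write $\DblCat_{\mathsf M}$ for any one of the four model structures $\DblCat_\proj,\DblCat_\Seg,\DblCat_\CSS,\DblCat_\Gpd$; all four are handled uniformly. By the commutative square of the previous proposition, the Grothendieck construction functor $(\int_{\sJ})_*\colon \DblCat^{\sJ}\to \DblCat$ sits over $(\int_{\sJ})_*\colon (\Cat^{\Simpop})^{\sJ}\to \Cat^{\Simpop}$ via the horizontal nerves $\Nh$ and $(\Nh)_*$. Moreover, by \cref{thm:allQE,squareSeg,squareCSS,thm:gpdsquare} each $\DblCat_{\mathsf M}$ is right-induced from the corresponding $\Cat^{\Simpop}_{\mathsf M}$ along the adjunction $\ch\dashv \Nh$, so $\Nh$ creates the weak equivalences of $\DblCat_{\mathsf M}$, and $(\Nh)_*$ creates those of $(\DblCat_{\mathsf M})^{\sJ}_\proj$.

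First I would show that $(\int_{\sJ})_*\colon (\DblCat_{\mathsf M})^{\sJ}\to \DblCat_{\mathsf M}$ preserves all weak equivalences. Indeed, if $f$ is a weak equivalence in $(\DblCat_{\mathsf M})^{\sJ}_\proj$, then $(\Nh)_*f$ is one in $(\Cat^{\Simpop}_{\mathsf M})^{\sJ}_\proj$; since $(\int_{\sJ})_*$ over $\Cat^{\Simpop}_{\mathsf M}$ is a model for the homotopy colimit functor, it preserves all weak equivalences, so $\Nh((\int_{\sJ})_*f)=(\int_{\sJ})_*((\Nh)_*f)$ is a weak equivalence in $\Cat^{\Simpop}_{\mathsf M}$; and as $\Nh$ reflects weak equivalences, $(\int_{\sJ})_*f$ is a weak equivalence in $\DblCat_{\mathsf M}$. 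Consequently $(\int_{\sJ})_*$ needs no deriving and induces a functor on underlying $\infty$-categories. Then I would identify this functor: $\Nh\colon \DblCat_{\mathsf M}\to \Cat^{\Simpop}_{\mathsf M}$ and $(\Nh)_*\colon (\DblCat_{\mathsf M})^{\sJ}_\proj\to (\Cat^{\Simpop}_{\mathsf M})^{\sJ}_\proj$ are right Quillen equivalences (the latter by \cite[Theorem 11.6.5]{Hirschhorn}) preserving all weak equivalences, hence induce equivalences of underlying $\infty$-categories by \cref{rem:liftQE}; since $(\int_{\sJ})_*$ over $\Cat^{\Simpop}_{\mathsf M}$ induces the colimit functor on $(\Cat^{\Simpop}_{\mathsf M})_\infty$, conjugating along the equivalence induced by $\Nh$ and invoking the commutative square shows that $((\int_{\sJ})_*)_\infty$ is the colimit functor of $(\DblCat_{\mathsf M})_\infty$, hence naturally equivalent to $(\hocolim_{\sJ})_\infty$.

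To finish I would promote this $\infty$-categorical identification to an honest zigzag of levelwise weak equivalences, as required by the definition of a model for the homotopy colimit. Using the explicit description of \cref{explicitGCdblcat}, there is a canonical natural transformation $\tau\colon (\int_{\sJ})_*\Rightarrow \colim_{\sJ}$ of functors $\DblCat^{\sJ}\to \DblCat$, sending the inclusion of each fibre $Fj\hookrightarrow (\int_{\sJ})_*F$ to the colimiting cocone leg $Fj\to \colim_{\sJ}F$. Let $Q$ be the chosen cofibrant replacement functor on $(\DblCat_{\mathsf M})^{\sJ}_\proj$, with $q\colon Q\Rightarrow\id$. Precomposing $\tau$ with $Q$ yields $\tau Q\colon (\int_{\sJ})_*\circ Q\Rightarrow \colim_{\sJ}\circ Q=\hocolim_{\sJ}$, while $(\int_{\sJ})_*(q)\colon (\int_{\sJ})_*\circ Q\Rightarrow (\int_{\sJ})_*$ is levelwise a weak equivalence in $\DblCat_{\mathsf M}$ by the first step; this gives the sought zigzag once $\tau Q$ is shown to be levelwise a weak equivalence. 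Since both $(\int_{\sJ})_*\circ Q$ and $\hocolim_{\sJ}$ preserve all weak equivalences, by \cref{rem:liftQE} it suffices to check that $(\tau Q)_\infty$ is a natural isomorphism, which I would deduce from the identification above together with the fact that $\tau$ realizes the canonical comparison between the Grothendieck construction and the homotopy colimit; this is the content verified over $\Cat^{\Simpop}$ in \cref{GChocolim,GCinproj}, and it transports along $\Nh$ using that $\Nh\circ\tau$ factors through the analogous comparison over $\Cat^{\Simpop}$ composed with the unit of the Quillen equivalence $\ch\dashv \Nh$, which becomes invertible on underlying $\infty$-categories.

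I expect this last step to be the main obstacle. Because $\Nh$ is a right, not left, Quillen functor, one cannot simply transport cofibrant-replacement arguments from $\Cat^{\Simpop}$ to $\DblCat$, so verifying that the comparison $\tau$ is a levelwise weak equivalence after cofibrant replacement genuinely relies on the $\infty$-categorical detection criterion \cref{rem:liftQE} and on the identification of $((\int_{\sJ})_*)_\infty$ carried out in the second step, rather than on a direct computation inside $\DblCat$.
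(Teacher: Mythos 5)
Your overall strategy is the same as the paper's: reduce along the commutative square relating $(\int_\sJ)_*$ on $\DblCat^\sJ$ and on $(\Cat^{\Simpop})^\sJ$ via the horizontal nerves, using that all four model structures on $\DblCat$ are right-induced (so $\Nh$ creates weak equivalences) and that the bottom functor is a model for the homotopy colimit by \cref{GCinproj,GCinloc}; your first two steps (weak-equivalence preservation, and the identification of $((\int_\sJ)_*)_\infty$ with the $\infty$-categorical colimit functor) are correct and are exactly the content behind the paper's short argument. The problem is your third step, which you rightly flag as the main obstacle: it does not go through as you describe. Knowing that $((\int_\sJ)_*\circ Q)_\infty$ and $(\hocolim_\sJ)_\infty$ are both equivalent to the colimit functor does not show that your specific transformation $\tau Q$ becomes invertible; you must identify $(\tau Q)_\infty$ with the canonical comparison, and the proposed transport along $\Nh$ is circular. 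Concretely, $\Nh\tau_{QF}$ factors as the comparison $\tau'$ over $\Cat^{\Simpop}$ evaluated at $(\Nh)_*QF$ followed by the canonical map $\colim_\sJ (\Nh)_*QF\to \Nh\colim_\sJ QF$. But $(\Nh)_*QF$ is not projectively cofibrant ($\Nh$ is right, not left, Quillen), so \cref{GCinproj} gives no control over $\tau'_{(\Nh)_*QF}$; and the second map being a weak equivalence is essentially the statement being proven (it expresses that $\Nh$ commutes with homotopy colimits of cofibrant diagrams up to weak equivalence). Neither factor is under control, so the claimed detection of $(\tau Q)_\infty$ is not established. (Also, that canonical map involves the counit of $\ch\dashv\Nh$, not the unit, but this is minor.)

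The gap can be closed by replaying the proof of \cref{GChocolim} directly inside $\DblCat$, which is the honest content of the paper's ``follows directly''. The right adjoint $\cH_\sJ(\sC)=\sC^{\sJ_{-/}}$ of $\int_\sJ$ induces levelwise a right adjoint of $(\int_\sJ)_*\colon \DblCat^\sJ\to\DblCat$, because $(-)^{\sJ_{j/}}$ preserves pullbacks and hence the strict Segal conditions defining the image of $\Nh$. The natural transformation $\cst\Rightarrow(\cH_\sJ)_*$ is a levelwise weak equivalence in $\DblCat_\proj$: after applying $\Nh$ it is, in each simplicial degree, the functor $\sC\to\sC^{\sJ_{j/}}$ admitting a right adjoint as in the proof of \cref{GChocolim}, and $\Nh$ creates weak equivalences. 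Your $\tau$ is the mate of this transformation, so the mate argument of \cref{GChocolim}, together with \cref{rem:liftQE}, yields the zigzag $(\int_\sJ)_*\Leftarrow(\int_\sJ)_*\circ Q\Rightarrow\hocolim_\sJ$ of levelwise weak equivalences in $\DblCat_\proj$, and \cref{hocoliminloc} then handles $\DblCat_\Seg$, $\DblCat_\CSS$, and $\DblCat_\Gpd$ since the localized structures share cofibrations with the projective one.
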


\begin{proof}
    This follows directly from the fact that in the commutative diagram 
    \begin{tz}
        \node[](1) {$(\DblCat_\mathrm{xxx})^\sJ_\proj$}; 
        \node[below of=1](2) {$(\Cat^{\Simpop}_\mathrm{xxx})^\sJ_\proj$}; 
        \node[right of=1,xshift=1.8cm](3) {$\DblCat_\mathrm{xxx}$}; 
        \node[below of=3](4) {$\Cat^{\Simpop}_\mathrm{xxx}$}; 
\punctuation{4}{,};

        \draw[->](1) to node[left,la]{$(N_h)_*$} (2); 
        \draw[->](1) to node[above,la]{$(\int_\sJ)_*$} (3); 
        \draw[->](3) to node[right,la]{$N_h$} (4); 
          \draw[->](2) to node[below,la]{$(\int_\sJ)_*$} (4); 
    \end{tz}
    where $\mathrm{xxx}=\proj,\,\Seg,\,\CSS,$ or $\Gpd$, the vertical functors are Quillen equivalences by \cref{thm:allQE,squareSeg,squareCSS,thm:gpdsquare} and \cite[Theorem 11.6.5]{Hirschhorn}. Moreover, by \cref{GCinproj,GCinloc}, the bottom functor is a model for the homotopy colimit. Hence, the top functor is also a model for the homotopy colimit functor, as desired. 
\end{proof}

\subsection{Application} \label{sec:application}

As an application, we compute the homotopy colimits in $\DblCat_\proj$ present in the localization sets of \cref{rem:explicitsetSeg,rem:explicitsetCSS}.

We start with the spine inclusion $\bH[1]\amalg_{ \bH[0]}^h\ldots \amalg_{\bH[0]}^h \bH[1]\to \bH[n]$ in $\DblCat_\proj$. The source of this double functor is given by the homotopy colimit in $\DblCat_\proj$ of the diagram
\begin{tz}
    \node[](1) {$\bH[1]$}; 
    \node[above right of=1,xshift=.5cm](2) {$\bH[0]$};
    \node[below right of=2,xshift=.5cm](3) {$\bH[1]$}; 
    \node[above right of=3,xshift=.5cm](4){$\bH[0]$}; 
    \node[below right of=4,white,xshift=.5cm](5) {$\Simp[1]$}; 
    \node[below right of=4,xshift=.5cm] {$\ldots$}; 
    \node[above right of=5,xshift=.5cm](6){$\bH[0]$};
    \node[below right of=6,xshift=.5cm](7) {$\bH[1]$}; 

    \draw[->](2) to node[above,la]{$d^0$} (1);
    \draw[->](2) to node[above,la,xshift=3pt]{$d^1$} (3);
    \draw[->](4) to node[above,la]{$d^0$} (3);
    \draw[->](4) to node[above,la,xshift=3pt]{$d^1$} (5);
    \draw[->](6) to node[above,la]{$d^0$} (5);
    \draw[->](6) to node[above,la,xshift=3pt]{$d^1$} (7);
\end{tz}
By \cref{thm:homotopycolim} applied to the category $\sJ=\{\bullet\leftarrow \bullet \to \bullet\leftarrow \ldots \leftarrow\bullet \to\bullet\}$, we can compute this homotopy colimit by applying the functor $(\int_\sJ)_*\colon \DblCat^{\sJ}\to \DblCat$ to the functor $\sJ\to \DblCat$ representing the above diagram. Using \cref{explicitGCdblcat}, we get that it is given by the double category of the form
\begin{tz}[node distance=1cm]
    \node[](1) {$\bullet$}; 
    \node[right of=1](2) {$\bullet$};
    \node[below of=2](3) {$\bullet$};
    \node[below of=3](4) {$\bullet$};
    \node[right of=4](5) {$\bullet$};
    \node[below of=5,xshift=1cm](9) {$\bullet$};
    \node[right of=9](9') {$\bullet$};
    \node[below of=9'](10) {$\bullet$};
    \node[below of=10](11) {$\bullet$};
    \node[right of=11](12) {$\bullet$};
    \draw[->](1) to (2);
    \draw[->,pro](2) to (3);
    \draw[->,pro](4) to (3);
    \draw[->](4) to (5);
    \node at ($(5)!0.5!(9)$) {\rotatebox{135}{$\ldots$}};
    \draw[->](9) to (9');
    \draw[->,pro](9') to (10);
    \draw[->,pro](11) to (10);
    \draw[->](11) to (12);
\end{tz}
Moreover, the spine inclusion $\bH[1]\amalg_{ \bH[0]}^h\ldots \amalg_{\bH[0]}^h \bH[1]\to \bH[n]$ is given by the canonical projection double functor sending all vertical morphisms to identities.

We now turn to the completeness map $\bH[0]\amalg^h_{\bH[1]}\amalg^h_{\bH[3]} \amalg^h_{\bH[1]} \bH[0]\to \bH[0]$ in $\DblCat_\proj$. The source of this double functor is given by the homotopy colimit in $\DblCat_\proj$ of the diagram
    \begin{tz}
    \node[](1) {$\bH[0]$}; 
    \node[above right of=1,xshift=.5cm](2) {$\bH[1]$};
    \node[below right of=2,xshift=.5cm](3) {$\bH[3]$}; 
    \node[above right of=3,xshift=.5cm](4){$\bH[1]$}; 
    \node[below right of=4,xshift=.5cm](5) {$\bH[0]$};  

    \draw[->](2) to node[above,la]{$!$} (1);
    \draw[->](2) to node[above,la,xshift=7pt]{$d^3d^1$} (3);
    \draw[->](4) to node[above,la,xshift=-4pt]{$d^0d^1$} (3);
    \draw[->](4) to node[above,la]{$!$} (5);
\end{tz}
Reasoning as before, we get that it is given by the double category freely generated by the following data
\begin{tz}[node distance=1cm]
    \node[](1) {$\bullet$}; 
    \node[below of=1](2) {$\bullet$};
    \node[below of=2](3) {$\bullet$}; 
    \node[right of=3](4) {$\bullet$}; 
    \node[right of=4](5) {$\bullet$}; 
    \node[above of=5](6) {$\bullet$}; 
    \node[above of=6](7) {$\bullet$}; 
    \draw[d](1) to (7);
    \draw[->](2) to (6); 
    \draw[->](3) to (4);
    \draw[->](4) to (5);
    \draw[->,pro](2) to (1);
    \draw[->,pro](2) to (3);
    \draw[->,pro](6) to (5);
    \draw[->,pro](6) to (7);

    \node[right of=5](8) {$\bullet$}; 
    \node[below of=4](9) {$\bullet$}; 
    \node[below of=9](10) {$\bullet$}; 
    \node[below of=8](11) {$\bullet$}; 
    \node[below of=11](12) {$\bullet$};
    \draw[->](5) to (8);
    \draw[->](9) to (11);
    \draw[d](10) to (12);
    \draw[->,pro](9) to (4);
    \draw[->,pro](9) to (10);
    \draw[->,pro](11) to (8);
    \draw[->,pro](11) to (12);

    \node[] at ($(1)!0.5!(6)$) {\rotatebox{90}{$\Rightarrow$}};
    \node[] at ($(2)!0.5!(5)$) {\rotatebox{270}{$\Rightarrow$}};
    \node[] at ($(4)!0.5!(11)$) {\rotatebox{90}{$\Rightarrow$}};
    \node[] at ($(9)!0.5!(12)$) {\rotatebox{270}{$\Rightarrow$}};
\end{tz}
Finally, the completeness map $\bH[0]\amalg^h_{\bH[1]}\amalg^h_{\bH[3]} \amalg^h_{\bH[1]} \bH[0]\to \bH[0]$ is given by the unique double functor to the terminal double category.

\bibliographystyle{alpha}
\bibliography{bib}

\end{document}